\numberwithin{equation}{section}
    \setlist{nosep}
\definecolor{backcolour}{gray}{.9}
\theoremstyle{plain}
 \newtheorem{theorem}{Theorem}[section]
 \newtheorem{proposition}[theorem]{Proposition}
 \newtheorem{lemma}[theorem]{Lemma}
 \newtheorem{corollary}[theorem]{Corollary}
\theoremstyle{definition}
 \newtheorem{definition}[theorem]{Definition}
 \newtheorem{notation}[theorem]{Notation}
 \newtheorem{assumption}[theorem]{Assumption}
\theoremstyle{remark}
 \newtheorem{remark}[theorem]{Remark}
 \newtheorem{example}[theorem]{Example}
\newtheorem*{rem*}{Remark}
\newcommand{\IA}{\mathbb A}
\newcommand{\IC}{\mathbb C}
\newcommand{\IH}{\mathbb H}
\newcommand{\IN}{\mathbb N}
\newcommand{\IP}{\mathbb P}
\newcommand{\IQ}{\mathbb Q}
\newcommand{\IZ}{\mathbb Z}
\newcommand{\bQ}{\mathbb Q}
\newcommand{\Bl}{\mathrm{Bl}}
\newcommand{\id}{\mathrm{id}}
\newcommand{\tr}{\mathrm{tr}}
\newcommand{\K}{\mathrm{K}}
\newcommand{\ga}{{g_{\alpha}}}
\DeclareMathOperator{\Cl}{Cl}
\DeclareMathOperator{\Pic}{Pic}
\DeclareMathOperator{\Stab}{Stab}         
\DeclareMathOperator{\codim}{codim}         
\DeclareMathOperator{\Aut}{Aut}         
\DeclareMathOperator{\GL}{GL}           
\DeclareMathOperator{\ord}{ord}         
\DeclareMathOperator{\rank}{rk}       
\DeclareMathOperator{\SL}{SL}           
\DeclareMathOperator{\Fix}{Fix}
\DeclareMathOperator{\Exc}{Exc}
\DeclareMathOperator{\Sing}{\mathrm{Sing}}
\renewcommand{\Im}{\operatorname{Im}}
\newcommand{\revi}[1]{{\textcolor{black}{#1}}}
\newcommand{\rev}[1]{{\textcolor{black}{#1}}}
\newcommand{\corre}[1]{{\textcolor{black}{#1}}}
\newcommand{\new}[1]{{\textcolor{black}{#1}}}
\newcommand{\gio}[1]{{\textcolor{black}{#1}}}
\newcommand{\sslash}{\mathbin{/\mkern-4.5mu/}}
\newcommand{\lra}{\longrightarrow}
\DeclareRobustCommand\longhookrightarrow
\DeclareRobustCommand\longtwoheadrightarrow
\newcommand{\reg}{\mathrm{reg}}
\newcommand{\symp}{\mathrm{symp}}
\newcommand{\sing}{\mathrm{sing}}
\newcommand{\bir}{\mathrm{bir.}}
\DeclareMathOperator{\PSL}{PSL}
\newcommand{\GroupNames}{\href{https://people.maths.bris.ac.uk/~matyd/GroupNames/index500.html}{GroupNames}}
\setlist[enumerate,1]{label={\rm(\arabic*)}, ref={\rm\arabic*}}
\newcommand{\supth}[1]{\ensuremath{#1^{\mathrm{th}}}}
\title{Terminalizations of quotients of compact hyperk\"{a}hler manifolds by induced symplectic automorphisms}
\author{Valeria Bertini}
\address{Dipartimento di Matematica dell'Universit\`a di Genova (DIMA), Via Dodecaneso, 35, 16146 Genova, Italy}
\email{bertini@dima.unige.it}
\author{Annalisa Grossi}
\address{Universit\'e Paris-Saclay, CNRS, Laboratoire de Math\'ematiques d'Orsay, Rue Michel Magat, B\^at. 307, 91405 Orsay, France}
\email{annalisa.grossi@universite-paris-saclay.fr, annalisa.grossi3@unibo.it}
\curraddr{Alma Mater studiorum Universit\`a di Bologna, Dipartimento di Matematica,
  Piazza di Porta San Donato 5, Bologna, 40126 Italia}
\author{Mirko Mauri} \author{Enrica Mazzon}
\address{Universit\'e Paris Cit\'e and Sorbonne Universit\'e, CNRS, IMJ-PRG, F-75013 Paris, France}
\email{mauri@imj-prg.fr, mazzon@imj-prg.fr}
\begin{document}


\removeabove{0.8cm}
\removebetween{0.8cm}
\removebelow{0.8cm}

\maketitle

\begin{prelims}

\DisplayAbstractInEnglish

\bigskip

\DisplayKeyWords

\medskip

\DisplayMSCclass







\end{prelims}


\newpage

\setcounter{tocdepth}{1}

\tableofcontents


\section{Introduction}

\subsection{Irreducible symplectic varieties}
Irreducible symplectic varieties play a key role in the classification of varieties with Kodaira dimension zero. In the last decades, fundamental results about their birational geometry, algebraic cycles and moduli theory have been proved; see for instance \url{https://www.erc-hyperk.org/papers}
for a list of the latest advances in the field. The importance of irreducible symplectic  varieties rests on the celebrated Beauville--Bogomolov decomposition, proved in increasing degree of generality in \cite{B1983, GrebKebekusKovacsPeternell, DruelGuenancia, Druel2018, Guenancia2016, GrebGuenanciaKebekus, HoringPeternell, Campana2021, BGL2020}: Any compact K\"{a}hler space with numerically trivial canonical class and klt singularities admits a quasi-\'{e}tale cover\footnote{A quasi-\'{e}tale cover is a finite morphism \'{e}tale in codimension~$1$.} which can be written as the product of complex tori, strict Calabi--Yau varieties or irreducible symplectic varieties. 

It is expected that the number of deformation types of irreducible symplectic varieties is finite in each dimension; see \cref{rem:finitenessIHS}. Therefore, it is natural to ask whether it is possible to even classify irreducible symplectic varieties, at least in low dimension. Despite active research in the field, irreducible symplectic varieties (especially smooth ones) are notoriously difficult to construct. At the moment, in the smooth case, there are in each dimension at most three known deformation types of irreducible symplectic manifolds, see \cite{B1983, OG6,OG10}, namely those of
\begin{itemize}
    \item Hilbert schemes $S^{[n]}$ of $n$ points on a K3 surface $S$, 
    \item generalized Kummer varieties $\K_n(A)$ associated to an abelian surface $A$, 
    \item two sporadic examples built by O'Grady in dimensions $6$ and $10$. 
\end{itemize}
Dropping the smoothness assumption, we can generate more examples. For instance, there are, in \cite{GM22} alone, at least 29 distinct deformation types of singular $4$-dimensional irreducible symplectic orbifolds. The implicit hope is that while studying singular symplectic varieties, one may find some of them admitting a symplectic resolution, so ideally new smooth examples. Historically this is indeed how the O'Grady examples in dimensions $6$ and $10$ were discovered.

All known deformation types of irreducible symplectic varieties arise in the following ways:
\begin{itemize}
    \item moduli spaces of semistable sheaves on K3 or abelian surfaces \cite{PR2023}, 
    \item compactifications of Lagrangian fibrations, see \cite{MT2007, AS2015, M2016, SS2022, BCGPSV, LLX2024}, 
    \item terminalizations $p \colon Y \to X/G$ of symplectic quotients of a symplectic variety $X$ by a finite group $G$, see \cite{Fujiki1983, Fu-Menet, GM22}, 
    \[
\begin{tikzcd}[scale=1]
    & X \arrow[d, "q"]\\
   Y\arrow[r, "p"] & X/G\rlap{.}
\end{tikzcd}
\]
\end{itemize}
See also the survey \cite{Perego2020}. 

The purpose of this paper is to study systematically terminalizations of quotients of known irreducible symplectic manifolds. In particular, we complete part of the classification program designed by Menet in \cite[Section~1.3]{GM22}.

\subsection{Criteria for an efficient classification of terminalizations} For a sensible and efficient
description of the terminalizations above, some reductions and assumptions are in order. We first propose to restrict to the case of \begin{quote} \centering \emph{projective $\bQ$-factorial terminalizations $Y$ of symplectic quotients $X/G$\\ with simply connected regular locus $Y^{\reg}$.\footnote{Some authors call irreducible symplectic varieties with quotient singularities and simply connected regular locus  \emph{irreducible orbifolds}. We avoid this convention as it competes with the now well-established definition of irreducible symplectic varieties and it may cause confusion: An irreducible symplectic orbifold whose regular locus has nontrivial fundamental group would not be an irreducible orbifold!}}
\end{quote} 
Although the combination of quotients and birational modifications of $X/G$ is a source of many more irreducible symplectic varieties, they should be considered redundant as we explain in Section~\ref{sec:criteria}. Concretely, the reduction above requires that the candidate $G$-actions  satisfy the following conditions; see Section~\ref{subsection sympectic}, Section~\ref{sec:terminalization} and \cref{prop:fundamentalgroup} for the equivalence.

\begin{assumption}\label{assumption:codim2}
     The following equivalent conditions 
     hold:
     \begin{enumerate}
    \item\label{a:c2-1} $X/G$ has strictly canonical singularities. 
    \item\label{a:c2-2} The singular locus of $X/G$ has codimension~$2$.
    \item \label{condition:item} An element of $G$ fixes a codimension~$2$ subvariety in $X$.\footnote{If $X$ has $\mathbb{Q}$-factorial singularities, Assumption~\ref{assumption:codim2} is equivalent to the following condition:
\begin{enumerate}
    \item[(4)] The $\bQ$-factorial terminalization of $X/G$ is a nontrivial morphism.
\end{enumerate}}
\end{enumerate}
\end{assumption}

\begin{assumption}\label{ass:groupaction}
    The finite group $G$ acts on $X$ in such a way that the automorphisms whose fixed locus in~$X$ has codimension two generate the entire group $G$.
\end{assumption}

In this paper, we study the case of $X$ being a known irreducible symplectic manifold. In view of \revi{Assumption~\ref{assumption:codim2},} we can rule out the case of manifolds of O'Grady type as explained in \cref{rmk:finitesymplgroup}. Without loss of generality, we can then restrict to the case of Hilbert schemes or generalized Kummer varieties. 

Finite groups of symplectic automorphisms of them have been extensively studied in the literature; see \cref{rmk:finitesymplgroup}. However, the lattice-theoretic information of these classifications seems insufficient to prescribe the geometry and the intersection theory of the fixed loci, and ultimately the geometry and singularities of~$Y$. In order to maintain control over the geometry of the fixed loci, in this paper we assume the following.

\begin{assumption}\label{assumptioninduced}
    The finite group $G$ acts on $S^{[n]}$ or $\K_{n}(A)$ via symplectic automorphisms induced by automorphisms of the underlying K3 or abelian surface $S$ or $A$, respectively.
\end{assumption}

While Assumptions~\ref{assumption:codim2} and~\ref{ass:groupaction} are necessary to obtain an efficient classification (and should be required even in future works on the subject), Assumption~\ref{assumptioninduced} should be considered primarily as a technical requirement. Indeed, not all symplectic automorphisms with fixed loci of codimension~$2$ (so satisfying Assumption~\ref{assumption:codim2}) are induced. Consider for instance the example of a non-induced automorphism of order~$3$ on a variety of $\K3^{[2]}$ type \gio{in \cite[Example 17(iv)]{Nam2001}; \textit{cf.} also \cite[Section~3]{kawatani2009birational}.} 

There are certainly other classes of automorphisms whose fixed loci may be controlled effectively. For example, to also keep into account the Namikawa--Kawatani automorphism above, it would be interesting to also classify  quotients of Fano varieties of lines on cubic fourfolds induced by automorphisms of the underlying cubic fourfold, or automorphisms of EPW sextics, or the more challenging automorphisms of moduli spaces of semistable sheaves induced by automorphisms of the underlying surface. We plan to tackle some of these other cases in the near future and include them in the classification program of \cite[Section~1.3]{GM22}.

\subsection{Classification results}
We first show that the geometric Assumptions~\ref{assumption:codim2} and~\ref{assumptioninduced} impose group-theoretic constraints on $G$ and on the dimension of $X$.

\begin{theorem}\label{thm:canonicalsingularities} Let $G$ be a finite group of induced symplectic automorphisms acting on $X \simeq S^{[m]}$ or $\K_n(A)$. Then $X/G$ has strictly canonical singularities if and only if one of the following holds:
\begin{itemize}
    \item $m=2$ or $n=2$, and $G$ contains an involution.
    \item $n=2$, and $G$ contains a special type of automorphisms 
      of order~$3$ as in \cref{lemma:inducedinvolution}\,\eqref{l:i-2}.
    \item $n=3$, and $G$ contains a special type of involutions as in \cref{lemma:inducedinvolution}\,\eqref{l:i-1}. 
\end{itemize}
In particular, $X$ is isomorphic to $S^{[2]}$, $\K_2(A)$ or $\K_{3}(A)$. 
\end{theorem}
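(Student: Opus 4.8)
The plan is to use the equivalence recorded in \cref{assumption:codim2}: for an induced symplectic $G$-action, $X/G$ has strictly canonical singularities if and only if some $g\in G$ satisfies \eqref{condition:item}, i.e.\ $\Fix(g)\subset X$ has codimension $2$. So the whole statement reduces to deciding, for each induced symplectic automorphism $g$, whether $\codim_X\Fix(g)=2$, and then translating the answer into a condition on $G$ and on $(m,n)$. With this reduction the ``if'' direction is automatic; the content is the ``only if'' direction, namely that the three listed families exhaust the induced automorphisms with a codimension-$2$ fixed locus.

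First I would describe $\Fix(g)$ in terms of the inducing surface automorphism $\sigma$ of $S$ (resp.\ of $A$, an affine map $x\mapsto \sigma_0 x+t$ with $\sigma_0$ linear symplectic). A $g$-fixed length-$\ell$ subscheme is a $\sigma$-invariant configuration of $\ell$ points, and the only positive-dimensional moduli come from \emph{free} $\sigma$-orbits: the fixed points of $\sigma$ on the surface are isolated (Nikulin's theory on $S$; finiteness of $\Fix(\sigma_0)$ when the linear part is nontrivial on $A$), and the punctual, non-reduced contributions over them are subdominant. A free orbit of size $d=\ord(\sigma)$ sweeps out a $2$-dimensional family while consuming $d$ of the available points, and on $\K_n(A)$ a full $\langle\sigma_0\rangle$-orbit automatically satisfies the sum-zero condition. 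Hence the top-dimensional component of $\Fix(g)$ has dimension $2k$, where $k$ is the largest number of disjoint free orbits that fit among the $m$ (resp.\ $n+1$) points, and its codimension governs everything.

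Next I would run the count. On $S^{[m]}$ an involution ($d=2$) gives $k=\lfloor m/2\rfloor$ and codimension $m$ or $m+1$, hence $2$ exactly when $m=2$; orbits of size $d\ge 3$ are too large to fit when $m=2$ and give codimension $>2$ for larger $m$, so only involutions on $S^{[2]}$ survive. On $\K_n(A)$ the $-1$-type involutions $x\mapsto -x+t$ with $2t=0$ use the size-$2$ orbits $\{p,-p\}$ and give codimension $n-1$ when $n$ is odd and $n$ when $n$ is even, hence $2$ exactly for $n\in\{2,3\}$; the order-$3$ elements $x\mapsto\zeta x$ (which force $A$ to carry $\zeta$-multiplication) use the size-$3$ orbits $\{p,\zeta p,\zeta^2 p\}$ and give codimension $2$ only for $n=2$; elements of order $4$ or $6$, and pure torsion translations, yield codimension $\ge 4$ throughout $n\le 3$. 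Imposing the sum-zero constraint and the parity of $n+1$ pins down both the admissible $(m,n)$ and the precise shape of the surviving automorphisms, which I would match to the list in \cref{lemma:inducedinvolution}\,\eqref{l:i-1}--\eqref{l:i-2}; the dichotomy ``special involution'' versus ``translation'' (for $n=3$) and ``special order-$3$'' (for $n=2$) is exactly the distinction between codimension $2$ and codimension $\ge 4$. Packaging back into $G$ yields the stated trichotomy, and since no other value of $m$ or $n$ occurs, $X\cong S^{[2]}$, $\K_2(A)$ or $\K_3(A)$.

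The hard part will be turning the orbit heuristic into a rigorous dimension count. One must control the punctual loci at the isolated fixed points and verify that they neither raise the top dimension nor produce an unexpected codimension-$2$ stratum, and on $\K_n(A)$ one must handle uniformly the interaction of the sum-zero condition with torsion translations and with mixed orbit types (for instance the $2$-torsion orbits of an order-$4$ element), checking that these contribute only in codimension $\ge 4$ in the relevant range. Making the identification with \cref{lemma:inducedinvolution} precise—so that ``special'' is unambiguous and the classification is complete—is the technical heart of the argument.
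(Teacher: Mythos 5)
Your proposal follows essentially the same route as the paper: reduce via \cref{assumption:codim2} to deciding which induced automorphisms have a codimension-$2$ fixed locus, then run an orbit-counting dimension estimate (each free orbit of the surface automorphism contributes a $2$-dimensional family, isolated surface fixed points contribute nothing), and your numerology is correct --- it reproduces the paper's \cref{lemma:codime2} (only $\ord(g)=2$ on $S^{[2]}$, $\K_2(A)$, $\K_3(A)$, or $\ord(g)=3$ on $\K_2(A)$) and the explicit forms of \cref{lemma:inducedinvolution}, including the constraints $\alpha\in A[3]$, resp.\ $\alpha\in A[2]$, forced by the sum-zero condition, and the exclusion of pure translations on $\K_3(A)$.

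The one place where you diverge, and where your plan has a hole you yourself flag, is the control of the Hilbert--Chow exceptional locus: you assert that punctual, non-reduced contributions are ``subdominant'' and defer the proof. The paper avoids bounding these loci altogether. It performs the entire count on the symmetric product $S^{(n)}$ (resp.\ $A^{(n+1)}_0$) and then invokes \cref{lem:morphism generically finite}: any codimension-$2$ subvariety $F\subset X$ fixed by a symplectic automorphism maps generically finitely under $\epsilon$, because $\epsilon$ is semismall (\cref{thm:semismall}) and $F$, being the fixed locus of a symplectic automorphism, is symplectic, hence not uniruled and not swept by $\epsilon$-fibers. This transfers every codimension-$2$ solution on $X$ to one on $X_{\sing}$, so the symmetric-product count is exhaustive and no separate analysis of the punctual Hilbert scheme is needed. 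If you adopt that lemma, your ``hard part'' disappears; also note that your claim that the ``if'' direction is automatic still requires exhibiting the fixed surfaces (this is \cref{rem fix locus involutions on Hilb} for $S^{[2]}$ and the explicit families in \cref{lemma:inducedinvolution} for the Kummer cases), though your free-orbit families do provide exactly these.
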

\begin{proof} This follows from Lemmas~\ref{lemma:codime2} and~\ref{lemma:inducedinvolution}.
\end{proof}

\begin{theorem}[\textit{cf.} \cref{prop:blowup}]\label{thm:blowuponetime}
    \new{Away from the dissident locus $($see \cref{def:dissident}\,$)$, a terminalization of\, $X/G$ as in \cref{thm:canonicalsingularities} is isomorphic to the blowup of the reduced singular locus.}
\end{theorem}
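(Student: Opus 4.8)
The plan is to reduce the statement to a transverse, étale-local computation along the smooth part of the singular locus, where the geometry becomes a product. Write $\Sigma \coloneqq \Sing(X/G)$ with its reduced structure, and let $U$ be the complement of the dissident locus (\cref{def:dissident}). First I would show that over $U$ the locus $\Sigma$ is smooth and that $X/G$ is, analytically (or étale-) locally along $\Sigma$, isomorphic to a product $\IC^{\dim X-2}\times(\IC^2/\Gamma)$, where $\Gamma\subset\Sp(2,\IC)=\SL(2,\IC)$ is the cyclic stabilizer of a general point of the corresponding component of the fixed locus in $X$. This is where Luna's étale slice theorem enters: by \cref{assumptioninduced} the automorphisms are induced, so the relevant stabilizers are governed by \cref{lemma:codime2} and \cref{lemma:inducedinvolution}, and a general point of a codimension-two fixed component has cyclic stabilizer generated by a single symplectic automorphism acting with an isolated fixed point on the two-dimensional normal slice. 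The holomorphic symplectic form restricts nondegenerately to this slice, forcing the transverse action to lie in $\SL(2,\IC)$.

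Next I would identify the transverse singularity. A finite-order element of $\SL(2,\IC)$ with isolated fixed point is conjugate to $\diag(\zeta,\zeta^{-1})$ for a primitive $d$-th root of unity $\zeta$, so the transverse quotient $\IC^2/\Gamma$ is the Du Val singularity $A_{d-1}$, where $d=|\Gamma|$. By \cref{thm:canonicalsingularities} the only possibilities are $d=2$ and $d=3$, giving transverse $A_1$ and $A_2$ singularities respectively. The heart of the matter is then an explicit verification that a single blowup of the reduced singular point resolves both $A_1$ and $A_2$: writing $A_k=\{uv=w^{k+1}\}\subset\IC^3$, a chart computation shows that $\Bl_0(A_k)$ is smooth for $k\le 2$, whereas for $k\ge 3$ one blowup leaves a residual $A_{k-2}$ singularity. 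Since $A_k$ is Du Val, its minimal resolution is crepant, so for $k\in\{1,2\}$ this single blowup is exactly the crepant resolution; the blowup works precisely because \cref{thm:canonicalsingularities} confines us to the range $k\le 2$.

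Finally I would globalize and identify the blowup with the terminalization. Because the transverse slice is a surface and two-dimensional terminal singularities are smooth, over $U$ any terminalization $p\colon Y\to X/G$ restricts, fibrewise over $\Sigma$, to the minimal resolution of the transverse $A_k$, which by the previous step is the blowup of the transverse singular point. Blowing up the smooth center $\Sigma\cap U$ commutes with the étale-local product decomposition $\IC^{\dim X-2}\times(\IC^2/\Gamma)$, since $\Sigma$ corresponds there to $\IC^{\dim X-2}\times\{0\}$ and blowup commutes with the flat projection; hence $\Bl_\Sigma(X/G)$ agrees over $U$ with this fibrewise minimal resolution, and the induced map to $X/G$ is crepant with smooth (in particular terminal and $\bQ$-factorial) total space over $U$. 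By uniqueness of crepant models over the terminal locus, $p$ and $\Bl_\Sigma$ coincide over $U$, which is the assertion.

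The main obstacle I anticipate is Step~1 together with this globalization: one must check that away from the dissident locus the reduced singular locus $\Sigma$ is genuinely smooth, so that its blowup is well behaved and compatible with the transverse slice, and that the scheme-theoretic blowup of $X/G$ along the \emph{reduced} $\Sigma$ really restricts in each slice to the blowup of the singular point rather than to a thickened or non-reduced center. Pinning down the local model along $\Sigma$—in particular using the definition of the dissident locus to rule out collisions of distinct fixed components and jumps in the transverse ADE type—is the technical core; once the clean product structure is established, the $A_1$/$A_2$ blowup computation and the crepancy bookkeeping are routine.
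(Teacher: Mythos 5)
Your proposal is correct and follows essentially the same route as the paper: the transverse $A_1$/$A_2$ structure along the smooth part of the singular locus (forced by the order-$2$/order-$3$ constraint of \cref{lemma:codime2}), the observation that a single blowup of the reduced center yields the crepant resolution there, and the identification with any terminalization via uniqueness of the minimal resolution over the complement of the dissident locus. The only difference is packaging: the paper delegates your globalization/uniqueness step to \cref{prop:terminalization} (a terminalization is exactly a normal compactification of the unique symplectic resolution $f^{\circ}$ of $X^{\circ}$, itself obtained by blowing up the singular locus), whereas you verify the \'etale-local product structure and the $A_k$ blowup charts by hand.
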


It is open whether \cref{thm:blowuponetime} holds unconditionally without Assumption~\ref{assumptioninduced}. 

\begin{theorem}[Second and third Betti numbers, \textit{cf.} \cref{formulaBetti}, \rev{\cref{remark:binarydihedral}} and \cref{prop:IH3}]
Let $G$ be a finite group of induced symplectic automorphisms of\, \(X=S^{[n]}\) or $\K_n(A)$. Let $q \colon X \to X/G$ be the quotient map, $p \colon Y \to X/G$ be a terminalization of\, $X/G$, and $\Sigma$ be the singular locus of\, $X/G$. Denote by
\begin{itemize}

\item $F_g \subset X$ the $($unique$)$ component of the fixed locus of $g \in G$ of codimension~$2$, if any,  
\item $L$ a lattice isomorphic to $H^2(X,\IZ)$, 
    \item $N_2$ the number of components $q(F_g)$ in $\Sigma$ with $\ord(g)=2$,
    \item \(N_3\) the number of components $q(F_g)$ in $\Sigma$ with $\ord(g)=3$.
\end{itemize} 
Then the following topological identities hold:
  \begin{align*}      b_2(Y)=\rank\left(L^G\right)+N_2+2N_3 - \epsilon,\\     
    IH^3(Y, \IQ) \simeq H^3(X, \IQ)^{G}, 
     \end{align*}
     where $IH^*(Y, \IQ)$ stands for the intersection cohomology of\, $Y$ with rational coefficients, and $\epsilon$ equals $1$ if $X=K_{2}(A)$ and $G_{\circ} \simeq BD_{12}$ $($cf. \cref{sec:notation}\,$)$, or $0$ otherwise.
\end{theorem}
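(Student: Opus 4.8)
The plan is to compute the two topological invariants separately, since they are of quite different natures.

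\medskip

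\textbf{The second Betti number.} First I would use the terminalization structure from \cref{thm:blowuponetime}: away from the dissident locus (which has codimension $\geq 4$ and thus does not affect $b_2$), the map $p\colon Y\to X/G$ is the blowup of the reduced singular locus $\Sigma$. The strategy is to relate $H^2(Y,\IQ)$ to the $G$-invariant part of $H^2(X,\IQ)$ together with contributions from the exceptional divisors. Concretely, I would argue that $H^2(X/G,\IQ)\simeq H^2(X,\IQ)^G$ (this is the standard fact that rational cohomology of a finite quotient is the invariant part), which has rank $\rank(L^G)$ since $L\simeq H^2(X,\IZ)$. Then each irreducible component $q(F_g)$ of $\Sigma$ contributes exceptional divisor classes to $Y$: the key point is to count, for each component, how many new divisor classes appear after blowing up. For a component $q(F_g)$ coming from an involution ($\ord(g)=2$), the transverse $A_1$-type singularity yields a single exceptional divisor, giving the summand $N_2$. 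For a component with $\ord(g)=3$, the transverse singularity is of a type producing two exceptional divisor classes, giving the summand $2N_3$; here I would need to analyze the local transverse structure of the quotient singularity (an $A_2$-type or a cyclic quotient whose minimal resolution has two exceptional curves) to justify the coefficient $2$.

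\medskip

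\textbf{The correction term $\epsilon$.} The subtle point is the correction $\epsilon$, which accounts for a relation among the exceptional divisor classes in the special case $X=\K_2(A)$ with $G_\circ\simeq BD_{12}$. The plan is to show that in this situation the exceptional divisors are \emph{not} independent in $H^2(Y,\IQ)$: the noncommutative structure of the binary dihedral group $BD_{12}$ forces two of the otherwise-independent divisor contributions to satisfy a single linear relation, reducing the rank by $1$. I expect this to be \textbf{the main obstacle}: one must carefully track how the components $F_g$ intersect and how the induced $G$-action permutes or identifies the local branches, so that the naive count $\rank(L^G)+N_2+2N_3$ overcounts by exactly one. This likely requires the explicit geometry of the fixed loci on $\K_2(A)$ and the structure of $BD_{12}$, and is presumably the content of the separately-cited \cref{remark:binarydihedral}.

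\medskip

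\textbf{The third intersection cohomology group.} For the statement $IH^3(Y,\IQ)\simeq H^3(X,\IQ)^G$, I would proceed differently, using intersection cohomology rather than ordinary cohomology. Since $p\colon Y\to X/G$ is a terminalization, it is a crepant (in particular semismall) partial resolution, so the decomposition theorem applies and $IH^*(Y,\IQ)$ is a summand of $IH^*(X/G,\IQ)$; in degree $3$ the exceptional contributions are supported on the singular locus in codimension $\geq 2$ and do not reach degree $3$ from below, so one expects $IH^3(Y,\IQ)\simeq IH^3(X/G,\IQ)$. Then I would identify $IH^3(X/G,\IQ)\simeq H^3(X,\IQ)^G$ using that for a finite quotient the intersection cohomology agrees with the $G$-invariants of the cohomology of the (smooth) cover $X$. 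The cleanest route is to invoke the general principle $IH^*(X/G,\IQ)\simeq H^*(X,\IQ)^G$ for a quotient of a smooth projective variety by a finite group, applied in degree $3$; the main thing to verify is that the degree-$3$ exceptional contributions of the terminalization vanish, which follows from the codimension of $\Sigma$ and the local structure of the fibers of $p$.
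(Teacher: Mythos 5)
Your overall strategy coincides with the paper's (blowup/decomposition-theorem count of exceptional divisor classes for $b_2$; decomposition theorem for the semismall map $p$ for $IH^3$), but two steps contain genuine gaps. First, the correction term $\epsilon$: you rightly flag it as the main obstacle, but you give no argument, and the mechanism you sketch is not the right one. It is not that two independent exceptional classes acquire a linear relation; rather, over a component $q(F_g)$ with $\ord(g)=3$, the two local exceptional branches can be two branches of a single irreducible \emph{non-normal} divisor of $Y$, so only one class appears instead of two (\cref{Prop:excorder3}). Proving that this identification happens if and only if $G_\circ\simeq BD_{12}$ is the real content: one must produce an element $\iota\in G$ preserving $F_g$ and exchanging the two eigen-line subbundles $L_{\xi_3},L_{\bar\xi_3}$ of $N_{F_g/\K_2(A)}$, deduce $\iota g\iota^{-1}=g^2$ and that $\ord(\iota)$ is even, and then use the constraint $G\subset A\rtimes\SL(\Lambda)$ (forcing $\ord(\iota)\in\{2,4,6\}$) to exclude $\ord(\iota)=2,6$ (since $\SL(\Lambda)$ has no subgroup isomorphic to $S_3$) and conclude $\langle g,\iota\rangle\simeq BD_{12}$, which projects isomorphically onto $G_\circ$ by maximality of $BD_{12}$ in $\SL(\Lambda)$. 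None of this group-theoretic analysis appears in your proposal, and without it both directions of the ``if and only if'' are unproven.

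Second, the $IH^3$ computation: your vanishing argument fails as stated. The decomposition theorem for the semismall map gives $Rp_*\mathcal{IC}_Y=\mathcal{IC}_{X/G}\oplus\mathcal{IC}_{\Sigma_2}\left(R^2p_*\IQ_Y\right)[-2]\oplus\mathcal{S}$, where $\Sigma_2$ is the pure codimension-$2$ part of the singular locus (note also that the summand relation goes the other way from what you wrote: $IH^*(X/G,\IQ)$ is a summand of $IH^*(Y,\IQ)$). The middle term contributes $\bigoplus_{q(F_g)\subseteq\Sigma_2}H^1\left(q(F_g)^\nu,\IQ\right)^{\ord(g)-1}$ to $IH^3(Y,\IQ)$, and this does \emph{not} vanish for reasons of codimension or of the local structure of the fibers of $p$: it is an $H^1$ of a global surface, which for an arbitrary semismall terminalization can be nonzero. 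In the present situation it vanishes only because each $F_g$ is simply connected (a K3 surface or a variety of $\K3^{[2]}$ type, by the classification of codimension-$2$ fixed loci of induced automorphisms), so that $H^1\left(q(F_g)^\nu,\IQ\right)=H^1(F_g,\IQ)^{\Stab(\{F_g\})}=0$; this geometric input, which is exactly what \cref{prop:IH3} uses, is missing from your argument. Your alternative identification $IH^*(X/G,\IQ)\simeq H^*(X,\IQ)^G$ via quotient singularities is fine, but it does not repair the missing comparison between $IH^3(Y,\IQ)$ and $IH^3(X/G,\IQ)$.
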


\begin{theorem}[\textit{cf.} Tables~\ref{table Hilb2(K3)},~\ref{table n=2} and~\ref{table n=3}]
    For any action of a finite group of symplectic automorphisms of\, $X\simeq S^{[2]}, \K_2(A)$ or $\K_{3}(A)$ induced by the underlying K3 or abelian surface, the second Betti number and fundamental group of the regular locus of a \new{projective} terminalization $Y$ of the quotient $X/G$ \revi{are listed in Tables~\ref{table Hilb2(K3)},~\ref{table n=2} and~\ref{table n=3}.}
\end{theorem}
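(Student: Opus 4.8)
The plan is to turn the general formula of the preceding Betti-number theorem into an explicit table by running over a finite, completely enumerated list of admissible groups. By \cref{thm:canonicalsingularities}, Assumptions~\ref{assumption:codim2} and~\ref{assumptioninduced} already force $X$ to be one of $S^{[2]}$, $\K_2(A)$ or $\K_3(A)$, so the statement splits into exactly three families, matching the three tables. First I would assemble, for each such $X$, the complete list of finite groups $G$ of induced symplectic automorphisms satisfying Assumptions~\ref{assumption:codim2}--\ref{ass:groupaction}: in the Hilbert-scheme case this is extracted from the Nikulin--Mukai--Xiao classification of finite symplectic automorphism groups of K3 surfaces, and in the Kummer cases from the corresponding classification of symplectic automorphism groups of abelian surfaces (fixing the origin, up to translations by torsion points), in each case discarding those groups no element of which fixes a codimension-two locus (Assumption~\ref{assumption:codim2}) and those not generated by such elements (Assumption~\ref{ass:groupaction}). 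This produces the rows of Tables~\ref{table Hilb2(K3)},~\ref{table n=2} and~\ref{table n=3}.

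Second, for each group $G$ on the list I would compute the three numerical inputs of the formula $b_2(Y)=\rank(L^G)+N_2+2N_3-\epsilon$. The rank $\rank(L^G)$ is read off from the induced decomposition of the second cohomology: since the automorphisms are induced, $G$ acts on $H^2(X,\IZ)\cong H^2(S,\IZ)\oplus\IZ\delta$ (resp. $H^2(A,\IZ)\oplus\IZ\delta$) trivially on the exceptional class $\delta$, so $\rank(L^G)$ equals the rank of the invariant sublattice of $H^2(S,\IZ)$ (resp. $H^2(A,\IZ)$) plus one, and the invariant ranks are tabulated by the surface-level classifications. The integers $N_2$ and $N_3$ are obtained by a direct analysis of the codimension-two components $q(F_g)$ of the fixed loci: for an induced involution or order-three automorphism, the codimension-two fixed component arises from the ``non-trivially permuted'' configurations of points lying over the fixed locus on the surface, and Assumption~\ref{assumptioninduced} is precisely what makes this geometry explicit enough to count the distinct images in $\Sigma$. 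The correction $\epsilon$ is nonzero only in the single distinguished case $X=\K_2(A)$, $G_\circ\simeq BD_{12}$, as already isolated in the Betti-number theorem. Substituting these values yields the $b_2(Y)$ column of each table.

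Third, I would fill the fundamental-group column using \cref{prop:fundamentalgroup}. Because $X$ is simply connected in all three families and the terminal singularities of a symplectic variety occur in codimension at least four, one has $\pi_1(Y^{\reg})=\pi_1(Y)$, and this group is computed from $G$ modulo the normal subgroup generated by the automorphisms whose fixed locus has codimension two --- exactly the data already assembled for the $N_2$, $N_3$ count. For the groups satisfying Assumption~\ref{ass:groupaction} this quotient is trivial, confirming the simple connectivity demanded in the reduction; for the remaining admissible groups it records the residual fundamental group listed in the tables.

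The main obstacle I anticipate is neither the formula nor the enumeration but the fixed-locus bookkeeping feeding $N_2$ and $N_3$, together with the fundamental-group quotient, in the \emph{non-abelian} and higher-dimensional entries --- most notably $\K_3(A)$ and the binary-dihedral case $BD_{12}$ on $\K_2(A)$. There one must track how several automorphisms of the group share or separate codimension-two fixed components after passing to the quotient (so that components are neither double-counted nor wrongly merged in $\Sigma$), identify the dissident locus where the terminalization is not simply the blowup of the reduced singular locus (cf. \cref{thm:blowuponetime}), and correctly compute the subgroup generated by codimension-two stabilizers. This is where the value of the $\epsilon$-correction and the precise group-theoretic structure $G_\circ$ must be pinned down, and it is the part of the argument that genuinely requires the case-by-case geometry rather than a uniform lattice computation.
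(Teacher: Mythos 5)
Your skeleton is the one the paper actually follows in \cref{sec:classificationK3} and \cref{sec:classificationKn}: enumerate the admissible groups from surface-level classifications, read off $\rank(L^G)=\rank\left(H^2(M)^G\right)+1$, count $N_2,N_3$ via the conjugacy-class description of \cref{formulaBetti2}, and substitute into $b_2(Y)=\rank\left(L^G\right)+N_2+2N_3-\epsilon$ (\cref{formulaBetti}, \cref{remark:binarydihedral}) and $\pi_1(Y^{\reg})\simeq G/N$ (\cref{prop:fundamentalgroup}, \cref{cor:fundgroupKA}). However, your description of the Kummer-side enumeration would break the execution. The input is \emph{not} a classification of automorphism groups ``fixing the origin, up to translations by torsion points'': the objects to enumerate are honest subgroups $G\subseteq A[n+1]\rtimes \SL(\Lambda)$, and the translation content of $G$ is essential data rather than something to quotient out. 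By \cref{ex:differentquotient}, the two induced $C_3$-actions $\langle \tau_a g_3\rangle$ (with $g_3(a)=a$) and $\langle \tau_b g_3\rangle$ (with $g_3(b)\neq b$) on $\K_2(E^2)$ give $N_3=1$ versus $N_3=0$, hence $b_2=7$, $\pi_1=\{1\}$ versus $b_2=5$, $\pi_1=C_3$; both the codimension-two criterion of \cref{lemma:inducedinvolution} (the condition $T(\alpha)=\alpha$) and the counts of \cref{formulaBetti2} are computed inside $A[n+1]\rtimes\SL(\Lambda)$. An enumeration up to translations merges rows of Tables~\ref{table n=2} and~\ref{table n=3} that must remain distinct; the paper instead fixes $(A,G_\circ)$ via Fujiki's classification and runs through \emph{all} subgroups of $A[n+1]\rtimes G_\circ$ up to conjugation, by GAP (\cref{rmk:groupaction}). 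Relatedly, you should not discard groups failing Assumptions~\ref{assumption:codim2} and~\ref{ass:groupaction}: the statement covers \emph{any} induced action, and the Kummer tables keep terminal quotients ($N_2=N_3=0$) and non-simply-connected cases; only the highlighting records Assumption~\ref{ass:groupaction}.

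A second, smaller defect: the identification $\pi_1(Y^{\reg})=\pi_1(Y)$ ``because terminal singularities occur in codimension at least four'' is false. Removing a singular point of a quotient singularity does change the fundamental group --- for $\IA^{4}/\{\pm 1\}$ the regular locus has fundamental group $C_2$ while the cone itself is simply connected (\textit{cf.} the caveat in \cref{rmk:indepen}). This error happens to be harmless because \cref{prop:fundamentalgroup} computes $\pi_1(Y^{\reg})$ directly as $G/N$ with no reference to $\pi_1(Y)$, so the detour should simply be deleted.
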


If $X\simeq S^{[2]}$, the topological invariants of $Y$ depend only on the abstract isomorphism type of $G$, while in the Kummer case, they rely on the actual action of the group $G$ and neither on the abstract isomorphism type nor on the induced action in cohomology; see \cref{ex:differentquotient}. In any case, we find a group-theoretic description of these topological invariants depending on the embedding of $G$ in the automorphism group of the underlying surface; see \cref{formulaBetti2} and \cref{cor:fundgroupKA}.

\begin{theorem}[\textit{cf.} Theorem~\ref{thm:S2new}]
    Let $G$ be a finite group of induced symplectic automorphisms acting on $S^{[2]}$ and $Y$ a projective terminalization of $S^{[2]}/G$ with simply connected regular locus. There are at least five new deformation classes of such irreducible symplectic varieties $Y$. In particular, they are not deformation equivalent to any terminalization of quotients of Kummer fourfolds by groups of induced symplectic automorphisms, or a Fujiki fourfolds appearing in \cite[Theorem 1.11]{GM22} $($cf. \cref{defb:Fujiki variety}\,$)$.
\end{theorem}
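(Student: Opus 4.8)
The plan is to separate deformation classes by means of deformation invariants of irreducible symplectic varieties. The invariants I would use are the second Betti number $b_2(Y)$, the isometry class of the Beauville--Bogomolov--Fujiki lattice $H^2(Y,\IZ)$, the intersection Betti number recorded by $IH^3(Y,\IQ)$, the fundamental group $\pi_1(Y^{\reg})$, and the analytic types of the transverse singularities of $Y$ along the strata of its singular locus. Each of these is invariant under locally trivial deformation, so two terminalizations for which any one of them differs cannot be deformation equivalent. The argument then reduces to extracting these invariants from the tables produced earlier and comparing them across the relevant families.

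First I would pin down the $S^{[2]}$ side. By \cref{thm:canonicalsingularities}, for $X=S^{[2]}$ the admissible groups are the finite groups $G$ of induced symplectic automorphisms containing an involution, and the data $(G,b_2(Y),\pi_1(Y^{\reg}))$ for the resulting terminalizations are tabulated in Table~\ref{table Hilb2(K3)}, with $b_2(Y)$ computed from the formula of \cref{formulaBetti} (here $\epsilon=0$). A uniform simplification on this side is that $IH^3(Y,\IQ)\simeq H^3(S^{[2]},\IQ)^{G}=0$, since $b_3(S^{[2]})=0$. I would then restrict to the rows of Table~\ref{table Hilb2(K3)} with trivial $\pi_1(Y^{\reg})$ and read off their $b_2(Y)$. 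Exhibiting five such rows with pairwise distinct $b_2(Y)$ already forces five pairwise non-deformation-equivalent $Y$; where two values of $b_2$ happen to coincide, I would refine by the isometry class of $H^2(Y,\IZ)$ or by the transverse singularity types.

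The newness is where the real work lies. Kummer fourfolds means $X=\K_2(A)$, whose terminalizations are of dimension four and hence genuinely comparable; the six-dimensional $\K_3(A)$ terminalizations are excluded on dimension grounds alone. To separate the five Hilbert classes from the Kummer fourfold terminalizations, I would combine Table~\ref{table n=2} with \cref{prop:IH3}: there $IH^3(Y',\IQ)\simeq H^3(\K_2(A),\IQ)^{G}$, and since $b_3(\K_2(A))\neq 0$ many Kummer terminalizations have $IH^3\neq 0$ and are immediately separated from the Hilbert classes, all of which have $IH^3=0$. The remaining Kummer cases, those with $H^3(\K_2(A))^{G}=0$, must be separated numerically: $b_2(\K_2(A))=7$ forces $\rank(L^{G})\le 7$ and so sharply constrains $b_2(Y')$, in contrast with the Hilbert family where $b_2(S^{[2]})=23$; residual coincidences are resolved by the BBF lattice or the singularity list. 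To separate the five classes from the Fujiki fourfolds of \cite[Theorem~1.11]{GM22} (see \cref{defb:Fujiki variety}), I would compare $b_2$ and the singularity data entry by entry.

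The hard part is precisely this cross-comparison. Because $b_2$ can a priori agree across the Hilbert, Kummer, and Fujiki lists, no single invariant separates all cases at once, and one must deploy $b_2$, the refined BBF lattice, $IH^3$, and the transverse singularity types together, verifying against the complete tables and against \cite{GM22} that for each of the five Hilbert classes at least one invariant distinguishes it from every competing entry. Organizing this finite but delicate bookkeeping so that no coincidence survives is the crux of the proof.
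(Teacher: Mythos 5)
Your general strategy (separate deformation classes by deformation invariants and compare against the tables) is the right frame, but two of its load-bearing steps fail. First, you cannot choose ``five simply connected rows with pairwise distinct $b_2$'': the rows of Table~\ref{table Hilb2(K3)} with large $b_2$ come from \emph{admissible} groups, and their terminalizations are exactly the Fujiki fourfolds $S(G)^{[2]}_{\id}$ already appearing in \cite[Theorem 1.11]{GM22}, hence not new. The genuinely new classes are forced to be the five non-admissible rows $G=D_5, A_5, S_5, \GL_3(\IF_2), A_6$, whose Betti numbers are $8,6,6,5,5$ --- \emph{not} pairwise distinct. So the crux is to separate $A_5$ from $S_5$ and $\GL_3(\IF_2)$ from $A_6$, and here your proposal has no effective tool: the BBF lattice of these singular terminalizations is computed nowhere (in the paper or in your argument), and the transverse singularity types of $S^{[2]}/G$ for these groups appear in no table. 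The paper's actual mechanism, which you never invoke, is numerical: if projective terminalizations of $S^{[2]}/G_1$ and $S^{[2]}/G_2$ are deformation equivalent, then $\sqrt{|G_1|/|G_2|}$ is rational (\cite[Proposition 3.21, proof of Proposition 1.13]{GM22}, via Fujiki constants). Since $\sqrt{|S_5|/|A_5|}=\sqrt{2}$ and $\sqrt{|A_6|/|\GL_3(\IF_2)|}=\sqrt{360/168}$ are irrational, the equal-$b_2$ pairs split. Without this (or a concrete substitute), your argument produces at most three classes, not five.

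Second, your $IH^3$ separation from the Kummer side is vacuous: every quotient $\K_2(A)/G$ whose terminalization has simply connected regular locus has $G_\circ\neq 1$, so $IH^3(Y')=0$ by \cref{prop:3bettinumberKum} --- exactly as on the Hilbert side. The only genuine coincidence to resolve is $b_2=8$, namely $D_5$ against several Kummer rows of \cref{table sing n=2}, and resolving it requires producing singularity data for $S^{[2]}/D_5$ that is not in any table: the paper shows (using Xiao's classification \cite{Xiao1996}) that $C_5\triangleleft D_5$ fixes two points of $S$ in distinct $D_5$-orbits, giving an isolated singularity of $Y$ with isotropy $C_5$, a type that never occurs in \cref{table sing n=2}. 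Your phrase ``resolved by the singularity list'' presupposes exactly the computation that constitutes this part of the proof.
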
 

\begin{theorem}[\textit{cf.} Table~\ref{table sing n=2}]\label{thm:class2}
    Let $G$ be a finite group of induced symplectic automorphisms acting on $\K_2(A)$ and $Y$ a \new{projective} terminalization of\, $\K_2(A)/G$ with simply connected regular locus. The Betti numbers, Chern classes and singularities of\, $Y$ are listed in Table~\ref{table sing n=2}. 
    
   In particular, there exist at least three new deformation classes of irreducible symplectic orbifolds of dimension~$4$. All other terminalizations are deformation equivalent to Fujiki varieties, with the exception of\, $G=C_2$ $($called a Nikulin orbifold\,$)$ and possibly $G=BT_{24}$.
\end{theorem}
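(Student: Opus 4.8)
The plan is to run through the finite list of admissible groups $G$ one at a time. By \cref{thm:canonicalsingularities}, among induced symplectic $G$-actions on $\K_2(A)$ the quotient acquires strictly canonical singularities exactly when $G$ contains an involution or a special order-$3$ automorphism as in \cref{lemma:inducedinvolution}\,\eqref{l:i-2}; together with \cref{ass:groupaction} and the classification of finite groups of induced symplectic automorphisms of $\K_2(A)$, this isolates the finite set of groups listed in Table~\ref{table sing n=2}. For each such $G$ I would use \cref{lemma:inducedinvolution} to describe the codimension-$2$ fixed loci $F_g$ of its order-$2$ and order-$3$ elements and their images $q(F_g) \subset \Sigma$, thereby reading off $N_2$, $N_3$ and the entire stratification of $\Sigma = \Sing(\K_2(A)/G)$, in particular its dissident strata. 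The fundamental-group computation (\cref{prop:fundamentalgroup}, \cref{cor:fundgroupKA}) then tells us which pairs $(G,Y)$ actually have simply connected regular locus $Y^{\reg}$, i.e.\ which rows survive as irreducible symplectic orbifolds.

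With the geometry of $\Sigma$ in hand, the numerical invariants follow from results already established. The second Betti number is $b_2(Y)=\rank(L^G)+N_2+2N_3-\epsilon$ (cf.\ \cref{formulaBetti}, \cref{remark:binarydihedral}), the correction $\epsilon$ being nonzero only for $G_\circ \simeq BD_{12}$, while $b_3(Y)$ comes from $IH^3(Y,\IQ)\simeq H^3(\K_2(A),\IQ)^G$ (cf.\ \cref{prop:IH3}). For the higher Betti numbers, the orbifold Chern numbers and the Fujiki constant I would exploit that $Y$ has quotient singularities and that, away from the dissident locus, $p\colon Y\to \K_2(A)/G$ is the blowup of the reduced singular locus (\cref{thm:blowuponetime}); this reduces each computation to the $G$-invariant cohomology of $\K_2(A)$ plus an explicit local contribution over each stratum, evaluated from the normal-bundle data of the $F_g$. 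The singularity types of $Y$, and those of its universal quasi-\'etale cover, are then pinned down by a transverse local analysis over the dissident locus---where the terminalization genuinely differs from a blowup---identifying in each case the relevant quotient singularity; assembling all of this produces Table~\ref{table sing n=2}.

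To settle the deformation statement I would treat the computed data as deformation invariants: the Betti numbers, the Fujiki constant, the discriminant of the Beauville--Bogomolov form on $H^2(Y)$, and the list of singularity types together separate deformation classes. Comparing these tuples with those of the Fujiki fourfolds of \cref{defb:Fujiki variety} (equivalently \cite[Theorem 1.11]{GM22}) and with the Nikulin orbifold $\K_2(A)/C_2$, I expect most entries of the table to match a known Fujiki variety, $C_2$ to give the Nikulin orbifold, and at least three entries to have invariant tuples realized by none of the known examples, hence genuinely new; the entry $G=BT_{24}$ is left ambiguous because its invariants neither clearly coincide with nor clearly exclude a Fujiki variety.

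The main obstacle is twofold. First, the local geometry over the dissident locus: away from it the blowup description of \cref{thm:blowuponetime} makes everything transparent, but over the deeper strata one must determine the exact transverse quotient singularity of $Y$ and of its quasi-\'etale cover, and this is what controls both the singularity column and the higher Chern numbers of the table. Second, the \emph{positive} direction of the deformation comparison: showing that two orbifolds with matching numerical invariants are actually deformation equivalent---rather than merely not obviously distinct---requires realizing $Y$ as an explicit Fujiki model, and it is precisely the failure to do so for $BT_{24}$ that forces the word ``possibly'' in the statement.
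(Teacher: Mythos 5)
Your computational skeleton---isolating the admissible groups via \cref{thm:canonicalsingularities} and \cref{lemma:inducedinvolution}, reading off $N_2$, $N_3$, $b_2(Y)$ and $\pi_1(Y^{\reg})$ from \cref{formulaBetti}, \cref{remark:binarydihedral} and \cref{cor:fundgroupKA}, then converting the singularity counts into the remaining invariants (the paper does this via the closed formulas of Fu--Menet and Blache in terms of the numbers $a_k$, which is essentially your ``local contribution'' step made precise, and which requires knowing that $Y$ has quotient singularities, \cref{cor:quotientsingularitiestext})---coincides with the paper's route. The genuine gap is in the second half of the theorem. Matching numerical invariants is only a \emph{necessary} condition for deformation equivalence, and what the theorem asserts is the positive statement: every row of \cref{table sing n=2} other than $C_2$, $BT_{24}$ and the three new classes \emph{is} deformation equivalent to a Fujiki variety. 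You explicitly flag ``realizing $Y$ as an explicit Fujiki model'' as an unresolved obstacle, which means your proposal stops exactly where the substantive work begins.

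The paper closes this gap with a concrete construction (Section~\ref{sec:birational orbifolds}, \cref{prop:mainsection}). One writes $\K_2(A)/G \sim_{\bir} A^2/(S_3 \times G)$, deforms $(A,G)$ to $(E^2,G)$ with $E$ of complex multiplication (connectedness of the moduli of pairs, \cite[Proposition 3.7]{Fujiki88}), and then produces explicit isogenies of abelian surfaces along which the group actions descend or lift: $f_0(x,y)=(x+2y,x-y)$ with kernel the diagonal copy of $A[3]$ in \cref{lem:quotientC4}, which identifies different Kummer quotients with one another, and the isogenies $f_1,f_2$ of $E^4$ with kernel (anti-)diagonal copies of $\Pi_{g_3}$ in \cref{lem:K2SG} and \cref{lem:K2SII}, which identify $\K_2(E^2)/G$ birationally with $S^{[2]}/G'$ for a K3 surface $S \sim_{\bir} E^2/\langle g_3 \rangle$, \textit{i.e.}\ with a Fujiki variety as in \cref{defb:Fujiki variety}. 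Then \cref{prop:birationaldeformation} upgrades each birational identification to a deformation equivalence. Nothing in your proposal substitutes for this mechanism, and without it the dichotomy ``three new classes, the rest Fujiki except $C_2$ and possibly $BT_{24}$'' cannot be concluded: your invariant comparison can only show that classes are \emph{distinct}, never that they \emph{coincide}. (This is also the correct explanation of the word ``possibly'': the invariants of $\K_2(A,BT_{24})$ agree with those of $S(S_3^2 \rtimes C_2)^{[2]}_{\id}$, and it is precisely the absence of such an isogeny argument for this one pair that leaves the question open, \cref{rmk:open}.)
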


\begin{theorem}[\textit{cf.} Table~\ref{table sing n=3} and \cref{lem:quotientK3}]\label{thm:class3}
    Let $G$ be a finite group of induced symplectic automorphisms acting on $\K_3(A)$ and $Y$ a \new{projective} terminalization of\, $\K_3(A)/G$ with simply connected regular locus. \revi{The second Betti number and the singularities of\, $Y$ are listed in Table~\ref{table sing n=3}. In particular, $Y$ is deformation equivalent to one of the Fujiki sixfolds appearing in \cite[Section~6]{GM22}.}
\end{theorem}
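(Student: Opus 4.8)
The plan is to proceed in four stages: enumerate the admissible pairs $(G,\text{action})$, compute the second Betti number, determine the singularities, and finally establish the deformation equivalence with the Fujiki sixfolds, which is where the real content lies.

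First I would fix the list of pairs contributing to Table~\ref{table sing n=3}. By \cref{thm:canonicalsingularities}, for $X\simeq\K_3(A)$ the quotient $X/G$ has strictly canonical singularities precisely when $G$ contains a special involution as in \cref{lemma:inducedinvolution}\,\eqref{l:i-1}; in particular, for $n=3$ only such involutions produce codimension-$2$ fixed components, so no order-$3$ element contributes one and hence $N_3=0$. Intersecting the existing classification of finite groups of induced symplectic automorphisms of $\K_3(A)$ with Assumptions~\ref{assumption:codim2} and~\ref{ass:groupaction}, and retaining only the actions for which $Y^{\reg}$ is simply connected via \cref{prop:fundamentalgroup} and \cref{cor:fundgroupKA}, yields the finite list of rows of the table.

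Second, I would compute $b_2(Y)$ from the Betti number formula of \cref{formulaBetti}. Since $\epsilon=0$ for $X=\K_3(A)$ and $N_3=0$, one has $b_2(Y)=\rank(L^G)+N_2$. Here $G$ acts on $H^2(X,\IZ)\simeq U^{3}\oplus\langle-8\rangle$ through its action on $H^2(A,\IZ)=U^3$ and trivially on the class of the exceptional divisor spanning the $\langle-8\rangle$ factor, so the splitting is $G$-equivariant and orthogonal and $\rank(L^G)=\rank\bigl(H^2(A,\IZ)^G\bigr)+1$. The invariant part of $H^2(A,\IZ)$ is read off the induced surface automorphism, while $N_2$ counts the $G$-orbits of codimension-$2$ fixed components $F_g$; running this over the list fills the Betti column. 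For the singularities I would invoke \cref{prop:blowup} (\cref{thm:blowuponetime}): away from the dissident locus (\cref{def:dissident}) the terminalization $Y\to X/G$ is the blowup of the reduced singular locus, so the transverse singularity types along the codimension-$2$ strata are governed by the linearized $G$-action there. Along the dissident locus I would treat the finitely many higher-codimension strata through explicit local models of the induced action near the relevant fixed points of $A$, verifying directly that the resulting singularities are quotient singularities and thereby recovering the orbifold structure recorded in the table.

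Finally, the main obstacle is the deformation equivalence with the Fujiki sixfolds of \cite[Section~6]{GM22}, since matching $b_2$ and the singularity types numerically is not by itself enough: the plan is to exhibit an actual deformation. I would use that the $\IQ$-factorial terminalization is canonical and deforms in locally trivial families, so that the deformation class of $Y$ depends only on the deformation type of the pair $(A,\,G\text{-action})$ and not on the particular abelian surface. This lets me specialize $A$ to a convenient surface (for instance a product $E\times E'$ with the automorphisms acting diagonally), on which \cref{lem:quotientK3} identifies $\K_3(A)/G$, or its terminalization, with one of the Fujiki constructions. The delicate points, and where I expect the real work to lie, are verifying that the terminalization genuinely varies in a locally trivial family over the chosen base of $G$-surfaces, so that the specialization does not alter the deformation type, and then matching each resulting model with the correct Fujiki sixfold in \cite[Section~6]{GM22}; a Torelli/monodromy argument for irreducible symplectic varieties could serve as an alternative to confirm the identification once the period and lattice data are in hand.
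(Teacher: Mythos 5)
Your first three stages are essentially the paper's own route: \cref{lemma:codime2} and \cref{lemma:inducedinvolution}\,\eqref{l:i-1} force $N_3=0$ and reduce to $G=C_2^i\times\langle-\id\rangle$ (exactly the groups generated by the involutions $\tau_\alpha(-\id)$, which is the simple-connectedness condition of \cref{cor:fundgroupKA}), and $b_2(Y)=7+2^i$ follows from \cref{formulaBetti}, \cref{formulaBetti2} and \cref{lemma: rank H2invariant}. One caveat on your singularity stage: the entries $a_2$ and $s_2$ of Table~\ref{table sing n=3} are not a soft consequence of \cref{prop:blowup}; they are the output of the explicit fixed-locus geometry in \cref{prop:K3A terminalization nonsmooth} (the surfaces $V_{\alpha,\theta}$, the fourfolds $W_\alpha$, the $140$ isolated fixed points, their intersection rules, and the orbit counts). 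Your sketch points in that direction, but that counting is the actual content of the table.

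The genuine gap is in your last stage, on two concrete points. First, you treat \cref{lem:quotientK3} as if it only applied after specializing $A$; it does not. The isogeny $f_3(x,y,z)=(x+y,x+z,y+z)$ and the group $C_2^i\times\langle-\id\rangle$ (translations by $2$-torsion together with $-\id$) exist on \emph{every} abelian surface, and every pair $(B,G_{\tr})$ is of the required form $(A/H,\,A[2]/H)$: take $A=B/G_{\tr}$ and $H=B[2]/G_{\tr}$. So no complex multiplication, no product structure, no specialization, and hence no locally-trivial-family argument over a moduli of pairs is needed; that deformation step is only required (and justified via \cite[Proposition~3.7]{Fujiki88}) for the $\K_2(A)$ cases of \cref{prop:mainsection}, where the automorphism $g_3$ genuinely needs CM by $\xi_3$. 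The ``delicate point'' you flag is therefore a self-inflicted and unresolved complication. Second, even after your specialization, \cref{lem:quotientK3} only produces a \emph{birational} map between $\K_3(A)/G$ and the Fujiki quotient $S^3/\langle C_2^{4-i},S_3\rangle$; the step that converts this into deformation equivalence of the projective terminalizations is \cref{prop:birationaldeformation} (\textit{i.e.}, \cite[Corollary~6.17]{BL2022}: birational projective $\IQ$-factorial terminal IHS varieties are deformation equivalent), which your proposal never invokes but cannot do without. Once you use the lemma for arbitrary $A$ together with \cref{prop:birationaldeformation}, both your locally-trivial-family worry and the Torelli/monodromy alternative become superfluous, and the matching with the correct sixfold $S(C_2^{4-i})^{[3]}_{\id}$ is automatic from the statement of the lemma.
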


\begin{corollary}[\textit{cf.} \cref{cor:quotientsingularitiestext}]\label{cor:quotientsingularities}
    Any \new{projective} terminalization of a quotient of\, $\K_2(A)$ or $\K_3(A)$ by a finite group of induced symplectic automorphisms has quotient singularities.
\end{corollary}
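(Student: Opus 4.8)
The plan is to reduce the global statement to a computation with local models at the deepest singularities, and then to recognise each such model as a quotient singularity. First I would invoke \cref{thm:canonicalsingularities} to restrict to $X=\K_2(A)$ and $X=\K_3(A)$. Since $X$ is smooth and $G$ is finite, the quotient $X/G$ already has quotient singularities, and the terminalization $p\colon Y\to X/G$ is birational; the entire question is whether passing to $Y$ preserves this property.

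Next I would dispose of the generic part of the singular locus $\Sigma\subset X/G$. Away from the preimage of the dissident locus (\cref{def:dissident}), \cref{thm:blowuponetime} identifies $Y$ with the blowup of the reduced locus $\Sigma$. Along each component of $\Sigma$ the transverse singularity is a rational double point of type $A_1$, coming from an involution, or of type $A_2$, coming from an order-three automorphism, and a single blowup of the reduced point resolves either of these. Hence $Y$ is smooth outside the preimage of the dissident locus, and every singular point of $Y$ lies over it.

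The core of the argument is the local analysis over a dissident point. For $x\in X$ mapping to such a point I set $H\coloneqq G_x\subset\Sp(T_xX)$, so that the germ of $X/G$ at the image of $x$ is the symplectic quotient singularity $(\IC^{2n},0)/H$, and the germ of $Y$ is a $\bQ$-factorial terminalization of it. By Assumption~\ref{assumptioninduced} the representation of $H$ on $T_xX$ is read off from the induced action on the abelian surface $A$, which yields a short, explicit list of pairs $(H,T_xX)$ --- precisely the transverse data underlying Table~\ref{table sing n=2} and Table~\ref{table sing n=3}. For each model I would compute a terminalization and match it with a quotient singularity: generically it is smooth, while over the dissident locus it retains isolated singularities modelled on $\IC^{4}/\{\pm1\}$ in the fourfold case, and on a small quotient $\IC^{6}/H'$, or transversally on $\IC^{4}/\{\pm1\}$ along a residual one-dimensional stratum, in the sixfold case. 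All of these are quotient singularities by construction, so assembling the local descriptions shows that $Y$ has quotient singularities everywhere.

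I expect the hard part to be this last step for the non-abelian stabilizers, above all the binary dihedral group $BD_{12}$ and the binary tetrahedral group $BT_{24}$ acting on $\K_2(A)$: there $\IC^{4}/H$ is not itself terminal, so the terminalization effects a genuine partial resolution, and one must check that the outcome is a quotient singularity rather than merely a terminal symplectic one. The explicit fixed-point data furnished by the induced action on $A$ is what makes this verification tractable, and it is exactly this computation that \cref{cor:quotientsingularitiestext} carries out.
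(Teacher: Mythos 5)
Your skeleton---reduce to the dissident locus, pass to the local models $T_xX/G_x$ at dissident points, and check case by case that terminalizations of these models have quotient singularities---is indeed the computational core of the paper (\cref{prop: sing after terminalization}, \cref{prop:K3A terminalization nonsmooth}, and the analysis in Section~\ref{sec:sing computation n=2}). But two genuine gaps remain. The first is the step where you ``compute a terminalization and match it'': terminalizations are unique only in codimension one, analytic $\IQ$-factoriality does not follow from global $\IQ$-factoriality, and a priori the germ of the given global projective $Y$ over a dissident point could be a \emph{different} terminalization of $T_xX/G_x$ than the one you compute, with different (conceivably non-quotient) singularities. The paper closes exactly this gap with Namikawa's deformation theory: \cref{prop:singulartermina} and \cref{prop:Qfactorialterminalization} show that, under an irreducibility hypothesis on exceptional divisors, the restriction $Y_U$ admits a locally trivial deformation to \emph{any} projective terminalization of $U$, hence has the same singularities; and for the delicate $BT_{24}$ isotropy it additionally invokes the Lehn--Sorger resolutions and Bellamy's classification of all symplectic resolutions of that quotient (\cref{remark:projectiveterminalization}). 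Without some statement of this kind, your local computations do not control the actual $Y$.

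The second gap is structural: your route requires running the local analysis for \emph{every} induced group $G$, so you must also treat stabilizers such as $C_4$, $Q_8$ and $BD_{12}$ (which do occur, e.g.\ at the punctual point $\mathfrak{m}^2$, whose stabilizer is all of $G_\circ$), none of which appear in Tables~\ref{table sing n=2} and~\ref{table sing n=3}; for the non-abelian ones the quotient is not handled by a single blowup and the verification is nontrivial. The paper deliberately avoids this: it verifies quotient singularities only for the groups whose terminalization has simply connected regular locus (Tables~\ref{table sing n=2} and~\ref{table sing n=3}), and then deduces the general case from \cref{prop:fundamentalgroup} combined with \cref{prop:birationaldeformation}: any other projective terminalization of $X/G$ is deformation equivalent, through locally trivial deformations, to the finite quotient $Y_N/(G/N)$ of a terminalization already treated, and quotient singularities are preserved both under finite quotients and under locally trivial deformations. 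This reduction is the entire content of the paper's two-line proof and is absent from your proposal; supplying it (or, alternatively, the Namikawa-type uniqueness of singularities above together with the extra stabilizer cases) is what is needed to make your argument complete.
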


If instead $X \simeq S^{[2]}$, the configurations of the singularities and topological invariants have already been studied in \cite{GM22} for so-called \emph{admissible}
symplectic groups. We have been informed that Menet is working on non-admissible group actions. 

It is natural to ask whether some of the previous terminalizations are smooth. We show that this happens only in three cases, and quite surprisingly they already appeared in the literature, scattered over three different places.

\begin{theorem}[Smooth terminalizations] \label{thm:smooth term}
Let \(G\) be a nontrivial finite group of induced symplectic automorphisms of\, \(X=S^{[n]}\) or \(\K_n(A)\). The quotient \(X/G\) admits a smooth terminalization if and only if
\begin{enumerate}[itemsep=2pt]
    \item $X=S^{[2]}$ and $G \simeq C_2^{4}$, see \emph{\cite[Proposition 14.5]{Fujiki1983}}, 

    \item $X=\K_2(A)$ and $G \simeq C_3^{3}$
    acting nontrivially on $H^2(A, \IZ)$, see \emph{\cite[Theorem 4.2]{kawatani2009birational}}, 

    \item $X=\K_3(A)$ and $G \simeq C_2^{5}$, see \emph{\cite[Theorem 1.1]{Floccari2022}}.
\end{enumerate}
All three terminalizations are birational to an irreducible symplectic manifold of\, $\K3^{[n]}$ type.
\end{theorem}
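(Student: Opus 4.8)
The plan is to prove both implications after a reduction that, in both directions, lands us among the three families of \cref{thm:canonicalsingularities} with simply connected regular locus; the ``if'' direction then becomes a citation, while the ``only if'' direction reduces to a finite inspection of the singularity classification already obtained.

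First I would show that a smooth terminalization forces Assumptions~\ref{assumption:codim2} and~\ref{ass:groupaction}. Since $G\neq 1$ acts by symplectic automorphisms, every nontrivial element has nonempty fixed locus by the holomorphic Lefschetz fixed-point formula, so $X/G$ is singular; a smooth terminalization $p\colon Y\to X/G$ is therefore a nontrivial birational morphism, and as $X$ is smooth, hence $\IQ$-factorial, the equivalence in the footnote to \cref{assumption:codim2} yields \cref{assumption:codim2}. Next let $G_{\circ}\trianglelefteq G$ be the subgroup generated by the elements whose fixed locus has codimension~$2$ $($\cref{sec:notation}$)$. If $G_{\circ}\neq G$, pick $g\in G\setminus G_{\circ}$: its fixed locus is nonempty and of codimension $\ge 4$, so after passing through $X/G_{\circ}$ it yields a codimension-$\ge4$, hence terminal, symplectic quotient singularity that persists in any terminalization $Y$, contradicting smoothness. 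Thus $G_{\circ}=G$, i.e.\ \cref{ass:groupaction} holds, and consequently $\pi_{1}(Y^{\reg})=1$: the free locus of the $G$-action on the simply connected $X$ has simply connected quotient $(X/G)^{\reg}$ with $\pi_{1}=G$ (the fixed loci being of codimension $\ge 2$), and the loops around the exceptional divisors over the codimension-$2$ components kill precisely $G_{\circ}=G$. We may therefore apply \cref{thm:canonicalsingularities}, which restricts us to $X\simeq S^{[2]},\K_{2}(A)$ or $\K_{3}(A)$ with $Y^{\reg}$ simply connected.

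By \cref{thm:blowuponetime} the terminalization coincides, away from the dissident locus, with the blowup of the smooth reduced singular locus, which is smooth; hence $\Sing(Y)$ is concentrated over the dissident locus, where several codimension-$2$ components meet or the transverse singularity type jumps. For each of the three families the admissible groups and the resulting singularities of $Y$ have been completely tabulated --- \cite{GM22} together with \cref{table Hilb2(K3)} for $S^{[2]}$, \cref{table sing n=2} via \cref{thm:class2} for $\K_{2}(A)$, and \cref{table sing n=3} via \cref{thm:class3} for $\K_{3}(A)$, all applicable since $Y^{\reg}$ is simply connected. As $Y$ is smooth exactly when $\Sing(Y)=\emptyset$, I would read off the singularity columns and verify that the dissident contributions vanish only for $G\simeq C_{2}^{4}$ on $S^{[2]}$, for $G\simeq C_{3}^{3}$ acting nontrivially on $H^{2}(A,\IZ)$ on $\K_{2}(A)$, and for $G\simeq C_{2}^{5}$ on $\K_{3}(A)$, which gives the ``only if'' direction. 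For the converse, the explicit constructions of \cite[Proposition~14.5]{Fujiki1983}, \cite[Theorem~4.2]{kawatani2009birational} and \cite[Theorem~1.1]{Floccari2022} produce the symplectic resolution in each of these three cases and identify it as birational to a manifold of $\K3^{[n]}$ type, establishing both the ``if'' direction and the final assertion.

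The main obstacle is the ``only if'' direction at the dissident locus: one must certify, for every admissible group other than the three above, that at least one local symplectic quotient singularity over the dissident locus admits no symplectic resolution, so that $\Sing(Y)\neq\emptyset$. This is precisely the delicate local analysis recorded in the singularity columns of the tables, and the content of the present theorem is the observation that these local obstructions vanish simultaneously at every dissident point only in the three listed cases.
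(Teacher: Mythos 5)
Your overall skeleton --- reduce to the tables via $G=N$ and simple connectivity, then inspect singularities --- is indeed the paper's strategy, but two of your steps have genuine gaps.

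First, the reduction $G=N$ (your ``$G_{\circ}$,'' a notation that clashes with the paper's $G_{\circ}=G/G_{\tr}$). The assertion that for $g\in G\setminus N$ the fixed locus ``yields a codimension-$\geq 4$, hence terminal, symplectic quotient singularity that persists in any terminalization'' is precisely the statement requiring proof, and the principle you invoke is false in general: an element with fixed locus of codimension $\geq 4$ does not by itself obstruct a symplectic resolution. For instance, with $a=\diag(-1,-1,1,1)$ and $b=\diag(1,1,-1,-1)$ acting on $\IA^{4}$, the element $ab=-\id$ has an isolated fixed point, yet $\IA^{4}/\langle a,b\rangle$ is a product of two $A_1$ surface singularities and admits a smooth crepant resolution. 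The difficulty is that a point $x\in\Fix(g)$ may lie on codimension-$2$ fixed loci of elements of $N$ (isolated fixed points of translations lying on fixed surfaces of involutions occur in the paper's own Kummer analysis, see Section~\ref{sec:C2}), so the local model of $X/G$ at $q(x)$ is $T_xX/G_x$ with $G_x\supsetneq\langle g\rangle$, and passing through $X/N$ first does not disentangle this interaction. To close the gap you need either Verbitsky's theorem (a symplectic resolution of $T_xX/G_x$ forces $G_x$ to be generated by its symplectic reflections, hence $G_x\subseteq N$, contradicting $g\in G_x\setminus N$), or the fact the paper uses implicitly: a smooth terminalization $Y$ is a smooth IHS variety by \cref{prop: terminaliz and quotients of IHS}, hence simply connected (Beauville--Bogomolov decomposition together with $\chi(\cO_Y)=n+1$ excludes nontrivial finite \'{e}tale covers), so that \cref{prop:fundamentalgroup} gives $G/N=\pi_1(Y^{\reg})=\pi_1(Y)=1$. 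Your order of deduction makes this gap load-bearing, since you obtain $\pi_1(Y^{\reg})=1$ as a consequence of $G=N$ rather than the other way around.

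Second, the ``only if'' inspection for $X=S^{[2]}$ cannot be executed as described: Table~\ref{table Hilb2(K3)} records no singularity data, and \cite{GM22} tabulates singularities only for \emph{admissible} groups (the paper explicitly notes that the non-admissible case is open). This is exactly why the paper proves \cref{prop:K3_2 terminalization nonsmooth} by a different argument: Guan's bound \cite{guan.betti} that a smooth IHS fourfold has $3\leq b_2\leq 8$ or $b_2=23$, which by Table~\ref{table Hilb2(K3)} leaves $C_2^{4}$ plus finitely many groups with $b_2\leq 8$ and $\pi_1(Y^{\reg})=\{1\}$; these are then excluded by the fixed-point-counting criteria of Lemmas~\ref{lemma:ordersmooth} and~\ref{lemma:corresp}, using the classifications of \cite{Xiao1996} and \cite{nikulin1980finite}. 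Your plan of reading off singularity columns does work for the Kummer cases, where \cref{table sing n=2} and \cref{prop:K3A terminalization nonsmooth} contain that data, but for $S^{[2]}$ it must be replaced by an argument of the above kind.
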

\begin{proof}
    This follows by direct inspection of our tables; see \cref{prop:K3_2 terminalization nonsmooth}, \cref{table sing n=2} and \cref{prop:K3A terminalization nonsmooth}. See also \cref{rmk:groupaction}. 
\end{proof}

\subsection{Second Betti numbers}
The study of terminalizations of quotients of symplectic manifolds goes back to the work of Fujiki. Nowadays, Fujiki varieties are
terminalizations of certain quotients of squares of K3 surfaces; see \cref{defb:Fujiki variety}. Their classification was initiated by Fujiki in \cite{Fujiki1983} and recently completed in \cite{GM22}. Terminalizations of cyclic quotients of $\K_2(A)$ and $S^{[2]}$ have also been studied  by Fu and Menet in \cite{Fu-Menet}. There, their interest was not to provide an exhaustive classification of all possible terminalizations arising, but rather to realize examples of irreducible symplectic fourfolds with different second Betti numbers.  

In the following table, we compare the second Betti numbers of irreducible symplectic fourfolds constructed in \cite{Fu-Menet}, \cite{GM22} and in the present paper. 

\small
\begin{longtable}{c|c|c|c|c|c|c|c|c|c|c|c|c|c|c|c|c|c|c|c|c|c}
\label{table Betti 4folds}
\revi{$b_2$}
 & 3
 & 4
 & 5
 & 6
 & 7
 & 8
 & 9
 & 10
 & 11
 & 12
 & 13
 & 14
 & 15
 & 16
 & 17
 & 18
 & 19
 & 20
 & 21
 & 22
 & 23
 \\\hline 
 \cite{Fu-Menet}
 & \(\circ\)
 &
 & \(\circ\)
 & \(\circ\)
 & \(\bullet\)
 & \(\bullet\)
 &
 & \(\bullet\)
 & \(\circ\)
 &
 &
 & \(\bullet\)
 &
 & \(\bullet\)
 & 
 &
 &
 &
 &
 &
 & \(\bullet\)
 \\ \hline
 \cite{GM22}
 &
 & \(\bullet\)
 & \(\bullet\)
 & \(\bullet\)
 & \(\bullet\)
 & \(\bullet\)
 &
 & \(\bullet\)
 & \(\bullet\)
 &
 &
 & \(\bullet\)
 &
 & \(\bullet\)
 &
 &
 &
 &
 &
 &
 & \(\bullet\)
 \\ \hline
 Present paper
 &
 &
 & \(\bullet\)
 & \(\bullet\)
 & \(\bullet\)
 & \(\bullet\)
 & 
 & \(\bullet\)
 & \(\bullet\)
 &
 &
 & \(\bullet\)
 & 
 & \(\bullet\)
 &
 &
 &
 &
 &
 &
 & \(\bullet\) \vspace{0.4 cm}\\
 \caption{The first row lists all possible second Betti numbers of irreducible symplectic fourfolds. A circle in the table corresponds to known examples of such fourfolds: The column gives their second Betti number, and the row indicates a reference in the literature where the examples appear. Black circles correspond to examples with simply connected regular locus. White circles mean that the regular locus of all examples with a fixed Betti number (column) in a given reference (row) is not simply connected.} \vspace{-0.5 cm}
 \end{longtable}

\normalsize
Observing the table, it is natural to ask the following:  
\begin{enumerate}
    \item \emph{Is there an irreducible  symplectic variety $Y$ with $b_2(Y)=3$ and $\pi_1(Y^{\reg})=1$?} A nontrivial terminalization of a quotient of a symplectic variety will always have at least $b_2 \geq 4$. In fact, the example of \cite{Fu-Menet} with $b_2=3$ is a quotient of a variety of $\K3^{[2]}$ type by an automorphism of order $11$, but its regular locus is not simply connected. 
    \item \emph{Is there an irreducible symplectic variety $Y$ with $b_2(Y)=4$ and non-quotient singularities?} In \cite{GM22}, Menet exhibits some Fujiki orbifolds of dimension~$4$ with the smallest Betti number possible, namely $b_2=4$. This Betti number cannot be realized by terminalizing the quotient of $\K_2(A)$ or $S^{[2]}$ by induced symplectic automorphisms (see Tables~\ref{table Hilb2(K3)} and~\ref{table n=2}), while examples in dimension $6$ appear in Table~\ref{table n=3}. It would be interesting to find an example with \new{($\IQ$-factorial terminal)} non-quotient singularities since at the moment the global Torelli theorem is not known in this case; see \cite[Theorem 1.1]{BL2022} and \cite{Menet2020}.
    \item \emph{Are there examples of irreducible symplectic fourfolds, possibly with simply connected regular locus, with $b_2=9,12,13,15$ or $16< b_2 < 23$.}
    \item \emph{Is there a conceptual explanation for the gap $16 < b_2 < 23$?} Note that the only examples with $b_2=23$ and $b_2=16$ are, respectively, $S^{[2]}$ and a Nikulin orbifold, \textit{i.e.}, a terminalization of a quotient of $S^{[2]}$ by an involution. Further, a $4$-dimensional irreducible symplectic orbifold with $b_2=23$ is necessarily smooth by \cite[Theorem 1.3]{Fu-Menet}. 
\end{enumerate}
\medskip

Terminalizations of sixfolds are less studied in the literature. In \cref{table Betti 6folds}, we summarize the second Betti numbers of irreducible symplectic sixfolds constructed in this paper as terminalizations of quotients of~$\K_3(A)$.  

\small
\begin{longtable}{c|c|c|c|c|c|c|c|c|c|c|c|c|c|c|c|c|c|c|c|c}
\label{table Betti 6folds}
 3
 & 4
 & 5
 & 6
 & 7
 & 8
 & 9
 & 10
 & 11
 & 12
 & 13
 & 14
 & 15
 & 16
 & 17
 & 18
 & 19
 & 20
 & 21
 & 22
 & 23
 \\\hline 
 
 & \(\circ\)
 & \(\circ\)
 & \(\circ\)
 & \(\circ\)
 & \(\bullet\)
 & \(\bullet\)
 & 
 & \(\bullet\)
 & 
 &
 &
 & \(\bullet\)
 & 
 & 
 & 
 &
 &
 &
 &
 & \(\bullet\)\vspace{0.3 cm}\\
 \caption{Second Betti numbers of terminalizations of quotients of \(\K_3(A)\), \revi{in the same notation as in Table~\ref{table Betti 4folds}}.}
 \end{longtable}
\vspace{-0.5 cm}
\normalsize

\subsection{General results on terminalizations} In the perspective of producing examples of irreducible symplectic varieties, we prove new results about terminalizations of symplectic varieties of independent interest.

\begin{proposition}[Terminalization of symplectic varieties, \textit{cf.} \cref{prop:terminalization}]
Let $f \colon Y \to X$ be a proper birational \revi{morphism onto} a symplectic variety $X$. Let $X^{\circ}$ be the complement of the dissident locus $($see \cref{def:dissident}\,$)$ and $f^{\circ} \colon Y^{\circ} \to X^{\circ}$ be the unique symplectic resolution of\, $X^{\circ}$. 

Then $f$ is a terminalization of\, $X$ if and only if $f$ is a compactification of $f^{\circ} \colon Y^{\circ} \to X^{\circ}$ and we have $\codim\left(Y\setminus Y^{\circ}\right)\geq2$. 
\end{proposition}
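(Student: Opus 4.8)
The plan is to treat the two implications separately, relying on three imported facts about symplectic varieties: that a crepant birational morphism of symplectic varieties is again a morphism of symplectic varieties and is \emph{semismall} (Kaledin); that over a base whose singularities are transversally surface Du~Val singularities the symplectic resolution is unique; and that a symplectic variety is terminal precisely when its singular locus has codimension at least four (Namikawa). Throughout I write $Y^{\circ} := f^{-1}(X^{\circ})$, so that $Y \setminus Y^{\circ} = f^{-1}(\Sigma_{\mathrm{diss}})$.

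\emph{Forward direction.} Assume $f$ is a terminalization. Then $f$ is crepant and $Y$ is symplectic, so $f$ is semismall. Over $X^{\circ}$ the singularities are transversally Du~Val, and a crepant terminal model of such a singularity must resolve it, since the only terminal symplectic surface singularity is the smooth point; hence $f|_{Y^{\circ}}\colon Y^{\circ}\to X^{\circ}$ is a symplectic resolution, and by uniqueness over $X^{\circ}$ it coincides with $f^{\circ}$. This shows $f$ is a compactification of $f^{\circ}$. For the codimension bound, the dissident locus $\Sigma_{\mathrm{diss}}$ has codimension $\ge 4$ in $X$, so semismallness gives $\dim f^{-1}(\Sigma_{\mathrm{diss}})\le \dim X - 2$, that is $\codim(Y\setminus Y^{\circ})\ge 2$.

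\emph{Reverse direction.} Assume $f$ compactifies $f^{\circ}$ and $\codim(Y\setminus Y^{\circ})\ge 2$. First I upgrade $Y$ to a symplectic variety: after normalizing (which leaves the hypotheses intact, as $Y^{\circ}$ is already normal), the symplectic form on the smooth variety $Y^{\circ}$ is a reflexive $2$-form that extends across the codimension-$\ge 2$ locus $Y\setminus Y^{\circ}$, and, being pulled back from the symplectic form on $X$, it extends holomorphically on any resolution; thus $Y$ is symplectic. Next, crepancy: every $f$-exceptional divisor must lie over the codimension-$2$ stratum of $X$ inside $X^{\circ}$, because $f^{-1}(\Sigma_{\mathrm{diss}})=Y\setminus Y^{\circ}$ has codimension $\ge 2$ and so contains no divisor; over $X^{\circ}$ the map is the crepant $f^{\circ}$, so the discrepancy divisor $K_Y-f^{*}K_X$ vanishes and $f$ is crepant.

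It remains to see that $Y$ is $\IQ$-factorial and terminal, and this is the step I expect to be the main obstacle. By the Namikawa criterion, terminality is equivalent to $\codim \Sing(Y)\ge 4$, whereas the hypothesis only gives $\codim(Y\setminus Y^{\circ})\ge 2$ and a~priori $\Sing(Y)\subseteq Y\setminus Y^{\circ}$. To close the gap I would compare $Y$ with an auxiliary $\IQ$-factorial terminalization $g\colon \widetilde Y\to X$, which exists by the symplectic MMP: by the forward direction $g$ too compactifies $f^{\circ}$ with complement of codimension $\ge 2$, so $f$ and $g$ agree over $X^{\circ}$ and are isomorphic in codimension one. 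Two crepant symplectic models isomorphic in codimension one are linked by flops supported over $\Sigma_{\mathrm{diss}}$, and I would use this, together with semismallness of both maps, to transport the singularity structure of $\widetilde Y$ to $Y$, forcing $\codim\Sing(Y)\ge 4$ and $\IQ$-factoriality. The genuinely delicate point is exactly this comparison: one must rule out that $Y$ acquires a codimension-two transversal Du~Val stratum sitting over a codimension-four dissident stratum, the single configuration allowed by semismallness that the codimension bound alone does not exclude.
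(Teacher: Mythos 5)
Your forward direction is correct and coincides with the paper's: identify $f$ with $f^{\circ}$ over $X^{\circ}$ by uniqueness of the minimal/symplectic resolution, then play semismallness against $\codim(X\setminus X^{\circ})\geq 4$ to get $\codim(Y\setminus Y^{\circ})\geq 2$. In the reverse direction, your first two steps (extending the form to conclude $Y$ is symplectic, and crepancy of $f$ because every exceptional divisor meets $Y^{\circ}$, where $f$ restricts to the crepant $f^{\circ}$) are sound. The genuine gap is the terminality step, which you leave open; worse, the route you sketch is circular, since theorems decomposing birational maps of symplectic/hyperk\"ahler models into flops presuppose that both sides are ($\IQ$-factorial) terminal, which is exactly what you are trying to prove about $Y$. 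The missing argument is short and purely valuation-theoretic, and it is what the paper's one-line conclusion (``isomorphic in codimension $1$ to a terminalization of $X$, so it is terminal'') encodes. Let $g\colon \widetilde{Y}\to X$ be any terminalization, which exists by \cite{BCHM}. By the forward direction, $\widetilde{Y}$ also compactifies $f^{\circ}$ with complement of codimension $\geq 2$, so $Y$ and $\widetilde{Y}$ are isomorphic in codimension $1$; in particular they carry the same prime divisors, so a divisorial valuation $E$ of the common function field is exceptional over $Y$ if and only if it is exceptional over $\widetilde{Y}$. Since you proved $K_Y=f^{*}K_X$, and $K_{\widetilde{Y}}=g^{*}K_X$ by definition, discrepancies are computed over $X$: for every $E$ exceptional over $Y$,
\[
a(E,Y)=a(E,X)=a\left(E,\widetilde{Y}\right)>0,
\]
the last inequality because $\widetilde{Y}$ is terminal. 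Hence $Y$ is terminal; no flops, no semismallness, and not even Namikawa's codimension-$4$ criterion is needed here. This also disposes of the configuration you worried about: a transversal Du Val stratum of codimension $2$ in $Y$ over the dissident locus would produce a crepant exceptional divisor with $a(E,Y)=0$, contradicting the displayed chain of equalities.

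Separately, your plan to ``force $\IQ$-factoriality'' is a red herring and cannot succeed. $\IQ$-factoriality is not part of the paper's definition of a terminalization (it is only an extra property that one may arrange), and it genuinely fails under the stated hypotheses: for instance, the closure of the minimal nilpotent orbit in $\mathfrak{sl}_3$ has an isolated symplectic singularity, hence is terminal and is its own terminalization, yet it is not $\IQ$-factorial, as witnessed by its small resolution $T^{*}\IP^{2}$; more generally, contracting a flopping curve of $\widetilde{Y}$ over the dissident locus produces a $Y$ satisfying your hypotheses, crepant and terminal by the argument above, but not $\IQ$-factorial. One last point of hygiene: in the reverse direction you cannot simply ``normalize $Y$,'' since the claim concerns $Y$ itself; normality of $Y$ must be taken as a hypothesis, as in the paper's Proposition (``normal compactification''), while in the forward direction it is automatic because terminal varieties are normal.
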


\begin{proposition}[Terminalization of symplectic varieties with only quotient singularities, \textit{cf.} \cref{prop:Qfactorialterminalization}]
Let $f \colon Y \to X$ be a projective $\IQ$-factorial terminalization of a complex symplectic variety $X$ with only quotient singularities. Suppose that the divisor $E|_{f^{-1}(U)}$ is irreducible for any prime exceptional divisor $E \subset Y$ and for any connected open set $U \subseteq X$ in the Euclidean topology. Assume further that $U$ is a connected Euclidean neighborhood of some $x \in X$, and let $T \to U$ be a projective terminalization of\, $U$. Then, 
up to shrinking $U$ to a smaller neighborhood of $x$, $Y_{U}\coloneqq f^{-1}(U)$ admits a locally trivial deformation to $T$. Furthermore, $Y_U$ and $T$ are locally analytically $\IQ$-factorial over $U$ $($see \cref{defn:Q-factorial}\,$)$, and they have the same singularities $($in the sense of \cref{prop:singulartermina}\,$)$. 
\end{proposition}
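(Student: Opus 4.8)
The plan is to compare $Y_U$ and $T$ first away from the dissident locus, where they are forced to agree, and then to account for their discrepancy over the dissident locus through the deformation theory of $\IQ$-factorial terminalizations of symplectic varieties. After shrinking $U$, the quotient-singularity hypothesis lets me write $U$ analytically as $W/\Gamma$ with $W$ smooth and $\Gamma\subset\Sp$ finite; in particular $U$ is analytically $\IQ$-factorial, since a multiple of any $\Gamma$-invariant Weil divisor on the smooth $W$ descends to a Cartier divisor on $U$. By \cref{prop:terminalization}, both $Y_U\to U$ and $T\to U$ are terminalizations, so over the complement $U^{\circ}$ of the dissident locus they both coincide with the unique symplectic resolution of $U^{\circ}$; hence $Y_U$ and $T$ are canonically isomorphic over $U^{\circ}$, and any difference between them is supported over the dissident locus, which has codimension $\geq 4$.

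Next I would establish local analytic $\IQ$-factoriality of $Y_U$. Because the $f$-exceptional divisors map into the singular locus, restriction yields a right-exact sequence $\bigoplus_E \IZ\,E|_{f^{-1}(U)} \to \Cl(Y_U) \to \Cl(U) \to 0$ of analytic-local class groups, where $E$ runs over the $f$-exceptional primes. Pullbacks from $U$ are $\IQ$-Cartier since $U$ is analytically $\IQ$-factorial, and the irreducibility hypothesis guarantees that each analytic exceptional divisor over $U$ is precisely the restriction $E|_{f^{-1}(U)}$ of a global prime exceptional divisor; as $Y$ is algebraically $\IQ$-factorial, a multiple $mE$ is a line bundle, and its analytic restriction exhibits $E|_{f^{-1}(U)}$ as $\IQ$-Cartier. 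Therefore every analytic Weil divisor on $Y_U$ is $\IQ$-Cartier. This is exactly where the irreducibility assumption is indispensable: without it a globally irreducible $E$ could split into several analytic branches over $U$ that need not individually be $\IQ$-Cartier, and $Y_U$ would fail to be locally analytically $\IQ$-factorial.

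With $Y_U$ locally analytically $\IQ$-factorial, both $Y_U$ and $T$ are $\IQ$-factorial terminalizations of the symplectic variety $U$ (for $T$ this is either taken as the defining property of the chosen terminalization or deduced a posteriori, local analytic $\IQ$-factoriality being invariant under locally trivial deformation). Using Kaledin's local product decomposition of $U$ along its symplectic strata, I would reduce the comparison to the transverse symplectic slice at a point of the dissident locus. There the two $\IQ$-factorial terminalizations are connected by a sequence of Mukai flops, and by Namikawa's deformation theory of affine symplectic varieties these flops are realized by locally trivial deformations that are isomorphisms over $U^{\circ}$. Patching them over $U$ produces a locally trivial deformation of $Y_U$ to $T$. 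Since locally trivial deformations preserve the analytic type of the singularities and the two terminalizations already agree over $U^{\circ}$, it follows that $Y_U$ and $T$ have the same singularities in the sense of \cref{prop:singulartermina}, and that $T$ is locally analytically $\IQ$-factorial.

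The step I expect to be the main obstacle is this last one: upgrading the flops relating the two terminalizations to a genuinely \emph{locally trivial} deformation over $U$, rather than a general Poisson deformation that would also move the base $U$. This requires a careful use of the universal Poisson deformation of the local symplectic model, checking that the wall-crossings between the movable cones of the various $\IQ$-factorial terminalizations lift to families with fixed base $U$, together with an algebraization step (Artin approximation combined with GAGA) to reconcile the projective algebraic global $Y$ with the analytic-local geometry over $U$.
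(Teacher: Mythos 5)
Your treatment of the two $\IQ$-factoriality statements is essentially the paper's: the quotient presentation $W/\Gamma$ gives analytic $\IQ$-factoriality of $U$, and your localization-sequence argument for $Y_U$, using the irreducibility hypothesis together with the algebraic $\IQ$-factoriality of $Y$, is precisely \cref{lem:Qfact}. Likewise, your observation that $Y_U$ and $T$ both restrict to the unique symplectic resolution over the complement of the dissident locus is \cref{prop:terminalization}. Up to that point the two arguments coincide.

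The gap is in the central deformation step, and you have flagged it yourself in your final paragraph. The paper does not stratify, does not pass to transverse slices at dissident points, and does not patch: it shrinks $U$ once to a quotient chart around $x$ and exploits the second consequence of quotient singularities that your write-up never isolates, namely the contracting $\mathbb{G}_m$-action on the germ $\widehat{(\IC^{2n}/\Gamma)}_0$, acting with positive weights on the maximal ideal and on the symplectic form (this is hypothesis \eqref{item:locGmaction} of \cref{prop:singulartermina}). That action is exactly what makes Namikawa's machinery applicable \emph{to the whole germ at once}: by \cite[Lemma A.2, Proposition A.5, Lemma A.8]{Namikawa2008} combined with Artin approximation \cite{Artin1969}, the germ $(U,x)$ is algebraized into a single affine symplectic variety $Z$ with good $\mathbb{G}_m$-action, and both $Y_U$ and $T$ are transplanted to $\mathbb{G}_m$-equivariant projective $\IQ$-factorial terminalizations $W(Y_U)\to Z \leftarrow W(T)$; then \cite[Corollary 25]{Namikawa2008}, applied once to $Z$, produces the locally trivial deformation and the equality of singularities. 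Your route instead requires (i) gluing locally trivial deformations constructed slice-by-slice at different points of the dissident locus into one deformation of $Y_U$ over the fixed base $U$ — there is no mechanism in your proposal for this, and it is not automatic, since the local deformations are in no way canonical or compatible on overlaps — and (ii) the algebraization and wall-crossing control that you defer to the end, which is precisely the content of the Namikawa results above. (Also, minor point: the flops connecting two $\IQ$-factorial terminalizations of a symplectic variety need not be Mukai flops.) The repair is not to strengthen the patching but to delete it: after shrinking, $U$ itself is an analytic open set of the cone $Z$, so Namikawa's theorem applies globally and no gluing is ever needed.
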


\begin{rem*}
A deformation $\pi \colon \mathcal{X} \to S$ is called \emph{locally trivial} if for any $x \in \mathcal{X}$, there exist analytic neighborhoods
$\mathcal{U} \subset \mathcal{X}$ of $x$ and $S_{0} \subset S$ of $\pi(x)$ such that $\mathcal{U} \simeq S_0 \times U$, where $U \coloneqq \pi^{-1}(\pi(x)) \cap \mathcal{U}$; see \textit{e.g.} \cite[Section~1.2.1]{Sernesi2006} or \cite[Definition~4.1]{BL2022}.
\end{rem*}

\begin{proposition}[Fundamental group of terminalizations, \textit{cf.} \cref{prop:fundamentalgroup}]
Let $X$ be a simply connected smooth symplectic variety endowed with an action of a finite group $G$ of symplectic automorphisms. Let
$p \colon Y \to X/G$ be a terminalization of the quotient. The fundamental group of the regular locus of\, $Y$ is 
\[\pi_1(Y^{\reg}) \simeq G/N,\]
where $N \triangleleft G$ is the normal subgroup generated by elements $\gamma \in G$ whose fixed locus in $X$ has codimension~$2$. The universal quasi-\'{e}tale cover of\, $Y$ is a terminalization of the quotient $X/N$.
\end{proposition}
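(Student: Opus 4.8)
The plan is to reduce the computation of $\pi_1(Y^{\reg})$ to the symplectic resolution $f^{\circ}\colon Y^{\circ}\to (X/G)^{\circ}$ over the complement of the dissident locus and then to analyze precisely which loops are killed when the transverse $A$-singularities are resolved. First I would record that, since $G$ acts by symplectic automorphisms, every nontrivial stabilizer fixes a subvariety of even codimension $\geq 2$ and so contains no pseudoreflection; by Chevalley--Shephard--Todd the quotient $X/G$ is smooth exactly along the free locus. Set $X^{\diamond}\coloneqq q^{-1}\bigl((X/G)^{\circ}\bigr)$; both $X\setminus X^{\free}$ and $X\setminus X^{\diamond}$ have codimension $\geq 2$ in $X$, so $U\coloneqq X^{\free}\cap X^{\diamond}$ is connected with $\pi_1(U)=\pi_1(X)=1$. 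The restriction $q\colon U\to (X/G)^{\circ}\setminus\Sigma$ is an étale $G$-Galois cover with simply connected total space, hence $V\coloneqq Y^{\circ}\setminus\cE\cong (X/G)^{\circ}\setminus\Sigma$ (where $\cE\coloneqq\Exc(f^{\circ})$) satisfies $\pi_1(V)=G$. Finally, invoking \cref{prop:terminalization}, $Y^{\circ}$ is smooth, $Y\setminus Y^{\circ}$ has codimension $\geq 2$, and $\Sing(Y)\subset Y\setminus Y^{\circ}$; thus $Y^{\circ}\subseteq Y^{\reg}$ with complement of codimension $\geq 2$ in the smooth variety $Y^{\reg}$, so that $\pi_1(Y^{\reg})=\pi_1(Y^{\circ})$ and it suffices to compute the latter.

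The next step is the local analysis. Since $\cE$ is a divisor in the smooth variety $Y^{\circ}$, the Zariski--van Kampen principle gives a surjection $\pi_1(V)\twoheadrightarrow\pi_1(Y^{\circ})$ whose kernel is the normal subgroup generated by the meridians of the components of $\cE$. I would then compute these meridians along each component $q(F_{\gamma})$ of $\Sigma$. By the symplectic slice theorem, near a generic point of $q(F_{\gamma})$ the quotient $X/G$ is analytically $\IC^{2n-2}\times\bigl(\IC^{2}/\mu_{k}\bigr)$, where the cyclic stabilizer $\langle\gamma\rangle\simeq\IZ/k$ acts by $\diag(\zeta_k,\zeta_k^{-1})$, producing a transverse $A_{k-1}$ singularity; its minimal resolution is simply connected and the link has fundamental group $\IZ/k$. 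Consequently the meridians of the $k-1$ exceptional curves over $q(F_{\gamma})$ generate, inside $\pi_1(V)=G$, the image $\langle\gamma\rangle$ of this local $\IZ/k$, the generator $\gamma$ being realized by a path in $U$ joining a point to its $\gamma$-translate. Running over all components of $\Sigma$ and using that every $g\in G$ with $\codim\Fix(g)=2$ is a power of such a stabilizer generator, the normal closure of all meridians is exactly $N$. Hence $\pi_1(Y^{\reg})=\pi_1(Y^{\circ})=G/N$.

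For the statement on the universal quasi-\'etale cover, I would exploit the normality of $N$. The quotient map $X/N\to X/G$ is a $G/N$-quotient; since $N$ contains every codimension-$2$ fixer, any $\bar g\neq 1$ in $G/N$ has $\Fix_{X/N}(\bar g)$ of codimension $\geq 4$ (if some $n^{-1}g$ fixed a codimension-$2$ locus then $g\in N$), so $X/N\to X/G$ is quasi-\'etale and both spaces have trivial canonical class. Choosing a $G/N$-equivariant terminalization $\widetilde Y\to X/N$ (unique, hence equivariant, over the nondissident locus), the same codimension count shows $G/N$ fixes no divisor on $\widetilde Y$: an exceptional divisor fixed pointwise would force a codimension-$2$ fixed locus on $X/N$. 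Therefore $\widetilde Y\to\widetilde Y/(G/N)$ is quasi-\'etale and crepant, and $\widetilde Y/(G/N)$ is a terminalization of $X/G$. Applying the first part of the proposition to the pair $(X,N)$ — and noting that $N$ is by definition generated by its own codimension-$2$ fixers, so the corresponding normal subgroup is all of $N$ — gives $\pi_1(\widetilde Y^{\reg})=N/N=1$. Thus $\widetilde Y\to\widetilde Y/(G/N)$ is a $G/N$-Galois quasi-\'etale cover with simply connected regular locus, so it is the universal quasi-\'etale cover; as all terminalizations of $X/G$ agree in codimension $1$ and hence share the same $Y^{\reg}$ up to codimension $2$, this identifies the universal quasi-\'etale cover of the given $Y$ with a terminalization of $X/N$.

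The main obstacle is the local van Kampen computation of the previous paragraph: it requires the symplectic slice normal form to present each transverse singularity as $A_{k-1}$ with cyclic stabilizer $\langle\gamma\rangle$, the correct identification of the meridians of the exceptional curves with the class $\gamma\in G$, and the verification that the normal closure of all such classes is neither larger nor smaller than $N$ (in particular that each codimension-$2$ fixer is accounted for by a component of $\Sigma$ lying in the nondissident locus). The remaining topological inputs — invariance of $\pi_1$ under removal of codimension-$\geq 2$ subsets of a smooth variety, and the surjectivity with meridian kernel for a divisor complement — are standard.
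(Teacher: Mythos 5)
Your first paragraph's reduction and your van Kampen computation of $\pi_1(Y^{\reg})$ follow a genuinely different (and more self-contained) route than the paper: the paper only uses the local analysis to show that the meridians die, giving a surjection $G/N \twoheadrightarrow \pi_1(Y^{\reg})$, and then obtains the reverse surjection globally, by building a $G/N$-equivariant terminalization $Y_N \to X/N$, proving that $Y_N/(G/N)$ is again a terminalization of $X/G$, and invoking \cref{rmk:indepen} together with finiteness of $G/N$; your argument instead identifies the kernel of $G=\pi_1(V)\to \pi_1(Y^{\circ})$ exactly, so it yields the isomorphism in one step and does not need the auxiliary construction for the $\pi_1$ formula at all. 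However, your local model is stated incorrectly: the pointwise stabilizer $\Stab(F)$ of a codimension-$2$ component need not be cyclic. It is an arbitrary finite subgroup of $\SL(2,\IC)$ (it acts freely on the punctured normal slice because symplectic fixed loci have even codimension), so the transverse singularity is an arbitrary ADE singularity, not $A_{k-1}$ with $\langle\gamma\rangle$ acting by $\diag(\zeta_k,\zeta_k^{-1})$; take for instance $X=\IC^4$ with a binary dihedral group acting on the first $\IC^2$-factor. Likewise ``every codimension-$2$ fixer is a power of a stabilizer generator'' should read ``lies in the pointwise stabilizer of its codimension-$2$ fixed component''. The repair is immediate and is exactly what the paper's local step does: minimal resolutions of \emph{all} ADE singularities are simply connected, so the meridians over $q(F)$ normally generate the image of $\Stab(F)$ in $G$, every nontrivial element of $\Stab(F)$ is a codimension-$2$ fixer, and conversely; hence the normal closure of all meridians is exactly $N$.

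The genuine gap is in your third paragraph, at the step ``$\widetilde Y\to\widetilde Y/(G/N)$ is quasi-\'etale and crepant, therefore $\widetilde Y/(G/N)$ is a terminalization of $X/G$''. Terminality does \emph{not} descend along quasi-\'etale finite quotients: $\IA^2 \to \IA^2/\pm\id$ is quasi-\'etale with terminal (smooth) source, but the target is an $A_1$ singularity, canonical and not terminal. To make your inference you would have to exclude codimension-$2$ fixed loci of $G/N$ on $\widetilde Y$, whereas your argument only excludes divisorial ones; and your codimension count cannot exclude them, since a codimension-$2$ fixed locus could consist of $2$-dimensional fibers of $\widetilde Y \to X/N$ over the codimension-$\geq 4$ fixed locus in $X/N$, a configuration that saturates but does not violate the semismall bound of \cref{thm:semismall}. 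The paper sidesteps terminality-descent entirely: it shows that $q_1\colon X/N \to X/G$ is \'etale over the whole nondissident locus $(X/G)^{\circ}$ --- for which one needs that the stabilizer $G_x$ of any point of $X$ lying over $(X/G)^{\circ}$ consists of codimension-$2$ fixers and hence is contained in $N$, a strictly stronger fact than your ``$\Fix_{X/N}(\bar g)$ has codimension $\geq 4$'' --- so that $Y_N/(G/N)$ restricted over $(X/G)^{\circ}$ is the unique symplectic resolution and its complement has codimension $\geq 2$; then \cref{prop:terminalization} certifies that $Y_N/(G/N)$ is a terminalization of $X/G$, and terminality of the quotient comes out as a conclusion rather than an input. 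Rerouting your third paragraph through \cref{prop:terminalization} in this way closes the gap; note that the gap affects only the universal quasi-\'etale cover statement, since your first two paragraphs already establish $\pi_1(Y^{\reg})\simeq G/N$, and your remaining steps (quasi-\'etaleness of $X/N \to X/G$, simple connectivity of $\widetilde Y^{\reg}$ via applying the first part to $(X,N)$, and the identification of the universal cover up to codimension $1$) are sound.
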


\subsection{Outline}
\begin{itemize}
    \item In Section~\ref{sec:symplectic}, we recall the definition of (irreducible) symplectic variety and describe properties of their terminalizations.
    \item In Section~\ref{sec:criteria}, we motivate the criteria of the classification and comment in particular on Assumptions~\ref{assumption:codim2} and~\ref{ass:groupaction}.
    \item In Section~\ref{sec:termexplicit}, we specialize the previous results to the case of quotients of irreducible symplectic manifolds $S^{[n]}$ or $\K_n(A)$ by induced automorphism groups. For this purpose, in Section~\ref{sec:codim2 fixed loci}, we show that the codimension~$2$ fixed loci of induced symplectic automorphisms are subject to severe constraints.  
    \item In Sections~\ref{sec:secondbetti},~\ref{sec:third} and~\ref{sec:fundgroup}, we provide group-theoretic formulas for the second and third Betti numbers of~$Y$ and the fundamental group of the regular locus of $Y$.
    \item We list the second Betti number and fundamental group of the regular locus of all terminalizations of induced symplectic quotients of $S^{[2]}$, $\K_2(A)$ and $\K_3(A)$, respectively in Table~\ref{table Hilb2(K3)} (Section~\ref{sec:classificationK3}), Table~\ref{table n=2} (Section~\ref{sec:terminalizationKnKn}) and Table~\ref{table n=3} (Section~\ref{sec:terminalizationKnKn}). 
    \item In \cref{table sing n=2} (Section~\ref{sec:terminalizationfour}), we list Betti numbers, Chern classes and singularities of the terminalizations of quotients $\K_2(A)/G$ with simply connected regular locus. The analysis of the singularities is carried on in Section~\ref{sec:sing computation n=2}.
    
    \item In Section~\ref{sec:terminalization6} \corre{and in \cref{table sing n=3} (Section~\ref{sec:terminalizationfour})}, we describe the singularities of the terminalizations of quotients $\K_3(A)/G$ with simply connected regular locus.
    \item In Section~\ref{sec:birational orbifolds}, we determine whether terminalizations with the same topological invariants are actually deformation equivalent.
\end{itemize}

\subsection{Acknowledgements} This project started during the online session of the \emph{Interactive Workshop \& Hausdorff School} held at University of Bonn in September 2021, as part of the events of the ERC Sinergy Grant HyperK. We warmly thank Benjamin Bakker for suggesting the problem and inspiring the authors. Armando Capasso and Olivier Debarre took part in the online session of the workshop, and we thank them for all their suggestions and for comments on earlier versions of this paper. We also thank Gwyn Bellamy, Simon Brandhorst,  Igor Dolgachev, Salvatore Floccari, Osamu Fujino, Daniel Huybrechts, Micha\l{} Kapustka, Gr\'egoire Menet, Giovanni Mongardi, Yoshinori Namikawa, Travis Schedler for useful conversations and valuable email exchanges. We are grateful to the Simons Center for Geometry and Physics in Stony Brook for offering perfect working conditions to pursue our project on the occasion of the \emph{Workshop ``Hyperk\"ahler quotients, singularities, and quivers''} held in January 2023. 

\section{Notation}\label{sec:notation}

\begin{itemize}
    \item Denote by $S^{(n)}\coloneqq S^{n}/S_{n}$ the $n$-fold symmetric product of the surface $S$. A point in $S^{(n)}$ is an unordered $n$-tuple $[(x_1, \ldots, x_{n})]$ or the formal sum $x_1 + \dots + x_n$ with $x_i \in S$.
    \item The Hilbert--Chow morphism 
    \[\epsilon \colon S^{[n]}\lra S^{(n)}, \quad \xi \longmapsto \sum_{x \in \xi} \mathrm{length}(\mathcal{O}_{\xi, x}) \cdot x,\]
    sends any subscheme $\xi$ of length $n$ in the surface $S$ to its weighted support. It is a symplectic resolution of the symmetric product $S^{(n)}$.
    \item The generalized Kummer variety $\K_{n}(A)$ is the fiber over $0$ of the composition
    \[A^{[n+1]} \overset{\epsilon}\lra A^{(n+1)} \overset{s}\lra A,\]
    where $\epsilon$ is the Hilbert--Chow morphism and $s$ is the summation map. We denote by $A^{(n+1)}_0$ the fiber over $0$ of $s$. The restriction $\epsilon \colon \K_{n}(A) \to A^{(n+1)}_0$ is a symplectic resolution.

    \item Let $A= \IC^2/\Lambda$ be a complex torus with period lattice $\Lambda \coloneqq H_1(A, \IZ)$. 
    The group of symplectic automorphisms of $A$ is 
    \[A \rtimes \SL(\Lambda),\]
    where $A$ acts on itself by translation and $\SL(\Lambda) \subset \SL(2,\IC)$ is the group of linear automorphisms of the universal cover $\IC^2$ with determinant~$1$ and preserving the period lattice $\Lambda$. The group of induced symplectic automorphisms of $\K_{n}(A)$ is
    \[A[n+1]\rtimes \SL(\Lambda).\]
    Denote by $\tau_{\alpha}$ the automorphism on \(\K_{n}(A)\) induced by the translation $\alpha \in A[n+1]$.

    \item Given a group $G \subseteq A[n+1] \rtimes \SL(\Lambda)$, we write \[G_{\tr} \coloneqq G \cap A[n+1]\] for the normal subgroup of translations in $G$, and we set
    \begin{equation} \label{equ:G_0}  
    G_{\circ} \coloneqq G/{G_{\tr}} = \Im(\pi\colon G \longhookrightarrow A[n+1] \rtimes \SL(\Lambda) \lra \SL(\Lambda)).
    \end{equation}
\item We use the notation  
    \begin{align*}
    C_n & \hspace{5mm} \text{cyclic group of order } n, \\
    S_n & \hspace{5mm} \text{symmetric group of degree } n, \\
    A_n & \hspace{5mm} \text{alternating group of degree } n, \\
    D_n & \hspace{5mm} \text{dihedral group of degree } n, \\
    Q_8 & \hspace{5mm} \text{quaternion group}, \\
    BD_{12} & \hspace{5mm} \text{binary dihedral group of order }12, \\
    BT_{24} & \hspace{5mm} \text{binary tetrahedral group of order }24.  
    \end{align*}
    \item Let $G$ be a group acting on a normal variety $X$, and let \(q:X\to X/G\) be the quotient map.  For any $x \in X$ and $g \in G$, we write 
    \begin{align*}
    G_x& \coloneqq \{ g \in G \mid  g(x)=x\},  \\
    \Fix (g) & \coloneqq \{x \in X \mid  g(x)=x\}.
    \end{align*}
    The isotropy (group) of a point $z$ in $X/G$ is the stabilizer of a point of the orbit $q^{-1}(z)$, up to conjugation. 

\item Assume that $G$ acts on a smooth complex algebraic variety $X$ of dimension $n$ and fixes a point $x$. Then an analytic (or \'{e}tale) neighborhood of \rev{$q(x)$ in $X/G$} is isomorphic to the linear quotient $T_{x}X/G$ of its tangent space. If $G$ is cyclic of order $k$, then the action of $G$ on $T_{x}X \simeq \IA^n$ can be diagonalized and written as
\[(x_1, \ldots, x_{n}) \longmapsto \left(\xi^{m_1}_{k}x_1, \ldots, \xi^{m_n}_{k}x_n\right),\]
where $\xi_{k}$ is a primitive $\supth{k}$ root of unity and the integers $m_i$ are called weights of the action. We usually abbreviate the quotient by this action as 
\[\IA^n/\tfrac{1}{k}(m_1, \ldots, m_n).\]

\begin{definition} \label{defn:sing type ai}
    Let $X$ be an algebraic variety of dimension $2n$. We denote by $a_k=a_k(X)$ the number of singularities of analytic type
    \[\IA^{2n}/\tfrac{1}{k}(1,-1, \ldots, 1,-1).\]
\end{definition}
\end{itemize}

\section{Symplectic varieties and terminalizations}\label{sec:symplectic}

\subsection{Symplectic varieties}\label{subsection sympectic}
We refer to \cite{KollarMori1998} for the standard terminology in birational geometry. If $X$ is a normal variety and $j \colon X^{\reg} \hookrightarrow X$ is the inclusion of the regular locus, then $\Omega^{[p]}_{X}\coloneqq j_{*}\Omega^{p}_{X^{\reg}}$ is the sheaf of reflexive $p$-forms on~$X$. 
\begin{definition}
  Let $X$ be a normal variety. A reflexive 2-form
  \[
  \omega_X \in H^0\left(X, \Omega^{[2]}_{X}\right)=H^0\left(X^{\reg}, \Omega^{2}_{X^{\reg}}\right)
  \]
  is \emph{symplectic} if its restriction to the regular locus of $X$, denoted by $X^{\reg}$, is closed non-degenerate.
\end{definition}
\begin{definition}
A normal variety $X$ is \emph{symplectic}, or equivalently has \emph{symplectic singularities}, if \begin{itemize}
\item it admits a symplectic form $\omega_X \in H^0(X, \Omega^{[2]}_{X})$, 
\item it has rational singularities.
\end{itemize} 
By \cite[Corollary 1.8]{KS2021}, this means that a holomorphic
symplectic form $\omega_{X^{\reg}}$ on $X^{\reg}$ extends to a (possibly degenerate) holomorphic 2-form $\omega_{\widetilde{X}}$ on a resolution $\widetilde{X} \to X$. We say that $X$ admits a \emph{symplectic resolution} if $\omega_{\widetilde{X}}$ is non-degenerate.
\end{definition}

\begin{proposition}\label{prop:termcanonical}
  A symplectic variety has Gorenstein canonical singularities, and it is terminal if and only if the singular locus has codimension at least $4$.
\end{proposition}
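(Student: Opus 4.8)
The plan is to read off all three assertions from a single resolution $\pi \colon \widetilde{X} \to X$, turning everything into a discrepancy computation driven by the symplectic form. For the Gorenstein and canonical statements I would argue as follows. Since $\dim X = 2n$, the top power $\omega_X^{\wedge n}$ is a reflexive section of the canonical sheaf $\Omega^{[2n]}_X$; as $\omega_X|_{X^{\reg}}$ is non-degenerate, $\omega_X^{\wedge n}$ is a nowhere-vanishing volume form on $X^{\reg}$ and hence trivializes $\Omega^{[2n]}_X$ as a reflexive rank-one sheaf. Thus $K_X$ is Cartier (indeed $K_X \sim 0$), and since rational singularities are Cohen--Macaulay, $X$ is Gorenstein. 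For canonicity, the defining property of symplectic singularities (see \cite{KS2021}) gives a holomorphic extension $\omega_{\widetilde{X}}$ of $\omega_X$ to $\widetilde{X}$, and $\omega_{\widetilde{X}}^{\wedge n}$ is then a holomorphic section of $K_{\widetilde{X}}$ restricting to $\pi^\ast(\omega_X^{\wedge n})$ over $\pi^{-1}(X^{\reg})$. By the standard description of discrepancies for a Gorenstein variety, writing $K_{\widetilde{X}} = \pi^\ast K_X + \sum_i a_i E_i$ and using $K_X \sim 0$, the divisor $\sum_i a_i E_i$ is exactly the zero divisor of the holomorphic form $\omega_{\widetilde{X}}^{\wedge n}$; in particular all $a_i \geq 0$ and $X$ is canonical.

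Next I would settle the terminal criterion, which amounts to deciding when some $a_i$ vanishes. The zero divisor $\sum_i a_i E_i$ of $\omega_{\widetilde{X}}^{\wedge n}$ is precisely the degeneracy locus of the $2$-form $\omega_{\widetilde{X}}$, so $a_i = 0$ exactly when $\omega_{\widetilde{X}}$ is non-degenerate at the generic point of $E_i$. If such an $E_i$ exists, then near a general point of $E_i$ the pair $(\widetilde{X}, \omega_{\widetilde{X}})$ is an honest holomorphic symplectic manifold and $\pi$ contracts $E_i$ onto $Z \coloneqq \pi(E_i)$; the symplectic form forces the generic fibre of $E_i \to Z$ to be one-dimensional, hence $\codim Z = 2$ (the fibre-dimension estimate for crepant symplectic contractions, cf.\ \cite{Nam2001}). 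As $\pi$ is an isomorphism over $X^{\reg}$, the center $Z$ lies in $\Sing X$, which therefore has a codimension-$2$ component. Conversely, if $\Sing X$ has a codimension-$2$ component $\Sigma$, then by Namikawa's transverse-slice theorem the generic slice transverse to $\Sigma$ is a Du Val surface singularity, whose minimal (crepant) resolution yields an exceptional divisor over $\Sigma$ with discrepancy $0$. Recalling that the singular locus of a symplectic variety has even codimension, ``no codimension-$2$ component'' is the same as $\codim \Sing X \geq 4$, and we conclude that $X$ is terminal if and only if $\codim \Sing X \geq 4$.

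The hard part will be the terminal equivalence rather than the Gorenstein/canonical part, which is pure discrepancy bookkeeping once $\omega_X$ is extended to $\omega_{\widetilde{X}}$. Specifically, the two nontrivial inputs are the geometry of a crepant exceptional divisor, namely that non-degeneracy of $\omega_{\widetilde{X}}$ along $E_i$ forces its center to have codimension exactly $2$, and the local structure theorem identifying codimension-$2$ symplectic singularities as transverse Du Val, so that a genuine discrepancy-$0$ divisor is produced. Both rest on Namikawa's local analysis of symplectic singularities (together with the evenness of the codimension of the singular locus) rather than on any manipulation of the form itself.
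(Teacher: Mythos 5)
Your first two assertions are fine: trivializing $\Omega^{[2n]}_X$ by $\omega_X^{\wedge n}$, using that rational singularities are Cohen--Macaulay to get Gorenstein, and identifying $\sum_i a_i E_i$ with the zero divisor of $\omega_{\widetilde{X}}^{\wedge n}$ is exactly Beauville's classical argument, and it is correct. (The paper itself proves nothing here: it cites \cite[Claim 2.3.1]{Kol13} for this part and \cite[Corollary 1]{Namikawa2001} for the terminal criterion.) The genuine gap is in the direction ``some $a_i=0$ $\Rightarrow$ $\codim \Sing X = 2$''. You apply ``the fibre-dimension estimate for crepant symplectic contractions'' to the resolution $\pi$, localized near a general point of $E_i$. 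But $\pi$ is \emph{not} a crepant symplectic contraction: $\omega_{\widetilde{X}}$ degenerates along every exceptional divisor of positive discrepancy, and $K_{\widetilde{X}}=\pi^*K_X+\sum_i a_iE_i$ is not $\pi$-trivial unless all $a_i$ vanish. Every estimate of this type (Wierzba's theorem for contractions of symplectic manifolds, Kaledin's semismallness \cite[Lemma 2.11]{Kal06}, \textit{i.e.}\ \cref{thm:semismall}) is a \emph{global} theorem about a proper crepant morphism with everywhere non-degenerate (or symplectic-variety) source. What survives locally from non-degeneracy of $\omega_{\widetilde{X}}$ at the generic point of $E_i$ is only that the fibres of $E_i\to Z$ are isotropic, which bounds their dimension by $n$, not by $1$: in $\left(\mathbb{A}^4, dx\wedge dy+dz\wedge dw\right)$ the hypersurface $\{w=0\}$ is fibred by the isotropic planes $\{y=\mathrm{const},\, w=0\}$, which contain but strictly exceed the one-dimensional kernel of the restricted form; so ``hypersurface in a symplectic manifold with isotropic fibres'' does not force one-dimensional fibres. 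Worse, the degeneracy divisor of $\omega_{\widetilde{X}}$ may dominate $Z$, so that no single fibre of $E_i\to Z$ even lies in the locus where your local symplectic-manifold picture is valid.

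The standard repair is global, and it is the one the paper itself runs in \cref{prop:terminalization}: replace $\pi$ by a projective $\IQ$-factorial terminalization $Y\to X$ \cite[Corollary 1.4.3]{BCHM}. If $X$ is not terminal, this morphism must extract at least one divisor $E$ (a small terminalization would give $a(E,X)=a(E,Y)>0$ for all $E$, forcing $X$ terminal), and that divisor is crepant; since $Y$ is again symplectic (\cref{prop: terminaliz and quotients of IHS}) and $Y\to X$ is crepant and proper, semismallness (\cref{thm:semismall}) yields $2\dim E-\dim Z\le \dim X$, hence $\codim Z=2$. Alternatively one quotes \cite[Corollary 1]{Namikawa2001} outright, as the paper does. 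Two smaller points: for the converse you invoke Namikawa's transverse Du Val description, which lives in the very paper \cite{Namikawa2001} whose Corollary 1 \emph{is} the statement being proved; it is cleaner and independent to note that terminal varieties are regular in codimension $2$ (see \cite[Corollary 5.18]{KollarMori1998}), so a codimension-$2$ singular component already precludes terminality. And the evenness of the codimension of $\Sing X$, which you need to pass from ``no codimension-$2$ component'' to ``$\codim\Sing X\ge 4$'', should be attributed to Kaledin's stratification \cite[Theorem 2.3]{Kal06}.
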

\begin{proof}
See \textit{e.g.}~\cite[Claim 2.3.1]{Kol13} and \cite[Corollary 1]{Namikawa2001}.
\end{proof}

\begin{definition}
Let $X$ be a variety with canonical singularities. A \emph{terminalization} of $X$ is a \new{proper} 
birational morphism $f \colon Y \to X$ such that $Y$ has terminal singularities and $f^*K_X = K_Y$. 
\end{definition}
\noindent A terminalization of $X$ exists by \cite[Corollary 1.4.3]{BCHM}, and it can be chosen projective, $\IQ$-factorial (see \cref{defn:Q-factorial})  and equivariant with respect to a group $G$ acting on $X$. Further, it is also unique up to isomorphism in codimension~$1$; see \cite[Corollary 3.54]{KollarMori1998}. 

\subsection{Terminalization of symplectic varieties}\label{sec:terminalization}
\begin{proposition}\label{thm:semismall}
Let $f\colon Y \to X$ be a crepant birational modification of a symplectic variety $X$, \textit{e.g.}~a terminalization of\, $X$. Then $f$ is semismall; \textit{i.e.},  $\dim (Y \times_{X} Y)  \leq \dim X$. 
\end{proposition}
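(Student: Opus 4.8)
The plan is to translate the bound $\dim(Y\times_X Y)\le \dim X$ into the classical semismallness inequality and then establish the latter using the symplectic form. Write $n=\dim X=\dim Y$ (recall $f$ is birational). The fiber of the projection $Y\times_X Y\to X$ over a point $x$ is $f^{-1}(x)\times f^{-1}(x)$, of dimension $2\dim f^{-1}(x)$. Stratifying $X$ by the fiber dimension and passing to closures, one sees that $\dim(Y\times_X Y)\le n$ is equivalent to the assertion that for every irreducible closed subvariety $W\subseteq X$ one has $2\,d_W\le \codim_X W$, where $d_W$ denotes the dimension of the generic fiber of $f$ over $W$; indeed this rearranges to $\dim W+2\,d_W\le \dim X$, which is the contribution of the stratum $W$ to $\dim(Y\times_X Y)$. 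First I would dispose of the trivial strata: over $X^{\reg}$ the morphism $f$ extracts no divisor (a crepant birational morphism has discrepancy zero, while any divisor over a smooth point has positive discrepancy), so $f$ is small there, and a small birational morphism onto the factorial locus $X^{\reg}$ is an isomorphism. Hence $d_W=0$ whenever $W\not\subseteq \Sing X$, and the inequality need only be checked for $W\subseteq \Sing X$, where $\codim_X W\ge 2$ automatically since $X$ is normal.

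The heart of the argument is the claim that the generic fiber $F$ of $f$ over such a $W$ is isotropic for the transverse symplectic form, which immediately yields $d_W\le \tfrac12\codim_X W$. To produce this form I would reduce to an isolated singularity by slicing. Fix a general smooth point $w\in W$ and a general complete-intersection slice $H\subseteq X$ of dimension $c=\codim_X W$ meeting $W$ transversally at $w$ only. By Kaledin's decomposition theorem for symplectic singularities, $H$ is again a symplectic variety whose symplectic form is the restriction of $\omega_X$ to $H^{\reg}$, with $w$ an isolated singular point, and $f$ restricts to a crepant birational morphism $f^{-1}(H)\to H$ whose fiber over $w$ is exactly $F$. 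Pulling $\omega_H$ back along $f^{-1}(H)\to H$ gives a form agreeing on the regular locus with the symplectic form of $f^{-1}(H)$ that restricts to zero on $F$, because $F$ is contracted to the point $w$ and the restriction of a pullback form to a fiber vanishes. Thus the smooth locus of $F$ is isotropic in a symplectic space of dimension $c$, forcing $\dim F\le c/2=\tfrac12\codim_X W$, as desired.

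I expect the main obstacle to be the slicing step: one must verify that a general slice through $w$ genuinely carries symplectic singularities with $\omega_X|_{H^{\reg}}$ symplectic and that $f$ remains crepant and birational after restriction to $f^{-1}(H)$. This is precisely the content of Kaledin's transverse-slice and stratification theorem for symplectic varieties, so the cleanest route may be to invoke it directly rather than to reprove the local structure by hand; one could equivalently cite the semismallness results of Namikawa and Wierzba for crepant modifications of symplectic varieties. Either way, once the reduction to an isolated symplectic singularity is granted, the isotropy computation above completes the proof.
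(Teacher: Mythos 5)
Your overall strategy (reduce semismallness to the stratum-wise inequality $2d_W\le\codim_X W$, dispose of strata meeting $X^{\reg}$ via crepancy, then bound fibers over $\Sing X$ by isotropy) is the standard one, and your closing suggestion to simply cite known results is in fact exactly what the paper does: its proof of \cref{thm:semismall} is a bare citation of \cite[Lemma 2.11]{Kal06}, \cite[Proposition 2.14]{Losev2022} and \cite[Proposition 2.15]{Tighe2022}. However, the direct argument you sketch has a genuine gap, and it is not where you place it. The vanishing of the 2-form on the central fiber $F$ is \emph{not} a formal consequence of ``the restriction of a pullback form to a fiber vanishes'': the form on $f^{-1}(H)$ (or on a resolution of it) is the pullback of $\omega_H$ only over $H^{\reg}$, where the fibers are points and the claim is vacuous; over the singular point $w$ --- the only place where $F$ is positive-dimensional --- the form is merely the unique \emph{extension} of that pullback, and an extension has no a priori reason to vanish on $F$. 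That vanishing (isotropy of fibers of symplectic contractions) is precisely the nontrivial theorem of Wierzba, Namikawa and Kaledin, whose proofs require real input: rational singularities of the base together with Hodge theory on a resolution of $F$ (a holomorphic 2-form on a smooth proper variety vanishes iff its cohomology class does), or Kaledin's Poisson-theoretic stratification. As written, your argument silently assumes the conclusion at this point.

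The slicing step, which you flag as the main obstacle, is indeed also unjustified: Kaledin's decomposition theorem gives a formal (or local analytic) product structure transverse to the leaves of the canonical Poisson stratification, not a statement about a global general complete-intersection slice $H$ through $w$; and Bertini cannot rescue this, because the linear system of hypersurfaces through $w$ has $w$ as a base point, so symplecticity of $H$, normality of $f^{-1}(H)$ and crepancy of $f^{-1}(H)\to H$ all fail to be automatic exactly at $w$ and along $F$, which is where you need them. Moreover, your strata $W$ come from the fiber-dimension stratification and need not be unions of symplectic leaves, so the formal decomposition does not apply to them directly. Finally, a point of generality: Wierzba's and Namikawa's semismallness statements assume the source is smooth (symplectic resolutions), whereas the proposition is applied in the paper to terminalizations $Y\to X/G$ with $Y$ singular; this is exactly why the paper cites, besides \cite{Kal06}, also \cite{Losev2022} and \cite{Tighe2022}, which treat crepant modifications with singular source. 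If you want a self-contained proof, the honest route is to prove (or cite in the correct generality) the isotropy theorem, not to derive it from the pullback formalism.
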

\begin{proof} See \textit{e.g.}~\cite[Lemma 2.11]{Kal06}, \cite[Proposition 2.14]{Losev2022} and \cite[Proposition 2.15]{Tighe2022}.
\end{proof}

\begin{definition}\label{def:dissident}
    Let $X$ be a symplectic variety. Let $X^{\circ}$ be the largest open set of points $x$ in $X$ such that either $X$ is smooth at $x$, or the formal completion $\widehat{X}_{x}$ admits a decomposition $\widehat{M}_{x}\times \widehat{W}_{x}$, where $M_{x}$ is a smooth scheme and $W$ is a canonical surface singularity. The complement $X \setminus X^{\circ}$ is called the \emph{dissident locus}.
\end{definition}

In other words, $X^{\circ}$ is the union of the strata of dimension $\dim{X}$ and $\dim{X}-2$ of the stratification of~$X$ constructed in \cite[Theorem~2.3]{Kal06}. Note that $X^{\circ}$ admits a unique symplectic resolution $f^{\circ} \colon Y^{\circ} \to X^{\circ}$ obtained by repeatedly blowing up the singular locus of $X^{\circ}$ (or of its blowup), as in the surface case.

\begin{proposition}\label{prop:terminalization}
Let $f \colon Y \to X$ be a proper birational modification of a symplectic variety $X$. Then $f$ is a terminalization of\, $X$ if and only if $f$ is a normal compactification of $f^{\circ} \colon Y^{\circ} \to X^{\circ}$ and $\codim\left(Y\setminus Y^{\circ}\right)\geq2$.
\end{proposition}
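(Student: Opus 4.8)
The plan is to use the defining characterization of a terminalization as a proper birational morphism that is crepant and has terminal source, and to analyze $f$ separately over the two ranges of the base: the complement $X^{\circ}$ of the dissident locus, where the singularities are (transverse to a smooth factor) canonical surface singularities of codimension~$2$, and the dissident locus $X\setminus X^{\circ}$ itself, which has codimension $\geq 4$ by \cref{def:dissident}. The two workhorses are \cref{prop:termcanonical}, which says that a symplectic variety is terminal exactly when its singular locus has codimension $\geq 4$, and the semismallness of crepant modifications (\cref{thm:semismall}), which turns codimension bounds on the base into fiber-dimension bounds.

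For the forward implication I would assume $f$ is a terminalization and first examine its restriction $f^{-1}(X^{\circ})\to X^{\circ}$. This is crepant with terminal (hence symplectic) source, and étale-locally $X^{\circ}$ has the form $M\times W$ with $M$ smooth and $W$ an ADE surface singularity; since the terminalization of such a product is the relative minimal resolution $M\times\widetilde W$, which is smooth, $f^{-1}(X^{\circ})$ is smooth, and by uniqueness of the symplectic resolution of $X^{\circ}$ it coincides with $f^{\circ}\colon Y^{\circ}\to X^{\circ}$. (Over the smooth part of $X^{\circ}$ one uses that a crepant proper birational morphism onto a smooth variety is an isomorphism.) Thus $f$ is a (normal, since $Y$ is) compactification of $f^{\circ}$. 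Setting $Y^{\circ}=f^{-1}(X^{\circ})$, the set $Y\setminus Y^{\circ}=f^{-1}(X\setminus X^{\circ})$ lies over the dissident locus; as $f$ is semismall, every stratum $S$ of codimension $c\geq 4$ satisfies $\dim f^{-1}(S)\leq \dim S+c/2=\dim X-c/2\leq \dim X-2$, so $\codim(Y\setminus Y^{\circ})\geq 2$.

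For the reverse implication I would assume $f$ is a normal compactification of $f^{\circ}$ with $\codim(Y\setminus Y^{\circ})\geq 2$, and check crepancy first: a prime exceptional divisor $E$ has codimension~$1$, so it cannot be contained in the codimension $\geq 2$ set $Y\setminus Y^{\circ}$, whence its generic point lies in $Y^{\circ}$, where $f=f^{\circ}$ is a symplectic resolution and hence crepant; therefore $E$ has discrepancy $0$, and as this holds for all exceptional divisors, $f^{*}K_{X}=K_{Y}$. Crepancy over the symplectic variety $X$ then forces $Y$ to be symplectic: the symplectic form on the smooth open $Y^{\circ}$ extends, by reflexivity across the codimension $\geq 2$ locus $Y\setminus Y^{\circ}$ and triviality of $K_{Y}=f^{*}K_{X}$, to a symplectic form on $Y^{\reg}$, and it extends to a resolution because it already does so on $X$, so $Y$ has symplectic singularities by the criterion of \cite[Corollary~1.8]{KS2021}.

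It remains to prove $Y$ terminal, i.e.\ $\codim\Sing Y\geq 4$ by \cref{prop:termcanonical}, and this is the main obstacle, since the hypothesis $\codim(Y\setminus Y^{\circ})\geq 2$ only gives codimension $\geq 2$ directly. Because $Y^{\circ}$ is smooth, any codimension-$2$ component of $\Sing Y$ would be a codimension-$2$ symplectic stratum $Z\subset Y$ inside $Y\setminus Y^{\circ}$, hence mapping into the dissident locus, so that $\dim f(Z)\leq \dim X-4$ and $Z$ is genuinely contracted. I would rule this out by a semismallness count on a composite: choosing a $\mathbb Q$-factorial terminalization $\pi\colon Y'\to Y$, which resolves the transverse ADE singularity along $Z$ with one-dimensional fibers, the composite $g=f\circ\pi$ is crepant, hence semismall by \cref{thm:semismall}. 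Writing $c=\codim_{X}f(Z)\geq 4$, a general fiber satisfies $\dim g^{-1}(x)\geq (c-2)+1=c-1$, so $Y'\times_{X}Y'$ contains a family of dimension $\geq \dim f(Z)+2(c-1)=\dim X+(c-2)>\dim X$, contradicting semismallness. Hence $Y$ has no codimension-$2$ singularities and is terminal, which finishes the proof.
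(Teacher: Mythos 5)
Your proof is correct. The forward implication is essentially the paper's: both arguments identify $f^{-1}(X^{\circ})$ with $f^{\circ}$ via uniqueness of the relative minimal resolution and then use semismallness (\cref{thm:semismall}) over the dissident locus --- the paper phrases this as a contradiction with an exceptional divisor contained in $Y\setminus Y^{\circ}$, you as a fiber-dimension bound over each dissident stratum, which is the same count.

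The converse is where you genuinely diverge, and your route works. The paper's converse is a comparison argument: a terminalization $T \to X$ exists by \cite[Corollary~1.4.3]{BCHM}; by the forward direction $T$ is also a compactification of $f^{\circ}$ with complement of codimension $\geq 2$, so $Y$ and $T$ are isomorphic in codimension~$1$, and terminality transfers between crepant models that agree in codimension~$1$ (\textit{cf.} \cite[Corollary~3.54]{KollarMori1998}), since the relevant discrepancies may all be computed on $X$. You instead verify the definition of a terminalization directly: crepancy because every exceptional divisor meets $Y^{\circ}$; symplecticity of $Y$ by extending the form across the codimension $\geq 2$ set $Y\setminus Y^{\circ}$ and invoking \cite[Corollary~1.8]{KS2021}; and terminality by excluding codimension-$2$ singular components through a second semismallness count applied to $f\circ\pi$, with $\pi$ a terminalization of $Y$. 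The auxiliary $\pi$ is the key trick: the count on $f$ alone gives only $\dim X$ when $\codim_X f(Z)=4$, and the positive-dimensional fibers of $\pi$ over $Z$ supply the extra $+1$ needed to break semismallness. Your argument is longer but self-contained (no comparison with an abstract terminalization, and symplecticity of $Y$ comes out as a byproduct); the paper's is shorter and needs nothing about $Y$ beyond the codimension-$1$ identification. Two points you should make explicit: the existence of $\pi$ requires $Y$ canonical, which you get precisely from the symplecticity step, so the order of your steps is essential; and passing from ``no codimension-$2$ singular components'' to ``$\codim \Sing Y \geq 4$,'' as \cref{prop:termcanonical} requires, uses that the singular locus of a symplectic variety has no codimension-$3$ components, \textit{e.g.}~by the even-dimensionality of the strata in \cite[Theorem~2.3]{Kal06}.
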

\begin{proof}
Suppose that $f$ is a terminalization of $X$. By the uniqueness of minimal surface resolution, $f$ must restrict to $f^{\circ}$ over $X^{\circ}$. Now, if $f$ extracts a divisor $E \subseteq Y\setminus Y^{\circ}$, then 
\begin{align*}
\dim\left(E\times_{X}E\right) &= 2 \dim E - \dim f(E) \geq 2 \dim X - 2 - \dim (X \setminus X^{\circ}) \\
&\geq  \dim X -2 +4 > \dim X,
\end{align*}
which contradicts \cref{thm:semismall}. Hence, $\codim\left(Y\setminus Y^{\circ}\right)\geq2$. 

Conversely, a compactification of $f^{\circ} \colon Y^{\circ} \to X^{\circ}$ such that ${\codim\left(Y\setminus Y^{\circ}\right)\geq2}$ is isomorphic in codimension~$1$ to a terminalization of $X$, so it is terminal.
\end{proof}

\begin{definition}\label{defn:Q-factorial}
A normal algebraic or analytic variety $X$ is \emph{$\IQ$-factorial} if for every Weil divisor $D$ on~$X$, there is an $m \in \IN$ such that $mD$ is Cartier. A normal complex analytic variety $X$ is \emph{locally analytically $\IQ$-factorial}  if
every open set $U \subseteq X$ in the Euclidean topology is $\IQ$-factorial.

Let $f \colon Y \to X$ be a proper morphism 
of normal complex varieties; then 
$Y$ is \emph{locally analytically $\IQ$-factorial over $X$} if $Y_{U} \coloneqq f^{-1}(U)$ is $\IQ$-factorial for any open set $U \subseteq X$ in the Euclidean topology.
\end{definition}
\begin{lemma}\label{lem:Qfact}
Let $f \colon Y \to X$ be a proper birational morphism of normal complex algebraic or analytic varieties with exceptional divisor $E= \sum_i E_i$. Suppose that
\begin{itemize}
\item[$(\dagger)$]  the divisors $E_i|_{f^{-1}(U)}$ are irreducible for any connected open set $U \subseteq X$ in the Euclidean topology.
\end{itemize} If\, $X$ is locally analytically $\IQ$-factorial and $Y$ is $\IQ$-factorial, then $Y$ is locally analytically $\IQ$-factorial.  
\end{lemma}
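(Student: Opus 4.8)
The plan is to verify $\IQ$-factoriality first on the $f$-saturated opens $Y_U=f^{-1}(U)$, for $U\subseteq X$ a connected Euclidean open set, and then to pass to an arbitrary Euclidean open $V\subseteq Y$ through the local (germ-wise) nature of $\IQ$-factoriality. So I would fix such a $U$ and a prime Weil divisor $D$ on $Y_U$; since an arbitrary Weil divisor is a finite $\IZ$-combination of primes, it suffices to produce $m\in\IN$ with $mD$ Cartier. I then split into two cases according to whether $D$ is $f$-exceptional, i.e.\ whether $f(D)$ has codimension $\geq 2$ in $U$.

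The \emph{exceptional case} is the heart of the matter and the only place where $(\dagger)$ is used. If $f(D)$ has codimension $\geq 2$, then $D$ is a prime divisor contained in $\Exc(f)\cap Y_U$, hence in some $E_i\cap Y_U$; by $(\dagger)$ the latter is irreducible, so $D=E_i|_{Y_U}$. Because $Y$ is globally $\IQ$-factorial, there is an $m$ with $mE_i$ Cartier on $Y$. Restricting this Cartier divisor to the open $Y_U$ and invoking $(\dagger)$ once more — so that $E_i\cap Y_U$ is the single prime $E_i|_{Y_U}$ with no hidden splitting — shows that $m\,(E_i|_{Y_U})$ is Cartier, whence $D$ is $\IQ$-Cartier. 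This is precisely the point where $(\dagger)$ is indispensable: it rules out the classical failure mode in which an irreducible global divisor breaks into several analytic branches locally, the phenomenon behind the standard examples of varieties that are globally but not locally analytically $\IQ$-factorial.

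For the \emph{non-exceptional case}, set $\Delta:=f(D)$, a prime divisor on $U$, with $D$ its strict transform. Since $X$ is locally analytically $\IQ$-factorial, $U$ is $\IQ$-factorial, so $m\Delta$ is Cartier on $U$ for some $m$, and $f^{*}(m\Delta)$ is Cartier on $Y_U$. Writing its associated Weil divisor as
\[
f^{*}(m\Delta)=mD+\sum_i c_i\,(E_i|_{Y_U}),\qquad c_i\in\IZ_{\geq 0},
\]
(strict transform plus exceptional part, each $E_i|_{Y_U}$ prime by $(\dagger)$) and using the exceptional case to know that each $E_i|_{Y_U}$ is $\IQ$-Cartier, I conclude that $mD=f^{*}(m\Delta)-\sum_i c_i\,(E_i|_{Y_U})$ is $\IQ$-Cartier, hence so is $D$. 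Combining the two cases, $Y_U$ is $\IQ$-factorial for every connected Euclidean open $U\subseteq X$.

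It remains to upgrade this to arbitrary Euclidean opens $V\subseteq Y$. Since $f$ is surjective and proper, the sets $Y_U$ with $U$ shrinking to a point $x=f(y)$ form a neighborhood basis of each fiber, so every $y\in Y$ has a $\IQ$-factorial Euclidean neighborhood; equivalently every germ $\cO_{Y,y}$ is $\IQ$-factorial, and as $\IQ$-factoriality of a normal complex space is a local property, $Y$ is locally analytically $\IQ$-factorial. I expect the main obstacle to be exactly this last passage: for a general $V$ the map $f|_V$ is no longer proper, so the divisor classification above cannot be rerun on $V$ directly, and the conclusion must instead be extracted from the germ-wise characterization of $\IQ$-factoriality supplied by the $\IQ$-factoriality of all the $Y_U$.
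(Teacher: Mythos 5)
Your treatment of the saturated opens $Y_U=f^{-1}(U)$ is correct and is in substance the paper's own argument: the paper packages your two cases into the localization exact sequence
\[
\bigoplus_i \IQ\, E_i|_{Y_U} \lra \Cl(Y_U)_\IQ \lra \Cl(Y_U\setminus E)_\IQ \lra 0,
\]
identifies $\Cl(Y_U\setminus E)_\IQ \simeq \Cl(U\setminus f(E))_\IQ\simeq\Cl(U)_\IQ \simeq \Pic(U)_\IQ$ using the local analytic $\IQ$-factoriality of $X$, and uses the global $\IQ$-factoriality of $Y$ to make the $E_i|_{Y_U}$ $\IQ$-Cartier --- exactly your exceptional/non-exceptional dichotomy, with $(\dagger)$ playing the same role in both write-ups.

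The genuine gap is your final paragraph. The assertion that ``$\IQ$-factoriality of a normal complex space is a local property'' is false, and it is precisely the phenomenon that \cref{defn:Q-factorial} is designed to keep track of. Two things break. First, $\IQ$-factoriality does not pass to Euclidean open subsets: a prime divisor on $Y_U\cap V$ need not extend to an analytic divisor on $Y_U$ (its closure in $Y_U$ need not be analytic), so knowing that $Y_U$ is $\IQ$-factorial says nothing about smaller opens; this non-heredity is the very failure mode you correctly invoke $(\dagger)$ to control at the level of the $E_i$, and it resurfaces here. Second, even if every divisor germ of $Y$ were $\IQ$-Cartier, $\IQ$-factoriality of an open $V$ requires a \emph{single} $m$ with $mD$ Cartier on all of $V$; the germwise multiples $m_y$ need not be bounded on a non-relatively-compact $V$, so there is no gluing step available. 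Hence your passage from ``every $y$ has some $\IQ$-factorial neighborhood $Y_U$'' to ``every open $V\subseteq Y$ is $\IQ$-factorial'' does not go through.

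It is worth noting that the paper does not attempt this upgrade either: its proof ends with the conclusion that $Y$ is locally analytically $\IQ$-factorial \emph{over} $X$, i.e., exactly the statement that all the $Y_U$ are $\IQ$-factorial, and it is this relative version that is used downstream in \cref{prop:singulartermina} and \cref{prop:Qfactorialterminalization}. So the part of your proof that is correct is all that the paper itself establishes; the discrepancy between the lemma's wording (the absolute notion) and what is actually proved (the relative one) lies in the paper, and your attempt to close it with a locality principle is the one step that is wrong rather than merely missing.
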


\begin{proof}
By assumption $(\dagger)$, any prime exceptional divisor of $f|_{Y_U} \colon Y_U \coloneqq f^{-1}(U) \to U$ is $E_i|_{Y_U}$ \new{for some $i$.} Then
by the localization formula, we have
\[\bigoplus_{i}\IQ \,E_{i}|_{Y_U} \lra \Cl(Y_U)_{\IQ} \lra \Cl(Y_{U} \setminus E)_{\IQ} \lra 0.\] 
Since $X$ is locally analytically $\IQ$-factorial, then $\Cl(Y_{U} \setminus E)_{\IQ}\simeq \Cl(U \setminus f(E))_{\IQ} \simeq \Cl(U)_{\IQ} \simeq \Pic(U)_{\IQ}$. Since $Y$ is $\IQ$-factorial, a multiple of $E_i|_{Y_U}$ is Cartier. We conclude that $\Cl(Y_U)_{\IQ}$ is generated by Cartier divisors; \textit{i.e.}, $Y$ is locally analytically $\IQ$-factorial over $X$.\end{proof}

\begin{proposition}\label{prop:singulartermina}
Let $f \colon Y \to X$ be a projective $\IQ$-factorial terminalization of a complex symplectic variety $X$ with exceptional divisor $E= \sum_i E_i$. Suppose that 
\begin{enumerate}
\item \label{item:locQfact} $X$ is locally analytically $\IQ$-factorial, 
\item \label{item:locGmaction} the formal completion $\hat{X}_x$ of\, $X$ at any singular point $x\in X$ admits a $\mathbb{G}_{m}$-action with only positive weights on the maximal ideal of\, $\mathcal{O}_{\hat{X}_x}$ and on the local symplectic form, 
\item the divisors $E_i|_{f^{-1}(U)}$ are irreducible for any connected open set $U \subseteq X$ in the Euclidean topology.
\end{enumerate} 
Assume further that $U$ is a connected Euclidean neighborhood of some $x \in X$, and let $T \to U$ be a projective terminalization of\, $U$ which is locally analytically $\IQ$-factorial over $U$. Then, up to  shrinking $U$ to a smaller neighborhood of $x$ if necessary, $Y_{U}\coloneqq f^{-1}(U)$ admits a locally trivial deformation to $T$. In particular, $Y_U$ and $T$ have the same singularities; \textit{i.e.}, for any $t \in T$,  there exist a $y \in Y_{U}$ and a formal isomorphism $\hat{T}_t \simeq \hat{Y}_{U, y}$.
\end{proposition}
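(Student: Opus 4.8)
The plan is to reduce to an affine conical model and apply Namikawa's deformation theory of symplectic singularities, exploiting the principle that a $\IQ$-factorial terminalization has only terminal --- hence Poisson-rigid --- singularities, so that its Poisson deformations are automatically \emph{locally trivial}. First I would exhibit $Y_U$ and $T$ as two projective $\IQ$-factorial terminalizations of the \emph{same} local model. The restriction $f|_{Y_U}\colon Y_U\to U$ is a projective terminalization, and hypotheses (1) and (3) together with \cref{lem:Qfact} make it locally analytically $\IQ$-factorial over $U$; the same holds for $T\to U$ by assumption. By the uniqueness of terminalizations in codimension one \cite[Corollary 3.54]{KollarMori1998}, $Y_U$ and $T$ are isomorphic in codimension one, so in particular $H^2(Y_U,\IC)\cong H^2(T,\IC)$; write $\mathfrak{h}$ for this common vector space. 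Using the $\mathbb{G}_m$-action of hypothesis (2), I would shrink $U$ to a $\mathbb{G}_m$-stable neighbourhood contracting to $x$, so that $\hat X_x$ is a conical symplectic singularity, and I would algebraize it to an affine conical symplectic variety $X^{\mathrm{aff}}$ for which $Y_U$ and $T$ correspond to $\IQ$-factorial terminalizations $Y^{\mathrm{aff}}_0,Y^{\mathrm{aff}}_1\to X^{\mathrm{aff}}$.

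The core of the argument is Namikawa's universal graded Poisson deformation. For the conical variety $X^{\mathrm{aff}}$, each terminalization $Y^{\mathrm{aff}}_i$ carries an unobstructed universal Poisson deformation $\mathcal{Y}_i\to\mathfrak{h}$, sitting over the universal Poisson deformation $\mathcal{X}\to\mathfrak{h}/W$ of $X^{\mathrm{aff}}$, with $W$ the Namikawa Weyl group. By \cref{prop:termcanonical} the singular locus of a terminalization has codimension at least $4$, and such terminal symplectic singularities admit no nontrivial local Poisson deformation; hence each family $\mathcal{Y}_i\to\mathfrak{h}$ is locally trivial, every fibre carrying the same singularities as $Y^{\mathrm{aff}}_i$. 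Over a generic $\lambda\in\mathfrak{h}$ the deformation $\mathcal{X}_{[\lambda]}$ has its codimension-two singularities smoothed, so it is its own terminalization; consequently the fibres $\mathcal{Y}_{0,\lambda}$ and $\mathcal{Y}_{1,\lambda}$ are both canonically the terminalization of $\mathcal{X}_{[\lambda]}$, hence isomorphic, the two families differing only over the wall-and-chamber locus where the flop relating $Y^{\mathrm{aff}}_0$ and $Y^{\mathrm{aff}}_1$ takes place.

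Concatenating the locally trivial deformation of $Y^{\mathrm{aff}}_0$ to the fibre $\mathcal{Y}_{0,\lambda}$, the isomorphism $\mathcal{Y}_{0,\lambda}\cong\mathcal{Y}_{1,\lambda}$, and the locally trivial deformation of $\mathcal{Y}_{1,\lambda}$ back to $Y^{\mathrm{aff}}_1$ produces a locally trivial deformation between the two terminalizations. Transporting it across the algebraization --- possibly after shrinking $U$, whence the clause in the statement --- yields the locally trivial deformation of $Y_U$ to $T$. The equality of singularities then follows formally, since any two fibres of a locally trivial family are pointwise formally isomorphic.

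The hard part will be the middle step: guaranteeing that the deformations are locally trivial rather than merely Poisson. This is exactly where the conical hypothesis (2) is indispensable, both for Namikawa's construction of the universal Poisson deformation over $\mathfrak{h}=H^2$ and for the Poisson-rigidity of the terminal strata that forces local triviality. A secondary technical difficulty, responsible for the phrase ``up to shrinking $U$'', is the algebraization of the Euclidean-local model $U$ to an affine conical symplectic variety and the faithful transport of the resulting family back to $U$.
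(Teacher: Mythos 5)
Your proposal is correct and takes essentially the same route as the paper: both reduce, via the conical $\mathbb{G}_m$-structure of hypothesis (2), Namikawa's algebraization lemmas and Artin approximation, to comparing two projective $\IQ$-factorial terminalizations of a single affine conical symplectic variety $Z$, and then conclude by Namikawa's Poisson-deformation theory. The only difference is one of emphasis: the paper invokes \cite[Corollary 25]{Namikawa2008} as a black box and spends its effort on the algebraization step (constructing $W(Y_U)\to Z \leftarrow W(T)$ and verifying their terminality and $\IQ$-factoriality via \cref{lem:Qfact} and the $\mathbb{G}_m$-retraction), whereas you unpack precisely that corollary's proof---universal Poisson deformations, Poisson-rigidity of terminal singularities, generic isomorphism of fibres---while leaving the algebraization as a flagged technicality.
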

\begin{proof}
  We closely follow \cite{Namikawa2008}.  For any $x \in X$, there exists a pointed affine symplectic scheme $(Z,z)$,
  \begin{enumerate*}[(i)]\item with a good $\mathbb{G}_m$-action fixing $z$, \item algebraizing $\hat{X}_x$, \textit{i.e.}, $\hat{X}_x \simeq \hat{Z}_z$, and \item only depending on $\hat{X}_x$ and the weights of the $\mathbb{G}_m$-action; see \cite[Lemma A.2 and the proof of Lemma 22]{Namikawa2008}.\end{enumerate*} The local $\mathbb{G}_m$-action on $\hat{X}_x$ lifts to $\hat{Y}_x \coloneqq \hat{X}_x \times_X Y$ and linearizes a $f|_{\hat{Y}_x}$-ample line bundle; see \cite[Steps 1 and 2 of Proposition~A.7, Lemma A.8]{Namikawa2008}. 

Now, by \cite[Proposition A.5]{Namikawa2008}, there exists a $\mathbb{G}_m$-equivariant projective morphism ${g \colon W\!=\!W(Y_U) \to Z}$ such that $\hat{Y}_x \simeq \hat{W}_z \coloneqq \hat{Z}_z \times_Z W$. By Artin's approximation \cite[Corollary 2.4]{Artin1969}, there exists an analytic open neighborhood $z \in V \subset Z$ such that, up to shrinking $U$, the following diagram commutes: 
\[
\xymatrix{
Y \ar[d]_{f} & Y_U \ar[r]^{\simeq} \ar[d]_{f_U} \ar@{_{(}->}[l] & W_V \ar[d]^{g_V} \ar@{^{(}->}[r] & W \ar[d]^{g} \\
X & U \ar[r]^{\simeq} \ar@{_{(}->}[l] & V \ar@{^{(}->}[r] & Z\rlap{.}
}
\]
Since $Y_{U} $ is a $\IQ$-factorial terminalization of $U$ by \cref{lem:Qfact}, and since the $\mathbb{G}_m$-action retracts $W$ into $W_{V}$, we conclude that $W_{V}$ and $W$ are terminal and $\IQ$-factorial too.
Applying the same construction to $T \to U$, we obtain two projective $\IQ$-factorial terminalizations of $Z$
\[W(Y_U) \overset{g}\lra Z \overset{\;\;g'}\longleftarrow W(T).\]
The result then follows from \cite[Corollary 25]{Namikawa2008}.
\end{proof}

\begin{corollary}\label{prop:Qfactorialterminalization}
Let $f \colon Y \to X$ be a projective $\IQ$-factorial terminalization of a complex symplectic variety $X$ with only quotient singularities. Suppose that the divisor $E|_{f^{-1}(U)}$ is irreducible for any prime exceptional divisor $E \subset Y$ and for any connected open set $U \subseteq X$ in the Euclidean topology. \revi{Assume further that $U$ is a connected Euclidean neighborhood of some $x \in X$, and let $T \to U$ be a projective terminalization of\, $U$. Then, 
up to shrinking $U$ to a smaller neighborhood of $x$, $Y_{U}\coloneqq f^{-1}(U)$ admits a locally trivial deformation to $T$. Furthermore, $Y_U$ and $T$ are locally analytically $\IQ$-factorial over $U$, and they have the same singularities in the sense of \cref{prop:singulartermina}. }
\end{corollary}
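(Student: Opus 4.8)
The plan is to derive the statement from \cref{prop:singulartermina} by verifying its hypotheses (1)--(3) under the assumption that $X$ has only quotient singularities, and by arranging that the auxiliary terminalization $T\to U$ is locally analytically $\IQ$-factorial over $U$, as that proposition requires.

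First I would dispatch hypothesis (1). Near a point $x$, a variety with quotient singularities has an analytic model $\hat X_x \simeq \IC^{2n}/G_x$ with $2n=\dim X$ and $G_x$ finite; since quotients of smooth spaces by finite groups are $\IQ$-factorial and every Euclidean open of such a quotient is again of this form, $X$ (and likewise the open subset $U$) is locally analytically $\IQ$-factorial. For hypothesis (2) I would use the linear scaling action: because $X$ is symplectic, one has $G_x\subset\Sp(2n,\IC)$, and the $\mathbb{G}_m$-action $\lambda\cdot v=\lambda v$ on $\IC^{2n}$ commutes with $G_x$, hence descends to $\hat X_x$. It fixes the image of the origin, acts with weight $1$ on the maximal ideal (so with only positive weights), and scales the descended symplectic form $\sum_i dp_i\wedge dq_i$ with weight $2$; this is exactly the good $\mathbb{G}_m$-action demanded. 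Hypothesis (3), the irreducibility of $E_i|_{f^{-1}(U)}$, is assumed outright.

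It remains to produce $T\to U$ locally analytically $\IQ$-factorial over $U$, which will also deliver the same property for $Y_U$. I would take $T$ to be $\IQ$-factorial (a terminalization may always be chosen so) and invoke \cref{lem:Qfact} for $g\colon T\to U$: the base $U$ is locally analytically $\IQ$-factorial by the first step and $T$ is $\IQ$-factorial, so only the condition $(\dagger)$ on the exceptional divisors of $g$ needs checking. I would transfer this from $Y$: restricting $f$ gives a terminalization $f|_{Y_U}\colon Y_U\to U$, and by the uniqueness of terminalizations up to isomorphism in codimension~$1$ the birational map $T\dashrightarrow Y_U$ over $U$ is an isomorphism in codimension~$1$. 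It therefore matches the prime exceptional divisors of $g$ with those of $f|_{Y_U}$, and for any connected Euclidean open $U'\subseteq U$ it identifies a dense open of $E^T|_{g^{-1}(U')}$ with a dense open of the corresponding $E_i|_{f^{-1}(U')}$, which is irreducible by assumption; hence $E^T|_{g^{-1}(U')}$ is irreducible as well. Thus $(\dagger)$ holds and \cref{lem:Qfact} shows $T$ is locally analytically $\IQ$-factorial over $U$; applying the same lemma to $f$ shows $Y_U$ is too. With all hypotheses in place, \cref{prop:singulartermina} yields the locally trivial deformation of $Y_U$ to $T$ and the coincidence of their singularities.

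The step I expect to be the main obstacle is this last transfer of the irreducibility condition $(\dagger)$ to the abstractly given $T$: one must use the codimension-$1$ uniqueness of terminalizations and check that restricting the codimension-$1$ isomorphism over each connected open $U'\subseteq U$ preserves irreducibility of the restricted exceptional divisors. The remaining verifications---local analytic $\IQ$-factoriality and the weights of the scaling action---are routine once the quotient model $\IC^{2n}/G_x$ is in hand.
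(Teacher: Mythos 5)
Your proposal is correct and follows the same route as the paper, whose entire proof reads: quotient singularities imply hypotheses (1) and (2) of \cref{prop:singulartermina}, and the local analytic $\IQ$-factoriality of $Y$ and $T$ follows from \cref{lem:Qfact}. Your verifications of (1) and (2) via the local model $\IC^{2n}/G_x$ with $G_x\subset \Sp(2n,\IC)$ and the scaling $\mathbb{G}_m$-action are exactly the details the paper leaves implicit. Where you genuinely add something is the treatment of $T$: applying \cref{lem:Qfact} to $T\to U$ requires $T$ to be $\IQ$-factorial and its exceptional divisors to satisfy $(\dagger)$, neither of which the paper verifies; your transfer of $(\dagger)$ from $Y_U$ to $T$ using the codimension-one uniqueness of terminalizations (any terminalization extracts precisely the divisors of discrepancy zero over $U$, so $T\dashrightarrow Y_U$ is an isomorphism in codimension one and matches prime exceptional divisors) is correct and supplies the missing justification. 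Be aware, though, that you thereby replace the arbitrary $T$ of the statement by a $\IQ$-factorial one; this restriction is in fact forced, since the conclusion ``locally analytically $\IQ$-factorial over $U$'' applied to the open set $U$ itself makes $T$ $\IQ$-factorial, and non-$\IQ$-factorial projective terminalizations do exist (e.g.\ the common contraction of the two Lehn--Sorger resolutions of $\IC^4/BT_{24}$), so your reading agrees with the paper's implicit intent rather than weakening the result.
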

\begin{proof}
If $X$ has quotient singularities, then assumptions \eqref{item:locQfact} and \eqref{item:locGmaction} in \cref{prop:singulartermina} hold. \revi{The local analytic $\IQ$-factoriality of $Y$ and $T$ follows from \cref{lem:Qfact}.}
\end{proof} 

\subsection{Irreducible symplectic varieties}
\begin{definition}
A symplectic compact K\"{a}hler\footnote{We refer to \cite[Section~3.3, p. 346]{G1962} or \cite[Section~2.3]{BL2022} for the notion of (possibly singular) compact K\"{a}hler space.} variety $(X, \omega_X)$ is an \emph{irreducible $($holomorphic$)$ symplectic variety} (IHS variety for short) if for any finite quasi-\'{e}tale cover $g \colon X' \to X$, the exterior algebra of reflexive forms $H^0(X',\Omega^{[\bullet]}_{X'})$ on $X'$ is generated by the reflexive pullback $g^*\omega_X$.
\end{definition}

\begin{remark}[Finiteness results for IHS varieties]\label{rem:finitenessIHS}
    It is expected that the number of deformation types of irreducible symplectic varieties is finite in each dimension. For instance, in any given dimension, there are only finitely many diffeomorphism types of irreducible symplectic manifolds with isomorphic Beauville--Bogomolov lattice $(H^2, q)$; see \cite[Theorem 4.3]{Huy2003} and the refinement \cite[Theorem 4.4]{Kamenova2018}. Topological bounds are known in low dimension: The second Betti number of a $4$-dimensional irreducible symplectic orbifold is at most $23$ by \cite{guan.betti, Fu-Menet}, and in the smooth fourfold case is either at most $8$ or $23$ by \cite{guan.betti}, and conjecturally only $5,6,7$ or $23$ according to \cite[Corollary 1.3]{BS2022} (\textit{cf.} \cite[Theorem 9.3]{DHMV2022}); see also the recent survey \cite{BD2022}. It is expected that the same bound of at most $23$ holds for smooth sixfolds too; see \cite{Kurnosov2016, Sawon2022} for partial results. A conjectural bound for the second Betti numbers of irreducible symplectic manifolds in arbitrary dimension is proposed in \cite{KimLaza2020}.
\end{remark}

\begin{proposition}\label{prop:birationaldeformation}
Birational \new{projective} $\IQ$-factorial terminal IHS varieties are deformation equivalent. In particular, a projective $\IQ$-factorial terminalization of an IHS variety is unique up to deformation.
\end{proposition}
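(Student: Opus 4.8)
The plan is to reproduce, in the singular setting, the classical strategy of Huybrechts for smooth hyperk\"{a}hler manifolds (\textit{cf.}~\cite[Theorem 4.6]{Huy2003}), replacing smooth deformation theory and global Torelli by their analogues for primitive/irreducible symplectic varieties established in \cite{BL2022}. Throughout, ``deformation equivalent'' should be understood in the locally trivial sense, which is the notion that preserves the IHS structure and the singularity types along the family.

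First I would reduce to a birational map that is an isomorphism in codimension one. Let $\phi \colon X \dashrightarrow X'$ be a birational map between projective $\IQ$-factorial terminal IHS varieties. Since $K_X$ and $K_{X'}$ are trivial and both varieties are $\IQ$-factorial and terminal, they are minimal models, so by the negativity lemma $\phi$ neither contracts nor extracts a divisor (see \cite[Corollary 3.54]{KollarMori1998}); it therefore restricts to an isomorphism between big open subsets whose complements have codimension at least two. Consequently $\phi$ identifies reflexive line bundles and reflexive two-forms, and because the Beauville--Bogomolov--Fujiki form and the Hodge structure on $H^2$ are computed from data insensitive to codimension-two modifications, $\phi$ induces a Hodge isometry $\phi_* \colon H^2(X,\IZ) \to H^2(X',\IZ)$. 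Being induced by a birational map, $\phi_*$ is moreover a parallel-transport operator, which is what places $X$ and $X'$ in the same connected component of the marked moduli space.

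Next I would set up matching locally trivial deformations and move to a very general member. By \cite{BL2022}, $X$ (and likewise $X'$) admits a universal locally trivial deformation over a smooth base, whose period map to the period domain of $(H^2(X,\IZ),q)$ is a local isomorphism; identifying the two period domains via $\phi_*$, the varieties $X$ and $X'$ map to the same period point $p$. Choosing a generic deformation direction $v$, I would take locally trivial deformations $X_t$ of $X$ and $X_t'$ of $X'$ realizing the period $p_t = p + tv$. The locally trivial deformation theory of \cite{BL2022}, together with \cref{prop:singulartermina} and \cref{prop:Qfactorialterminalization}, guarantees that the $X_t$ and $X_t'$ remain $\IQ$-factorial terminal IHS varieties with the same singularity types as $X$ and $X'$. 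For very general $t$ the Picard rank of $X_t$ drops to the minimal possible value, so no wall-crossing classes survive and the movable and ample cones coincide; the deformed birational map $\phi_t \colon X_t \dashrightarrow X_t'$, still an isomorphism in codimension one, then has empty indeterminacy and is an isomorphism. By global Torelli (\cite{BL2022}), $X_t$ and $X_t'$, having the same period and lying in the same component, are birational, and minimality of the Picard rank forces this birational equivalence to be an isomorphism, recovering $X_t \cong X_t'$ directly.

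The hardest step is exactly this passage from a codimension-one isomorphism to an honest isomorphism on the very general fiber while keeping the family locally trivial: one must control the indeterminacy locus of $\phi$ under deformation (that it disappears once the Picard rank drops, i.e.~that flopping walls in the movable cone do not persist) and simultaneously ensure that the chosen deformation is locally trivial, so that the singularity types are constant and the period-theoretic arguments of \cite{BL2022} apply fiberwise. Granting this, I would conclude by chaining families: $\mathcal{X} \to B$ connects $X$ to $X_t$, $\mathcal{X}' \to B$ connects $X'$ to $X_t'$, and $X_t \cong X_t'$, whence $X$ and $X'$ are locally trivially deformation equivalent. The ``in particular'' statement is then immediate, since any two projective $\IQ$-factorial terminalizations of a fixed IHS variety are crepant over the same base, hence birational (indeed isomorphic in codimension one), and therefore deformation equivalent by the first part.
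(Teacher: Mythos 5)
The paper gives no argument of its own here: its entire proof is the citation \cite[Corollary 6.17]{BL2022}, and your sketch is in effect an outline of how that cited result is proved (Huybrechts' argument for smooth hyperk\"ahler manifolds, transported to the singular setting by Bakker--Lehn). Measured against that target, your outline has the right skeleton --- reduction to an isomorphism in codimension one, an induced isometry on $H^2$, matching locally trivial deformations, specialization to a very general fiber, then chaining families --- and your ``in particular'' step is fine and matches the paper's implicit reasoning (any two projective $\IQ$-factorial terminalizations are isomorphic in codimension one by \cite[Corollary 3.54]{KollarMori1998}). But the two places where you go beyond the skeleton are exactly where the proposal breaks.

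First, the claim that $\phi_*$ is ``moreover a parallel-transport operator, which is what places $X$ and $X'$ in the same connected component of the marked moduli space'' is circular: being a parallel-transport operator means precisely that the isometry is realized along a chain of deformations connecting $X$ to $X'$, i.e.\ it presupposes the deformation equivalence you are trying to prove. In \cite{BL2022} this is a \emph{consequence} of the theorem, obtained by constructing a common deformation directly, not an input. Second, your mechanism for the key step does not parse: a very general locally trivial deformation of a projective $\IQ$-factorial terminal IHS variety carries no nonzero integral $(1,1)$-classes, hence is non-projective with trivial rational Picard group, so on those fibers there is no ample cone, no movable cone, and no ``wall-crossing classes'' --- the cone picture you invoke is a projective-MMP statement that cannot be run there. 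The actual argument (Huybrechts in the smooth case, Bakker--Lehn here) deforms the graph of $\phi$ as a cycle, equivalently shows the two marked varieties are inseparable points of the marked moduli space, and deduces the isomorphism on very general fibers from that. Your fallback via global Torelli does not repair the gap, because the global Torelli theorem of \cite{BL2022} requires $b_2 \geq 5$ (as this paper itself recalls in Section~\ref{sec:criteria}), an assumption absent from the proposition; \cite[Corollary 6.17]{BL2022} is proved without it precisely to avoid that restriction. So as written, the crucial passage from a codimension-one isomorphism to an honest isomorphism is either unproved or established only under an extra hypothesis.
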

\proof
See \cite[Corollary 6.17]{BL2022}.
\endproof

\begin{definition}
An automorphism \(\varphi \colon X \to X\) of a symplectic variety $(X, \omega_X)$ is  \textit{symplectic} if \(\varphi^*\omega_X=\omega_X\).
\end{definition}

\begin{lemma} 
Let $G$ be a symplectic group acting on a symplectic variety $X$. Any irreducible components of the fixed locus of\, $G$ has even dimension.
\end{lemma}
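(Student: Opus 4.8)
The plan is to reduce the statement to linear algebra on the tangent space at a general fixed point, using the standard fact that the invariant subspace of a symplectic representation is again symplectic, and then to propagate this from the regular locus of $X$ to components that may be buried in the singular locus.

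\textbf{Linear-algebra input.} First I would record the pointwise statement. Let $(V,\omega)$ be a finite-dimensional symplectic vector space on which $G$ acts linearly preserving $\omega$, and write $V^{G}$ for the subspace of invariants. I claim $\omega|_{V^{G}}$ is nondegenerate, so that $\dim V^{G}$ is even. The symplectic orthogonal $(V^{G})^{\perp}$ is $G$-invariant, since for $v\in(V^{G})^{\perp}$, $w\in V^{G}$ and $g\in G$ one has $\omega(gv,w)=\omega(gv,gw)=\omega(v,w)=0$. Moreover the $\omega$-adjoint of each $g$ is $g^{-1}$, because $\omega(gv,u)=\omega(v,g^{-1}u)$; hence (for finite $G$, which covers the groups relevant here, or more generally whenever $V^{G}$ admits a $G$-stable complement) the averaging projector $P=\frac{1}{|G|}\sum_{g\in G}g$ onto $V^{G}$ is self-adjoint for $\omega$. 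Then for any $0\neq w\in V^{G}$, choosing $v$ with $\omega(w,v)\neq 0$ gives $\omega(w,Pv)=\omega(Pw,v)=\omega(w,v)\neq 0$ with $Pv\in V^{G}$, so the radical $V^{G}\cap(V^{G})^{\perp}$ vanishes and $\omega|_{V^G}$ is nondegenerate.

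\textbf{Reduction to a smooth fixed point.} Let $F$ be an irreducible component of the fixed locus $\Fix(G)$, and suppose first that $F\not\subseteq\Sing(X)$. Then a dense open subset of $F$ consists of smooth points of $F$ lying in $X^{\reg}$. At such a point $x$ the finite action linearizes, so $T_{x}F=(T_{x}X)^{G}$, while $\omega_X$ restricts to the genuine symplectic form $\omega_x$ on $T_{x}X$, which is $G$-invariant. Applying the linear-algebra input to $(T_xX,\omega_x)$ shows $\dim F=\dim(T_{x}X)^{G}$ is even.

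\textbf{Main obstacle and its resolution.} The one delicate case is a component $F$ contained entirely in $\Sing(X)$, where the naive tangent-space computation is unavailable. To handle it I would argue by induction on $\dim\Sing(X)$ using Kaledin's stratification \cite[Theorem~2.3]{Kal06}: $X$ is a finite disjoint union of smooth locally closed symplectic leaves of even dimension, preserved by $G$ since $G$ fixes $\omega_X$, and along the leaf $L$ through a general point $x\in F$ there is a formal product decomposition $\widehat{X}_{x}\cong\widehat{L}_{x}\times\widehat{Z}_{0}$ with $Z$ a transverse conical symplectic slice of strictly smaller dimension. The $G_{x}$-fixed locus splits as the product of the fixed locus in the smooth symplectic leaf $L$ (even-dimensional by the previous paragraph) and the fixed locus in the slice $Z$. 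Since $\dim\Sing(Z)<\dim\Sing(X)$, the parity of the second factor follows by induction, the base case being precisely the regular situation already treated. I expect this inductive bookkeeping along the transverse slices to be the technical heart of the argument; the linear-algebra step is routine, but controlling components that sit inside deeper strata of the singular locus is what requires the full strength of the symplectic stratification.
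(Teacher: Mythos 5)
Your linear-algebra step and your treatment of components meeting $X^{\reg}$ coincide with the paper's own proof: there the smooth case is exactly the observation $T_{\Fix(G)}=\left(\left(T_X\right)|_{\Fix(G)}\right)^G$, which is symplectic and hence even-dimensional, and the singular case is likewise handled via the stratification of \cite[Theorem~2.3]{Kal06}. The difference lies in \emph{how} the stratification is used, and that is where your argument has a gap. The paper needs only that the strata are smooth, locally closed, symplectic and $G$-invariant: if $F$ is an irreducible component of $\Fix(G)$ and $L$ is the stratum containing its generic point, then near a general $x\in F$ one has $F\cap L$ open dense in $F$ and $\Fix(G)\cap L=\Fix(G|_{L})$, so $\dim F=\dim_{x}\Fix(G|_{L})$ is even by the smooth case applied to $L$. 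No transverse slice and no induction are required.

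Your version instead invokes the formal product decomposition $\widehat{X}_{x}\simeq\widehat{L}_{x}\times\widehat{Z}_{0}$ and asserts that the fixed locus splits as a product of the fixed loci in the two factors. For that you need the decomposition itself to be chosen $G$-equivariantly, which is not part of \cite[Theorem~2.3]{Kal06} as cited; for a finite group fixing $x$ such an equivariant splitting can be arranged, but it requires a separate averaging/equivariant Darboux-type argument that you do not supply. Your induction is also not well-founded when the stratum through $x$ is $0$-dimensional: then $\dim Z=\dim X$ and $\dim\Sing(Z)=\dim\Sing(X)$ locally, so the inductive parameter does not drop (the case is harmless, since $F$ is then a point, but it must be addressed). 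Finally, the slice bookkeeping is superfluous for the component you care about: since $x$ is general in $F$ and $L$ is locally closed with $x\in L$, one has $\widehat{F}_{x}\subseteq\widehat{L}_{x}\times\{0\}$, so only the leaf factor contributes to $\dim F$. Once you drop the slice and the induction and use only the $G$-invariance of the strata, your proof becomes correct --- and becomes precisely the paper's proof. (Your explicit attention to finiteness of $G$, needed both for linearization at a fixed point and for the averaging projector, is a point the paper leaves implicit and is well taken.)
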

\begin{proof}
   If $X$ is smooth, the fixed locus $\Fix(G)$ is smooth and  symplectic as its tangent bundle is the $G$-fixed part of the tangent bundle of $X$, thus symplectic and even-dimensional: 
   \[T_{\Fix(G)}=\left(\left(T_X\right)|_{\Fix(G)}\right)^G.\]
   In general, we stratify $X$ into smooth $G$-invariant locally closed symplectic subsets as in \cite[Theorem~2.3]{Kal06} and reduce to the argument in the smooth case.
\end{proof}

\begin{proposition}\label{prop: terminaliz and quotients of IHS}
The sets of symplectic varieties and of IHS varieties are both closed under \new{projective} terminalizations or finite quotients by symplectic groups. 
\end{proposition}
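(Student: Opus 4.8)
The statement bundles four assertions: that symplectic and IHS varieties are each preserved under (i) projective terminalizations and (ii) finite quotients by symplectic groups. The plan is to isolate two structural inputs and feed all four cases through them. The first input I will record as a self-contained lemma: finite quasi-\'etale covers preserve both properties. The second is the birational invariance of the spaces of global reflexive differentials among klt varieties, via the extension theorem of Greb--Kebekus--Kov\'acs--Peternell: for klt $X$ and any resolution $\pi\colon \widetilde X \to X$ one has $\pi_*\Omega^p_{\widetilde X} = \Omega^{[p]}_X$, so that for birational klt varieties $X,X'$ a common resolution $\widetilde Z$ yields canonical isomorphisms $H^0(X,\Omega^{[\bullet]}_X) \cong H^0(\widetilde Z,\Omega^\bullet_{\widetilde Z}) \cong H^0(X',\Omega^{[\bullet]}_{X'})$ carrying $\omega_X$ to $\omega_{X'}$. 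Throughout I use that symplectic (Gorenstein canonical, by \cref{prop:termcanonical}) and terminal singularities are klt, hence rational and Cohen--Macaulay, and Zariski--Nagata purity of the branch locus.

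For the symplectic assertions: if $f\colon Y\to X$ is a terminalization then $K_Y = f^*K_X$ is Cartier and trivial, and $Y$ is terminal hence rational, so Gorenstein; the reflexive pullback $\omega_Y := f^{[*]}\omega_X$ is a closed reflexive $2$-form whose top power $\omega_Y^n = f^*(\omega_X^n)$ is a nowhere-vanishing section of the trivial bundle $\mathcal{O}_Y(K_Y)$ (the zero locus of a function being pure codimension one, yet avoiding the big open $Y^{\reg}$), so $\omega_Y$ is non-degenerate on $Y^{\reg}$ and $Y$ is symplectic. For a quotient $q\colon X\to X/G$ by a symplectic group, the preceding lemma shows each component of $\Fix(g)$ has even codimension in the even-dimensional $X$; hence no ramification divisor occurs and $q$ is quasi-\'etale, $X/G$ is klt (finite quotient of klt) and normal with rational singularities, and the $G$-invariant $\omega_X$ descends on the \'etale-in-codimension-one locus and extends reflexively to a symplectic form. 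The same computation in the cover direction gives the announced Lemma: a finite quasi-\'etale $g\colon X'\to X$ of a symplectic $X$ is symplectic ($X'$ is klt hence rational, $K_{X'}=g^*K_X$ is trivial Cartier, and $g^{[*]}\omega_X$ is symplectic), and if moreover $X$ is IHS and $X'$ is compact K\"ahler then $X'$ is IHS, since every quasi-\'etale cover of $X'$ is one of $X$.

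For the IHS assertions I verify the defining generation property cover by cover. Compactness is clear, and K\"ahlerness is inherited: a projective morphism over a compact K\"ahler base is K\"ahler, and a finite quotient of a compact K\"ahler space is compact K\"ahler by averaging the K\"ahler form (\textit{cf.}~\cite{BL2022}). Given a terminalization $f\colon Y\to X$ with $X$ IHS and any finite quasi-\'etale $Y'\to Y$, let $X'$ be the normalization of $X$ in the function field of $Y'$; then $Y'\dashrightarrow X'$ is birational, and since $Y'\to Y$ is \'etale over $Y^{\reg}$ (purity) and $f$ is an isomorphism over a big open $V\subseteq X^{\reg}\cap Y^{\reg}$, the cover $X'\to X$ is \'etale over $V$, hence quasi-\'etale by purity over $X^{\reg}$. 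As $X$ is IHS, $H^0(X',\Omega^{[\bullet]}_{X'}) = \IC[\omega_{X'}]$, and birational invariance gives $H^0(Y',\Omega^{[\bullet]}_{Y'}) = \IC[\omega_{Y'}]$, so $Y$ is IHS. For a quotient $X/G$ with $X$ IHS and any finite quasi-\'etale $h\colon W\to X/G$, let $\widetilde W$ be the normalization of $W\times_{X/G}X$; both $q$ and $h$ being quasi-\'etale forces $\widetilde W\to X$ quasi-\'etale, so $H^0(\widetilde W,\Omega^{[\bullet]}_{\widetilde W}) = \IC[\omega_{\widetilde W}]$ by IHS of $X$, and writing $W=\widetilde W/H$ as a quasi-\'etale quotient yields $H^0(W,\Omega^{[\bullet]}_W) = H^0(\widetilde W,\Omega^{[\bullet]}_{\widetilde W})^H = \IC[\omega_{\widetilde W}]^H = \IC[\omega_W]$, using that $H$ acts symplectically. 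Hence $X/G$ is IHS.

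I expect the main obstacle to be the IHS generation condition rather than the symplectic part: the fundamental group of $Y^{\reg}$ can genuinely differ from that of $X^{\reg}$ (indeed this change is a central theme of the paper), so finite quasi-\'etale covers of $Y$ and of $X$ are \emph{not} in bijection and one cannot argue naively. The device that rescues the argument is to pass through the normalization $X'$ of $X$ in $K(Y')$ and invoke purity of the branch locus to certify that $X'\to X$ is quasi-\'etale; the delicate point is checking \'etaleness over the big open $V$ and extending it across the codimension-$\ge 2$ locus. Once covers are matched in this way, the birational invariance of reflexive differentials and the invariance of the symplectic form close every case uniformly.
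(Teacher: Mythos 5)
Your proof is correct, and it overlaps with the paper's argument on the two quotient statements while genuinely diverging on the terminalization statements. For quotients, you and the paper use the same skeleton: fixed loci of symplectic automorphisms have even codimension, so $q\colon X\to X/G$ is quasi-\'etale, the invariant form descends, rationality of $X/G$ follows (the paper cites \cite[Proposition~5.13]{KollarMori1998}, you use klt descent), and a quasi-\'etale cover $W \to X/G$ is matched with a quasi-\'etale cover $\widetilde W \to X$ via the normalized fiber product. The paper finishes this last step (following \cite[Lemma~2.2]{Matsushita2015}) by a dimension count, $\dim\langle\omega_Z\rangle = \dim H^0\bigl(Z,\Omega^{[\bullet]}_Z\bigr) \geq \dim H^0\bigl(X',\Omega^{[\bullet]}_{X'}\bigr) \geq \dim\langle\omega_{X'}\rangle$, forcing equalities; you instead take $H$-invariants, using that a symplectic action is trivial on $\IC[\omega_{\widetilde W}]$ --- a cleaner finish with the same content. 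The real difference is the terminalization half: the paper disposes of the symplectic case with ``by construction'' and of the IHS case by citing \cite[Proposition~12]{Schwald2020}, whereas you prove both directly --- crepancy forces $\omega_Y^n = f^*(\omega_X^n)$ to be a unit, and quasi-\'etale covers $Y'\to Y$ are transported to quasi-\'etale covers $X'\to X$ by normalizing $X$ in $K(Y')$ and invoking purity over the big locus where $f$ is an isomorphism, after which birational invariance of reflexive forms for klt varieties (via \cite{GrebKebekusKovacsPeternell}, \cite{KS2021}) identifies the two form algebras. This self-contained argument is genuine added value (it is essentially what the citation to Schwald encapsulates), and it correctly isolates the one real subtlety: $\pi_1(Y^{\reg})$ and $\pi_1(X^{\reg})$ differ, so covers cannot be matched naively.

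One minor repair: in the symplectic-terminalization step, your parenthetical justification for the non-vanishing of $\omega_Y^n$ (``the zero locus of a function being pure codimension one, yet avoiding the big open $Y^{\reg}$'') is circular as written, since non-degeneracy on $Y^{\reg}$ is exactly what is being proved, and the locus where $f$ is an isomorphism is dense but \emph{not} big in $Y$ (the exceptional locus of a crepant terminalization is typically a divisor). The correct justification is already implicit in your displayed identity: $\omega_Y^n$ and $f^*(\omega_X^n)$ are sections of the Cartier divisor $K_Y = f^*K_X$ agreeing on the dense iso-locus, hence everywhere, and the pullback of a nowhere-vanishing function is nowhere vanishing; equivalently, the zero divisor of $\omega_Y^n$ is the discrepancy divisor of $f$, which vanishes by crepancy.
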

\proof
The terminalization of a symplectic variety is symplectic by construction. Let $q\colon X \to X/G$ be a symplectic quotient of a symplectic variety. Any $G$-invariant symplectic form descends to $X/G$, and $X/G$ has rational singularities by \cite[Proposition 5.13]{KollarMori1998}. Hence, $X/G$ is symplectic.

Now suppose  that $X$ is an IHS variety. The statement for \new{projective} terminalization is proved in \cite[Proposition 12]{Schwald2020}.\footnote{The notion of primitive symplectic in \textit{loc.\ cit.}~stands for IHS varieties.} The statement for symplectic quotients $q\colon (X, \omega_X) \to X/G$ is proved in \cite[Lemma~2.2]{Matsushita2015} when $X$ is smooth. The argument in the singular case is essentially identical. We give a 
self-contained proof for completeness. Let $g \colon X' \to X/G$ be a quasi-\'{e}tale cover of $X/G$ and $Z$ be the normalization of an irreducible component of $X \times_{X/G} X'$ that maps surjectively onto $X$ and $X'$. All maps in the commutative square
\[
\begin{tikzcd}[scale=1]
   Z \arrow[d, "\tilde{g}"'] \arrow[r, "q'"] & X' \arrow[d, "g"]\\
   X \arrow[r, "q"'] & X/G
\end{tikzcd}
\]
are quasi-\'{e}tale. Since $X$ is an IHS variety, the algebra $H^0(Z, \Omega^{[\bullet]}_{Z})$ is generated by the pullback $\omega_{Z} \coloneqq \tilde{g}^*(\omega_X)$. Since $q'$ is quasi-\'{e}tale and $\tilde{g}^*(\omega_X)$ descends to a symplectic form $\omega_{X'}$ on $X'$, the inequalities
\[\dim \langle \omega_{Z} \rangle =  \dim H^0\left({Z}, \Omega^{[\bullet]}_{{Z}}\right) \geq \dim H^0\left(X', \Omega^{[\bullet]}_{X'}\right) \geq \dim \langle \omega_{X'} \rangle\]
are identities, and so $H^0(X', \Omega^{[\bullet]}_{X'})$ is generated by $\omega_{X'}$. Hence, $X/G$ is an IHS variety.
\endproof

\begin{corollary}
Let $G$ be a finite symplectic group acting on a symplectic variety $X$ or an IHS variety. A \new{projective} terminalization of the quotient $X/G$ is symplectic or an IHS variety, respectively.
\end{corollary}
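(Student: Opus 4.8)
The final statement to prove is the Corollary:

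\begin{quote}
Let $G$ be a finite symplectic group acting on a symplectic variety $X$ or an IHS variety. A projective terminalization of the quotient $X/G$ is symplectic or an IHS variety, respectively.
\end{quote}

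\textbf{Proof proposal.}
The plan is to combine the two stability results already packaged in \cref{prop: terminaliz and quotients of IHS}, which asserts that the classes of symplectic varieties and of IHS varieties are each closed under projective terminalizations and under finite quotients by symplectic groups. The key observation is that a terminalization of a quotient is obtained by composing these two operations, so the corollary should follow by simply factoring the construction and applying the proposition twice.

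Concretely, I would first treat the symplectic case. Given a finite symplectic group $G$ acting on a symplectic variety $X$, I would form the quotient map $q \colon X \to X/G$ and invoke the second half of \cref{prop: terminaliz and quotients of IHS} to conclude that $X/G$ is again a symplectic variety: the $G$-invariant symplectic form descends, and rational singularities are preserved by the cited result of Koll\'ar--Mori. Once $X/G$ is known to be symplectic, I would take a projective terminalization $p \colon Y \to X/G$ (which exists by \cite[Corollary 1.4.3]{BCHM}, as recalled after the definition of terminalization) and apply the first half of \cref{prop: terminaliz and quotients of IHS} to deduce that $Y$ is symplectic. This chain $X \rightsquigarrow X/G \rightsquigarrow Y$ gives the symplectic half of the statement.

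The IHS case proceeds along exactly the same two-step route. Starting from an IHS variety $X$ with a symplectic $G$-action, the quotient $X/G$ is IHS by the symplectic-quotient half of \cref{prop: terminaliz and quotients of IHS}, whose proof reduces the generation statement for reflexive forms on an arbitrary quasi-\'etale cover to the known generation on $X$ via the commutative square of quasi-\'etale maps. Then a projective terminalization $Y \to X/G$ is IHS by the terminalization half of the same proposition (itself relying on \cite[Proposition 12]{Schwald2020}). Thus $Y$ is an IHS variety, completing the proof.

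I do not expect any genuine obstacle here, since the corollary is essentially a formal composition of the two closure properties already established. The only point requiring a sentence of care is that the terminalization is applied to $X/G$ rather than to $X$ directly, and one must first confirm that $X/G$ lies in the relevant class (symplectic, resp.\ IHS) so that ``terminalization of a symplectic (resp.\ IHS) variety'' is meaningful; this is precisely guaranteed by the quotient half of \cref{prop: terminaliz and quotients of IHS}. Hence the proof is a short two-line deduction: apply the proposition to the quotient, then to its terminalization.
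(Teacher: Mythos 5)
Your proposal is correct and is exactly the argument the paper intends: the corollary is stated without proof precisely because it is the two-step composition of the closure properties in \cref{prop: terminaliz and quotients of IHS} (quotient first, then terminalization), which is what you carry out. No gaps; the only care point you flag—that one must know $X/G$ is symplectic/IHS before terminalizing—is handled correctly by the quotient half of the proposition.
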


\begin{remark}\label{rmk:finitesymplgroup}
   \revi{ Finite groups of symplectic automorphisms of known irreducible symplectic manifolds have been extensively studied in the literature. Particularly relevant for the present paper are the classifications of finite symplectic groups acting on $\K3^{[2]}$ by H\"{o}hn and
Mason in \cite{HM} (see also the preliminary results in \cite{Mon2013}), and on $\K_2(A)$ by Mongardi, Tari and Wandel in \cite[Section~5]{Mon-Tar-Wan}. See also \cite{Mongardi2016, KMO2023}.}

In view of Assumption~\ref{assumption:codim2}, O'Grady examples instead are less interesting for our classification purposes. Indeed, the only symplectic automorphism of an irreducible symplectic manifold of O'Grady 10 type is the identity by \cite{Giovenzana.Grossi.Onorati.Veniani.Symplecticrigidity}. On the other hand, all symplectic automorphisms of an irreducible symplectic manifold of O'Grady 6 type act trivially on the second cohomology group by \cite{Grossi.Onorati.Veniani:sp.bir.transf.OG6}, 
and their fixed loci have codimension at least $4$ by \cite[Section 6]{Mongardi.Wandel:OG.trivial.action}, so they do not satisfy Assumption~\ref{assumption:codim2}.
\end{remark}

\section{Remarks on the criteria of the classification}\label{sec:criteria}
Given an irreducible symplectic variety $X$, it is possible to obtain new such varieties by taking quotients, birational contractions or terminalizations. When one classifies birational modifications of symplectic quotients, in order to avoid redundancy, it is convenient to restrict to projective $\bQ$-factorial terminalizations~$Y$ of symplectic quotients $X/G$ by a finite group $G$, with simply connected regular locus $Y^{\reg}$. If $X$ has $\mathbb{Q}$-factorial singularities, this amounts to Assumptions~\ref{assumption:codim2} and~\ref{ass:groupaction}.
 
Symplectic quotients $X/G$ of irreducible symplectic varieties $X$ are irreducible symplectic varieties themselves; see \cref{prop: terminaliz and quotients of IHS}. However, they are less interesting from a classification viewpoint as the geometry of $X/G$ can be essentially recovered from the $G$-equivariant geometry of $X$. For instance, we have the following: 
\begin{itemize}
    \item The rational cohomology of $X/G$ is isomorphic to the $G$-invariant part of the rational cohomology of~$X$:
    \[H^*(X/G, \IQ)=H^*(X, \IQ)^G.\]
    Note, however, that the relation between the integral cohomology $H^*(X/G, \IZ)$ and $H^*(X, \IZ)$ is more subtle, and even for a single involution, their connection is not trivial; see for instance \cite{Kapfer-Menet}.
    \item The fundamental group of the regular locus of $X/G$ is an extension \revi{of $G$ by $\pi_1(X^{\reg})$:}
    \[1\lra\pi_1(X^{\reg}) \lra \pi_1((X/G)^{\reg}) \lra G\lra 1.\]
    In particular, if $X^{\reg}$ is simply connected, then $\pi_1((X/G)^{\reg}) \simeq G$ 
    \item The deformations of $X/G$ are the deformations of $X$ preserving the group action.
\end{itemize}  
\medskip

More conceptually, the building blocks of the Beauville--Bogomolov decomposition are defined only up to quasi-\'{e}tale cover. Nonetheless, for the symplectic factors of the decomposition, one can actually choose a distinguished representative in the class of all quasi-\'{e}tale covers of a fixed symplectic factor, namely its universal quasi-\'{e}tale cover. In other words, the irreducible symplectic factors $Y$ in the Beauville--Bogomolov decomposition can always be chosen so that the regular locus $Y^{\reg}$ is algebraically simply connected, \textit{i.e.}, the algebraic fundamental group $\hat{\pi}_1(Y^{\reg})$ of the regular locus is trivial. This is indeed possible since the algebraic fundamental group of an irreducible symplectic variety is known to be finite by \cite[Corollary~13.2]{GrebGuenanciaKebekus}.\footnote{Actually, the same is expected to hold for the topological fundamental group: This is implicit 
in \cite[Section~13]{GrebGuenanciaKebekus}, explicitly conjectured in \cite[Conjecture 3]{Wang2022} and proved by Engel, Filipazzi, Greer, Mauri and Svaldi in \cite{FMS} under the assumption that $Y$ admits a Lagrangian fibration.} The conclusion is that we should only classify irreducible symplectic varieties $Y$ with $\hat{\pi}_1(Y^{\reg})=1$ (conjecturally, $\pi_1(Y^{\reg})=1$) as all other irreducible symplectic varieties are quasi-\'{e}tale quotients of them. 
\medskip

Birational transformations of $X/G$ are also potential new sources of irreducible symplectic varieties. However, to preserve the non-degeneracy of the symplectic form, one is only allowed to extract divisors with discrepancy zero. Any such birational modification is dominated by a $\bQ$-factorial terminalization $Y$, see \cite[Corollary 1.4.3]{BCHM}, and moreover it can be recovered by the Mori theory of $Y$ itself. Therefore, it is superfluous to study symplectic birational modifications of $X/G$ other than its $\bQ$-factorial terminalizations. Actually, it has always been clear in the literature that $\bQ$-factorial and terminal irreducible symplectic varieties form a particularly agreeable class of varieties for their well-behaved birational and deformation theory: 
\begin{itemize}
\item Birational projective $\bQ$-factorial terminal irreducible symplectic varieties are deformation equivalent. In particular, any two projective $\bQ$-factorial terminalizations of the same irreducible symplectic variety are deformation equivalent. See \cite[Corollary 6.17]{BL2022}.
\item Deformations of projective $\bQ$-factorial terminal irreducible symplectic varieties are equisingular. In particular, the Betti numbers and the fundamental group of the regular locus are deformation invariants. See \cite{Namikawa2006}.
\item The global Torelli theorem holds for $\bQ$-factorial K\"{a}hler terminal irreducible symplectic varieties with $b_2 \geq 5$. See \cite[Theorem 1.1]{BL2022}.\footnote{It is expected that the assumption on the Betti number can be removed, and this is known if the irreducible symplectic variety has only quotient singularities; see \cite{Menet2020}.}
\end{itemize} 
\medskip

Also note  that if Assumption~\ref{assumption:codim2} holds, then \revi{the cohomology class of each exceptional divisor of a terminalization $Y \to X/G$ in $H^{2}(Y, \mathbb{Z})$ remains of type $(1,1)$ only along a divisor in the Kuranishi family of $Y$.} This implies that the general deformation of $Y$ cannot come from the quotient-terminalization construction and should be considered as an honestly new deformation type of irreducible symplectic variety. 
\medskip

Finally, observe that the projectivity \new{condition can always be achieved by \cite[Corollary 1.4.3]{BCHM}. The projectivity of terminalizations obtained by gluing local terminalization is discussed in Sections~\ref{sec:terminalization} and~\ref{remark:projectiveterminalization}.} 

\section{Induced symplectic automorphisms and terminalizations} \label{sec: strategy}

In this section, we show that terminalizations of quotients of $S^{[n]}$ or $K_n(A)$ by induced symplectic automorphism groups can be obtained via a single explicit blowup \new{away from the dissident locus;} see \cref{prop:blowup}.

\subsection{Induced automorphism}
Let $X$ be either a Hilbert scheme $S^{[n]}$ of a K3 surface $S$, or a generalized Kummer variety $\K_n(A)$ associated to an abelian surface $A$.

\begin{definition}
An automorphism $\phi \colon S \to S$ of a K3 surface $S$ induces an automorphism of $S^{[n]}$. We call such an automorphism \new{of $S^{[n]}$} \emph{induced}.
\end{definition}

\begin{definition}
An automorphism $\phi \colon A \to A$ of the abelian surface $A$ (not necessarily fixing the origin) induces an automorphism $\phi^{(n+1)} \colon A^{(n+1)} \to A^{(n+1)}$ of its symmetric product $A^{(n+1)}$. If $\phi^{(n+1)}$ preserves $A^{(n+1)}_0$, then it lifts to an automorphism of $\K_{n}(A)$. We call such an automorphism \new{of $\K_{n}(A)$} \emph{induced}.
\end{definition}

Note that an induced automorphism on $S^{[n]}$ or $\K_n(A)$ is symplectic if and only if the underlying automorphism of $S$ or $A$ is symplectic.

\subsection{Codimension 2 fixed loci of induced authomorphisms} \label{sec:codim2 fixed loci}

Let $G$ be a finite group of induced symplectic automorphisms of $X=\K_{n}(A)$ or $S^{[n]}$. In this section, we describe the connected components of codimension~$2$ fixed by automorphisms $g \in G$. The importance of these loci lies in the fact that their images in $X/G$ are the centers of blowups giving a terminalization $Y \to X/G$. Their geometry is severely constrained: We show that they occur only if the orders of $g$ and $n$ are either $2$ or $3$. Further, these fixed components are all of $\K3$ or $\K3^{[2]}$ type; see also \cite[Theorems~1.0.2 and 1.0.4]{KMO2023}.

Denote the Hilbert--Chow morphism by $\epsilon \colon S^{[n]}\to S^{(n)}$ or $\epsilon \colon \K_n(A) \to A^{(n+1)}_0$ as in Section~\ref{sec:notation}.

\begin{lemma} \label{lem:morphism generically finite}
Let \(G\) be a finite group of symplectic automorphisms of\, \(X\). Let $F \subset X$ be a subvariety of codimension~$2$ fixed by an element of the group. 
Then the restriction $\epsilon\vert_{F}$ is generically finite.
\end{lemma}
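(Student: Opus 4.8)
The plan is to prove that $\epsilon\vert_F$ is generically finite by contradiction, supposing that $\epsilon\vert_F$ contracts $F$ to something of positive fiber dimension, and then deriving a violation of the semismallness of the symplectic resolution $\epsilon$ (see \cref{thm:semismall}, applied with $f = \epsilon$). The key geometric input is that $F$ is fixed by a finite-order symplectic automorphism $g$, hence $F$ is itself a smooth symplectic subvariety of $X$ of codimension~$2$; in particular $\dim F = \dim X - 2 = 2n-2$ (in the Hilbert scheme case) and $F$ carries a symplectic form, so it has no rational curves along which it could be uniformly contracted in a way compatible with the symplectic structure of the target stratum.

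First I would recall that $\epsilon$ is a symplectic resolution of $S^{(n)}$ (respectively $A^{(n+1)}_0$), so by \cref{thm:semismall} it is semismall: $\dim(X \times_{S^{(n)}} X) \le \dim X$. Concretely, semismallness means that for every irreducible closed subset $Z$ of the target, $2\dim \epsilon^{-1}(Z) - \dim Z \le \dim X$, i.e.\ $\dim\epsilon^{-1}(Z) \le \tfrac{1}{2}(\dim X + \dim Z)$. Now suppose for contradiction that $\epsilon\vert_F$ is \emph{not} generically finite. Then the image $W \coloneqq \epsilon(F)$ satisfies $\dim W < \dim F = \dim X - 2$, and the general fiber of $F \to W$ has dimension $d \coloneqq \dim F - \dim W \ge 1$. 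Since $F \subseteq \epsilon^{-1}(W)$, we have $\dim \epsilon^{-1}(W) \ge \dim F = \dim X - 2$. Plugging into the semismallness inequality gives $\dim X - 2 \le \dim\epsilon^{-1}(W) \le \tfrac12(\dim X + \dim W)$, whence $\dim W \ge \dim X - 4$. Combined with $\dim W \le \dim X - 3$ (from $\dim W < \dim X - 2$), this forces $\dim W = \dim X - 3$ or $\dim W = \dim X - 4$, so the naive dimension count alone does not yet contradict semismallness, and one must use the symplectic geometry of $F$.

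The decisive step is therefore to exploit that $F$ is symplectic. Because $g$ is a symplectic automorphism of finite order, its fixed locus $F$ is smooth and the restriction of $\omega_X$ to $F$ is again a symplectic (in particular, nondegenerate) $2$-form, so $F$ is a holomorphic symplectic variety; this is exactly the lemma proved earlier in the excerpt ($T_{\Fix(G)} = ((T_X)\vert_{\Fix(G)})^G$ is symplectic). A positive-dimensional contraction $\epsilon\vert_F \colon F \to W$ would restrict to a contraction on the general fiber, which is a positive-dimensional projective subvariety $\Phi$ of $X$ contained in a fiber of the symplectic resolution $\epsilon$. But fibers of $\epsilon$ are contracted by the resolution of the symmetric product, and the composite $F \hookrightarrow X \xrightarrow{\epsilon} S^{(n)}$ cannot drop dimension on a symplectic $F$: restricting $\omega_X$ to the fiber direction would annihilate the symplectic form along the tangent directions of $\Phi$, contradicting nondegeneracy of $\omega_X\vert_F$. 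More precisely, the tangent space to a fiber of $\epsilon$ over a point of $W$ is isotropic for $\omega_X$, so a positive-dimensional fiber of $\epsilon\vert_F$ would give a nonzero isotropic subspace of $T_F$ on which $\omega_X\vert_F$ degenerates, contradicting that $\omega_X\vert_F$ is symplectic.

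The main obstacle I anticipate is making the last isotropy argument rigorous at the level of the actual geometry of $\epsilon$, rather than by dimension count alone: one needs to know that the positive-dimensional fibers of the Hilbert--Chow morphism are isotropic (equivalently, that $\epsilon$ is semismall and its exceptional fibers are Lagrangian/isotropic in the relevant strata), and that this isotropy is inherited by the intersection with the symplectic subvariety $F$. I would handle this by combining \cref{thm:semismall} with the standard fact that the fibers of a crepant (symplectic) resolution are isotropic for the symplectic form on the smooth locus, and then restrict to $F$ using that the inclusion $F \hookrightarrow X$ is symplectic. An alternative, cleaner route that sidesteps the isotropy subtlety is to apply semismallness \emph{directly to the $G$-fixed structure}: since $F$ itself is a smooth symplectic variety of dimension $\dim X - 2$ and $\epsilon$ is semismall, any contraction of $F$ under $\epsilon$ would produce a subvariety $\Phi \subset F$ with $\epsilon(\Phi)$ a point and $\dim\Phi \ge 1$, and one argues that such a contracted symplectic subvariety cannot exist by the same fiber-product dimension estimate applied to $F \times_W F \subseteq X \times_{S^{(n)}} X$. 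I expect the bookkeeping of which stratum of $S^{(n)}$ (or $A^{(n+1)}_0$) the image $W$ lands in, and verifying the resolution is genuinely semismall there, to be the finest point to get right.
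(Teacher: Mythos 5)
Your setup is sound as far as it goes: semismallness of $\epsilon$ (\cref{thm:semismall}) together with $F\subseteq\epsilon^{-1}(W)$ does pin the general fiber dimension $d$ of $\epsilon\vert_F\colon F\to W\coloneqq\epsilon(F)$ down to $d\in\{1,2\}$, and you are right that the symplectic structure must then be brought in. The problem is that your decisive step rests on a false piece of linear algebra: the existence of a positive-dimensional isotropic subspace of $T_F$ does \emph{not} contradict nondegeneracy of $\omega_X\vert_F$. Every symplectic vector space contains isotropic subspaces of dimension up to half its own, and every holomorphic symplectic manifold contains isotropic (even Lagrangian) subvarieties, so ``a nonzero isotropic subspace of $T_F$ on which $\omega_X\vert_F$ degenerates'' is no contradiction at all. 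Granting that the fibers of $\epsilon$ are isotropic, what nondegeneracy of $\omega_X\vert_F$ actually yields is an injection $T_{F/W}\hookrightarrow \left(T_F/T_{F/W}\right)^{*}$, hence $d\le\dim W$, i.e.\ $d\le\tfrac12\dim F$; for $n\ge 3$ this excludes neither $d=1$ nor $d=2$. Your fallback estimate via $F\times_W F\subseteq X\times_{\epsilon(X)}X$ gives $2n-2+d\le 2n$, i.e.\ $d\le 2$, which is exactly where you started. So the argument never closes.

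The two ingredients you are missing are the ones the paper uses. First, parity of the \emph{image}: $W=\epsilon(F)$ is fixed by the symplectic automorphism induced by $g$ on the singular symplectic variety $\epsilon(X)$, so (by the same stratification argument that proves fixed loci of symplectic actions are even-dimensional) it is generically symplectic, of even dimension; since $\dim F=2n-2$ is even, a general fiber of dimension $1$ would force $\dim W=2n-3$ to be odd, a contradiction. Second, to get down to $d\le 1$ in the first place one cannot use semismallness naively: the paper splits into cases. If $F=\epsilon^{-1}(\epsilon(F))$, then $F$ is a union of positive-dimensional fibers of the symplectic resolution, hence uniruled, which is impossible for the symplectic variety $F$; if $F\subsetneq\epsilon^{-1}(\epsilon(F))$, then $\dim\left(\epsilon^{-1}(z)\cap F\right)<\dim\epsilon^{-1}(z)$ for general $z\in W$, and inserting this \emph{strict} inequality into the semismall count
\[
2n\;\ge\;\dim W+2\dim\epsilon^{-1}(z)\;>\;\dim W+2\dim\left(\epsilon^{-1}(z)\cap F\right)\;=\;2n-2+\dim\left(\epsilon^{-1}(z)\cap F\right)
\]
yields $\dim\left(\epsilon^{-1}(z)\cap F\right)\le 1$. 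In short, the symplectic geometry enters through non-uniruledness of $F$ upstairs and evenness of the $g$-fixed locus downstairs, not through isotropy of the fibers, which is too weak to conclude.
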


\proof
If $F \not\subseteq \Exc(\epsilon)$, then $\epsilon_{|F}$ is birational, so generically finite. Then suppose  $F \subseteq \Exc(\epsilon)$. If $F= \epsilon^{-1}(\epsilon(F))$, then $F$ is uniruled, which is impossible as $F$ is the fixed locus of a symplectic automorphism and hence~$F$ is symplectic so not uniruled. It follows that $F \subsetneq \epsilon^{-1}(\epsilon(F))$ and $\dim( \epsilon^{-1}(z) \cap F) < \dim ( \epsilon^{-1}(z))$  for a general $z \in \epsilon(F)$. By \cite[Lemma 2.11]{Kal06} or \cref{thm:semismall}, the morphism $\epsilon$ is semismall; \textit{i.e.}, $\dim(X)= \dim\left( X\times_{\epsilon(X)} X\right) $.  We get
\begin{align*}
2n
& = \dim\left( X\times_{\epsilon(X)} X\right) \\
& \geq \dim\left(\epsilon^{-1}(\epsilon(F)) \times_{\epsilon(F)} \epsilon^{-1}(\epsilon(F))\right) \geq \dim (\epsilon(F)) + 2 \dim\left(\epsilon^{-1}(z)\right)\\
& > \dim(\epsilon(F)) + 2 \dim \left(\epsilon^{-1}(z) \cap F \right)\\
& = \dim(F) + \dim\left(\epsilon^{-1}(z)\cap F\right) = 2n-2 + \dim \left(\epsilon^{-1}(z)\cap F \right),
\end{align*} 
 and so $ \dim\left(\epsilon^{-1}(z)\cap F\right) \leq 1$. If $\dim\left(\epsilon^{-1}(z)\cap F\right) =0$, then $\epsilon_{|F}$ is generically finite. Otherwise, $\dim(\epsilon(F))$ is odd, which is impossible since $\epsilon(F)$ is symplectic as fixed locus of a symplectic automorphism of $\epsilon(X)$.
\endproof

\begin{lemma}[Order of automorphisms with large fixed locus]\label{lemma:codime2}
    Let $F$ be a subvariety of codimension~$2$ fixed by an induced symplectic automorphism $g \colon X \to X$. Then
\begin{itemize}
    \item $\ord(g)=2$ and $X=S^{[2]}$, $\K_2(A)$ or $\K_{3}(A)$, or
    \item $\ord(g)=3$ and $X=\K_2(A)$.
\end{itemize}
\end{lemma}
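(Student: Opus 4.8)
The plan is to push everything down to the symmetric product via the Hilbert--Chow morphism $\epsilon$ and reduce the statement to an elementary dimension count on fixed loci of the induced automorphism of the underlying surface. Write $m \coloneqq \ord(g)$ and let $\phi$ be the symplectic automorphism of $S$ or $A$ inducing $g$; throughout $n \ge 2$, so that $X$ is a genuine irreducible symplectic manifold. Since $\epsilon$ is equivariant and $F$ is $g$-invariant, the image $\epsilon(F)$ is contained in the fixed locus of the induced automorphism $\phi^{(n)}$ of $S^{(n)}$ (respectively of $\phi^{(n+1)}$ acting on $A^{(n+1)}_0$). By \cref{lem:morphism generically finite} the restriction $\epsilon\vert_F$ is generically finite, so $\dim \epsilon(F) = \dim F = 2n-2$. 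Hence some irreducible component of the fixed locus of the induced automorphism on the symmetric product has dimension at least $2n-2$, and the whole argument becomes: bound the maximal dimension of such a component.

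First I would record the input that controls this dimension: any nontrivial power of a symplectic automorphism of $S$ or $A$ has a $0$-dimensional (possibly empty) fixed locus. On a K3 surface this is the standard fact that nontrivial symplectic automorphisms have only isolated fixed points; on an abelian surface, writing $\phi = \tau_\alpha \circ \psi$ with linear part $\psi$, the map $\psi - \id$ is an isogeny whenever $\psi \neq \id$, so $\Fix(\phi^k)$ is finite for $\phi^k \neq \id$. Consequently a $\phi$-invariant configuration decomposes into $\phi$-orbits, and only the \emph{free} orbits---those of full size $m$ whose base point avoids the finite fixed loci of all nontrivial powers---vary in a positive-dimensional family, each moving in a $2$-dimensional family while consuming $m$ of the available points. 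Therefore a component parametrised by $j$ free orbits has dimension exactly $2j$, subject to $jm \le n$ in the Hilbert case. Matching $2j = 2n-2$ forces $j = n-1$, and then $(n-1)m \le n$ gives $n = 2$, $m = 2$, i.e.\ $X = S^{[2]}$.

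The Kummer case requires one extra ingredient: the configurations must sum to $0$, i.e.\ lie in $A^{(n+1)}_0$, and this is exactly where the asymmetry with the Hilbert case arises. For a free $\phi$-orbit the orbit-sum equals $N(x) + c$, where $N \coloneqq \id + \psi + \cdots + \psi^{m-1}$; since $(\psi - \id)N = \psi^m - \id = 0$, the homomorphism $N$ takes values in the finite group $\Fix(\psi)$ and hence vanishes identically on the connected group $A$. Thus when $\psi \neq \id$ the orbit-sum is constant, the summation condition imposes no drop in dimension, and the bound becomes $(n-1)m \le n+1$; this yields precisely $(m,n) \in \{(2,2),(2,3),(3,2)\}$, i.e.\ $X = \K_2(A)$ with $m \in \{2,3\}$ and $X = \K_3(A)$ with $m = 2$. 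When instead $\psi = \id$, so $g$ is induced by a nontrivial translation $\tau_\alpha$, one has $N = m\cdot\id$, a surjective isogeny; the summation map is then dominant, cutting the dimension to $2j-2$, and $2j-2 \ge 2n-2$ together with $jm = n+1$ forces $n \le 1$, a contradiction. This rules out pure translations and completes the list.

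The step I expect to be the main obstacle is the Kummer summation analysis: one must correctly decide whether the constraint $\sum x_i = 0$ cuts the dimension, which hinges on the vanishing $N \equiv 0$ for $\psi \neq \id$ versus the dominance of $N = m\cdot\id$ for translations. Getting this dichotomy right is what separates $\K_2(A)$ and $\K_3(A)$ from the Hilbert scheme and explains why $\K_3(A)$ appears only for $m=2$. A minor secondary point is to ensure that the leftover non-free points can actually be filled by fixed points summing to $0$, so that the maximal-dimensional components are non-empty; this is a finite check using the torsion fixed points of $\psi$.
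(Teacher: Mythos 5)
Your proposal is correct, and its skeleton coincides with the paper's proof of \cref{lemma:codime2}: push $F$ down via the Hilbert--Chow morphism using \cref{lem:morphism generically finite}, stratify fixed configurations by orbit type, and observe that only free orbits contribute positive dimension (two each), while shorter orbits are pinned to the finite fixed loci of nontrivial powers of the automorphism; the inequalities $2j \ge 2n-2$ together with $jm \le n$ (resp.\ $\le n+1$) then yield exactly the four listed cases. The one genuine difference is your treatment of the Kummer case. The paper never analyzes the summation constraint at all: it bounds $\dim F_{\underline{n}} \le 2n_{\ord(g)}$ inside the full symmetric product, ignoring the sum-zero condition, and then measures codimension inside $A^{(n)}_0$, whose dimension is two less; this built-in slack (the ``$2\epsilon$'' in the paper's inequality) makes the crude upper bound sufficient, at the cost of a possibly non-sharp estimate. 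You instead determine precisely when the sum-zero condition cuts dimension, via the norm map $N = \id + \psi + \cdots + \psi^{m-1}$: the telescoping identity $(\psi-\id)N = \psi^m - \id = 0$ forces $N \equiv 0$ when $\psi \neq \id$ (so no cut), whereas $N = m\cdot\id$ is dominant for pure translations (cutting by $2$). This refinement is not needed for the lemma as stated --- a translation of order $2$ or $3$ with a codimension-$2$ fixed locus would not contradict the conclusion --- but it buys you something extra: it rules out pure translations within this lemma, a fact the paper only establishes afterwards in \cref{lemma:inducedinvolution}, where the automorphisms with codimension-$2$ fixed loci are pinned down to the form $\tau_\alpha(-\id)$ or $\tau_\alpha\circ T$. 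Both arguments are complete; yours is sharper in the Kummer case, the paper's is more economical.
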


\begin{proof}
Let $G$ be the cyclic group $\langle g\rangle$ acting indifferently on the surface $M \coloneqq S$ or $A$, on the singular symplectic variety $X_{\sing} \coloneqq S^{(n)}$ or $A^{(n)}_0$, or on the symplectic resolution $X\coloneqq S^{[n]}$ or $\K_{n-1}(A)$. Stratify the surface $M$ according to length of $G$-orbits; \textit{i.e.}, 
\[M= \bigsqcup^{\ord(g)}_{i=1} M_i \quad \text{with} \quad M_i \coloneqq \{ m \in M \colon |G m|=i\}.\]
The locus $M_{\ord(g)}$ is open and dense, while $M_i$ with $i < \ord(g)$ consists of at most finitely many points as $g$ is symplectic.
A $g$-fixed point $z$ of $X_{\sing}$ is a union of orbits for the action of $G$ on $M$; \textit{i.e.}, $z=\{G m_1, \ldots, Gm_r\}$ for some $m_j \in M$ such that $\sum^r_{j=1}|G m_j|=n$.

For any partition $\underline{n} =\sum^{\ord(g)}_{i=1} i \cdot n_i$ of $n$, define $F_{\underline{n}} \subseteq X_{\sing}$ as the union of the $g$-fixed subvarieties whose general points $z=\{G m_1, \ldots, Gm_r\}$ satisfy $|\{ j \in [r] \colon |Gm_j|=i\}|=n_i$.  The finite morphism
  \begin{align*}
    \prod^{\ord(g)}_{i=1} M^{n_i}_i \lra M^{(n)}, \quad
(m_i) \longmapsto \{G m_i\},
\end{align*}
contains $F_{\underline{n}}$ in its image, and so 
\[\dim F_{\underline{n}} \leq  \sum^{\ord(g)}_{i=1} n_i\dim M_{i} =  n_{\ord(g)} \dim M_{\ord(g)} = 2 n_{\ord(g)}\]
since $\dim M_{i}=0$ for $i \neq \ord(g)$ and $\dim M_{\ord(g)}=\dim M = 2$.
In particular, we obtain
\[
\codim_{X_{\sing}}\left(F_{\underline{n}}\right) \geq 2n - 2 n_{\ord(g)}-2 \epsilon,
\]
where $\epsilon=0$ or $1$ if $M=S$ or $A$, \rev{respectively}. Finally, the equation $\codim_{X_{\sing}}(F_{\underline n})=2$ admits a solution only in the cases listed in \cref{lemma:codime2}. By \cref{lem:morphism generically finite}, the solutions for $X_{\sing}$ are solutions for the symplectic resolution $X$ too. 
\end{proof}

\begin{remark}\label{rem fix locus involutions on Hilb} 
Note that on \(X=S^{[2]}\), any induced involution $g$ fixes a locus of codimension~$2$, namely  the strict transform of $\{[(x,g(x))]| x\in S\}$ in $S^{(2)}$. Thus, for \(X=S^{[2]}\) the condition on $g$ in \cref{lemma:codime2} is actually necessary and sufficient. See also \cite[Theorems 1.1 and 1.2]{KMO2022}. 
\end{remark}

In the Kummer case, the automorphisms fixing a locus of codimension $2$ (see \cref{lemma:codime2} above) admit a particularly explicit description.

\begin{lemma}[Induced involutions and automorphisms of order $3$ on $\K_n(A)$] \label{lemma:inducedinvolution} \leavevmode
\begin{enumerate}
    \item\label{l:i-1} An induced symplectic involution $g$ of\, \(\K_{n}(A)\), with $n=2$ or $3$, fixes a subvariety $F$ of codimension~$2$ if and only if  \[g = \tau_\alpha (-\id) \in A \rtimes \SL(\Lambda)\] with
    $\alpha \in A[3]$ if $n=2$, or $\alpha \in A[2]$ if $n=3$. 
    \item\label{l:i-2} An induced symplectic automorphism $g$ of order $3$ 
      of\, \(\K_{2}(A)\) fixes a subvariety $F$ of codimension~$2$ if and only \[g = \tau_\alpha \circ T \in A \rtimes \SL(\Lambda)\] with $T^3=1$, $T \neq \id$ and $T(\alpha)=\alpha$ $($equivalently,  $T$ is a linear automorphism of order $3$ commuting with $\tau_{\alpha})$.
\end{enumerate}
\end{lemma}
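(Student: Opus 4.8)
The plan is to reduce the whole statement to a dimension count for $g$-invariant tuples on the singular model $A^{(n+1)}_0$, organized by orbit type. First, by \cref{lem:morphism generically finite} any codimension~$2$ component $F\subset\Fix(g)$ on $\K_n(A)$ maps generically finitely under the Hilbert--Chow morphism $\epsilon$ onto a codimension~$2$ component of the fixed locus of $g$ on $A^{(n+1)}_0$; conversely, the components I will exhibit consist generically of reduced tuples of distinct points, hence lie where $\epsilon$ is an isomorphism and lift back to $\K_n(A)$. It therefore suffices to study $g$-invariant length-$(n+1)$ subschemes $z=\{x_1,\dots,x_{n+1}\}$ of $A$ with $\sum_i x_i=0$. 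By \cref{lemma:codime2} I may assume $\ord(g)=2$ with $n\in\{2,3\}$ in case~\eqref{l:i-1}, and $\ord(g)=3$ with $n=2$ in case~\eqref{l:i-2}. Write $g=\tau_{\alpha}\circ T$ with $\alpha\in A[n+1]$ and $T\in\SL(\Lambda)$, so that $g(x)=T(x)+\alpha$.

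Next I would pin down the linear part $T$. If $T=\id$ then $g$ is a translation, and a free orbit $\{x,g(x),\dots\}$ has sum $\equiv\ord(g)\cdot x$ modulo torsion, which is moduli-dependent; hence $\sum_i x_i=0$ cuts the dimension by~$2$ and forces codimension at least~$4$, incompatible with the hypothesis, so $T\neq\id$ and $T$ has order $\ord(g)$. Since $T\in\SL(2,\IC)$ is diagonalizable of finite order with $\det T=1$, its eigenvalues multiply to~$1$: for $\ord(g)=2$ this forces $T=-\id$, and for $\ord(g)=3$ the eigenvalues are the two primitive cube roots of unity. In both nontrivial cases $1$ is not an eigenvalue, so $T-\id$ is invertible and $\id+T+\dots+T^{\,\ord(g)-1}=0$ as an operator on $A$ --- this is $\id+T=0$ for $T=-\id$, and the minimal-polynomial relation $\id+T+T^2=0$ for $T$ of order~$3$.

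The key step is that this vanishing decouples the constraint $\sum_i x_i=0$ from the moduli. Indeed, the sum over a free $g$-orbit $O=\{x,g(x),\dots,g^{\ell-1}(x)\}$, with $\ell=\ord(g)$, equals $\big(\sum_{j=0}^{\ell-1}T^{j}\big)x+w=w$, a fixed element $w\in A$ depending only on $\alpha$ and $T$, namely $w=\alpha$ for $\ell=2$ and $w=(2\,\id+T)\alpha$ for $\ell=3$. Every invariant tuple is a union of $a$ free orbits and $k$ fixed points of $g$ on $A$ (for $T=-\id$ these satisfy $2p=\alpha$), with $\ell a+k=n+1$, and the corresponding fixed-locus component is irreducible of dimension $2a$. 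Thus reaching codimension $2=2n-2a$ in $A^{(n+1)}_0$ forces $a=n-1$, the maximal number of free orbits; and since each free orbit contributes the constant $w$, the equation $\sum_i x_i=0$ becomes a condition on $\alpha$ and $T$ alone and does not lower the dimension further. This is exactly what makes both implications fall out of a single computation.

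Finally, imposing $\sum_i x_i=0$ on the maximal configuration $a=n-1$ yields the stated conditions. For $\ord(g)=2$, $n=2$ (here $a=1$, $k=1$) the fixed point $p$ and the orbit give $p+\alpha=0$ with $2p=\alpha$, i.e.\ $3\alpha=0$, which is automatic from $\alpha\in A[3]$. For $\ord(g)=2$, $n=3$ (here $a=2$, $k=0$) the condition is $2\alpha=0$, i.e.\ $\alpha\in A[2]$, refining $\alpha\in A[4]$. For $\ord(g)=3$, $n=2$ (here $a=1$, $k=0$) it is $(2\,\id+T)\alpha=0$, i.e.\ $T\alpha=-2\alpha=\alpha$ using $3\alpha=0$, equivalent to $T$ commuting with $\tau_{\alpha}$ since $T\tau_{\alpha}T^{-1}=\tau_{T\alpha}$. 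Conversely, under each condition the displayed configuration is nonempty, generically reduced and of codimension~$2$, producing the required $F$. The main obstacle I anticipate is the careful bookkeeping of the orbit-type stratification and checking that these candidate components are genuinely reduced and of codimension exactly~$2$ (so that \cref{lem:morphism generically finite} applies and they lift to $\K_n(A)$); all the conceptual weight sits in the vanishing $\sum_{j}T^{j}=0$.
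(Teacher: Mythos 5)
Your proposal is correct and follows essentially the same route as the paper: both reduce to the singular model $A^{(n+1)}_0$ via \cref{lem:morphism generically finite}, identify the candidate codimension-$2$ fixed loci as the maximal free-orbit configurations (one orbit plus a fixed point for $n=2$, $\ord(g)=2$; two orbits for $n=3$; one orbit for $\ord(g)=3$), and extract the torsion/commutation conditions on $\alpha$ and $T$ from the requirement that the configuration sums to zero. Your uniform orbit-type dimension count, resting on the identity $\id + T + \cdots + T^{\ord(g)-1}=0$ when $T\neq\id$, merely makes explicit what the paper asserts case by case, namely that pure translations and non-maximal configurations only fix loci of codimension at least $4$.
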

\begin{proof}
Since $-\id$ is the only involution of $\SL(2, \IC)$, any induced involution $g$ of $\K_2(A)$ is of the form \[\tau_\alpha (-\id) \in A[3] \rtimes (-\id),\] and it fixes the strict transform of the surface $\{[(x,g(x),-x-g(x))] \mid x \in A \} \subset A^{(3)}_0$; see also \cite[Theorem~7.5]{Kapfer-Menet}.

If $n=3$, induced involutions of $\K_3(A)$  are of the form either
\[\tau_\alpha (-\id) \in A[4] \rtimes (-\id) \quad \text{or} \quad \tau_{\alpha} \in A[2],\] 
but the only involutions $g$ that fix a fourfold in $A^{(4)}_0$, namely $\{[(x,g(x),y, g(y))] \mid x, y\in A \}$, are those of the form $\tau_\alpha (-\id) \in A[2] \rtimes (-\id)$.

An order $3$ automorphism $g$ fixes a surface $F$ in $\K_2(A)$ if and only if it fixes the surface
\[
\epsilon(F) = \left\{\left[\left(x,g(x),g^2(x)\right)\right]\mid x \in A\right\}
  \]
  in $A^{(3)}_0$ by \cref{lem:morphism generically finite}. This occurs if and only if $g$ satisfies $1+g+g^2=0$, \textit{i.e.}, if and only if $T \in \SL(\Lambda)$ has minimal polynomial $1+t+t^2$, \textit{i.e.},  $T^3=\id$,  $T \neq \id$ and $T(\alpha)=\alpha$.
\end{proof}

\begin{remark}\label{rmk:codim2unique}
\cref{lemma:codime2,lemma:inducedinvolution} imply that an induced symplectic automorphism of $S^{[n]}$ or $\K_n(A)$ fixes at most one subvariety of codimension~$2$.
When it exists, such a  subvariety $F$ is a crepant resolution of $\epsilon(F)$ and is isomorphic to a K3 surface or a Hilbert square of a K3 surface. 
\end{remark}

\renewcommand*{\arraystretch}{1.1}
\begin{longtable}{c | c | c | l |l}
\caption{Codimension $2$ subvarieties $F$ fixed by an induced symplectic automorphism $g$ of $X$. We denote by $S_2$ and $S_3$ the minimal resolutions of $A/g$.}
\label{table:subvarieties F}\\
 $\ord(g)$ & $X$ & $F$ & $\epsilon(F)$ & $g$
 \\\hline
 $2$ & $S^{[2]}$ & $S$ & $[(x, g(x))]$ & any involution\\
 $2$ & $\K_2(A)$ & $S_2$ & $[(x, -x +\alpha, -\alpha)]$  & $\tau_{\alpha}(-\id)$\text{ with }$\alpha \in A[3]$\\
 $3$ & $\K_2(A)$ & $S_3$ & $[(x, T(x)+\alpha, T^2(x)-\alpha)]$ & $\tau_{\alpha} \circ T$ \text{ with }$T^3=1$, $T \neq \id$, $T(\alpha)=\alpha$\\
 $2$ & $\K_3(A)$ & $S_2^{[2]}$ & $[(x, -x+\alpha, y, -y+\alpha)]$ & $\tau_{\alpha}(-\id) \text{ with }\alpha \in A[2]$ 
\end{longtable}

\subsection{Terminalizations via explicit blowups}\label{sec:termexplicit}
\begin{notation}\label{Notation:gen}
    Let $G$ be a finite group of induced symplectic automorphisms of \(X=S^{[n]}\) or $\K_n(A)$.  Let $q \colon X \to X/G$ be the quotient map, $p \colon Y \to X/G$ be a terminalization of $X/G$, and $\Sigma$ be the singular locus of~$X/G$. Denote by $F_g \subset X$  the (unique by \cref{rmk:codim2unique}) component of the fixed locus of $g \in G$ of codimension~$2$, if any: 
    \[\begin{tikzcd}
	& {X \supset F_g \hspace{100pt}} \\
	Y & {X/G \supset \Sigma\coloneqq \Sing(X/G) \supseteq q(F_{g})}.
	\arrow["p", from=2-1, to=2-2]
	\arrow["q", shift right=24, from=1-2, to=2-2]
\end{tikzcd}\]
\end{notation}

\cref{prop:blowup} is a refinement of \cref{prop:terminalization} in our special context. It asserts that in order to obtain a terminalization of $X/G$ \new{away from the dissident locus,} it suffices to blow up once the irreducible components of the singular locus of codimension~$2$---no need to repeat the process---in the same way as a single blowup suffices to resolve the surface singularities of type $A_1$ and $A_2$. 

\begin{corollary}\label{prop:blowup}
  We use  the notation of \cref{def:dissident}. Away from the dissident locus, the terminalization $Y$ is isomorphic to the blowup of the reduced singular locus of\, \revi{X/G}; \textit{i.e.},
  \[Y^{\circ} \simeq \Bl_{\Sigma \cap (\revi{X/G})^{\circ}} (\revi{X/G})^{\circ}.\]
\end{corollary}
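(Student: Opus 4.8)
The plan is to identify the left-hand side $Y^{\circ}$ with the canonical symplectic resolution of $(X/G)^{\circ}$, and then recognize the right-hand blowup as that same resolution. By \cref{prop:terminalization}, any terminalization $p\colon Y\to X/G$ restricts over $X^{\circ}=(X/G)^{\circ}$ to the unique symplectic resolution $f^{\circ}\colon Y^{\circ}\to (X/G)^{\circ}$ produced in the discussion following \cref{def:dissident}. Since blowing up commutes with restriction to the open set $(X/G)^{\circ}$, it therefore suffices to prove that $\Bl_{\Sigma^{\circ}}(X/G)^{\circ}$, where $\Sigma^{\circ}\coloneqq\Sigma\cap (X/G)^{\circ}$, is a symplectic resolution of $(X/G)^{\circ}$; uniqueness then forces the asserted isomorphism $Y^{\circ}\simeq \Bl_{\Sigma^{\circ}}(X/G)^{\circ}$.

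\textbf{Local model along $\Sigma^{\circ}$.} First I would pin down the local structure. By the very definition of the dissident locus, at every point $x\in (X/G)^{\circ}$ the formal completion splits as $\widehat{M}_{x}\times \widehat{W}$ with $M_{x}$ smooth of dimension $2n-2$ and $W$ a canonical, hence ADE, surface singularity. The codimension~$2$ strata of $\Sigma^{\circ}$ are exactly the images $q(F_{g})$ of the codimension~$2$ fixed loci, so by \cref{lemma:codime2} the generic isotropy transverse to such a stratum is cyclic of order $2$ (acting as $-\id$) or of order $3$ (acting as $\tfrac{1}{3}(1,-1)$); these are the only orders that produce a codimension~$2$ fixed locus, and by \cref{rmk:codim2unique} distinct components cannot coincide away from deeper strata. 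Consequently $W$ is an $A_{1}$ singularity over the order-$2$ components and an $A_{2}$ singularity over the order-$3$ components, and this type is constant along each stratum.

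\textbf{A single blowup resolves.} Next I would compute the blowup in this local model. The reduced singular locus meets the chart as $M_{x}\times\{0\}$, and since blowing up commutes with the smooth product factor, $\Bl_{\Sigma^{\circ}}(X/G)^{\circ}$ is locally isomorphic to $M_{x}\times \Bl_{0}W$. For $W=A_{1}=\IC^{2}/\{\pm 1\}$ the blowup of the origin is the minimal resolution. For $W=A_{2}$, writing it as $\{uv=w^{3}\}\subset\IA^{3}$, a direct computation in the three affine charts of the blowup of the maximal ideal $(u,v,w)$ shows that the total space is already smooth, with exceptional fibre a pair of $(-2)$-curves meeting transversally, which is precisely the minimal resolution. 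In both cases the resolution is crepant, so the map $\Bl_{\Sigma^{\circ}}(X/G)^{\circ}\to (X/G)^{\circ}$ is smooth and crepant, i.e.\ a symplectic resolution. By uniqueness of the symplectic resolution of $(X/G)^{\circ}$ it agrees with $f^{\circ}$, completing the proof.

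\textbf{Main obstacle.} The crux is the order-$3$ case. Unlike $A_{1}$, the $A_{2}$ singularity has a minimal resolution with a chain of two exceptional curves, so one must check that a \emph{single} blowup of the reduced singular point already produces both $(-2)$-curves together with a smooth total space; the chart computation on $\{uv=w^{3}\}$ is exactly what is needed, and it is the reason $A_{1}$ and $A_{2}$ — and no higher $A_{n}$ — are resolved by one blowup of the reduced singular locus. A secondary point demanding care is that the transverse singularity is genuinely of type $A_{1}$ or $A_{2}$ at \emph{every} point of $(X/G)^{\circ}$, not merely generically on each stratum; this relies on \cref{lemma:codime2} to exclude stabilizers of order $\ge 4$ and on \cref{rmk:codim2unique} to rule out collisions of codimension~$2$ components outside the dissident locus.
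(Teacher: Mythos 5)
Your proposal is correct and follows essentially the same route as the paper's proof: both reduce via \cref{prop:terminalization} (uniqueness of the symplectic resolution over the complement of the dissident locus) to the observation that, by \cref{lemma:codime2}, the transverse singularities along the codimension-$2$ components are of type $A_1$ or $A_2$, each of which is resolved by a single blowup of the reduced singular locus. The only difference is that you spell out the chart computation on $\{uv=w^3\}$ where the paper simply invokes this standard fact; this is a welcome but not substantive addition.
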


\begin{proof}
    By \cref{prop:termcanonical}, the quotient $X/G$ is not terminal if and only if the fixed locus of some element of~$G$ has a component of codimension~$2$. This occurs only if $\ord(g)=2$ or $3$, and in the precise cases detailed in \cref{lemma:codime2}. Geometrically, this implies that a normal slice to a general point in $q(F_g)$ is a canonical surface singularity of type $A_1$ or $A_2$, which can be resolved with a single blowup. We conclude by applying \cref{prop:terminalization}. 
\end{proof}

\begin{remark}
    Up to a small $\bQ$-factorial modification, see \cite[Corollary 1.37]{Kol13}, which is an isomorphism away from the dissident locus of $X/G$, we can suppose that $Y$ is $\bQ$-factorial too.
\end{remark}

\section{Second Betti number of a terminalization}\label{sec:secondbetti}

\begin{proposition}\label{formulaBetti}
We use Notation~\ref{Notation:gen}. Let
\begin{itemize}
\item $L$ be a lattice isomorphic to $H^2(X,\IZ)$, 
    \item $N_2$ be the number of components $q(F_g)$ in $\Sigma$ with $\ord(g)=2$, 
    \item \(N_3\) be the number of components $q(F_g)$ in $\Sigma$ with $\ord(g)=3$.
\end{itemize}  
Then the identity 
\[b_2(Y)=\rank\left(L^G\right)+N_2+2N_3\]
holds except in the case where $X=\K_2(A)$ and  $G_{\circ} \simeq BD_{12}$, treated in \cref{remark:binarydihedral}.
\end{proposition}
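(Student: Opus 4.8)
The plan is to split $H^2(Y,\IQ)$ into a part pulled back from $X/G$ and a part generated by the exceptional divisors of $p$. Since $p\colon Y\to X/G$ is crepant, hence semismall by \cref{thm:semismall}, I would invoke the decomposition theorem for semismall maps: writing $d=\dim Y$, the summands of $Rp_*\IQ_Y[d]$ are intersection complexes $IC_{\overline S}(L_S)$ attached to the \emph{relevant} strata $S$ (those with $\dim p^{-1}(s)=\tfrac12\codim S$), where $L_S=R^{\codim S}p_*\IQ_Y|_S$ is the top cohomology of the fibre, and one obtains $H^k(Y,\IQ)=\bigoplus_{S}IH^{\,k-\codim S}(\overline S,L_S)$. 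Here $Y$ is a rational homology manifold in the Kummer case by \cref{cor:quotientsingularities}, and in general has singularities in codimension $\geq 4$ by \cref{prop:termcanonical}, so $IH^2(Y)=H^2(Y)$ and replacing $\IQ_Y$ by $IC_Y$ is harmless in degree $2$. In degree $k=2$ only strata of codimension $0$ and $2$ can contribute: the symplectic stratification of $X/G$ (after \cite[Theorem~2.3]{Kal06}) has even-codimensional strata, so the dissident locus has codimension $\geq 4$ and feeds only $H^{\geq 4}$. Thus $b_2(Y)=\dim IH^2(X/G,\IQ)+\sum_{S=q(F_g)}\dim H^0(\overline S,L_S)$.

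For the open stratum the contribution is $IH^2(X/G,\IQ)$. As $X/G$ is a global finite quotient it satisfies $H^*(X/G,\IQ)\simeq H^*(X,\IQ)^{G}$, and being a rational homology manifold it has $IH^2=H^2$; since $H^2(X,\IZ)\simeq L$ this gives $\dim IH^2(X/G,\IQ)=\rank(L^{G})$, the first summand. The codimension-$2$ strata are exactly the components $q(F_g)$ of \cref{table:subvarieties F}. By \cref{prop:blowup}, near a general point the transverse slice is a Du Val singularity of type $A_1$ when $\ord(g)=2$ and of type $A_2$ when $\ord(g)=3$, whose single blowup contributes one, respectively two, exceptional $\IP^1$'s; accordingly $L_S$ has rank $1$ or $2$, with stalk the span of the classes of these curves. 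For an $A_1$-stratum the rank-$1$ local system is trivial (a holomorphic family of $\IP^1$'s has no monodromy on $H^2$ of the fibre), so $\dim H^0=1$ and the order-$2$ strata contribute $N_2$ in total. For an $A_2$-stratum, $\dim H^0(\overline S,L_S)$ equals the number of monodromy orbits of the two $(-2)$-curves: it is $2$ unless the monodromy exchanges them, in which case it is $1$.

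The heart of the argument is to decide when the two exceptional curves over an order-$3$ component are swapped. Since $F_g$ is a K3 surface by \cref{rmk:codim2unique}, hence simply connected, the monodromy of $L_S$ factors through the subgroup of $G$ stabilizing $F_g$. The two curves correspond to the eigen-subbundles $N^{+}\oplus N^{-}$ of the normal bundle $N_{F_g/X}$, on which $g$ acts by the conjugate eigenvalues $\zeta_3^{\pm1}$ (the weights are $(1,-1)$ by symplecticity), so a stabilizing element exchanges the two divisors precisely when it conjugates $g$ to $g^{-1}$. Writing $g=\tau_\alpha\circ T$ with $T\in\SL(\Lambda)$ of order $3$ as in \cref{lemma:inducedinvolution}, such an element exists iff $G_{\circ}\subseteq\SL(\Lambda)$ contains one inverting $T$; but any $h\in\SL(2,\IC)$ with $hTh^{-1}=T^{-1}$ is anti-diagonal in an eigenbasis of $T$, whence $h^2=-\id$ and $\langle T,h\rangle$ is binary dihedral of order $12$. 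As order-$3$ elements occur only for $X=\K_2(A)$ by \cref{lemma:codime2}, this forces $G_{\circ}\simeq BD_{12}$.

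In every case not excluded, the monodromy is therefore trivial, each order-$3$ stratum contributes $2$, and we obtain $b_2(Y)=\rank(L^{G})+N_2+2N_3$; the binary-dihedral case is treated in \cref{remark:binarydihedral}. The main obstacle is precisely this monodromy computation — linking the swap of the exceptional curves to the conjugacy $g\mapsto g^{-1}$ and then to the sharp group-theoretic condition $G_{\circ}\simeq BD_{12}$ — whereas the absence of merging for the order-$2$ strata and the codimension $\geq 4$ of the dissident locus are comparatively routine.
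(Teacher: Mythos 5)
Your proposal is correct and takes essentially the same approach as the paper: the paper likewise applies the decomposition theorem (blowup formula) to the semismall terminalization to get $b_2(Y)=\rank\left(L^G\right)+\#\{p\text{-exceptional prime divisors}\}$, and its key \cref{Prop:excorder3} proves exactly your dichotomy --- the two exceptional components over an order-$3$ component $q(F_g)$ merge if and only if some $\iota \in G$ stabilizing $F_g$ conjugates $g$ to $g^{2}$, which forces $G_{\circ}\simeq BD_{12}$. The only cosmetic differences are that you phrase the merging as monodromy of the local system $L_S$ rather than as irreducibility of the global exceptional divisor (the same count, since $\dim H^0(\overline{S},L_S)$ is the number of irreducible exceptional divisors over $\overline{S}$), and you replace the paper's case analysis on $\ord(\iota)\in\{2,4,6\}$ by the direct computation that any $h\in\SL(2,\IC)$ inverting $T$ satisfies $h^2=-\id$; note that both versions still need the maximality of $BD_{12}$ among the finite subgroups of $\SL(\Lambda)$ listed in Section~\ref{rem:Fujiki results on AG} to pass from $BD_{12}\subseteq G_{\circ}$ to $G_{\circ}\simeq BD_{12}$.
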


\begin{remark}
Let $X \coloneqq S^{[n]}$ or $\K_{2}(A)$ with $M\coloneqq S$ or $A$, respectively. Recall that 
\[H^2(X, \IZ) \simeq H^2(M, \IZ) \oplus \IZ e,\]
where $2e$ is the class of the $\epsilon$-exceptional divisor. Since the group $G$ of induced automorphisms preserves the $\epsilon$-exceptional divisor, we obtain
\[H^2(X/G, \IQ) \simeq H^2(X, \IQ)^G \simeq H^2(M, \IQ)^G \oplus \IQ e.\]
We conclude that
\[\rank\left(L^G\right)=\rank\left(H^2(M)^G\right) +1.\]
\end{remark}

\begin{proof}[Proof of Proposition~\ref{formulaBetti}]
The blowup formula (or the decomposition theorem) gives
\[H^2(Y, \IQ)=I\!H^2(Y, \IQ) \simeq H^2(X/G, \IQ) \oplus \bigoplus_i H^0(E_i, \IQ),\]
where the sum runs over all the $p$-exceptional divisors $E_i$. 
Then, it suffices  to compute the $p$-exceptional divisors.  As shown in Section~\ref{sec:termexplicit}, the terminalization $p \colon Y \to X/G$ extracts an exceptional prime divisor for each $q(F_g)$ with transversal $A_1$ singularities, and at most two exceptional prime divisors for each $q(F_g)$ with transversal $A_2$ singularities. 

The latter case occurs only if $g$ has order $3$ and \(X=\K_{2}(A)\). Suppose  that this is indeed the case, and denote simply by $F$ the $g$-fixed surface. Consider the blowup of $\K_{2}(A)/G$ along $q(F)$, 
\[p_{q(F)} \colon \Bl_{q(F)}(\K_{2}(A)/G) \lra \K_{2}(A)/G.\]
A neighborhood $U$ of a general point in $q(F)$ is locally analytically isomorphic to the product $\IA^{2} \times (\IA^2/C_3)$. In particular, the restriction of $p_{q(F)}$ over $U$ extracts two exceptional prime divisors. Globally, these may be contained in two distinct $p_{q(F)}$-exceptional prime divisors of $\Bl_{q(F)}(\K_{2}(A)/G)$, or be two branches of the same non-normal $p_{q(F)}$-exceptional divisor. The latter case occurs only when $G_{\circ} \simeq BD_{12}$, as explained in \cref{Prop:excorder3}. We conclude that if $G_{\circ} \not \simeq BD_{12}$,  the terminalization \(p\colon Y\to X/G\) extracts exactly two exceptional prime divisors for each \(q(F_g)\) with transversal \(A_2\) singularities, whence the statement.
\end{proof}

\begin{lemma}\label{Prop:excorder3}
Suppose that a finite group $G$ of induced symplectic automorphisms of\, $\K_{2}(A)$ contains an element~$g$ of order $3$ fixing a surface $F$. Then the blowup $p' \colon Y' \to \K_{2}(A)/G$ of\, $\K_{2}(A)/G$ along $q(F)$ extracts two exceptional prime divisors, unless $g$ is contained in a subgroup of $G$ which is isomorphic to the binary dihedral group $BD_{12}$ and  splits the quotient $G \to G_{\circ} \simeq BD_{12} \subset \SL(\Lambda)$ $($cf.~\cref{sec:notation}\,$)$. In this case, the exceptional divisor of $p'$ is irreducible.
\end{lemma}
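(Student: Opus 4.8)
The plan is to reduce the global question about the exceptional divisor $E=p'^{-1}(q(F))$ to a purely group-theoretic condition on $g$ inside $G$, by a monodromy analysis of the two exceptional curves of the transversal $A_2$ singularity.

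\emph{Local picture and monodromy criterion.} First I would recall, as in \cref{prop:blowup} and its proof, that a general point of $q(F)$ has an analytic neighbourhood of the form $\IA^2 \times (\IA^2/C_3)$, where $C_3=\langle g\rangle$ acts on the symplectic slice $\nu\cong\IA^2$ with weights $\tfrac13(1,-1)$; thus the transversal singularity is of type $A_2$, and the single blowup of the reduced centre $q(F)$ extracts exactly two $(-2)$-curves $C_1,C_2$ (the two eigenlines of $g$ give the two rulings, meeting at one point). Hence $E$ is a family of such pairs $\{C_1,C_2\}$ over $q(F)$, and $E$ is irreducible precisely when the associated monodromy $\pi_1(q(F)^{\reg})\to\mathrm{Sym}\{C_1,C_2\}\cong\IZ/2$ is nontrivial. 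Since the étale cover realizing this monodromy is $F\to q(F)=\Stab_G(F)\backslash F$, and $\langle g\rangle$ fixes both eigenlines of $g$, the monodromy is nontrivial exactly when some $h\in\Stab_G(F)$ swaps the two eigenlines of $g$ on $\nu$, i.e.\ conjugates $g|_\nu$ to $g^{-1}|_\nu$. As the generic pointwise stabiliser of $F$ is $\langle g\rangle\cong C_3$ (by the analysis behind \cref{lemma:inducedinvolution}), this means $hgh^{-1}=g^{-1}$ in $G$; and any such $h$ automatically preserves $F$ because $\Fix(g)=\Fix(g^{-1})$ and the codimension-$2$ component is unique by \cref{rmk:codim2unique}. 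This yields the criterion
\[
E\text{ is irreducible}\iff\exists\,h\in G\text{ with }hgh^{-1}=g^{-1}.
\]

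\emph{Forward direction.} Assume such an $h=\tau_\beta R$ exists, writing $g=\tau_\alpha T$ as in \cref{lemma:inducedinvolution}. Comparing linear parts in $A[3]\rtimes\SL(\Lambda)$ gives $RTR^{-1}=T^{-1}$, so in the eigenbasis of $T$ the element $R$ is anti-diagonal, whence $R^2=-\id$ and $R$ has order $4$; thus $\langle T,R\rangle\cong BD_{12}$. I would then show the lift is equally rigid: a short computation gives $h^2=\tau_{(1+R)\beta}(-\id)$ and $h^4=\id$, so $h$ has order $4$ and $h^2\neq\id$ is central in $H\coloneqq\langle g,h\rangle$; with $hgh^{-1}=g^{-1}$ and $g^3=\id$ this forces $H\cong BD_{12}$. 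Since $H$ surjects onto $\langle T,R\rangle\cong BD_{12}\le G_{\circ}$ and both have order $12$, the map $H\to G_{\circ}$ is injective, i.e.\ $H\cap G_{\tr}=1$. Finally, invoking the classification of admissible linear parts $G_{\circ}\subset\SL(\Lambda)$ (see \cref{rmk:finitesymplgroup}), the only realizable finite subgroup of $\SL(2,\IC)$ containing an order-$3$ element inverted by another element is $BD_{12}$ itself; in particular $BT_{24}$ is excluded, since it has no subgroup of index $2$ and the normaliser of its Sylow $3$-subgroup equals its centraliser, so no element inverts an order-$3$ element. Therefore $G_{\circ}\cong BD_{12}$ and $H$ splits $G\to G_{\circ}$, with $g\in H$.

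\emph{Converse and main obstacle.} Conversely, if $g$ lies in a subgroup $H\le G$ with $H\cong BD_{12}$ splitting $G\to G_{\circ}$, then $g$ generates the unique normal subgroup of order $3$ of $H$, and any order-$4$ element $h\in H$ satisfies $hgh^{-1}=g^{-1}$ by the defining relations of $BD_{12}$; the criterion above then gives that $E$ is irreducible. I expect the main obstacle to be the monodromy criterion itself: one must argue carefully that the global reducibility of $E$ is detected exactly by the permutation action of $\Stab_G(F)$ on the two transversal curves, which requires controlling the étale-local structure of $q$ near $F$ and confirming that the generic pointwise stabiliser of $F$ is precisely $\langle g\rangle$ (so that $hgh^{-1}$, fixing $F$ pointwise and of order $3$, is forced into $\{g,g^{-1}\}$). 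The remaining group-theoretic steps are essentially formal, the only external input being the classification of the admissible $G_{\circ}$.
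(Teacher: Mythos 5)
Your proposal is correct, and it follows the same skeleton as the paper's proof: both reduce irreducibility of $E$ to the purely group-theoretic condition that some $h\in G$ conjugates $g$ to $g^{-1}$, and both then combine this with Fujiki's classification of the possible linear parts $G_\circ$ to land on the $BD_{12}$-splitting condition. The differences are in how the two halves are executed, and they are worth recording. For the geometric half, the paper builds an explicit two-stage $G$-equivariant blowup $X_2\to X_1\to \K_2(A)$ (blowing up $G\cdot F$ and then the two $g$-fixed sections $s_{\xi_3},s_{\bar\xi_3}$ of $\IP(L_{\xi_3}\oplus L_{\bar\xi_3})$) and identifies $X_2/G$ with a blowup of $Y'$ away from the bad locus; your monodromy formulation — the label double cover trivializes over $F$ because the eigen-subbundles $L_{\xi_3},L_{\bar\xi_3}\subset N_{F/\K_2(A)}$ are globally defined, so monodromy factors through $\Stab_G(\{F\})/\langle g\rangle$ and is nontrivial exactly when some $h$ swaps them, i.e.\ $hgh^{-1}=g^{-1}$ — is the same mechanism stated more intrinsically, and it correctly uses that the setwise stabilizer normalizes the pointwise stabilizer $\langle g\rangle$ and that $F$ is the unique codimension-$2$ component of $\Fix(g)$. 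For the group-theoretic half your argument is a genuine streamlining: from $RTR^{-1}=T^{-1}$ you observe $R$ is anti-diagonal in the eigenbasis of $T$, hence $R^2=-\id$ and $\ord(R)=4$, which immediately forces $\ord(h)=4$ and $H=\langle g,h\rangle\cong BD_{12}$ injecting into $G_\circ$; the paper instead runs a case analysis on $\ord(\iota)\in\{2,4,6\}$, ruling out $2$ and $6$ by the non-embeddability of $S_3$ into $\SL(\Lambda)$ and using maximality of $BD_{12}$ in the order-$4$ case. Your route eliminates those cases at the source. Two small corrections: the classification of admissible $G_\circ$ you invoke lives in Section~\ref{rem:Fujiki results on AG} (Fujiki), not in \cref{rmk:finitesymplgroup}; and, like the paper, your argument only controls components of $E$ dominating $q(F)$ (both proofs work away from the locus of points with larger stabilizer), which is the intended level of generality.
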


\begin{proof}
The $G$-orbit of $F$, denoted by $G \cdot F$, consists of $r$ irreducible components 
\[\new{F \coloneqq }F_g, F_{g_1}, \ldots, F_{g_j}=F_{h^{-1}_j g h_j}=h^{-1}_j(F_g), \ldots, F_{g_{r-1}},\]
where $g_j \coloneqq h^{-1}_j g h_j$ for some $h_j \in G$. Then consider  the blowup of $\K_{2}(A)$ along $G \cdot F$, 
\[p_1 \colon X_1 \coloneqq \Bl_{G \cdot F} \K_{2}(A) \lra \K_{2}(A), \]
with exceptional divisors $\widetilde{E}_0\coloneqq p^{-1}_1(F), \widetilde{E}_1 \ldots, \widetilde{E}_{r-1}$.

Denote by $\xi_3$ a primitive third root of unity, and let $Z \subset \K_{2}(A)$ be the locus of points in $\K_{2}(A)$ whose stabilizer is neither trivial nor conjugate to $\langle g \rangle$. 
The normal bundle of $F_{g_j}$ in $\K_{2}(A)$ splits into the sum of two $\langle g_j \rangle$-equivariant line bundles: 
\[N_{F_{g_j}/\K_{2}(A)} \simeq L_{\xi_3} \oplus L_{\bar{\xi}_3},\]
where $g_j$ acts on $L_{\xi_3}$ or $L_{\bar{\xi}_3}$ by scaling by $\xi_3$ and $\bar{\xi}_3$, respectively. Therefore, away from $Z$, $\widetilde{E}_j$ is $\langle g_j \rangle$-equivariantly isomorphic to $\IP\big(N_{F_{g_j}/\K_{2}(A)}\big) \simeq \IP\left(L_{\xi_3} \oplus L_{\bar{\xi}_3}\right)$ with two $\langle g_j \rangle$-fixed sections of $\widetilde{E}_j \to F_{g_j}$, denoted by 
\[s_{\xi_3, j} \coloneqq \Im\left(\IP\left(L_{\xi_3}\right) \hookrightarrow \IP\left(L_{\xi_3} \oplus L_{\bar{\xi}_3}\right)\right) \quad \text{and} \quad s_{\bar{\xi}_3, j} \coloneqq \Im\left(\IP\left(L_{\bar{\xi}_3}\right) \hookrightarrow \IP\left(L_{\xi_3} \oplus L_{\bar{\xi}_3}\right)\right).\]
Let $p_2 \colon X_2 \to X_1$ be the simultaneous blowup of the closure of the sections $s_{\xi_3, j}$ and $s_{\bar{\xi}_3, j}$ for $j=0, \ldots, r-1$, with exceptional divisors $\widetilde{E}_{\xi_3, j}$ and $\widetilde{E}_{\bar{\xi}_3, j}$. Since the centers of the blowups $p_1$ and $p_2$ are $G$-invariant, $X_2$ inherits an action by $G$. Further, away from $q(Z)$, the quotient $X_2/G$ is isomorphic to the blowup $p_3 \colon Y_1 \to Y'$ along the double locus of the $p'$-exceptional locus $E$; it suffices to verify it in the local model $\IA^{2} \times (\IA^2/C_3)$. Therefore, away from $Z$, there exists a commutative diagram 
\begin{equation}\label{square:terminalizationC3}
  \begin{tikzcd}[scale=1]
   \K_2(A) \arrow[d, "q"'] & X_1 \arrow[l, "p_1"'] & X_2 \arrow[l, "p_2"'] \arrow[d, "q_1"]\\
   \K_2(A)/G & Y' \arrow[l, "p'"'] & Y_1 \simeq X_2/G\rlap{,} \arrow[l, "p_3"']
\end{tikzcd}  
\end{equation}
where the horizontal arrows are blowups and the vertical arrows are $G$-quotient maps.

The $p'$-exceptional divisor $E$ is the image under $p_3 \circ q_1$ of the distinct divisors $\widetilde{E}_{\xi_3, j}$ and $\widetilde{E}_{\bar{\xi}_3, j}$. Suppose that $E$ is irreducible. Then there exists a $\iota \in G$ such that \[\iota\left(\widetilde{E}_{\xi_3, 0}\right)=\widetilde{E}_{\bar{\xi}_3, 0}.\] 
Note that the subgroup $H \subseteq G$ generated by $g$ and $\iota$ has the following presentation: 
\[
H=\left\langle g, \iota \mid \iota g \iota^{-1} = g^2, \iota^{2k}=1 \right\rangle.
\]
Indeed, the following facts hold:
\begin{itemize}
\item The automorphism~$\iota$ preserves $F_{g}$, so it exchanges $\widetilde{E}_{\xi_3, 0}$ and $\widetilde{E}_{\bar{\xi}_3, 0}$, and thus it has even order.
\item The automorphism~$\iota$ preserves the locus 
$\epsilon(F)= \{[(x,g(x),g^2(x))] \mid x \in A \}$, so either $\iota g = g \iota$ or $\iota g = g^2 \iota$.
\item For any ${(x,v) \in L_{\xi_3} \subset N_{F/\K_2(A)}}$ with $x \in F_{g}$ and $v \in L_{\xi_3, x}$, we have 
\[g \cdot (x,v) = (g \cdot x, dg_{x}(v))=(x, \xi_3 v).\]
Hence, we obtain
\begin{align*}
    g \cdot ( \iota \cdot x, d\iota_x(v)) & = g \iota \cdot (x, v) = \iota g^m \cdot (x,v)\\
    & =\iota \cdot (x, \xi^m_3 v)=(\iota \cdot x, d\iota_{x}(\xi^m_3 v))=(\iota \cdot x, \xi^m_3 d\iota_{x}(v)),
\end{align*}
\textit{i.e.}, $\iota(L_{\xi_3}) = L_{\xi^m_3}$. Since $\iota$ must exchange $L_{\xi_3}$ and $L_{\bar{\xi}_3}$, we must have $m=2$, \textit{i.e.}, $\iota g = g^2 \iota$.
\end{itemize}

Further constraints on the subgroup $H$ are imposed by the fact that $G$ is a group of symplectic automorphisms coming from an abelian surface $A$. Indeed, since $G \subset A \rtimes \SL(\Lambda)$, the order of $\iota$ must be $2$, $4$ or~$6$. 
\begin{itemize}
    \item If $\ord(\iota)=2$, then $H \simeq S_3$ and $H$ projects isomorphically onto  $H_{\circ}\subset \SL(\Lambda)$, which gives a contradiction since no subgroup of $\SL(\Lambda)$ is isomorphic to $S_3$. 
    \item If $\ord(\iota)=4$, then $H \simeq BD_{12}$ and $H$ projects isomorphically onto $G_{\circ} \subset \SL(\Lambda)$ since $BD_{12}$ is a maximal finite subgroup of $\SL(\Lambda)$; see \cref{rem:Fujiki results on AG}.
\item If $\ord(\iota)=6$, we replace $\iota$ with $\iota^3$ and obtain the same contradiction as in the case  $\ord(\iota)=2$.
\end{itemize}
We conclude that $E$ is irreducible if and only if $g$ is contained in the subgroup $H \simeq BD_{12} \simeq G_{\circ}$.
\end{proof}

We provide an alternative group-theoretic characterization of $N_2$ and $N_3$.

\begin{proposition}\label{formulaBetti2}
In the notation of \cref{formulaBetti}, we have the following: 
\begin{enumerate}
    \item\label{fB2-1} $N_2$ is the number of conjugacy classes of involutions of\, $G$ if\, $X=S^{[2]}$ or $\K_2(A)$. 
    \item\label{fB2-2} $N_2$ is the number of conjugacy classes of involutions satisfying \cref{lemma:inducedinvolution}\,\eqref{l:i-1} if\, ${X=\K_3(A)}$. 
    \item\label{fB2-3} $N_3$ is the number of conjugacy classes of subgroups of\, $G$ of order $3$  satisfying \cref{lemma:inducedinvolution}\,\eqref{l:i-2} if\, $X=\K_2(A)$. 
    \item\label{fB2-4} $N_2=N_3=0$ in all other cases.
\end{enumerate}
\end{proposition}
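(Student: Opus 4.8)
The plan is to translate the counting of codimension-$2$ components of $\Sigma$ into a purely group-theoretic count of $G$-orbits of the fixed loci $F_g$. First I would record that, by \cref{lemma:codime2}, a codimension-$2$ component $F_g$ can occur only when $\ord(g)\in\{2,3\}$, and more precisely only for $\ord(g)=2$ when $X\in\{S^{[2]},\K_2(A),\K_3(A)\}$ and for $\ord(g)=3$ when $X=\K_2(A)$. This already gives item~\eqref{fB2-4}: in every other case there are no such $F_g$, so $N_2=N_3=0$. The remaining content is the identification of the distinct images $q(F_g)$. Since $q^{-1}(q(F_g))=\bigcup_{h\in G}h(F_g)$ and each $h(F_g)=F_{hgh^{-1}}$ is again an irreducible codimension-$2$ fixed component (by \cref{rmk:codim2unique}), two components coincide, $q(F_g)=q(F_{g'})$, if and only if $F_{g'}=F_{hgh^{-1}}$ for some $h\in G$. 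Thus the whole statement reduces to a rigidity claim: the assignment $g\mapsto F_g$ recovers the cyclic group generated by $g$ acting on the underlying surface.

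The key step is therefore to prove that $F_a=F_b$ forces $\langle a\rangle=\langle b\rangle$ (and $a=b$ when $\ord(a)=\ord(b)=2$). Here I would argue directly on the surface $M=S$ or $A$ using the explicit descriptions in \cref{lemma:inducedinvolution} and \cref{table:subvarieties F}. In every case $\epsilon(F_g)$ is exactly the variety of $\langle g\rangle$-orbits on $M$: for an involution of $S^{[2]}$ it is $\{[(x,g(x))]\}$, for the Kummer involutions and the order-$3$ automorphisms it is the image of the orbit map, and for $\K_3(A)$ it is the set of unordered pairs of $g$-orbits $\{x,gx,y,gy\}$. Since $F_a=F_b$ gives $\epsilon(F_a)=\epsilon(F_b)$, comparing a generic orbit on the left with the one through the same point on the right shows that the $\langle a\rangle$- and $\langle b\rangle$-orbits coincide generically, so $a$ and $b$ generate the same cyclic group of automorphisms of $M$. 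As the induced automorphism is determined by the underlying surface automorphism, this yields $\langle a\rangle=\langle b\rangle$, and $a=b$ for involutions (where $a=a^{-1}$).

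With the rigidity claim in hand the bijections follow. For involutions, $q(F_g)=q(F_{g'})$ if and only if $g'$ is conjugate to $g$, so $N_2$ equals the number of conjugacy classes of involutions admitting a codimension-$2$ fixed locus; for order-$3$ elements, since $F_g=F_{g^{-1}}$, one gets $q(F_g)=q(F_{g'})$ if and only if $\langle g'\rangle$ is conjugate to $\langle g\rangle$, so $N_3$ counts conjugacy classes of order-$3$ subgroups with that property. It then remains to read off which automorphisms qualify on each $X$. On $S^{[2]}$ every induced involution fixes a codimension-$2$ locus (\cref{rem fix locus involutions on Hilb}), and on $\K_2(A)$ the only involutions in $A[3]\rtimes\SL(\Lambda)$ are the $\tau_\alpha(-\id)$ with $\alpha\in A[3]$, which by \cref{lemma:inducedinvolution}\,\eqref{l:i-1} all qualify; this gives~\eqref{fB2-1}. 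On $\K_3(A)$ the involutions are either translations $\tau_\alpha$ with $\alpha\in A[2]$ or $\tau_\alpha(-\id)$ with $\alpha\in A[4]$, and only the latter with $\alpha\in A[2]$ qualify, giving~\eqref{fB2-2}; the order-$3$ case is governed by \cref{lemma:inducedinvolution}\,\eqref{l:i-2}, giving~\eqref{fB2-3}. Finally, no component is counted by both $N_2$ and $N_3$: if an involution and an order-$3$ element fixed the same $F$ pointwise, their product would fix $F$ and act on its normal bundle with eigenvalues of order~$6$, contradicting \cref{lemma:codime2}.

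The main obstacle is the rigidity claim in the order-$3$ case. The tempting argument — that the pointwise stabiliser of $F$ embeds into $\Sp(2,\IC)=\SL(2,\IC)$ through its action on the normal bundle — only shows that $a$ and $b$ map to order-$3$ elements of $\SL(2,\IC)$, and does \emph{not} by itself force $\langle a\rangle=\langle b\rangle$, since a binary tetrahedral stabiliser would contain several distinct order-$3$ subgroups. Passing to the orbit description on the surface is what removes this ambiguity; the most delicate instance is $\K_3(A)$, where a generic point of $\epsilon(F)$ is a union of \emph{two} $g$-orbits, so one must recover $b(x)$ by intersecting the four-point sets $\{x,ax,y,ay\}$ as the second orbit $\{y,ay\}$ varies, isolating $bx\in\{x,ax\}$ and hence $bx=ax$ for generic $x$.
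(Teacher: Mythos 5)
Your proof is correct and takes essentially the same route as the paper: the paper identifies the codimension-$2$ components of $\Sigma$ with conjugacy classes of the cyclic pointwise stabilizers via the fact $\Stab(F_g)=\langle g\rangle$ together with $q(F_g)=q(F_{g'})$ if and only if $g'\in h^{-1}\langle g\rangle h$, which is exactly your rigidity claim combined with $q(F_g)=q(F_{g'})\Leftrightarrow F_{g'}=F_{hgh^{-1}}$. The only difference is that the paper merely ``recalls'' the stabilizer fact, whereas you actually prove it through the orbit description of $\epsilon(F_g)$ on the underlying surface (including the delicate two-orbit case on $\K_3(A)$) and you explicitly rule out a component being counted by both $N_2$ and $N_3$ --- details the paper leaves implicit, argued correctly.
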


\proof
Recall that the \new{pointwise} stabilizer of $F_g$ is the group generated by $g$; in symbols, 
\[
\Stab(F_g) \coloneqq \{g \in G \mid \forall \underline{x} \in F_g,\; g(\underline{x})=\underline{x}\} = \langle g \rangle.
\]
It is a standard and general fact that
$q(F_g)=q(F_{g'})$ if and only if $g' \in h^{-1} \Stab(F_g) h \text{ for some }h \in G$,
hence
$q(F_g)=q(F_{g'})$ if and only if $g' \in h^{-1} \langle g\rangle h \text{ for some }h \in G$.
Together with \cref{sec:codim2 fixed loci}, this gives the group-theoretic characterization of $N_2$ and $N_3$ of the statement.
\endproof

\begin{lemma}\label{lemma:N2}
If\, $X=\K_2(A)$, then 
\[
N_2=\begin{cases}
    1 & \text{if }\, |G| \text{ is even},\\
    0 & \text{if }\, |G| \text{ is odd}.\\
\end{cases}
\]
\end{lemma}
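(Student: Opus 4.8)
The plan is to reduce the statement to a purely group-theoretic count via \cref{formulaBetti2}\,\eqref{fB2-1}, which identifies $N_2$ with the number of conjugacy classes of involutions of $G$ when $X=\K_2(A)$. The odd case is then immediate: if $|G|$ is odd, Cauchy's theorem (or Lagrange) forbids elements of order $2$, so $G$ has no involutions and $N_2=0$. Everything nontrivial is therefore concentrated in the even case, where I must produce one involution and show that all of them are conjugate.

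First I would pin down the shape of the involutions. Writing an element of $G\subseteq A[3]\rtimes\SL(\Lambda)$ as $\tau_\alpha T$ with $\alpha\in A[3]$ and $T\in\SL(\Lambda)$, the relation $(\tau_\alpha T)^2=\tau_{\alpha+T(\alpha)}T^2=\id$ forces $T^2=\id$. Since $-\id$ is the only nontrivial involution of $\SL(2,\IC)$ (an involution is diagonalizable with eigenvalues $\pm1$ multiplying to $1$), and since $G_{\tr}=G\cap A[3]$ consists of $3$-torsion translations and hence contains no involution, every involution of $G$ has the form $\tau_\alpha(-\id)$ with $\alpha\in A[3]$; by Cauchy's theorem at least one such element exists when $|G|$ is even. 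As a side remark, \cref{lemma:inducedinvolution}\,\eqref{l:i-1} guarantees that each such involution automatically fixes a codimension-$2$ subvariety, in agreement with the count in \cref{formulaBetti2}.

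The heart of the argument is to show that these involutions form a single conjugacy class, in fact already under conjugation by translations in $G_{\tr}$. I would compute, for $\gamma\in G_{\tr}$, the conjugate $\tau_\gamma\bigl(\tau_\alpha(-\id)\bigr)\tau_\gamma^{-1}$: using the identity $(-\id)\tau_{-\gamma}=\tau_\gamma(-\id)$, this equals $\tau_{\alpha+2\gamma}(-\id)$. On the other hand, if $\tau_\alpha(-\id)$ and $\tau_\beta(-\id)$ both lie in $G$, then $\tau_\alpha(-\id)\cdot\tau_\beta(-\id)=\tau_{\alpha-\beta}$ is a translation in $G_{\tr}$, so $\alpha-\beta\in G_{\tr}$, and conversely every such shift keeps the element in $G$. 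Thus the set of involutions of $G$ is exactly $\set{\tau_\alpha(-\id)\mid \alpha\in\alpha_0+G_{\tr}}$ for any fixed involution $\tau_{\alpha_0}(-\id)$. Since $2\gamma=-\gamma$ in the $3$-torsion group $A[3]$ and $\gamma\mapsto-\gamma$ is a bijection of the group $G_{\tr}$, the map $\gamma\mapsto\alpha_0+2\gamma$ surjects onto $\alpha_0+G_{\tr}$; hence the single $G_{\tr}$-conjugacy orbit of $\tau_{\alpha_0}(-\id)$ already exhausts all involutions. Therefore there is exactly one conjugacy class, and $N_2=1$.

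All the computations are routine manipulations in the semidirect product $A[3]\rtimes\SL(\Lambda)$. The only point demanding genuine care is the bookkeeping in this product, and in particular the identity $2\gamma=-\gamma$ on $A[3]$: it is precisely this $3$-torsion phenomenon that makes translation-conjugation act \emph{transitively} on the whole coset of involutions rather than on a proper subgroup of it, and thereby collapses the count to a single conjugacy class. I do not expect any serious obstacle beyond keeping the conventions for $\tau_\alpha$ and $-\id$ straight.
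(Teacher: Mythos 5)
Your proof is correct and takes essentially the same route as the paper: reduce via \cref{formulaBetti2}\,\eqref{fB2-1} to counting conjugacy classes of involutions, observe that every involution of $G$ has the form $\tau_\alpha(-\id)$, and conjugate by a translation in $G_{\tr}$ --- your conjugating element $\tau_\gamma$ with $\gamma=\alpha-\alpha'$ is precisely the inverse of the paper's element $t_2t_1=\tau_{\alpha'-\alpha}$. The paper just verifies $(t_2t_1)^{-1}t_1(t_2t_1)=t_2$ in one line, whereas you package the identical computation as transitivity of $G_{\tr}$-conjugation on the coset of involutions, with the same $3$-torsion identity $2\gamma=-\gamma$ doing the work in both.
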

\begin{proof}
If $|G|$ is odd, then we have $N_2=0$. Otherwise, any two involutions in $G$, 
namely $t_{1} \coloneqq \tau_{\alpha} (- \id)$ and $t_{2} \coloneqq \tau_{\alpha'} (- \id)$, are conjugate to each other as
\((t_2t_1)^{-1}t_1(t_2t_1)=t_2\);  hence $N_2=1$.
\end{proof}

\section{Third Betti number of a terminalization}\label{sec:third}

\begin{proposition}\label{prop:IH3}
  We use Notation~\ref{Notation:gen}. The third intersection cohomology group of\, $Y$ is the $G$-invariant part of the third cohomology group of\, $X$; \textit{i.e.},
\[I\!H^3(Y, \IQ) \simeq H^3(X, \IQ)^{G}.\] 
\end{proposition}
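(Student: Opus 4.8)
The proof relies on the decomposition theorem applied to the semismall morphism $p \colon Y \to X/G$ (\cref{thm:semismall}). First, since $X/G$ is a finite quotient of the smooth variety $X$, it has only quotient singularities and is therefore a rational homology manifold; hence $\intH^{*}(X/G, \IQ) = H^{*}(X/G, \IQ) = H^{*}(X, \IQ)^{G}$, the last identity being the transfer isomorphism for finite quotients. In particular $\intH^{3}(X/G, \IQ) \simeq H^{3}(X, \IQ)^{G}$, so it suffices to prove that the terminalization contributes no additional classes in degree three, that is, $\intH^{3}(Y,\IQ) \simeq \intH^{3}(X/G, \IQ)$.

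The plan is to stratify $X/G$ and apply the semismall decomposition theorem to the intersection complex $IC_{Y}$ (equivalently to $\IQ_{Y}$, as $Y$ has quotient singularities). Write $d = \dim Y = \dim(X/G)$. Since $p$ is semismall, $Rp_{*}IC_{Y}$ is concentrated in a single perverse degree and splits as a direct sum of intersection complexes $IC_{\overline{W}}(\mathcal{L})$ supported on the closures of the relevant strata $W$, where $\mathcal{L}$ is the local system of top-dimensional irreducible components of the fibers (de Cataldo--Migliorini). Taking hypercohomology, a stratum $W$ of dimension $d - 2c$ over which the fibers of $p$ have dimension $c$ contributes $\intH^{k-2c}(\overline{W}, \mathcal{L})$ to $\intH^{k}(Y, \IQ)$. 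By \cref{prop:blowup} and \cref{def:dissident}, over the complement of the dissident locus the only positive-dimensional fibers occur along the codimension-$2$ strata $q(F_{g})$, where the fibers are $\IP^{1}$ (transversal $A_{1}$) or chains of two $\IP^{1}$'s (transversal $A_{2}$), so $c = 1$; all remaining relevant strata are contained in the dissident locus and have $\dim \overline{W} \leq d - 4$, i.e. $c \geq 2$. Consequently, for $k = 3$ the top stratum contributes $\intH^{3}(X/G, \IQ)$, each codimension-$2$ stratum contributes $\intH^{1}(q(F_{g}), \mathcal{L})$, and the dissident strata contribute $\intH^{3 - 2c}(\overline{W},\mathcal{L}) = 0$ since $3 - 2c < 0$.

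It then remains to show that $\intH^{1}(q(F_{g}), \mathcal{L}) = 0$ for each codimension-$2$ stratum. By \cref{rmk:codim2unique}, the fixed component $F_{g}$ is a smooth, simply connected K3 surface or Hilbert square of a K3 surface, and the restriction of $q$ realizes $q(F_{g})$ as a finite quotient of $F_{g}$. Since $q(F_{g})$ has quotient singularities, $\intH^{1}(q(F_{g}), \mathcal{L}) = H^{1}(q(F_{g}), \mathcal{L})$, and rationally this group is cut out inside $H^{1}$ of the simply connected cover $F_{g}$ with the pulled-back coefficients; as $F_{g}$ is simply connected the pullback of $\mathcal{L}$ is a trivial $\IQ$-local system and $H^{1}(F_{g}, \IQ) = 0$, whence $\intH^{1}(q(F_{g}), \mathcal{L}) = 0$. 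This covers both the $A_{1}$ case, where $\mathcal{L}$ is trivial, and the $A_{2}$ case, where the two sections may be exchanged by monodromy as in \cref{Prop:excorder3}. Combining the three contributions yields $\intH^{3}(Y, \IQ) \simeq \intH^{3}(X/G, \IQ) \simeq H^{3}(X, \IQ)^{G}$.

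The main obstacle is the bookkeeping in the semismall decomposition theorem: identifying the relevant strata and the fiber-component local systems $\mathcal{L}$, in particular handling the possibly nontrivial monodromy of the $A_{2}/BD_{12}$ case, and checking that the dissident locus, being of codimension at least four, cannot contribute in degree three. Once this is in place, the vanishing $H^{1}(F_{g}, \IQ) = 0$ for the simply connected surfaces $F_{g}$ does the rest.
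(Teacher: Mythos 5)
Your proposal is correct and takes essentially the same route as the paper: the decomposition theorem for the semismall terminalization $p$, with the degree-$3$ part reduced to the top stratum plus the $H^1$-contributions of the codimension-$2$ strata, which are killed by transfer to the simply connected surfaces $F_g$, while the deeper (dissident) strata cannot contribute in degree $3$ for dimension reasons. If anything, your explicit treatment of possible monodromy on the local system of fiber components (the $BD_{12}$ case of \cref{Prop:excorder3}) is more careful than the paper's, which asserts that $R^2p_*\IQ_Y$ is a trivial local system over a dense open subset of $\Sigma_2$ and then concludes via the same transfer argument $H^1\left(q(F_g)^\nu,\IQ\right)=H^1(F_g,\IQ)^{\Stab(\{F_g\})}=0$.
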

\begin{proof}
    Let $\Sigma_2= \bigcup F_{g}$ be the components of the singular locus of $X/G$ of pure codimension $2$. The decomposition theorem for the semismall terminalization $p \colon Y \to X/G$ gives
    \begin{equation}\label{eq:decompsemismall}
        Rp_* \mathcal{IC}_{Y} = \mathcal{IC}_{X/G} \oplus \mathcal{IC}_{\Sigma_2}\left(R^2p_*\IQ_{Y}\right)[-2]\oplus \mathcal{S},
    \end{equation}
    where $\mathcal{S}$ is a summand of the decomposition theorem supported in codimension at least $4$. Over a dense open set of $\Sigma_2$, the constructible sheaf $R^2p_*\IQ_{Y}$ is a trivial local system  (of rank $1$ or $2$, more precisely $\ord(g)-1$). The normalization $\nu \colon q(F_g)^{\nu} \to q(F_g)$ and $X/G$ have quotient singularities, so their intersection complexes are trivial local systems:
    \[\mathcal{IC}_{X/G}= \IQ_{X/G}, \quad \mathcal{IC}_{q(F_{g})}= \nu_*\mathcal{IC}_{q(F_{g})^{\nu}} = \nu_*\IQ_{q(F_{g})^{\nu}}.\]
    Therefore, we can rewrite \eqref{eq:decompsemismall} as 
    \[Rp_* \mathcal{IC}_{Y} = \IQ_{X/G} \oplus \bigoplus_{q(F_g) \subseteq \Sigma_2}\nu_*\IQ_{q(F_{g})^{\nu}}^{\ord(g)-1}[-2]\oplus \mathcal{S}. \]
    Taking $H^3$, we obtain that
    \[
    I\!H^3(Y, \IQ)=H^3(X, \IQ)^{G} \oplus \bigoplus_{q(F_g) \subseteq \Sigma_2} H^1\left({q\left(F_{g}\right)^{\nu}}, \IQ\right)^{\ord(g)-1}. 
    \]
    Let $\Stab(\{F_g\}) \coloneqq \{g \in G \mid g(F_g)=F_g\}$ be the setwise stabilizer of $F_{g}$. Then the Galois quotient
    \[F_g \to F_g/\Stab(\{F_g\})=q(F_{g})^{\nu}\]
    induces the inclusion
    \[H^1\left(q\left(F_{g}\right)^{\nu}, \IQ\right)= H^1\left(F_{g}, \IQ\right)^{\Stab(\{F_g\})}\subseteq H^1\left(F_{g}, \IQ\right).\] Since in our cases $F_g$ is simply connected, we conclude that $ I\!H^3(Y, \IQ)=H^3(X, \IQ)^{G}$.   
\end{proof}

\begin{proposition}\label{prop:3bettinumberKum}
  We use Notation~\ref{Notation:gen}. Suppose further that $X = \K_n(A)$ and $G_{\circ} \neq 1$. Then \[H^3(Y, \IQ)=I\!H^3(Y, \IQ)=0.\] 
\end{proposition}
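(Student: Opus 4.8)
The plan is to leverage the computation of $IH^3(Y,\IQ)$ already obtained in \cref{prop:IH3}, which gives $IH^3(Y,\IQ)\simeq H^3(X,\IQ)^G$ for $X=\K_n(A)$. Since $Y$ has quotient singularities by \cref{cor:quotientsingularities} (and in particular rational, $\IQ$-factorial singularities), its ordinary and intersection cohomology agree in low degrees, so it suffices to show that the $G$-invariant part of $H^3(\K_n(A),\IQ)$ vanishes as soon as $G_\circ\neq 1$. Thus the entire problem reduces to a representation-theoretic computation on the third cohomology of the generalized Kummer variety.

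\medskip

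First I would recall the structure of $H^3(\K_n(A),\IQ)$ as a representation of the group of induced symplectic automorphisms. The cohomology of $\K_n(A)$ is governed by the cohomology of the abelian surface $A$, and the relevant low-degree piece $H^3(\K_n(A),\IQ)$ is built (via the Nakajima/Göttsche description, or directly from the explicit generators) from the odd cohomology $H^1(A,\IQ)\oplus H^3(A,\IQ)$ of $A$. Concretely, the class of $H^3(\K_n(A),\IQ)$ is identified with a direct sum of copies of $H^1(A,\IQ)$ (equivalently $H^3(A,\IQ)\simeq H^1(A,\IQ)^\vee$), on which the linear part of $G$ acts through the tautological representation of $\SL(\Lambda)\subset\SL(2,\IC)$ on $\Lambda_\IQ\simeq H^1(A,\IQ)$. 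The translations $G_{\tr}$ act trivially on cohomology, so the $G$-action on $H^3$ factors through $G_\circ\subseteq\SL(\Lambda)$.

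\medskip

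Next I would compute the invariants. The key point is that $H^1(A,\IQ)$ is the standard $2$-dimensional representation of $\SL(2,\IC)$ restricted to $G_\circ$, and a nontrivial finite subgroup of $\SL(\Lambda)$ acts on $\Lambda_\IQ$ without nonzero fixed vectors: indeed a fixed vector would force an eigenvalue $1$, but an element of $\SL(2)$ fixing a nonzero vector and of finite order is either the identity or (having determinant $1$ and one eigenvalue $1$) again forced to be unipotent hence infinite order, contradicting finiteness unless it is $\id$. Therefore $\bigl(H^1(A,\IQ)\bigr)^{G_\circ}=0$ whenever $G_\circ\neq 1$, and passing through the isomorphism of $H^3(\K_n(A),\IQ)$ with a sum of copies of this representation gives $H^3(\K_n(A),\IQ)^{G}=0$. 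Combined with \cref{prop:IH3} this yields $IH^3(Y,\IQ)=0$, and the agreement of $IH^3$ with $H^3$ for quotient singularities gives $H^3(Y,\IQ)=0$ as well.

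\medskip

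The main obstacle I anticipate is pinning down the precise $\SL(\Lambda)$-equivariant identification of $H^3(\K_n(A),\IQ)$ with copies of the standard representation $H^1(A,\IQ)$, rather than any accidental appearance of trivial summands. For $n=2,3$ (the only relevant cases by \cref{thm:canonicalsingularities}) this can be made completely explicit, but one must be careful that the summation/restriction defining $\K_n(A)\subset A^{[n+1]}$ does not introduce extra invariant classes; the cleanest route is to cite the known $G$-equivariant description of the cohomology of generalized Kummer varieties and check that no trivial $G_\circ$-subrepresentation occurs in degree $3$.
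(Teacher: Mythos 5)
Your proposal is correct and follows essentially the same route as the paper's proof: reduce via \cref{prop:IH3} to the vanishing of $H^3(\K_n(A),\IQ)^G$, use the $G$-equivariant identification of $H^3(\K_n(A),\IQ)$ with the odd cohomology $H^1(A,\IQ)\oplus H^3(A,\IQ)$ of $A$, observe that a nontrivial element of $G_\circ\subset\SL(\Lambda)$ fixes no nonzero vector there, and invoke \cref{cor:quotientsingularities} to identify $H^3(Y,\IQ)$ with $I\!H^3(Y,\IQ)$. The equivariant identification you flag as the main obstacle is exactly what the paper settles by citation (\cite[Corollary~6.3]{Kapfer-Menet}, \cite[Theorem~2.7]{OG2021}, or \cite[Theorem~7]{GS1993}), and your only imprecision --- describing $H^1(A,\IQ)$ as the $2$-dimensional standard representation rather than the $4$-dimensional rational form whose complexification is the standard representation plus its conjugate --- does not affect the vanishing of invariants.
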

\begin{proof}
 There exists a $G$-equivariant isomorphism \[H^1(A, \IZ) \oplus H^3(A, \IZ) \simeq H^3(\K_{n}(A), \IZ)/\mathrm{Tors};\]
 see \cite[Corollary 6.3]{Kapfer-Menet} or \cite[Theorem 2.7]{OG2021}, or the classical version with rational coefficients in \cite[Theorem 7]{GS1993}. A nontrivial symplectic linear automorphism $g \in G_\circ$ acting on $T_0 A \simeq H^{0,1}(A)$ does not fix any vector, so by \cref{prop:IH3}, 
\[0=H^1(A, \IQ)^G \oplus H^3(A, \IQ)^G \simeq H^3(\K_{n}(A), \IQ)^G \simeq I\!H^3(Y, \IQ).\]
Finally, note that $H^3(Y, \IQ) \simeq I\!H^3(Y, \IQ)$ since
$Y$ has quotient singularities by \cref{cor:quotientsingularities}. 
\end{proof}

\section{Fundamental group of the regular locus of a terminalization}\label{sec:fundgroup}

\begin{proposition}\label{prop:fundamentalgroup}
Let $X$ be a simply connected smooth \new{complex} symplectic variety endowed with an action of a finite group $G$ of symplectic automorphisms. Let
$p \colon Y \to X/G$ be a terminalization of the quotient. The fundamental group of the regular locus of\, $Y$ is 
\[\pi_1(Y^{\reg}) \simeq G/N,\]
where $N \triangleleft G$ is the normal subgroup generated by the elements $\gamma \in G$ whose fixed locus in $X$ has codimension~$2$. The universal quasi-\'{e}tale cover of\, $Y$ is a terminalization of the quotient $X/N$.
\end{proposition}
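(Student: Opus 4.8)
The plan is to compute $\pi_1(Y^{\reg})$ by comparing it with the fundamental group of the free quotient and then tracking exactly which loops are killed by the terminalization. Let $X^{\free}\subseteq X$ be the open locus where $G$ acts freely. Since every nontrivial symplectic automorphism fixes a locus of even, hence $\geq 2$, codimension, the complement $X\setminus X^{\free}$ has codimension $\geq 2$ in the smooth simply connected variety $X$; thus $\pi_1(X^{\free})=\pi_1(X)=1$. The quotient $U\coloneqq X^{\free}/G$ is therefore smooth with $X^{\free}$ as its universal cover, so the Galois covering $X^{\free}\to U$ identifies $\pi_1(U)\simeq G$, the class of a small loop encircling the image of a codimension-$2$ fixed component $q(F_\gamma)$ corresponding to $\gamma\in G$. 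As $U$ lies in the smooth locus of $X/G$, the terminalization $p$ is an isomorphism over $U$, so $U$ embeds as a dense open subset of $Y^{\reg}$ and we obtain a surjection $G\simeq\pi_1(U)\twoheadrightarrow\pi_1(Y^{\reg})$; in particular $\pi_1(Y^{\reg})$ is finite.

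Next I would determine the kernel of this surjection through the local models. Near a general point of $q(F_\gamma)$ with $\ord(\gamma)\in\{2,3\}$ the space $X/G$ is the product of a smooth factor with a transverse $A_1$ or $A_2$ surface singularity (\cref{prop:blowup}), whose minimal resolution is simply connected; hence the meridian around the corresponding exceptional divisor of $Y$, namely the image of $\gamma$, dies in $\pi_1(Y^{\reg})$. Thus the normal subgroup $N$ generated by the codimension-$2$ elements lies in the kernel, giving a surjection $G/N\twoheadrightarrow\pi_1(Y^{\reg})$. By contrast, a loop around a fixed component of codimension $\geq 4$ survives: there $X/G$ is transversally a symplectic quotient singularity of codimension $\geq 4$, which is already terminal by \cref{prop:termcanonical}, so $Y$ is untouched and $Y^{\reg}$ excludes that singular point, retaining the loop. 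This is consistent with, but does not by itself prove, injectivity.

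To obtain the reverse inequality, and simultaneously the statement about the universal cover, I would construct the cover directly. Since $N\triangleleft G$, the residual group $G/N$ acts on $X/N$, and I would take a $G/N$-equivariant projective terminalization $Y_N\to X/N$ (which exists and is unique in codimension $1$). The key point is that $G/N$ acts on $Y_N$ freely in codimension $1$: a nontrivial element $gN$ fixing a divisor of $Y_N$ would, via the codimension-$2$ dictionary of \cref{sec:codim2 fixed loci}, force a lift $g$ to fix a codimension-$2$ component of $\Fix(g)$ modulo $N$, hence $g\in N$, a contradiction. Granting this, $Y_N\to Y_N/(G/N)$ is quasi-\'etale, so $Y_N/(G/N)$ is a terminalization of $X/G$ and thus isomorphic to $Y$ in codimension $1$ (which is all that is needed, since $\pi_1$ of the regular locus depends only on an open subset with complement of codimension $\geq 2$). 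By Zariski–Nagata purity, $Y_N\to Y$ is genuinely \'etale over the smooth locus $Y^{\reg}$, yielding a connected Galois \'etale cover with group $G/N$, and hence a surjection $\pi_1(Y^{\reg})\twoheadrightarrow G/N$.

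Finally, the two surjections $G/N\twoheadrightarrow\pi_1(Y^{\reg})\twoheadrightarrow G/N$ between finite groups compose to a surjective endomorphism of $G/N$, which must be an isomorphism; therefore $\pi_1(Y^{\reg})\simeq G/N$. Applying the same computation to $N$ acting on $X$ gives $\pi_1(Y_N^{\reg})\simeq N/N=1$, so $Y_N$ is exactly the universal quasi-\'etale cover of $Y$, and it is a terminalization of $X/N$ by construction. The main obstacle is precisely the freeness in codimension $1$ of the $G/N$-action on $Y_N$ (equivalently, that $Y_N\to Y$ is quasi-\'etale): this is where tight control of the codimension-$2$ fixed loci is essential, and it must be checked against the explicit local models rather than purely group-theoretically.
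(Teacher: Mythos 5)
Your proof has the same architecture as the paper's: identify $\pi_1((X/G)^{\reg})\simeq G$ using that the quotient is \'etale in codimension $1$ on the simply connected $X$; kill the generators of $N$ using the simply connected minimal resolutions of the transversal surface singularities, giving $G/N\twoheadrightarrow\pi_1(Y^{\reg})$; produce the reverse surjection from a $G/N$-equivariant terminalization $Y_N\to X/N$; and conclude by the finite-group sandwich (the paper leaves this last step and the universal-cover statement implicit, which you correctly make explicit). However, two steps in your write-up lean on results from the induced-automorphism sections, which are not available at the level of generality of this proposition, and one of them is a genuine gap.

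First, in the ``meridians die'' step you restrict to $\ord(\gamma)\in\{2,3\}$ and transversal $A_1$ or $A_2$ singularities, citing \cref{prop:blowup}; that restriction comes from \cref{lemma:codime2}, which concerns \emph{induced} automorphisms of $S^{[n]}$ or $\K_n(A)$. Here $X$ is an arbitrary smooth simply connected symplectic variety, so the pointwise stabilizer $\Stab(F)$ of a codimension-$2$ component can be any finite subgroup of $\SL(2,\IC)$. The paper's local model is $\IA^{\dim X-2}\times \IA^2/\Stab(F)$, and the argument survives because the minimal resolution of \emph{any} du Val singularity is simply connected; note this kills all of $\Stab(F)$ at once, which is what one needs. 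This defect is trivially repaired. Second, and more seriously, your key claim that $G/N$ acts freely in codimension $1$ on $Y_N$ is justified only by an appeal to ``the codimension-$2$ dictionary'' of the induced setting, and the sentence ``force a lift $g$ to fix a codimension-$2$ component of $\Fix(g)$ modulo $N$'' is not an argument; you yourself flag it as the main obstacle. It can be fixed intrinsically: any $g\in G\setminus N$ has $\Fix(g)$ of codimension $\geq 4$ (fixed loci of symplectic automorphisms have even codimension, and a codimension-$2$ component would force $g\in N$), so the non-free locus of $G/N$ on $X/N$ has codimension $\geq 4$, and its preimage in $Y_N$ has codimension $\geq 2$ by semismallness (\cref{thm:semismall}). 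The paper instead avoids analyzing $Y_N$ directly: it observes that $q_1\colon X/N\to X/G$ is \'etale over the complement $(X/G)^{\circ}$ of the dissident locus (\cref{def:dissident}), so $Y_N/(G/N)$ restricts over $(X/G)^{\circ}$ to the canonical symplectic resolution, hence is a terminalization of $X/G$ by \cref{prop:terminalization}; the surjection $\pi_1(Y^{\reg})\twoheadrightarrow G/N$ then follows from \cref{rmk:indepen}, exactly as in your last step. With these two repairs your proof coincides with the paper's.
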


\cref{prop:fundamentalgroup} is a refinement of \cite[Proposition 2.13]{GM22}.

\begin{remark}\label{rmk:indepen} The fundamental group of the regular locus of a terminalization of $X/G$ is actually independent of the choice of the given terminalization since all terminalizations of $X/G$ are isomorphic in codimension~$1$. In general, however, the fundamental group of the regular locus of a variety is not a birational invariant. For instance, the fundamental group of the regular locus of the singular Kummer surface $A^{(2)}_0$ is infinite, but its minimal resolution is simply connected.
\end{remark} 

\begin{remark}
 The subgroup \(N\) generated by elements in $G$ whose fixed locus in \(X\) admits a component of codimension~$2$ is a normal subgroup  of \(G\). Indeed, the property of an element of having a component of the fixed locus of a certain codimension is invariant up to conjugation: If $g$ fixes a locus $F$ of codimension $m$, then $hgh^{-1}$ fixes the locus $h(F) \simeq F$ of the same codimension.
 It follows that any element conjugate to a generator of \(N\) is in \(N\); hence $N$ is normal.
\end{remark}

\begin{proof}[Proof of \cref{prop:fundamentalgroup}]
The quotient map $q \colon X \to X/G$ is \'{e}tale over the regular locus of $X/G$. Therefore, we have a short exact sequence
\[1 \lra \pi_1\left(q^{-1}((X/G)^{\reg})\right)\lra \pi_1((X/G)^{\reg}) \lra G \lra 1.\]
Since $X$ is simply connected and $q$ is \'{e}tale in codimension~$1$, we have 
$\pi_1((X/G)^{\reg}) \simeq G$.
As $(X/G)^{\reg}$ can be identified with a Zariski dense open subset of $Y^{\reg}$, we have a surjective map
\[G \simeq \pi_1((X/G)^{\reg}) \longtwoheadrightarrow \pi_1(Y^{\reg}).\]

Let $F$ be a codimension~$2$ subvariety of $X$ fixed by an element of $G$. An analytic neighborhood $U$ of a general point of $q(F)$ in $X/G$ is isomorphic to \rev{an analytic open set of} $\IA^{\dim X -2} \times W$, where $W$ is the canonical surface singularity $\IA^2/\Stab(F)$. The restriction of a terminalization $p \colon Y \to X/G$ to $U$ is isomorphic to \rev{an analytic simply connected open subset $\widetilde{U}$ of} $\IA^{\dim X -2} \times \widetilde{W}$, where $\widetilde{W}$ is the unique (simply connected) minimal resolution of $W$. By inclusions, we obtain the following commutative diagram: 
\[
\begin{tikzcd}[scale=1]
   \Stab(F)= \pi_1\left(\IA^2 \setminus\{0\}/\Stab(F)\right) = \pi_1(U \cap (X/G)^{\reg}) \arrow[d] \arrow[r] &  \pi_1\left(\widetilde{U}\right)=1 \arrow[d]\\
   \pi_1((X/G)^{\reg}) \arrow[two heads, r] & \pi_1(Y^{\reg})\rlap{.}
\end{tikzcd}
\]
Therefore, there exists a surjective map 
\[G/N \longtwoheadrightarrow \pi_1(Y^{\reg}).\]
We prove that the previous surjection is invertible. Let $p_{N} \colon Y_N \to X/N$ be a \corre{$G/N$-equivariant terminalization of $X/N$.} 
We obtain the following commutative square: 
\[
\begin{tikzcd}[scale=1]
   X/N \arrow[d, "q_{1}"'] & Y_N \arrow[l, "p_N"'] \arrow[d, "q_2"]\\
   X/G & Y_N/(G/N)\rlap{,} \arrow[l, "p"']
\end{tikzcd}
\]
where the horizontal arrows are birational morphisms and the vertical arrows $G/N$-quotient maps. Let $(X/G)^{\circ}$ be the complement of the dissident locus; see  \cref{def:dissident}. By the definition of $N$, $q_1$ is \'{e}tale over $(X/G)^{\circ}$, so $p^{-1}((X/G)^{\circ})$ is a symplectic resolution of $(X/G)^{\circ}$ built via the same sequence of blowups which gives $Y_{N}$ over $(X/N)^{\circ}$. We conclude that $Y_N/(G/N)$ is a terminalization of $X/G$ by \cref{prop:terminalization}, and by \cref{rmk:indepen}, there exists a surjective morphism
\begin{equation*}\pushQED{\qed}
\pi_1(Y^{\reg}) \longtwoheadrightarrow G/N.
\qedhere \popQED
	\end{equation*}
\renewcommand{\qed}{}  
\end{proof}

\begin{corollary}\label{cor:fundgroupKA}
We use Notation~\ref{Notation:gen}. The fundamental group of the regular locus of\, $Y$ is 
\[\pi_1(Y^{\reg}) \simeq G/N,\]
where $N$ is the normal subgroup generated by all elements of
\begin{itemize}
    \item order $2$ if $X=S^{[2]}$,
    \item order $2$ and $3$ satisfying \cref{lemma:inducedinvolution}\,\eqref{l:i-1} and~\eqref{l:i-2}  if $X=\K_2(A)$, 
    \item order $2$ satisfying \cref{lemma:inducedinvolution}\,\eqref{l:i-1}  if $X=\K_3(A)$.
\end{itemize}
In all other cases, $\pi_1(Y^{\reg}) \simeq G$.
\end{corollary}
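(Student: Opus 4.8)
The plan is to derive \cref{cor:fundgroupKA} as a direct specialization of \cref{prop:fundamentalgroup}, using the classification of codimension~$2$ fixed loci from \cref{sec:codim2 fixed loci} to describe the relevant normal subgroup $N$ explicitly in each case.

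First I would check that the hypotheses of \cref{prop:fundamentalgroup} are satisfied: both $X=S^{[n]}$ and $X=\K_n(A)$ are smooth irreducible symplectic manifolds, hence simply connected, and $G$ acts by symplectic automorphisms. The proposition then gives $\pi_1(Y^{\reg})\simeq G/N$, where $N\triangleleft G$ is the subgroup generated by those $\gamma\in G$ whose fixed locus in $X$ has codimension~$2$. It therefore remains only to make this generating set explicit for each admissible $X$, and normality will be automatic since the property of possessing a codimension~$2$ fixed component is invariant under conjugation.

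Next I would invoke \cref{lemma:codime2} to constrain both the order of the generators and the type of $X$: an induced symplectic automorphism can fix a codimension~$2$ subvariety only when $\ord(\gamma)=2$ and $X\in\{S^{[2]},\K_2(A),\K_3(A)\}$, or when $\ord(\gamma)=3$ and $X=\K_2(A)$. Consequently, in all remaining cases---for instance $S^{[n]}$ with $n\geq 3$ or $\K_n(A)$ with $n\geq 4$---no generator exists, so $N=1$ and $\pi_1(Y^{\reg})\simeq G$; this establishes the ``all other cases'' clause. For the three distinguished varieties I would then read off the generators: by \cref{rem fix locus involutions on Hilb}, every induced involution of $S^{[2]}$ fixes a codimension~$2$ surface, so $N$ is generated by all involutions; by \cref{lemma:inducedinvolution}, the codimension~$2$ generators of $\K_2(A)$ are exactly the elements of order $2$ and $3$ satisfying \cref{lemma:inducedinvolution}\,\eqref{l:i-1} and~\eqref{l:i-2}; and, again by \cref{lemma:inducedinvolution}, those of $\K_3(A)$ are exactly the involutions of type \cref{lemma:inducedinvolution}\,\eqref{l:i-1}.

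I expect no genuine obstacle, as the statement is essentially a bookkeeping consequence of results already in place. The one point that requires care is that, for the Kummer cases, membership in the generating set is dictated by the iff characterizations of \cref{lemma:inducedinvolution} and not merely by the order of the element: an order~$3$ element of $\K_2(A)$ contributes to $N$ only when $T(\alpha)=\alpha$, and an involution of $\K_3(A)$ only when it has the form $\tau_\alpha(-\id)$ with $\alpha\in A[2]$. I would therefore spell out that the subgroup generated by all codimension~$2$ elements coincides with the subgroup generated by the elements listed in the statement, which is immediate from those equivalences, thereby matching $N$ with the description in \cref{cor:fundgroupKA}.
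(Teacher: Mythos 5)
Your proposal is correct and follows exactly the route the paper intends: the corollary is stated without a separate proof precisely because it is the specialization of \cref{prop:fundamentalgroup} via the classification of codimension~$2$ fixed loci in \cref{lemma:codime2}, \cref{rem fix locus involutions on Hilb} and \cref{lemma:inducedinvolution}. Your added care about using the iff characterizations (rather than just element orders) in the Kummer cases is exactly the right bookkeeping point.
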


\section{Terminalizations of quotients of Hilbert schemes on \texorpdfstring{$\boldsymbol{\K3}$}{K3} surfaces}\label{sec:classificationK3} 
Symplectic actions of finite groups on \(S^{[2]}\) have been classified in \cite[Table 12]{HM}. Here we restrict to the groups $G$ of even order whose action comes from an action on the underlying K3 surface $S$ (which are marked with the label \emph{Type K3} in the fourth column of \cite[Table 12]{HM}). Since any involution gives rise to a surface with transversal $A_1$ singularities in $S^{[2]}/G$ (see \cref{rem fix locus involutions on Hilb}),
the previous conditions grant that the quotient $S^{[2]}/G$ is not terminal, as required by our criteria of classification (\textit{cf.} Section~\ref{sec:criteria}).

In Table~\ref{table Hilb2(K3)}, for any such group $G$, we list
\begin{itemize}
    \item the group ID as in \GroupNames, 
    \item an alias of $G$ as abstract group, 
    \item \(\rank \coloneqq \rank\big(H^2(S^{[2]})^G\big)=\rank\big(H^2(S)^G\big)+1\), as computed in \cite[Table 12, fifth column]{HM},  
    \item the number \(N_2\) of codimension $2$ components of the singular locus of \(S^{[2]}/G\), as computed in \cref{formulaBetti2}\eqref{fB2-1}, 
    \item \(b_2(Y)  =\rank + N_2 \), see \cref{formulaBetti} and \cref{lemma:codime2} for the fact that $N_3=0$,
    \item \(\pi_1(Y^{\reg})\simeq G/N\), where $N$ is the subgroup generated by involutions, see \cref{cor:fundgroupKA}. 
\end{itemize}
We highlight in gray the quotients whose terminalization has simply connected regular locus.
\small\renewcommand*{\arraystretch}{1.1}
\begin{longtable}{c|c|c|c|c|c}
\caption{Terminalizations of \(S^{[2]}/G\)} \\
 ID\label{table Hilb2(K3)}
 & \(G\)
 & \(\rank\)
 & $N_2$
 & \(b_2(Y)\)  
 &\(\pi_1(Y^{\reg})\) 
 \\\hline \endfirsthead
 ID
 & \(G\)
 & \(\rank\)
 & $N_2$
 & \(b_2(Y)\)  
 &\(\pi_1(Y^{\reg})\) 
 \\\hline \endhead
\rowcolor{backcolour} 2,1 & \(C_2\) & 15 & 1 & 16 & \{1\}
 \\\hline
 4,1 & \(C_4\) & 9 & 1 & 10 & \(C_2\)
 \\
 \rowcolor{backcolour} 4,2 & \(C_2^2\) & 11 & 3 & 14 & \{1\}
 \\\hline
 \rowcolor{backcolour} 6,1 & \(S_3\) & 9 & 1 & 10 & \{1\}
 \\
 6,2 & \(C_6\) & 7 & 1 & 8 & \(C_3\)
 \\\hline
 8,1 & \(C_8\) & 5 & 1 & 6 & \(C_4\)
 \\
 8,2 & \(C_2\times C_4\) & 7 & 3 & 10 & \(C_2\)
 \\
 \rowcolor{backcolour} 8,3 & \(D_4\) & 8 & 3 & 11 & \{1\}
 \\
 8,4 & \(Q_8\) & 6 & 1 & 7 & \(C_2^2\)
 \\
 \rowcolor{backcolour} 8,5 & \(C_2^3\) & 9 & 7 & 16 & \{1\}
 \\\hline
 \rowcolor{backcolour} 10,1 & \(D_5\) & 7 & 1 & 8 & \{1\}
 \\\hline
 12,1 & \(BD_{12}\) & 5 & 1 & 6 & \(S_3\)
 \\
 12,3 & \(A_4\) & 7 & 1 & 8 & \(C_3\)
 \\
\rowcolor{backcolour} 12,4 & \(D_6\) & 7 & 3 & 10 & \{1\}
 \\
 12,5 & \(C_2\times C_6\) & 5 & 3 & 8 & \(C_3\)
 \\\hline
 16,2 & \(C_4^2\) & 5 & 3 & 8 & \(C_2^2\)
 \\
 16,3 & \(C_2^2\rtimes C_4\) & 6 & 5 & 11 & \(C_2\)
 \\
 16,6 & \(M_4(2)\) & 4 & 2 & 6 & \(C_4\)
 \\
 16,8 & \(Q_8\rtimes C_2\) & 5 & 2 & 7 & \(C_2\)
 \\
 16,9 & \(Q_{16}\) & 4 & 1 & 5 & \(D_4\)
 \\
 \rowcolor{backcolour} 16,11 & \(C_2\times D_4\) & 7 & 7 & 14 & \{1\}
 \\
 16,12 & \(C_2\times Q_8\) & 5 & 3 & 8 & \(C_2^2\)
 \\
 \rowcolor{backcolour} 16,13 & \(C_4\circ D_4\) & 6 & 4 & 10 & \{1\}
 \\
 \rowcolor{backcolour} 16,14 & \(C_2^4\) & 8 & 15 & 23 & \{1\}
 \\\hline
 18,3 & \(C_3\times S_3\) & 5 & 1 & 6 & \(C_3\)
 \\
 \rowcolor{backcolour} 18,4 & \(C_3\rtimes S_3\) & 7 & 1 & 8 & \{1\}
 \\\hline
 20,3 & \(C_5\rtimes C_4\) & 5 & 1 & 6 & \(C_2\)
 \\\hline
 24,3 & \(Q_8\rtimes C_3\) & 4 & 1 & 5 & \(A_4\)
 \\
 \rowcolor{backcolour} 24,8 & \(C_3\rtimes D_4\) & 5 & 3 & 8 & \{1\}
 \\
 \rowcolor{backcolour} 24,12 & \(S_4\) & 6 & 2 & 8 & \{1\}
 \\
 24,13 & \(C_2\times A_4\) & 5 & 3 & 8 & \(C_3\)
 \\\hline
 32,6 & \(C_2^3\rtimes C_4\) & 5 & 5 & 10 & \(C_2\)
 \\
 32,7 & \(C_4 . D_4\) & 4 & 4 & 8 & \(C_2\)
 \\
 32,11 & \(C_4 \wr C_2\) & 4 & 3 & 7 & \(C_2\)
 \\
 \rowcolor{backcolour} 32,27 & \(C_2^2 \wr C_2\) & 6 & 10 & 16 & \{1\}
 \\
 32,31 & \(C_4 ._{4} D_4\) & 5 & 5 & 10 & \(C_2\)
 \\
 32,44 & \(C_8.C_2^2\) &  4 & 3 & 7 & \(C_2\)
 \\
 \rowcolor{backcolour} 32,49 & \(D_4 \circ D_4\) & 6 & 10 & 16 & \{1\}
 \\\hline
 36,9 & \(C_3^2 \rtimes C_4\) & 5 & 1 & 6 & \(C_2\)
 \\
 \rowcolor{backcolour} 36,10 & \(S_3^2\) & 5 & 3 & 8 & \{1\}
 \\
 36,11 & \(C_3 \times A_4\)& 5 & 1 & 6 & \(C_3^2\)
 \\\hline
 48,3 & \(C_4^2 \rtimes C_3\) & 5 & 1 & 6 & \(A_4\)
 \\
 \rowcolor{backcolour} 48,29  & \(Q_8 \rtimes S_3\) & 4 & 2 & 6 & \{1\}
 \\
 48,30 & \(A_4 \rtimes C_4\) & 4 & 3 & 7 & \(S_3\)
 \\
 \rowcolor{backcolour} 48,48 & \(C_2 \times S_4\) & 5 & 5 & 10 & \{1\}
 \\
 48,49 & \(C_2^2 \times A_4\)& 4 & 7 & 11 & \(C_3\)
 \\
 48,50 & \(C_2^2 \rtimes A_4\) & 6 & 5 & 11 & \(C_3\)
 \\\hline
 \rowcolor{backcolour} 60,5 & \(A_5\) & 5 & 1 & 6 & \{1\}
 \\\hline
 64,32 & \(C_2 \wr C_4\) & 4 & 6 & 10 & \(C_2\)
 \\
 64,35 & \( C_4^2 \rtimes_3 C_4\) & 4 & 4 & 8 & \(C_2^2\)
 \\
 64,136 & \(D_4 ._9 D_4\) & 4 & 6 & 10 & \(C_2\)
 \\
 \rowcolor{backcolour} 64,138 & \(C_2 \wr C_2^2\) & 5 & 9 & 14 & \{1\}
 \\
 \rowcolor{backcolour} 64,242 & \(C_2^{4} \rtimes C_2^{2}\) & 5 & 9 & 14 & \{1\}
 \\\hline
 \rowcolor{backcolour} 72,40 & \(S_3 \wr C_2\) & 4 & 3 & 7 & \{1\} 
 \\
 72,41 & \(C_3^2 \rtimes Q_8\) & 4 & 1 & 5 & \(C^2_2\)
 \\
 \rowcolor{backcolour} 72,43 & \(C_3 \rtimes S_4\) & 5 & 2 & 7 & \{1\} 
 \\\hline
 80,49 & \(C_2^{4} \rtimes C_5\) & 4 & 3 & 7 & \(C_5\)
 \\\hline 
 \rowcolor{backcolour} 96,64 & \(C_{4}^2\rtimes S_3\) & 4 & 2 & 6 & \{1\}
 \\
 96,70 & \(C_2^{4} \rtimes C_6\)& 4 & 4 & 8 & \(C_3\)
 \\
 \rowcolor{backcolour} 96,195 & \(A_4 \rtimes D_4\) &  4 & 6 & 10 & \{1\}
 \\
 96,204 & \(C_2^{3} \rtimes A_4\)& 4 & 4 & 8 & \(C_3\)
 \\
 \rowcolor{backcolour} 96,227 & \(C_2^2 \rtimes S_4\) & 5 & 5 & 10 & \{1\}
 \\\hline
 \rowcolor{backcolour} 120,34 & \(S_5\) & 4 & 2 & 6 & \{1\}
 \\\hline
 \rowcolor{backcolour} 128,931 & \(C_4^2 \rtimes _5 D_4\) & 4 & 7 & 11 & \{1\}
 \\\hline
 144,184 & \(A_4^2\) & 4 & 3 & 7 & \(C_3^2\)
 \\\hline
 \rowcolor{backcolour} 160,234 &  \(C_2^{4} \rtimes D_5\) & 4 & 4 & 8 & \{1\}
 \\\hline
 \rowcolor{backcolour} 168,42 & \(\GL_3(\mathbb{F}_2)\) & 4 & 1 & 5 & \{1\}
 \\\hline
 \rowcolor{backcolour} 192,955 & \(C_2^4 \rtimes D_6\) & 4 & 6 & 10 & \{1\}
 \\
 192,1023 & \(C_4^{2} \rtimes A_4\) & 5 & 3 & 8 & \(C_3\)
 \\ 
 \rowcolor{backcolour} 192,1493 & \(C_2^{3} \rtimes S_4\) &  4 & 6 & 10 & \{1\}
 \\\hline
 \rowcolor{backcolour} 288,1026 & \(A_4 \rtimes S_4\) & 4 & 4 & 8 & \{1\}
 \\\hline
 \rowcolor{backcolour} 360,118 & \(A_6\) & 4 & 1 & 5 & \{1\}
 \\\hline
 \rowcolor{backcolour} 384,18135 & \(F_{384}\) & 4 & 4 & 8 & \{1\}
 \\\hline
 \rowcolor{backcolour} 960,11357 & \(M_{20}\) & 4 & 2 & 6 & \{1\}
 \\
\end{longtable}

\normalsize

\begin{proposition} \label{prop:K3_2 terminalization nonsmooth}
All terminalizations in Table~\ref{table Hilb2(K3)} are singular with the exception of\, $G \simeq C_2^4$.
\end{proposition}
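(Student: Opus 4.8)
The plan is to treat $G \simeq C_2^4$ separately and to prove that every other entry of Table~\ref{table Hilb2(K3)} yields a singular $Y$. For $G \simeq C_2^4$ the terminalization is smooth by Fujiki \cite[Proposition 14.5]{Fujiki1983}; alternatively, the table records $b_2(Y)=23$, and a projective IHS orbifold fourfold with $b_2=23$ is forced to be smooth by \cite[Theorem 1.3]{Fu-Menet}. For the converse I would assume $Y$ is smooth---so that $Y$ is a smooth projective IHS fourfold, in particular simply connected---and then derive $G \simeq C_2^4$ by partitioning the table into three ranges.

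First, every row with $\pi_1(Y^{\reg}) \neq 1$ (the non-shaded rows) is excluded at once: a smooth IHS manifold is simply connected, so for smooth $Y$ one would have $1 = \pi_1(Y) = \pi_1(Y^{\reg})$, contradicting the last column. This handles all non-shaded entries irrespective of their second Betti number. Second, among the shaded rows (those with simply connected regular locus) I would invoke \cite{guan.betti}: a smooth IHS fourfold satisfies $b_2 \le 8$ or $b_2 = 23$. Since $b_2(Y)=23$ occurs only for $C_2^4$, any shaded row with $8 < b_2(Y) < 23$ cannot be smooth. This leaves only the shaded rows with $b_2(Y) \le 8$.

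For these remaining groups I would bring in the third Betti number. As $S^{[2]}$ is of $\K3^{[2]}$ type we have $H^3(S^{[2]},\IQ)=0$, so \cref{prop:IH3} together with the smoothness of $Y$ gives
\[ b_3(Y) = \dim H^3(Y,\IQ) = \dim IH^3(Y,\IQ) = \dim H^3(S^{[2]},\IQ)^G = 0. \]
On the other hand, the only deformation types of smooth IHS fourfolds are those of $\K3^{[2]}$ type, with $b_2=23$, and of Kummer type, with $b_2=7$ and $b_3=8$. A smooth $Y$ with $b_2(Y)\le 8$ cannot be of $\K3^{[2]}$ type, hence must be of Kummer type and satisfy $b_3(Y)=8$, contradicting $b_3(Y)=0$. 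Thus no shaded row with $b_2 \le 8$ is smooth either, and $G\simeq C_2^4$ is the only exception.

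The main obstacle is the last paragraph, which uses the fact that $\K3^{[2]}$ and Kummer are the only deformation types of smooth IHS fourfolds: this is the current state of knowledge rather than an unconditional theorem. To make the argument self-contained I would instead work locally at the dissident locus. By \cref{prop:blowup}, $Y$ is already a symplectic resolution away from it, so smoothness of $Y$ is equivalent to each local model $\IC^4/G_x$ at a dissident point admitting a symplectic resolution. Verbitsky's criterion forces the stabilizer $G_x$ to be generated by symplectic reflections---here the involutions fixing a codimension $2$ plane---and the four-dimensional symplectic quotient singularities admitting a symplectic resolution are classified (Bellamy--Schedler). The hard part would then be the group-by-group verification that the dissident stabilizers all lie on this resolvable list exactly when $G\simeq C_2^4$, a computation of the same nature as the singularity analysis carried out for the Kummer cases in Section~\ref{sec:sing computation n=2} and, for $S^{[2]}$, in \cite{GM22}.
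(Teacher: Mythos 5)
Your first three reductions (smoothness for $C_2^4$ via Fujiki, exclusion of all rows with $\pi_1(Y^{\reg})\neq 1$ because a smooth IHS manifold is simply connected, and exclusion of the shaded rows with $8 < b_2(Y) < 23$ via \cite{guan.betti}) agree with the paper, and your computation $b_3(Y)=\dim H^3(S^{[2]},\IQ)^G=0$ via \cref{prop:IH3} is correct. The gap is the final step, and it is genuine. Your primary argument invokes the claim that $\K3^{[2]}$ type and Kummer type are the only deformation types of smooth IHS fourfolds; as you yourself concede, this is an open conjecture, not a theorem (Guan's result constrains $b_2$ but classifies nothing below $b_2=23$), so the inference ``smooth with $b_2\le 8$ implies Kummer type, hence $b_3=8$'' is unproved. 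Your fallback --- localizing at the dissident points and deciding resolvability of each $\IC^4/G_x$ via Verbitsky's criterion and the Bellamy--Schedler classification --- is a reasonable strategy, but you explicitly leave the hard part (the group-by-group verification) undone, so the proposition remains unproved for the seventeen shaded groups with $b_2(Y)\le 8$ (among them $D_5$, $S_4$, $A_5$, $S_5$, $A_6$ and $M_{20}$).

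The paper closes exactly this step by an elementary argument of a different nature. If $Y$ is smooth, then $S^{(2)}/G$ can have no isolated singularity $[(x,y)]$ with $y\notin Gx$, since the terminalization is an isomorphism over such points; this translates into Assumption~\ref{assumption:false}: for every $g\in G$ the fixed locus $\Fix(g)\subset S$ lies in a single fiber of $S\to S/G$. From this the paper extracts two checkable constraints: $\ord(g)\cdot|\Fix(g)|$ divides $|G|$ for all $g\in G$ (\cref{lemma:ordersmooth}), and a bijection between conjugacy classes of involutions in $G$ and singular points of $S/G$ with even isotropy (\cref{lemma:corresp}). Combining these with Nikulin's counts of fixed points of symplectic automorphisms of K3 surfaces and Xiao's classification of the singularities of $S/G$ eliminates every remaining group: for most of them $2$ divides $|G|$ but $2\cdot 8=16$ does not; for $|G|=48,96,384,960$ the prime $3$ divides $|G|$ but $3\cdot 6=18$ does not; and for $|G|=160,288$ the bijection of \cref{lemma:corresp} fails. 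To repair your write-up you would either have to carry out the local Verbitsky/Bellamy--Schedler analysis case by case, or adopt the paper's divisibility argument, which needs no classification statement about smooth fourfolds at all.
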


\begin{proof}

If the terminalization $Y$ is smooth, then \new{in particular} the quotient $S^{(2)}/G$ does not admit an isolated singularity $[(x,y)]$ with $x \neq y$.
In fact, such points lie in the locus where the birational morphism \(Y\to S^{(2)}/G\) is an isomorphism. 
    
Equivalently, for any \(g\in G\), there exists no point \((x,y)\in S^2\) such that $(g(x), g(y))=(x,y)$ and $y \notin G x$. Otherwise, such a $g$-fixed point would not be of the form $(x,\iota(x))$ for any involution $\iota \in G$, so it would not lie on a codimension $2$ component of the fixed locus of some element $g \in G$, and it certainly would give rise to a singularity of $Y$. 

Equivalently, if \(Y\) is smooth, then the following statement holds true.  

\begin{assumption}\label{assumption:false}
        For any $g \in G$, the fixed locus $\Fix(g) \subset S$ lies in a fiber of $\pi: S \to S/G$.
\end{assumption}

We deduce the following  \cref{lemma:ordersmooth,lemma:corresp}. 

\begin{lemma}\label{lemma:ordersmooth} Under Assumption~\ref{assumption:false}, $\ord(g) \cdot |\Fix(g)|$ divides $|G|$ for any $g \in G$.
\end{lemma}

\begin{proof}[Proof of \cref{lemma:ordersmooth}]
Given $g \in G$ and \(x\in \Fix(g) \subset S\), any point of the orbit $G x$ is fixed by a conjugate of $g$ since \(\Fix(hgh^{-1})=h \Fix(g)\) with \(h\in G\). Moreover, $\Fix(g) \subseteq G x$ by Assumption~\ref{assumption:false}, so the orbit \(G x\) is the disjoint union of the fixed loci of conjugates of $g$, and $|\Fix(g)|$ divides $ |G x|$. We conclude that
\begin{equation*}\pushQED{\qed}
  \ord(g) \cdot |\Fix(g)| \text{ divides } |G_x| \cdot |G x|=|G|.\qedhere \popQED
	\end{equation*}
\renewcommand{\qed}{}     
\end{proof}

\begin{lemma}\label{lemma:corresp} Under Assumption~\ref{assumption:false}, there exists a bijective correspondence  
\begin{align*}
        \{\text{conjugacy classes of involutions of }G \} & \xleftrightarrow{\hphantom{\hspace{10pt}}} \{\text{singular points in } S/G \text{ with even isotropy}\}\\
        [\iota] & \longmapsto \, \pi(\Fix(\iota)).
\end{align*}
\end{lemma}
\begin{proof}[Proof of \cref{lemma:corresp}]
The correspondence $\iota \mapsto \pi(\Fix(\iota))$ is well defined since $\Fix(\iota)$ lies in the same $\pi$-fiber by Assumption~\ref{assumption:false}. It is also independent of the representative of $[\iota]$ since $\Fix(g \iota g^{-1})=g \Fix(\iota)$. The inverse map sends a singular point $q$ to the conjugacy class of the unique involution of $G_x$ for any $x \in \pi^{-1}(q)$. The uniqueness of such an involution follows from the faithfullness of the action of $G_x$ as finite subgroup of $\SL(2, \IC)$ on the tangent space $T_x S$ (recall that there is a unique involution in $\SL(2, \IC)$). 
\end{proof}

By \cite{guan.betti}, if $Y$ is a smooth IHS fourfold, then either $3 \leq b_2(Y) \leq 8$ or $b_2(Y)=23$, and according to Table~\ref{table Hilb2(K3)}, the latter occurs only if $G \simeq C_2^4$. All terminalizations $Y$ in Table~\ref{table Hilb2(K3)} with \(b_2(Y)\le 8\) and \(\pi_1(Y^{\reg})=\{1\}\) fail to satisfy the necessary conditions for smoothness detailed in Lemmas~\ref{lemma:ordersmooth} and~\ref{lemma:corresp}. 
In order to apply these lemmas, we use the classification of the singularities of $S/G$ obtained in \cite{Xiao1996}  and the computation of the cardinality of $\Fix(g_{n}) \subset S$ for a symplectic automorphism $g_n$ on $S$ of order $n$, contained for instance in \cite[Section~5]{nikulin1980finite}:
 \begin{longtable}{c|c c c c c c c}
  \(n\) 
  & 2
  & 3
  & 4
  & 5
  & 6
  & 7
  & 8
  \\\hline
  \(| \Fix(g_n)|\) 
  & 8
  & 6
  & 4
  & 4
  & 2
  & 3
  & 2
 \end{longtable}
More precisely, we are able to exclude all the cases, as
\begin{itemize}
\item if $|G|= 160$ or $288$, \cref{lemma:corresp} fails, 
\item if $|G|=48, 96, 384$ or $960$, \cref{lemma:ordersmooth} fails since $3 \mid |G|$ but $3 \cdot 6 \nmid |G|$, 
\item for all other groups $G$, \cref{lemma:ordersmooth} fails since $2 \mid |G|$ but $2 \cdot 8 \nmid |G|$.\qedhere
\end{itemize}
\renewcommand{\qed}{}
\end{proof}

\begin{theorem}\label{thm:S2new}
    Let $G$ be a finite group of induced symplectic automorphisms acting on $S^{[2]}$, and let $Y$ be a projective terminalization of $S^{[2]}/G$ with simply connected regular locus. There are at least five new deformation classes of such irreducible symplectic varieties $Y$. In particular, they are not deformation equivalent to any terminalization of quotients of Kummer fourfolds by groups of induced symplectic automorphisms, or a Fujiki fourfolds appearing in \cite[Theorem 1.11]{GM22}.
\end{theorem}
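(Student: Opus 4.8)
The plan is to separate deformation classes using the two invariants available for projective $\IQ$-factorial terminal IHS varieties: the second Betti number $b_2$ and the multiset of analytic singularity types. Both are deformation invariants for this class by \cite{Namikawa2006} (recalled in Section~\ref{sec:criteria}), since deformations of projective $\IQ$-factorial terminal IHS varieties are \emph{equisingular}; in particular the numbers $a_k$ of \cref{defn:sing type ai} are preserved. Hence two such varieties that differ in either invariant cannot be deformation equivalent, and it suffices to exhibit five varieties $Y$ whose invariant data is pairwise distinct and is realized by no Kummer quotient terminalization and no Fujiki fourfold of \cite[Theorem~1.11]{GM22}.

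First I would isolate the relevant cases. By \cref{cor:fundgroupKA}, the terminalizations with simply connected regular locus are exactly the gray rows of Table~\ref{table Hilb2(K3)}, and their $b_2(Y)$ are tabulated there; the values occurring are $5,6,7,8,10,11,14,16$ and $23$. The case $G\simeq C_2^4$ with $b_2=23$ is the smooth K$3^{[2]}$-type example of \cref{prop:K3_2 terminalization nonsmooth} and is not counted as new. Since the $b_2$ values of the Fujiki fourfolds coincide with ours (compare the \cite{GM22} and present-paper rows of Table~\ref{table Betti 4folds}), $b_2$ alone cannot isolate five new classes, so the singularity data must be brought in.

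Next I would compute the singularities of each candidate $Y$. For $X=S^{[2]}$ every codimension~$2$ fixed locus arises from an involution (\cref{lemma:codime2}, \cref{rem fix locus involutions on Hilb}), so after the single blowup of \cref{prop:blowup} the transversal $A_1$ loci are resolved and the remaining singularities of $Y$ lie over the dissident locus. There each isotropy group acts linearly on a tangent space $\IA^4$, yielding cyclic quotient singularities of type $\IA^4/\tfrac{1}{k}(1,-1,1,-1)$, and diagonalizing the action records the $a_k(Y)$. I would then match each resulting pair $\bigl(b_2(Y),\{a_k(Y)\}\bigr)$ against the singularity data of the Fujiki fourfolds in \cite[Theorem~1.11]{GM22} and against the Kummer quotient terminalizations of Table~\ref{table sing n=2} (analysed in Section~\ref{sec:sing computation n=2}). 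Five candidates whose invariant pairs are pairwise distinct and appear in neither list provide the five required new deformation classes.

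The main obstacle is precisely this comparison of singularities. Because equal $b_2$ together with equal singularity profile is only \emph{necessary} for deformation equivalence, every non-equivalence claim must be witnessed by an honest difference in one invariant; since several Fujiki and Kummer examples share a second Betti number with ours, the argument cannot rest on $b_2$ and must carry the full multiset $\{a_k\}$ through each comparison. Verifying that at least five gray rows of Table~\ref{table Hilb2(K3)} realize singularity profiles absent from both the \cite{GM22} Fujiki list and Table~\ref{table sing n=2}, while remaining pairwise distinct, is the crux of the proof.
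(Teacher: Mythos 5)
Your framework is sound in principle---$b_2$ and the analytic singularity types are indeed deformation invariants for projective $\IQ$-factorial terminal IHS varieties by \cite{Namikawa2006}---but the proposal contains a wrong step, leaves its crux unverified, and misses the key lemma that makes the paper's proof feasible. The paper does not compare singularity profiles at all for the $S^{[2]}$ cases; instead it invokes \cite[Proposition 3.21, Proof of Proposition 1.13]{GM22}: if terminalizations of $S^{[2]}/G_1$ and $S^{[2]}/G_2$ are deformation equivalent, then $\sqrt{|G_1|/|G_2|}$ is rational. This single criterion, combined with the $b_2$ column of Table~\ref{table Hilb2(K3)}, already separates the five witnesses $D_5$, $A_5$, $S_5$, $\GL_3(\IF_2)$, $A_6$ from each other (e.g.\ $A_5$ and $S_5$ have equal $b_2=6$ but $\sqrt{2}\notin\IQ$) and from the Fujiki fourfolds of \cite[Theorem 1.11]{GM22}. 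The only place the paper argues via singularities is $D_5$ against the Kummer rows with $b_2=8$ in Table~\ref{table sing n=2}: the subgroup $C_5\triangleleft D_5$ fixes two points of $S$ lying in distinct $D_5$-orbits (\cite[Theorem 3]{Xiao1996}), producing an isolated singularity of $Y$ with isotropy $C_5$, a type that never occurs in Table~\ref{table sing n=2}.

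Concretely, your gaps are these. First, your local analysis is incorrect: the isotropy groups of points of $S^{[2]}$ under the actions of Table~\ref{table Hilb2(K3)} are by no means all cyclic (the gray rows include $C_2^4$, $D_4\circ D_4$, $S_4$, $M_{20}$, \dots), and even for cyclic isotropy the weights need not be $(1,-1,1,-1)$ (an isolated symplectic $C_5$-quotient can be $\IA^4/\tfrac{1}{5}(1,-1,2,-2)$). So the multiset $\{a_k\}$ of \cref{defn:sing type ai} is not a complete singularity profile of $Y$, and determining the actual local models of a terminalization over non-cyclic isotropy requires case-by-case work of the kind the paper carries out only for the Kummer-relevant groups in \cref{prop: sing after terminalization}. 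Second, and decisively, the crux of your plan---computing the full singularity data of each gray row of Table~\ref{table Hilb2(K3)} and matching it against \cite[Theorem 1.11]{GM22} and Table~\ref{table sing n=2}---is exactly the computation the paper deliberately avoids (and which, for non-admissible groups, is not available in the literature; the paper notes that Menet is still working on those cases). You neither identify the five candidate groups nor perform any of these comparisons, so as written the proposal is a plan whose essential step is missing, while the one step you make explicit (that all residual singularities are of type $\IA^4/\tfrac{1}{k}(1,-1,1,-1)$) would fail.
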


\begin{longtable}{c|c|c}
 ID &  $G$ & $b_2(Y)$
  \\\hline
   10,1  &  \(D_5\) & 8 \\
   60,5 & \(A_5\) & 6\\
   120,34 & \(S_5\) & 6\\
   168,42 & \(\GL_3(\mathbb{F}_2)\) & 5 \\
  360,118 & \(A_6\) & 5
 \end{longtable}
 
\begin{proof}
If the projective terminalizations $Y_1 \to S^{[2]}/G_1$ and  $Y_2 \to S^{[2]}/G_2$ are deformation equivalent, then $b_2(Y_1)=b_2(Y_2)$ and  $\sqrt{|G_1|/|G_2|}$ is a rational number; see \cite[Proposition 3.21, Proof of Proposition 1.13]{GM22}. We then conclude  by direct inspections of Tables~\ref{table Hilb2(K3)} and~\ref{table sing n=2} and \cite[Section~5]{GM22}. 

Note that the terminalization $Y\to S^{[2]}/D_5$ has $b_2(Y)=8$, but it cannot be deformation equivalent to any of the new terminalizations with $b_2=8$ in \cref{table sing n=2}. Indeed, the subgroup $C_5 \triangleleft D_5$ fixes two points $z_1, z_2$ on $S$ lying in different $D_5$-orbits (\textit{cf.} \cite[Theorem 3]{Xiao1996}), and so the point $(z_1, z_2) \in S^{2}$ corresponds to an isolated singularity of $Y$ with isotropy $C_5$, but this singularity never appears in \cref{table sing n=2}. 
\end{proof}

\begin{remark}
   \rev{The terminalizations of $S^{[2]}/G$ in Table~\ref{table Hilb2(K3)} are Fujiki varieties $S(G)_{\theta}^{[2]}$ with trivial involution $\theta = \id$; see \cref{defb:Fujiki variety}. More information on their singularities is available in \cite{GM22}, provided that $G$ is an \emph{admissible} group of induced symplectic automorphisms; see \cite[Definition 1.10]{GM22}.}
\end{remark} 

 \section{Terminalizations of quotients of generalized Kummer manifolds}\label{sec:classificationKn} 
In this section, we compute the second Betti number  and the fundamental groups of the regular locus of terminalizations of quotients of $\K_2(A)$ and $\K_3(A)$ by finite groups of induced symplectic automorphisms; see Tables~\ref{table n=2} and~\ref{table n=3}, respectively.

\subsection{Symplectic automorphisms of an abelian surfaces}\label{rem:Fujiki results on AG}
Let $G$ be a finite group of symplectic automorphisms of an abelian surface $A$.
In the notation of Section~\ref{sec:notation}, the group $G\subseteq A[n+1]\rtimes \SL(\Lambda)$ fits in the short exact sequence
\[1\lra G_\textrm{tr}\lra G\lra G_\circ\lra 1.\] 
By the classification of finite subgroups of $\SL(2,\IC)$ together with \cite[Lemma 3.3]{Fujiki88}, $G_{\circ}$ is isomorphic to $\{1\}$, $C_m$ for $m \in \{2,3,4,6\}$, $Q_8$, $BD_{12}$ or $BT_{24}$. Moreover, by \cite[Remarks 3.6 and~3.12]{Fujiki88}, $(A,G_\circ)$ is deformation equivalent to one of the following: 
\begin{center}
$\begin{aligned}
& (A, 
\langle 
-\id
\rangle \simeq C_2),
&& &&& \\
& (E^2, 
\langle 
g_m
\rangle \simeq C_m)
&& \textrm{for } E=\IC/{\langle 1, \xi_m} \rangle, 
&&& g_m=\begin{psmallmatrix}
    \xi_m & 0 \\ 0 & \xi_m^{-1}
\end{psmallmatrix},
&&&& \xi_m=e^{\frac{2\pi i}{m}}, \\
& (E^2, 
\langle 
h, 
k
\rangle \simeq Q_8)
&& \textrm{for }  E=\IC/{\langle 1, i \rangle},
&&& h=\begin{psmallmatrix}
    0 & -1 \\ 1 & 0
\end{psmallmatrix},
&&&& k=g_4=\begin{psmallmatrix}
    i & 0 \\ 0 & -i
\end{psmallmatrix}, \\
& (\mathbb H/\Gamma, \langle i,j\rangle \simeq Q_8)
&& \textrm{for }  \mathbb H=\mathbb R[1,i,j,k],
&&& \Gamma=\mathbb Z[1,i,j,t],
&&&& t=\tfrac{1+i+j+k}{2}, \\
& (E^2, 
\langle 
h, 
l
\rangle \simeq BD_{12})
&& \textrm{for }  E=\IC/{\langle 1, \xi_6 \rangle}, 
&&& h=\begin{psmallmatrix}
    0 & -1 \\ 1 & 0
\end{psmallmatrix},
&&&& l=g_6=\begin{psmallmatrix}
    \xi_6 & 0 \\ 0 & \xi_6^{-1}
\end{psmallmatrix}, \\
& (\mathbb H/\Gamma, \Gamma^\times \simeq BT_{24})
&& \textrm{for }  \mathbb H=\mathbb R[1,i,j,k],
&&& \Gamma=\mathbb Z[1,i,j,t],
&&&& t=\tfrac{1+i+j+k}{2}, \\
& && 
&&& \Gamma ^\times = \langle r,t \rangle ,
&&&& r=\tfrac{1+i+j-k}{2};
\end{aligned}
$
\end{center}
\normalsize
\noindent see also \cite[Proposition 3.7, Lemma 2.6]{Fujiki88} and the surveys   \cite[Section~2.2]{Pietromonaco2022} and \cite[Appendix 2]{KMO2023}. Therefore, without loss of generality, we can assume that $G_{\circ}$ acts on $A$ as above,  and we identify actions of $G$ up to conjugation in $A[n+1] \rtimes \SL(\Lambda)$. In fact, the topological invariants we are interested in, that is,  $b_2(Y)$ and $\pi_1(Y^{\reg})$, are independent on the deformation type of the pair $(A,G)$ and invariant under conjugation in $A[n+1] \rtimes \SL(\Lambda)$. 

\begin{lemma} \label{lemma: rank H2invariant}
Let $G$ be a finite group of symplectic automorphisms of an abelian surface $A$.
Then,
$$ \rank(H^2(A)^G)=
\begin{cases}
 6 & \text{if }\, G_\circ \simeq C_2,\\
 4 & \text{if }\, G_\circ\simeq C_3, C_4, C_6, \\
 3 & \text{if }\, G_\circ\simeq Q_8, BD_{12}, BT_{24}.
\end{cases}
$$
\end{lemma}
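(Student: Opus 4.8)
The plan is to reduce the statement to a representation-theoretic computation for the linear part $G_\circ$ and then dispatch the seven cases with Schur's lemma. First I would observe that translations act trivially on $H^\bullet(A,\IC)$, so the $G$-action on cohomology factors through $G_\circ = G/G_{\tr}$, which acts via the restriction of the standard representation of $\SL(\Lambda)\subset\SL(2,\IC)$ on $V\coloneqq T_0A\simeq\IC^2$. Hence $\rank(H^2(A)^G)=\dim_\IC H^2(A,\IC)^{G_\circ}$, and it suffices to decompose $H^2(A,\IC)$ as a $G_\circ$-representation. Writing $H^1(A,\IC)=V^*\oplus\overline V{}^*$ and $H^2(A,\IC)=\wedge^2 H^1(A,\IC)$, the Hodge decomposition reads
\[
H^2(A,\IC)=\underbrace{\wedge^2 V^*}_{H^{2,0}}\ \oplus\ \underbrace{V^*\otimes\overline V{}^*}_{H^{1,1}}\ \oplus\ \underbrace{\wedge^2 \overline V{}^*}_{H^{0,2}}.
\]

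Next I would use that $G_\circ\subset\SL(V)$ acts with determinant $1$, so it fixes the lines $\wedge^2 V^*$ and $\wedge^2\overline V{}^*$ pointwise (this is precisely the symplectic condition); these always contribute $2$ to the rank. It then remains to compute $\dim(V^*\otimes\overline V{}^*)^{G_\circ}$. Endowing $V$ with a $G_\circ$-invariant Hermitian form gives a $G_\circ$-equivariant isomorphism $\overline V{}^*\simeq V$, whence $V^*\otimes\overline V{}^*\simeq\operatorname{End}(V)$, and by Schur's lemma
\[
\dim(V^*\otimes\overline V{}^*)^{G_\circ}=\dim\operatorname{Hom}_{G_\circ}(V,V)=\sum_i m_i^2,
\]
where $V\simeq\bigoplus_i W_i^{\oplus m_i}$ is the isotypic decomposition into irreducibles. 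Thus $\rank(H^2(A)^G)=2+\sum_i m_i^2$.

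Finally I would read off the multiplicities from the explicit models listed just above the statement. For $G_\circ\simeq C_2$ the generator $-\id$ makes $V=W^{\oplus 2}$ for $W$ the nontrivial character, so $\sum_i m_i^2=4$ and the rank is $6$. For $G_\circ\simeq C_3,C_4,C_6$ the diagonal generator $\diag(\xi_m,\xi_m^{-1})$ splits $V$ into two \emph{distinct} one-dimensional characters, so $\sum_i m_i^2=2$ and the rank is $4$. For $G_\circ\simeq Q_8,BD_{12},BT_{24}$ the given matrices generate a faithful irreducible two-dimensional representation, so $\sum_i m_i^2=1$ and the rank is $3$.

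The computation is routine once the reduction to $G_\circ$ is in place; the only step needing genuine care is checking irreducibility of $V$ in the three binary cases, which amounts to verifying that the listed generators share no common eigenvector (equivalently, $V$ admits no $G_\circ$-stable line). If one prefers to avoid the case-by-case bookkeeping of common eigenvectors, the whole count can be packaged into the single character identity $\dim(V^*\otimes\overline V{}^*)^{G_\circ}=|G_\circ|^{-1}\sum_{g\in G_\circ}\chi_V(g)^2$, which one evaluates directly from the eigenvalue pairs $(\lambda,\lambda^{-1})$ of each element.
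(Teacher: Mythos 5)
Your proof is correct, but it takes a genuinely different route from the paper's. The paper's argument is two lines: it observes that the whole subgroup $A \rtimes \langle -\id \rangle = \ker\{A \rtimes \SL(\Lambda) \to \PSL(\Lambda)\}$ acts trivially on $H^2(A)$ --- not just the translations --- so that $H^2(A)^G = H^2(A)^{G_\circ}$ and even $H^2(A)^{G_\circ/\langle -\id\rangle}$ when $-\id \in G_\circ$, and then it simply cites Fujiki's computation of these invariants \cite[Section~6]{Fujiki88}. You instead stop the reduction at $G_\circ$ and replace the citation by a self-contained calculation: the Hodge decomposition $H^2(A,\IC) = \wedge^2 V^* \oplus (V^*\otimes \overline{V}{}^*) \oplus \wedge^2\overline{V}{}^*$, triviality of the determinant lines because $G_\circ \subset \SL(V)$, the unitarization $\overline{V}{}^* \simeq V$, and Schur's lemma giving $\rank(H^2(A)^G) = 2 + \sum_i m_i^2$. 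This cleanly reproduces all three values ($m_i$-data: one character with multiplicity $2$ for $C_2$; two distinct characters for $C_3,C_4,C_6$; irreducible $V$ for the nonabelian groups, where irreducibility is automatic since a reducible finite subgroup of $\SL(2,\IC)$ is simultaneously diagonalizable, hence abelian). What your approach buys is independence from the literature and a uniform formula; what the paper's buys is brevity and the stronger intermediate fact that $-\id$ acts trivially on $H^2(A)$, which your computation only recovers implicitly (the $C_2$ case yielding the full rank $6$) and which is reused elsewhere in the paper. One cosmetic point: your closing character identity should in general read $|G_\circ|^{-1}\sum_g |\chi_V(g)|^2$; it agrees with your $\chi_V(g)^2$ here only because every $g \in \SL(2,\IC)$ of finite order has real trace $\lambda + \lambda^{-1}$.
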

\begin{proof}
Note that the group
\[A \rtimes (-\id) = \ker \{ A \rtimes \SL(\Lambda) \lra \SL(\Lambda) \lra \PSL(\Lambda)\}\]
acts trivially on $H^2(A)$, so $H^2(A)^G = H^2(A)^{G_{\circ}}$ and if $-\id \in G_\circ$, we have $H^2(A)^{G} = H^2(A)^{G_{\circ}/\langle -\id \rangle}$. The claim then follows from \cite[Section~6]{Fujiki88}.
\end{proof}

\subsection{Second Betti numbers and fundamental groups of terminalizations}\label{sec:terminalizationKnKn} Let $G$ be a finite group of induced symplectic automorphisms of $\K_2(A)$ or $\K_3(A)$. 
In \cref{table n=2,table n=3}, we list
\begin{itemize}
    \item the group ID of $G$ as in \GroupNames, when available (otherwise, we write NA), 
    \item an alias of $G$ as abstract group (we express $G$ as a (split or non-split) extension \revi{of \(G_\circ\) by \(G_{\text{tr}}\)} and, when available, we adopt the enumeration of extensions in \GroupNames; otherwise, we add the subscript * for unnumbered extensions), 
    \item \(\rank\coloneqq \rank\left(H^2(\K_2(A))^G\right)=\rank(H^2(A)^G)+1\), where the ranks are computed in \cref{lemma: rank H2invariant}, 
    \item the number \(N_i\) of components of codimension~$2$ of the singular locus of \(\K_n(A)/G\) with transversal \(A_{i-1}\) singularities, see \cref{formulaBetti2}, 
    \item   
    \(b_2(Y)\), see \cref{formulaBetti} and \cref{remark:binarydihedral}, 
    \item \(\pi_1(Y^{\reg})\), see \cref{cor:fundgroupKA}.  
\end{itemize}
The explicit values of \(N_i\), \(b_2(Y)\) and the groups \(\pi_1(Y^{\reg})\) can be obtained using GAP.\footnote{A GAP code containing all the calculations is available from the
authors upon request.}
We highlight in gray the quotients whose terminalization has simply connected regular locus.

\begin{example} \label{ex:differentquotient}
Let $\xi_3$ be a primitive third root of unity,  and let $E$ be an elliptic curve with complex multiplication $\xi_3 \curvearrowright E \colon x \mapsto \xi_3 \cdot x$. Consider the symplectic automorphism $g_3(x_1,x_2)=(\xi_3 x_1, \xi^{-1}_3 x_2)$. Choose \revi{$a=(a_1, a_2)$ and $b=(b_1, b_2)$ in $E^2[3]$} such that $g_3(a)=a$ and $g_3(b) \neq b$. Denote by $\tau_a, \tau_b \colon E^2 \to E^2$ the translations $\tau_a(x_1, x_2)=(x_1+a_1, x_2+a_2)$ and $\tau_b(x_1, x_2)=(x_1+b_1, x_2+b_2)$. Now, both $\tau_{a} g_3$ and $\tau_{b}  g_3$ induce the same action on $H^*(\K_2(E^2), \IZ)$. However, the quotient $\K_2(E^2)/\langle \tau_{a}  g_3\rangle$ has strictly canonical singularities, while $\K_2(E^2)/\langle \tau_{b}  g_3\rangle$ is terminal. The actions correspond to the two distinct rows for the cyclic group $C_3$ in Table~\ref{table n=2}.
\end{example} 

\begin{remark}[Group actions \textit{vs.}~abstract realization] \label{rmk:groupaction}
    In \cref{ex:differentquotient}, we pointed out that different actions of the same abstract group may lead to terminalizations with different deformation type. This explains why in the tables below, the same abstract group may appear multiple times. \textit{A priori}, the table should include all possible actions, namely all possible subgroups of $A[n+1] \rtimes G_{\circ}$. A GAP code can easily provide all of them and their relevant invariants, but to avoid redundancy,  \begin{quote} \centering
        \emph{we identify groups which are conjugate in $A[n+1] \rtimes G_\circ$\\ or give the same string of invariants $[G,G_\circ, \rank, N_i, b_2, \pi_1]$.}
        \vspace{0.1 cm}
    \end{quote} 
    Note that the latter condition leaves open the possibility that there may be terminalizations with different deformation type but same string of invariants; see for instance \cref{ex:differentdeformation}. 
    In this regard, we check the following facts:  \begin{itemize}
         \item Using a GAP code, we observe that the actions whose quotients have a smooth terminalization, \textit{i.e.}, $C_3^3$ in \cref{table n=2} and $C_2^5$ in \cref{table n=3},  are unique up to conjugation in $A[n+1] \rtimes G_\circ$. 
         \item By elementary algebraic considerations, the actions whose quotients admit a terminalization with simply connected regular locus (the most relevant according to Section~\ref{sec:criteria}) are all affine; \textit{i.e.}, the group $G$ is conjugate to the semidirect product $G_\tr \rtimes G_\circ$ by an element in $A[n+1]$; see \cref{lem:G is affine}.
    \end{itemize}
\end{remark}

\begin{remark}[Quaternion group]\label{ex:differentdeformation}

The moduli space of pairs $(A,G_\circ)$, where $A$ is an abelian surface and $G_\circ$ is a symplectic group of linear automorphisms, is connected except for $G_\circ=Q_8$, in which case it has two connected components represented by the pairs $(E^2, Q_8)$ and $(\IH/\Gamma,Q_8)$ defined in Section~\ref{rem:Fujiki results on AG}; see \cite[Remark 3.12]{Fujiki88}. The action of $Q_8$ on $E^2$ is maximal; \textit{i.e.}, it is not contained in any other finite subgroup of $\Aut(E^2)$ \new{(fixing the origin),} while the action on $\IH/\Gamma$ is the restriction of the action of $BT_{24}$ in Section~\ref{rem:Fujiki results on AG}. The induced actions on $\K_{3}(A)$ of their overgroups give rise to terminalizations with different $b_2$ and $\pi_1$; we distinguish the two cases in \cref{table sing n=2}. 

Accidentally, the terminalizations of $\K_{2}(E^2)/G$ and $\K_{2}(\IH/\Gamma)/G$, with $G_{\circ}=Q_8$, have the same $b_2$ and $\pi_1$ (because the groups $E^2[3]\rtimes Q_8$ and $\IH/\Gamma[3]\rtimes Q_8$ turn out to be abstractly isomorphic to the group of ID 648,730). Therefore, we write them only once in \cref{table n=2}. Mind, however, that the terminalizations are not deformation equivalent. For instance, the terminalizations of the quotients $\K_2(E^2)/Q_8$ and $\K_2(\IH/\Gamma)/Q_8$ have different singularities: 
     \begin{alignat*}{3}
     \Sing\left(\K_2\left(E^2, Q_8\right)\right)& : && \quad 4 Q_8+6 C_4 +29 C_2, \\
    \Sing(\K_2(\IH/\Gamma, Q_8)) &: && \quad 2 Q_8 + 9 C_4 + 28 C_2,
 \end{alignat*}
 where $m \cdot G_x$ means that the terminalization has $m$ isolated singularities with isotropy $G_x$. These singularities are computed in the same way as in Section~\ref{sec:singularitiesterminalization}; we omit the details.
 \end{remark}

\begin{remark}[Binary dihedral group]\label{remark:binarydihedral}
    If $G \simeq C^{2k}_{3} \rtimes BD_{12}$ with $k=0, 1$ or $2$, then there exists a unique conjugacy class of subgroups of order $3$ satisfying \cref{lemma:inducedinvolution}\eqref{l:i-2} and further contained in a subgroup of $G$ isomorphic to $BD_{12}$, splitting the projection  $G \simeq C^{2k}_{3} \rtimes BD_{12} \to BD_{12}$. So if $G_{\circ}\simeq BD_{12}$, by \cref{Prop:excorder3}, the formula in \cref{formulaBetti} acquires a correction term as follows: 
    \[b_2(Y)=\rank\left(L^G\right)+N_2+2N_3-1.\]
\end{remark}
\small\renewcommand*{\arraystretch}{1.1}
\begin{longtable}{c|c|c|c|c|c|c|c}
\caption{Terminalizations of \(\K_2(A)/G\)}\\
\label{table n=2}
 ID 
 & \(G\)
 & \(G_\circ\)
 & \(\rank\)
 & $N_2$
 & $N_3$
 & \(b_2(Y)\)  
 & $\pi_1(Y^{\reg})$ 
 \\\hline \endfirsthead
 ID 
 & \(G\)
 & \(G_\circ\)
 & \(\rank\)
 & $N_2$
 & $N_3$
 & \(b_2(Y)\)  
 &\(\pi_1(Y^{\reg})\) 
 \\\hline \endhead
\rowcolor{backcolour} 2,1
 & \(C_2\)
 & 
 & 
 & 1
 & 0
 & 8
 & $\{1\}$ 
 \\
 \rowcolor{backcolour} 6,1
 & \(C_3\rtimes C_2\)
 & 
 & 
 & 1
 & 0
 & 8
 & \(\{1\}\)
 \\
 \rowcolor{backcolour} 
 18,4
 & \(C_3^2\rtimes_2 C_2\)
 & 
 & 
 & 1
 & 0
 & 8
 & \(\{1\}\)
 \\
 \rowcolor{backcolour} 54,14
 & \(C_3^3\rtimes C_2\)
 & 
 & 
 & 1
 & 0
 & 8
 & \(\{1\}\)
\\
 \rowcolor{backcolour} 162,54
 & \(C_3^4\rtimes C_2\)
 & \multirow{-5}{*}{\(C_2\)}
 & \multirow{-5}{*}{7}
 & 1
 & 0
 & 8
 & \(\{1\}\)
\\ \hline
 3,1
 & \(C_3\)
 &
 &
 & 0
 & 0
 & 5
 & \(C_3\)
 \\
 \rowcolor{backcolour} 3,1
 & \(C_3\)
 & 
 & 
 & 0
 & 1
 & 7
 & \(\{1\}\)
 \\
 9,2
 & \(C_3^2\)
 & 
 & 
 & 0
 & 0
 & 5
 & \(C_3^2\)
 \\
 \rowcolor{backcolour} 9,2
 & \(C_3^2\)
 & 
 & 
 & 0
 & 3
 & 11
 & \(\{1\}\)
 \\
 27,3
 & \(C_3^2\rtimes C_3\)
 & 
 & 
 & 0
 & 0
 & 5
 & \(C_3^2\rtimes C_3\)
 \\
 27,3
 & \(C_3^2\rtimes C_3\)
 & 
 & 
 & 0
 & 1
 & 7
 & \(C_3\)
 \\
 27,5
 & \(C_3^3\)
 & 
 & 
 & 0
 & 0
 & 5
 & \(C_3^3\)
 \\
\rowcolor{backcolour}  27,5
 & \(C_3^3\)
 & 
 & 
 & 0
 & 9
 & 23
 & \(\{1\}\)
 \\
 81,12
 & \(C_3^3\rtimes_2 C_3\)
 & 
 & 
 & 0
 & 0
 & 5
 & \(C_3^3\rtimes_2 C_3\)
 \\
 81,12
 & \(C_3^3\rtimes_2 C_3\)
 & 
 & 
 & 0
 & 3
 & 11
 & \(C_3\)
 \\
 243,37
 & \(C_3^4\rtimes_1 C_3\)
 & \multirow{-11}{*}{\(C_3\)}
 & \multirow{-11}{*}{5}
 & 0
 & 1
 & 7
 & \(C_3^2\)
\\ \hline
4,1
 & \(C_4\)
 &
 &
 & 1
 & 0
 & 6
 & \(C_2\)
\\
36,9
 & \(C_3^2\rtimes C_4\)
 & 
 & 
 & 1
 & 0
 & 6
 & \(C_2\)
\\ 
324,164
 & \(C_3^4\rtimes_4 C_4\)
 & \multirow{-3}{*}{\(C_4\)}
 & \multirow{-3}{*}{5}
 & 1
 & 0
 & 6
 & \(C_2\)
 \\ \hline
 \rowcolor{backcolour} 6,2
 & \(C_6\)
 &
 &
 & 1
 & 1
 & 8
 & \(\{1\}\)
\\ 
\rowcolor{backcolour} 18,3
 & \(C_3\rtimes C_6\)
 & 
 & 
 & 1
 & 2
 & 10
 & \(\{1\}\)
\\
\rowcolor{backcolour} 54,13
 & \(C_3^2\rtimes_4 C_6\)
 & 
 & 
 & 1
 & 5
 & 16
 & \(\{1\}\)
\\
\rowcolor{backcolour} 54,5
 & \(C_3^2\rtimes C_6\)
 & 
 & 
 & 1
 & 1
 & 8
 & \(\{1\}\)
\\
\rowcolor{backcolour} 162,40
 & \(C_3^3\rtimes_4 C_6\)
 & 
 & 
 & 1
 & 2
 & 10
 & \(\{1\}\)
 \\
\rowcolor{backcolour}  486,146
 & \(C_3^4\rtimes_4 C_6\)
 & \multirow{-6}{*}{\(C_6\)}
 & \multirow{-6}{*}{5}
 & 1
 & 1
 & 8
 & \(\{1\}\)
\\ \hline
8,4
 & \(Q_8\)
 &
 &
 & 1
 & 0
 & 5
 & \(C_2^2\)
\\ 
72,41
 & \(C_3^2\rtimes Q_8\)
 & 
 & 
 & 1
 & 0
 & 5
 & \(C_2^2\)
\\ 
648,730
 & \(C_3^4\rtimes Q_8\)
 & \multirow{-3}{*}{\(Q_8\)}
 & \multirow{-3}{*}{4} 
 & 1
 & 0
 & 5
 & \(C_2^2\)
 \\ \hline
12,1
 & \(BD_{12}\)
 &
 &
 & 1
 & 1
 & \rev{6}
 & \(C_2\)
\\ 
 108,37
 & \(C_3^2\rtimes_3 BD_{12}\)
 & 
 & 
 & 1
 & 3
 & \rev{10}
 & \(C_2\)
\\ 
972,NA
 & \(C_3^4\rtimes_* BD_{12}\)
 & \multirow{-3}{*}{\(BD_{12}\)}
 & \multirow{-3}{*}{4}
 & 1
 & 1
 & \rev{6}
 & \(C_2\)
 \\ \hline
\rowcolor{backcolour} 24,3
 & \(BT_{24}\)
 &
 &
 & 1
 & 1
 & 7
 & \(\{1\}\)
\\ 
\rowcolor{backcolour} 216,153
 & \(C_3^2\rtimes BT_{24}\)
 & 
 & 
 & 1
 & 1
 & 7
 & \(\{1\}\)
\\ 
\rowcolor{backcolour} 1944,NA
 & \(C_3^4\rtimes_* BT_{24}\)
 & \multirow{-3}{*}{\(BT_{24}\)}
 & \multirow{-3}{*}{4}
 & 1
 & 1
 & 7
 & \(\{1\}\)
\\ 
\end{longtable}

\begin{longtable}{c|c|c|c|c|c|c}
\caption{Terminalizations of \(\K_3(A)/G\)}\\
\label{table n=3}
 ID
 & \(G\)
 & \(G_\circ\)
 & \(\rank\)
 & $N_2$
 & \(b_2(Y)\)  
 & $\pi_1(Y^{\reg})$
\\\hline \endfirsthead
 ID
 & \(G\)
 & \(G_\circ\)
 & \(\rank\)
 & $N_2$
 & \(b_2(Y)\)  
 &\(\pi_1(Y^{\reg})\) 
\\\hline \endhead
 2,1
 & \(C_2\)
 & \multirow{26}{*}{\(C_2\)}
 & \multirow{26}{*}{7}
 & 0
 & 7
 & \(C_2\)
 \\
\rowcolor{backcolour}  2,1
 & \(C_2\)
 & 
 & 
 & 1
 & 8
 & \(\{1\}\)
 \\
 4,2
 & \(C_2^2\)
 & 
 & 
 & 0
 & 7
 & \(C_2^2\)
 \\
\rowcolor{backcolour}  4,2
 & \(C_2^2\)
 & 
 & 
 & 2
 & 9
 & \(\{1\}\)
 \\
 8,5
 & \(C_2^3\)
 & 
 & 
 & 0
 & 7
 & \(C_2^3\)
 \\
 8,3
 & \(C_4\rtimes C_2\) 
 & 
 & 
 & 1
 & 8
 & \(C_2\)
 \\
 8,3
 & \(C_4\rtimes C_2\) 
 & 
 & 
 & 0
 & 7
 & \(C_4\rtimes C_2\) 
 \\
\rowcolor{backcolour}  8,5
 & \(C_2^3\)
 & 
 & 
 & 4
 & 11
 & \(\{1\}\)
 \\
 16,11
 & \(C_2^3\rtimes C_2\)
 & 
 & 
 & 0
 & 7
 & \(C_2^3\rtimes C_2\)
 \\
 16,11
 & \(C_2^3\rtimes C_2\)
 & 
 & 
 & 2
 & 9
 & \(C_2\)
 \\
 16,14
 & \(C_2^4\)
 & 
 & 
 & 0
 & 7
 & \(C_2^4\)
 \\
\rowcolor{backcolour}  16,14
 & \(C_2^4\)
 & 
 & 
 & 8
 & 15
 & \(\{1\}\)
 \\
 32,46 
 & \((C_2^2\times C_4)\rtimes_5 C_2\)
 & 
 & 
 & 0
 & 7
 & \((C_2^2\times C_4)\rtimes_5 C_2\)
 \\
 32,34
 & \(C_4^2\rtimes_6 C_2\)
 & 
 & 
 & 0
 & 7
 & \(C_4^2\rtimes_6 C_2\)
 \\
 32,51
 & \(C_2^5\)
 & 
 & 
 & 0
 & 7
 & \(C_2^5\)
 \\
\rowcolor{backcolour}  32,51
 & \(C_2^5\)
 & 
 & 
 & 16
 & 23
 & \(\{1\}\)
 \\
 32,46
 & \((C_2^2\times C_4)\rtimes_5 C_2\)
 & 
 & 
 & 4
 & 11
 & \(C_2\)
 \\
 32,34
 & \(C_4^{2} \rtimes_6 C_2\)
 & 
 & 
 & 1
 & 8
 & \(C_2^2\)
 \\
 64,211
 &  \((C_2 \times C_4^2) \rtimes_{11} C_2\)
 & 
 & 
 & 0
 & 7
 &  \((C_2 \times C_4^2) \rtimes_{11} C_2\)
 \\
 64,261
 & \((C_2^3\times C_4)\rtimes_7 C_2\)
 & 
 & 
 & 0
 & 7
 & \((C_2^3\times C_4)\rtimes_7 C_2\)
 \\
 64,211
 & \((C_2 \times C_4^2) \rtimes_{11} C_2\)
 & 
 & 
 &  2
 &  9
 & \(C_2^2\)
 \\
 64,261
 & \((C_2^3\times C_4)\rtimes_7 C_2\)
 & 
 & 
 & 8
 & 15
 & \(C_2\)
 \\
 128,2172
 & \((C_2^2\times C_4^2)\rtimes_{23}C_2\) 
 & 
 & 
 & 0
 & 7
 & \((C_2^2 \times C_4^{2}) \rtimes_{23} C_2\)
 \\
 128,2172
 & \((C_2^2 \times C_4^{2}) \rtimes_{23} C_2\)
 & 
 & 
 & 4
 & 11
 & \(C_2^2\)
 \\
 128,1599
 &  \(C_4^{3} \rtimes_{15} C_2\)
 & 
 & 
 & 0
 & 7
 & \(C_4^{3} \rtimes_{15} C_2\)
 \\
 128,1599
 & \(C_4^{3} \rtimes_{15} C_2\)
 & 
 & 
 & 1
 & 8
 & \(C_2^3\)
 \\
 256,29630
 & \((C_2\times C_4^3)\rtimes_* C_2\)
 & 
 & 
 & 2
 & 9
 & \(C_2^3\)
 \\
 256,29630
 & \((C_2\times C_4^3)\rtimes_* C_2\)
 & 
 & 
 & 0
 & 7
 & \((C_2\times C_4^3)\rtimes_* C_2 \)
 \\
 512,NA
 & \(C_4^4\rtimes C_2\)
 & \multirow{-6}{*}{\(C_2\)}
 & \multirow{-6}{*}{7}
 & 1
 & 8
 & \(C_2^4\)
\\ \hline
 3,1
 & \(C_3\)
 &
 &
 &
 &
 & \(C_3\)
\\
 12,3
 & \(C_2^2\rtimes C_3\)
 & 
 & 
 & 
 & 
 & \(C_2^2\rtimes C_3\)
\\
 48,3
 & \(C_4^2\rtimes C_3\)
 & 
 & 
 & 
 & 
 & \(C_4^2\rtimes C_3\)
\\
 48,50
 & \(C_2^4\rtimes_2 C_3\)
 & 
 & 
 & 
 & 
 & \(C_2^4\rtimes_2 C_3\)
\\
 192,1020
 & \((C_2^2\times C_4^2)\rtimes_3 C_3\)
 & 
 & 
 & 
 & 
 & \((C_2^2\times C_4^2)\rtimes_3 C_3\)
\\
768,1083578
 & \(C_4^4\rtimes C_3\)
  & \multirow{-6}{*}{\(C_3\)}
 & \multirow{-6}{*}{5}
 & \multirow{-6}{*}{0}
 & \multirow{-6}{*}{5}
 &  \(C_4^4\rtimes C_3\)
\\ \hline
 4,1
 &  \(C_4\)
 &
 &
 & 0
 & 5
 & \(C_4\)
 \\
 4,1
 &  \(C_4\)
 & 
 &
 & 1
 & 6
 & \(C_2\)
 \\
 8,2
 &  \(C_2\times C_4\)
 & 
 &
 & 0
 & 5
 & \(C_2\times C_4\)
 \\
 8,2
 &  \(C_2\times C_4\)
 & 
 &
 & 2
 & 7
 & \(C_2\)
 \\
 16,3
 &  \(C_2^2\rtimes C_4\)
 & 
 &
 & 3
 & 8
 & \(C_2\)
 \\
 16,10
 &  \(C_2^2\times C_4\)
 & 
 &
 & 4
 & 9
 & \(C_2\)
 \\
 16,3
 &  \(C_2^2\rtimes C_4\)
 & 
 &
 & 0
 & 5
 & \(C_2^2\rtimes C_4\)
 \\
 16,10
 &  \(C_2^2\times C_4\)
 & 
 &
 & 0
 & 5
 & \(C_2^2\times C_4\)
 \\
 32,22
 &  \(C_2^3\rtimes_2 C_4\)
 & 
 &
 & 0
 & 5
 & \(C_2^3\rtimes_2 C_4\)
 \\
 32,6
 &  \((C_2\times C_4)\rtimes C_4\)
 & 
 &
 & 2
 & 7
 & \(C_2^2\)
 \\
 32,6
 & \((C_2\times C_4)\rtimes C_4\)
 & 
 &
 & 1
 & 6
 & \(C_4\)
 \\
 32,6
 & \((C_2\times C_4)\rtimes C_4\)  
 & 
 &
 & 2
 & 7
 & \(C_4\)
 \\
 32,6
 & \((C_2\times C_4)\rtimes C_4\) 
 & 
 &
 & 0
 & 5
 &  \((C_2\times C_4)\rtimes C_4\)
 \\
 32,22
 &  \(C_2^3\rtimes_2 C_4\)
 & 
 &
 & 6
 & 11
 & \(C_2\)
 \\
 64,34
 &  \(C_4^2\rtimes C_4\)
 & 
 &
 & 0
 & 5
 & \(C_4^2\rtimes C_4\)
 \\
 64,34
 &  \(C_4^2\rtimes C_4\)
 & 
 &
 & 1
 & 6
 & \(C_2^2\rtimes C_2\)
 \\
 64,90
  & \((C_2^2\times C_4)\rtimes_3 C_4\)
 & 
 & 
 & 0
 & 5
 & \((C_2^2\times C_4)\rtimes_3 C_4\)
 \\
 64,90
 & \((C_2^2\times C_4)\rtimes_3 C_4\)
 & 
 & 
 & 2
 & 7
 & \(C_4\)
 \\
 64,90
 & \((C_2^2\times C_4)\rtimes_3 C_4\)
 & 
 & 
 & 4
 & 9
 & \(C_2\times C_2\)
 \\
 64,60
 &  \(C_2^4\rtimes_3 C_4\)
 & 
 &
 & 0
 & 5
 & \(C_2^4\rtimes_3 C_4\)
 \\
 64,60
 & \(C_2^4\rtimes_3 C_4\) 
 & 
 &
 & 10
 & 15
 & \(C_2\)
 \\
 128,856
 &  \((C_2\times C_4^2)\rtimes_9 C_4\)
 & 
 &
 & 0
 & 5
 & \((C_2\times C_4^2)\rtimes_9 C_4\)
 \\
 128,513
 &  \((C_2^3\times C_4)\rtimes_6 C_4\)
 & 
 &
 & 6
 & 11
 & \(C_2^2\)
 \\
 128,513
 &  \((C_2^3\times C_4)\rtimes_6 C_4\)
 & 
 &
 & 4
 & 9
 & \(C_4\)
 \\
 128,513
 &  \((C_2^3\times C_4)\rtimes_6 C_4\)
 & 
 &
 & 0
 & 5
 & \((C_2^3\times C_4)\rtimes_6 C_4\)
 \\
 128,856
 &  \((C_2\times C_4^2)\rtimes_9 C_4\)
 & 
 &
 & 2
 & 7
 & \(C_2^2\rtimes C_2\)
 \\
 256,5681
 &  \((C_2^2\times C_4^2)\rtimes_* C_4\)
 & 
 &
 & 3
 & 8
 & \(C_2^2\rtimes C_2\)
 \\
 256,5681
 &  \((C_2^2\times C_4^2)\rtimes_* C_4\)
 & 
 &
 & 0
 & 5
 & \((C_2^2\times C_4^2)\rtimes_* C_4\)
 \\
 256,1534
 &  \((C_2^2\times C_4^2)\rtimes_* C_4\)
 & 
 &
 & 2
 & 7
 & \(C_2\times C_4\)
 \\
 256,1534
 &  \((C_2^2\times C_4^2)\rtimes_* C_4\)
 & 
 &
 & 4
 & 9
 & \(C_2^3\)
 \\
 512,NA
 &  \((C_2\times C_4^3)\rtimes_* C_4\)
 & 
 &
 & 2
 & 7
 & \(C_2^3\rtimes C_2\)
 \\
 512,NA
 &  \((C_2\times C_4^3) \rtimes_* C_4\)
 & 
 &
 & 1
 & 6
 & \(C_2^2 \rtimes C_4\)
 \\
 1024,NA
 &  \(C_4^4\rtimes_* C_4\)
 & \multirow{-33}{*}{\(C_4\)}
 & \multirow{-33}{*}{5}
 & 1
 & 6
 & \(C_2^4\rtimes_1 C_2\)
 \\ \hline
 6,2
 & \(C_6\)
 &
 &
 & 1
 & 6
 & \(C_3\)
 \\
24,13
 & \(C_2^2\rtimes C_6\)
 & 
 & 
 & 2
 & 7
 & \(C_3\)
 \\ 
 96,72
 & \(C_4^2\rtimes_2 C_6\)
 & 
 & 
 & 1
 & 6
 & \(C_2^2\rtimes C_3\)
 \\ 
 96,229
 & \(C_2^4\rtimes_4 C_6\)
 & 
 & 
 & 6
 & 11
 & \(C_3\)
 \\ 
 384,18223
 & \((C_2^2\times C_4^2)\rtimes_* C_6\)
 & 
 & 
 & 2
 & 7
 & \(C_2^2\rtimes C_3\)
 \\ 
 1536,NA
 & \(C_4^4\rtimes_* C_6\)
 & \multirow{-6}{*}{\(C_6\)}
 & \multirow{-6}{*}{5}
 & 1
 & 6
 & \(C_2^4\rtimes_2 C_3\)
 \\ \hline
 8,4
 & \(Q_8\)
 & \multirow{6}{*}{\(Q_8\)}
 & \multirow{6}{*}{4}
 & 0
 & 4
 & \(Q_8\)
 \\
 8,4
 & \(Q_8\)
 & 
 & 
 & 1
 & 5
 & \(C_2^2\)
 \\
 16,4
 & \(C_2.Q_8\)
 & 
 & 
 & 0
 & 4
 & \(C_2.Q_8\)
 \\
 16,12
 & \(C_2\times Q_8\)
 & 
 & 
 & 0
 & 4
 & \(C_2\times Q_8\)
 \\
 16,4
 & \(C_2.Q_8\)
 & 
 & 
 & 2
 & 6
 & \(C_2\)
 \\
 16,12
 & \(C_2\times Q_8\)
 & 
 & 
 & 2
 & 6
 & \(C_2^2\)
 \\
 32,29
 & \(C_2^2\rtimes Q_8\)
 & 
 & 
 & 0
 & 4
 & \(C_2^2\rtimes Q_8\)
 \\
 32,29
 & \(C_2^2\rtimes Q_8\)
 & 
 & 
 & 4
 & 7
 & \(C_2^2\)
 \\
 64,224
 & \(C_2^3\rtimes_2 Q_8\)
 & 
 & 
 & 5
 & 9
 & \(C_2^2\)
 \\
 64,23
 & \(C_2^3._2 Q_8\)
 & 
 & 
 & 5
 & 9
 & \(C_2^2\)
 \\
 64,23
 & \(C_2^3._2 Q_8\)
 & 
 & 
 & 0
 & 4
 & \(C_2^3._2 Q_8\)
 \\
 64,224
 & \(C_2^3\rtimes_2 Q_8\)
 & 
 & 
 & 0
 & 4
 & \(C_2^3\rtimes_2 Q_8\)
 \\
 128,764
 & \((C_2^2\times C_4)\rtimes Q_8\)
 & 
 & 
 & 1
 & 5
 & \(C_2._2C_2^2\)
 \\
 128,764
 & \((C_2^2\times C_4)\rtimes Q_8\)
 & 
 & 
 & 4
 & 8
 & \(C_2^3\)
 \\
 128,761
 & \(C_2^4\rtimes_2 Q_8\)
 & 
 & 
 & 0
 & 4
 & \(C_2^4\rtimes_2 Q_8\)
 \\
 128,761
 & \(C_2^4\rtimes_2 Q_8\)
 & 
 & 
 & 7
 & 11
 & \(C_2^2\)
 \\
 256,298
 & \((C_2^3\times C_4)._*Q_8\)
 & 
 & 
 & 3
 & 7
 & \(C_2\times C_4\)
 \\
 256,25861
 & \((C_2^3\times C_4)\rtimes_* Q_8\)
 & 
 & 
 & 5
 & 9
 & \(C_2^3\)
 \\
 256,298
 & \((C_2^3\times C_4)._* Q_8\)
 & 
 & 
 & 4
 & 8
 & \(C_2._1C_2^2\)
 \\
 256,25861
 & \((C_2^3\times C_4)\rtimes_* Q_8\)
 & 
 & 
 & 2
 & 6
 & \(C_2._2C_2^2\)
 \\
 512,NA
 & \((C_2^2\times C_4^2)\rtimes_* Q_8\)
 & 
 & 
 & 3
 & 7
 & \(C_2^2\rtimes C_2^2\)
 \\
 512,NA
 & \((C_2^2\times C_4^2)\rtimes_* Q_8\)
 & 
 & 
 & 2
 & 6
 & \(C_2^2.C_2^2\)
 \\
 1024,NA
 & \((C_2\times C_4^3)\rtimes_* Q_8\)
 & 
 & 
 & 2
 & 6
 & \(C_2^3\rtimes_2 C_2^2\)
 \\
 1024,NA
 & \((C_2\times C_4^3)._* Q_8\)
 & 
 & 
 & 1
 & 5
 & \(C_2^3._1C_2^2\)
 \\
 2048,NA
 & \(C_4^4\rtimes_* Q_8\)
 & \multirow{-25}{*}{\(Q_8\)}
 & \multirow{-25}{*}{4}
 & 1
 & 5
 & \(C_2^4\rtimes_1 C_2^2\)
 \\ \hline
 8,4
 & \(Q_8\)
 &
 &
 & 0
 & 4
 & \(Q_8\)
 \\
 8,4
 & \(Q_8\)
 & 
 & 
 & 1
 & 5
 & \(C_2^2\)
 \\
 16,12
 & \(C_2\times Q_8\)
 & 
 & 
 & 2
 & 6
 & \(C_2^2\)
 \\
 16,12
 & \(C_2\times Q_8\)
 & 
 & 
 & 0
 & 4
 & \(C_2\times Q_8\)
 \\
 32,2
 & \(C_2^2._2 Q_8\)
 & 
 & 
 & 0
 & 4
 & \(C_2^2._2 Q_8\)
 \\
 32,47
 & \(C_2^2\times Q_8\)
 & 
 & 
 & 0
 & 4
 & \(C_2^2\times Q_8\)
 \\
 32,2
 & \(C_2^2._2 Q_8\)
 & 
 & 
 & 4
 & 8
 & \(C_2^2\)
 \\
 32,47
 & \(C_2^2\times Q_8\)
 & 
 & 
 & 4
 & 8
 & \(C_2^2\)
 \\
 64,74
 & \(C_2^3\rtimes_1 Q_8\)
 & 
 & 
 & 0
 & 4
 & \(C_2^3\rtimes Q_8\)
 \\
 64,74
 & \(C_2^3\rtimes_1 Q_8\)
 & 
 & 
 & 5
 & 9
 & \(C_2^2\)
 \\
 128,36
 & \(C_2^4._4 Q_8\)
 & 
 & 
 & 0
 & 4
 & \(C_2^4._4 Q_8\)
 \\
 128,1572
 & \(C_2^4\rtimes_6 Q_8\)
 & 
 & 
 & 7
 & 11
 & \(C_2^2\)
 \\
 128,36
 & \(C_2^4._4 Q_8\)
 & 
 & 
 & 7
 & 11
 & \(C_2^2\)
 \\
 128,1572
 & \(C_2^4\rtimes_6 Q_8\)
 & 
 & 
 & 0
 & 4
 & \(C_2^4\rtimes_6 Q_8\)
 \\
 256,3378
 & \((C_2^3\times C_4)\rtimes_* Q_8\)
 & 
 & 
 & 0
 & 4
 & \((C_2^3\times C_4)\rtimes_* Q_8\)
 \\
 256,3378
 & \((C_2^3\times C_4)\rtimes_* Q_8\)
 & 
 & 
 & 2
 & 6
 & \(Q_8\)
 \\
 256,3378
 & \((C_2^3\times C_4)\rtimes_* Q_8\)
 & 
 & 
 & 5
 & 9
 & \(C_2^3\)
 \\
 512,NA
 & \((C_2^2\times C_4^2)._*Q_8\)
 & 
 & 
 & 1
 & 5
 & \(C_4^2\)
 \\
 512,NA
 & \((C_2^2\times C_4^2)\rtimes_* Q_8\)
 & 
 & 
 & 1
 & 5
 & \(C_2\times Q_8\)
 \\
 512,NA
 & \((C_2^2\times C_4^2)._*Q_8\)
 & 
 & 
 & 2
 & 6
 & \(C_2^2\rtimes C_4\)
 \\
 512,NA
 & \((C_2^2\times C_4^2)\rtimes_* Q_8\)
 & 
 & 
 & 4
 & 8
 & \(C_2^4\)
 \\
 1024,NA
 & \((C_2\times C_4^3)\rtimes_* Q_8\)
 & 
 & 
 & 1
 & 5
 & \(C_4^2\rtimes_5 C_2\)
 \\
 1024,NA
 & \((C_2\times C_4^3)\rtimes_* Q_8\)
 & 
 & 
 & 2
 & 6
 & \(C_2^4\rtimes_1 C_2\)
 \\ 
 2048,NA
 & \(C_4^4\rtimes_* Q_8\)
 & \multirow{-24}{*}{\(Q_8\subset BT_{24}\)}
 & \multirow{-24}{*}{4}
 & 1
 & 5
 & \(C_2^4\rtimes_1 C_2^2\)
 \\ \hline
 12,1
 & \(BD_{12}\)
 &
 &
 & 1
 & 5
 & \(S_3\)
 \\ 
48,30
 & \(C_2^2\rtimes BD_{12}\)
 & 
 & 
 & 2
 & 6
 & \(S_3\)
 \\
 192,1495
 & \(C_2^4\rtimes_4 BD_{12}\)
 & \(BD_{12}\)
 & 
 & 5
 & 9
 & \(S_3\)
 \\
 192,185
 & \(C_4^2\rtimes BD_{12}\)
 & 
 & 
 & 1
 & 5
 & \(C_2^2\rtimes S_3\)
 \\
 768,1088649
 & \((C_2^2\times C_4^2)\rtimes BD_{12}\)
 & 
 & 
 & 2
 & 6
 & \(C_2^2\rtimes S_3\)
 \\
3072,NA
 & \(C_4^4\rtimes BD_{12}\)
 & 
 & \multirow{-6}{*}{4}
 & 1
 & 5
 & \(C_2^4\rtimes_3 S_3\)
 \\\hline
 24,3
 & \(BT_{24}\)
 &
 &
 & 1
 & 5
 & \(A_4\)
 \\
 96,3
 & \(C_2^2.BT_{24}\)
 & 
 &
 & 2
 & 6
 & \(A_4\)
 \\
 96,203
 & \(C_2^2\rtimes BT_{24}\)
 & 
 &
 & 2
 & 6
 & \(A_4\)
 \\
 384,4
 & \(C_2^4._* BT_{24}\)
 & 
 &
 & 3
 & 7
 & \(A_4\)
 \\
 384,5868
 & \(C_2^4\rtimes_* BT_{24}\)
 & 
 & 
 & 3
 & 7
 & \(A_4\)
 \\
 1536,NA
 & \((C_2^2\times C_4^2)\rtimes_* BT_{24}\)
 & 
 &
 & 2
 & 6
 & \(C_2^2\rtimes A_4\)
 \\
 1536,NA
 & \((C_2^2\times C_4^2)._* BT_{24}\)
 & 
 &
 & 1
 & 5
 & \(C_2^2.A_4\)
 \\
 6144,NA
 & \(C_4^4\rtimes_* BT_{24}\)
 & \multirow{-8}{*}{\(BT_{24}\)}
 & \multirow{-8}{*}{4}
 & 1
 & 5
 & \(C_4^2\rtimes A_4\)
 \\
\end{longtable}
\normalsize

\subsection{Technical digression: Affine actions} In this technical section, we show that all actions giving a terminalization with simply connected regular locus are affine. The possible groups arising are listed in Tables~\ref{table n=2} and~\ref{table n=3} and satisfy one of the assumptions \eqref{l:Gia-1}--\eqref{l:Gia-5} in \cref{lem:G is affine}. 

\begin{lemma}[Affine groups] \label{lem:G is affine}
    Let $G$ be a finite group of induced symplectic automorphisms of\, $\K_n(A)$. Then $G$ is conjugate by an element of $A[n+1]$ to the affine subgroup $G_\tr \rtimes G_\circ$ if
    \begin{enumerate}
        \item\label{l:Gia-1} $n=2$ and $G_\circ \simeq C_2$,
        \item\label{l:Gia-2} $n=2$ and $G_\circ \simeq C_6$,
        \item\label{l:Gia-3} $n=2$ and $G_\circ \simeq BT_{24}$,
        \item\label{l:Gia-4} $n=2$, $G_\circ \simeq C_3$ and $N_3 \neq 0$,
        \item\label{l:Gia-5} $n=3$, $G_\circ \simeq C_2$ and $N_2 \neq 0$.
    \end{enumerate}
\end{lemma}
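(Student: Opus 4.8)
The plan is to translate the statement into the vanishing of a single cohomology class and then to verify this vanishing case by case. Set $M \coloneqq A[n+1]/G_{\tr}$. This is a $G_\circ$-module: the subgroup $G_{\tr}=G\cap A[n+1]$ is normal in $G$ and stable under the linear action of $G_\circ$, while translations act trivially on it by conjugation. For each $T\in G_\circ$ pick a lift $g_T=\tau_{\alpha_T}T\in G$; since two lifts differ by an element of $G_{\tr}$, the assignment $T\mapsto\bar\alpha_T\in M$ is a well-defined $1$-cocycle $\beta\in Z^1(G_\circ,M)$. A direct computation gives $\tau_\gamma g_T\tau_\gamma^{-1}=\tau_{\alpha_T+(\id-T)\gamma}\,T$, so conjugating $G$ by a translation $\tau_\gamma$ with $\gamma\in A[n+1]$ replaces $\beta$ by $\beta-d\bar\gamma$, where $(d\bar\gamma)(T)=(T-\id)\bar\gamma$. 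Moreover $G$ equals the affine subgroup $G_{\tr}\rtimes G_\circ$ exactly when $\beta=0$. Hence $G$ is conjugate by an element of $A[n+1]$ to $G_{\tr}\rtimes G_\circ$ if and only if $\beta$ is a coboundary, that is, $[\beta]=0$ in $H^1(G_\circ,M)$, and everything reduces to proving this vanishing.

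For \eqref{l:Gia-1}, \eqref{l:Gia-2} and \eqref{l:Gia-3} I would argue through the central involution $-\id$. In each of these cases $G_\circ$ (equal to $C_2$, $C_6$, or $BT_{24}$) contains $-\id$, the unique involution of $\SL(\Lambda)\subset\SL(2,\IC)$, and $-\id$ is central in $G_\circ$. Since $n=2$, the module $M=A[3]/G_{\tr}$ is a $3$-group on which $-\id$ acts as the scalar $-1$. As $-\id$ is central, its action on coefficients induces the identity on $H^{*}(G_\circ,M)$; but, being a module endomorphism, the scalar $-1$ induces multiplication by $-1$. Therefore $H^{*}(G_\circ,M)$ is annihilated by $2$, and being a $3$-group it is also annihilated by a power of $3$, so $H^{*}(G_\circ,M)=0$; in particular $H^1(G_\circ,M)=0$ and $G$ is affine after a translation. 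This mechanism breaks down precisely for the remaining two items—$G_\circ=C_3$ has no involution in \eqref{l:Gia-4}, and $M=A[4]/G_{\tr}$ is a $2$-group in \eqref{l:Gia-5}—which is exactly why the hypotheses $N_3\neq0$, respectively $N_2\neq0$, are imposed.

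For \eqref{l:Gia-4} and \eqref{l:Gia-5} I would feed in the distinguished automorphisms produced by those hypotheses. In \eqref{l:Gia-5}, $N_2\neq0$ yields, by \cref{formulaBetti2}\eqref{fB2-2} and \cref{lemma:inducedinvolution}\eqref{l:i-1}, an involution $g=\tau_\alpha(-\id)\in G$ with $\alpha\in A[2]$; since $A[2]=2\,A[4]$ we may write $\alpha=2\delta$, and as the group of $1$-coboundaries of $\langle-\id\rangle$ acting on $M$ is exactly $2M$, the class $[\beta]=[\bar\alpha]$ vanishes. In \eqref{l:Gia-4}, $N_3\neq0$ yields, by \cref{formulaBetti2}\eqref{fB2-3} and \cref{lemma:inducedinvolution}\eqref{l:i-2}, an order-$3$ element $g=\tau_\alpha T\in G$ with $T$ generating $G_\circ=C_3$ and $T(\alpha)=\alpha$, i.e. $\alpha\in\ker(T-\id)$ on $A[3]$. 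The crux, which I expect to be the main obstacle, is to prove that $\ker(T-\id)=\Im(T-\id)$ on $A[3]$. Working with the deformation model $(E^2,\langle g_3\rangle)$ of \cref{rem:Fujiki results on AG}, one computes that $T$ acts on $A[3]\simeq\IF_3^{4}$ as a unipotent transformation with $(T-\id)^2=0$ and $\rank(T-\id)=2$; since then $\Im(T-\id)\subseteq\ker(T-\id)$ and both have dimension $2$, they coincide. Thus $\alpha\in\Im(T-\id)$, and as the group of $1$-coboundaries of $C_3$ acting on $M$ is exactly $(T-\id)M$, the class $[\beta]=[\bar\alpha]$ again vanishes. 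In either case the reduction of the first paragraph shows that $G$ is conjugate by a translation to $G_{\tr}\rtimes G_\circ$, as required.
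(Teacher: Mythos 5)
Your proof is correct, but it takes a genuinely different route from the paper's. The paper argues by explicit conjugation, case by case: for \eqref{l:Gia-1} it conjugates by $\tau_\alpha$ itself (using $3\alpha=0$); for \eqref{l:Gia-2}, and for the generator $t$ of $BT_{24}$ in \eqref{l:Gia-3}, it solves $\beta-g(\beta)=-\alpha$, using that $\id-g$ is invertible on $A[3]$; for the second generator $r$ of $BT_{24}$ it observes that $(\tau_\beta r)^3(t^3)^{-1}=\tau_{\beta+r(\beta)+r^2(\beta)}$ already lies in the conjugated group, so conjugating by it fixes the group while straightening $\tau_\beta r$ to $r$; and its cases \eqref{l:Gia-4}, \eqref{l:Gia-5} rest on precisely your two facts, $\Pi_{g_3}=\Im(\id-g_3)$ (asserted there without proof) and $A[2]=2A[4]$. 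Your reduction to the vanishing of $[\beta]\in H^1\left(G_\circ,A[n+1]/G_{\tr}\right)$ is sound: the cocycle is well defined because $G_{\tr}$ is a $G_\circ$-submodule of $A[n+1]$, conjugation by $\tau_\gamma$ shifts $\beta$ by a coboundary, and $\beta=0$ exactly when $G$ contains the linear copy of $G_\circ$. This buys genuine uniformity: cases \eqref{l:Gia-1}--\eqref{l:Gia-3} follow at once from the central-involution argument, since $-\id$ is central and acts as $-1$ on $3$-torsion coefficients, so the cohomology is killed by $2$ and by $3$; in particular you bypass the paper's delicate generator-by-generator juggling for the non-cyclic group $BT_{24}$. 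It also makes transparent where the hypotheses $N_3\neq0$ and $N_2\neq0$ enter: they furnish a cocycle representative that is visibly a coboundary. Moreover, you actually prove the identity $\ker(T-\id)=\Im(T-\id)$ on $A[3]$, which the paper leaves implicit; your appeal to the model $(E^2,\langle g_3\rangle)$ is legitimate because the $G_\circ$-representation on $H_1(A,\IZ)$ is constant along the connected deformation family of \cref{rem:Fujiki results on AG} (equivalently, $\Lambda$ is a free $\IZ[\xi_3]$-module and $3$ is a unit times $(1-\xi_3)^2$, so kernel and image of $\xi_3-1$ on $\Lambda/3\Lambda$ coincide). What the paper's method buys in exchange is that it is completely elementary---no group cohomology---and it exhibits the conjugating translation explicitly.
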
 

\begin{proof} We follow the notation of Section~\ref{rem:Fujiki results on AG}.
  It suffices to prove that, up to conjugation in $A[n+1]$, the quotient $G \to G_{\circ}$ splits. If $\{g_i\}$ are generators of $G_{\circ} \subset \SL(\Lambda)$, and $\tau_{\alpha_i} g_i$ is a lift of $g_i$ in $G \subset A[n+1] \rtimes \SL(\Lambda)$, we show that there exists an $\alpha \in A[n+1]$ such that the $\tau_{\alpha}(\tau_{\alpha_i} g_i)\tau_{-\alpha} \subset \SL(\Lambda)$
    generate $G_{\circ}$; \textit{i.e.}, $G_{\circ} \subset \tau_{\alpha} G \tau_{-\alpha}$ splits the quotient $\tau_{\alpha} G \tau_{-\alpha} \to G_{\circ}$.

  \eqref{l:Gia-1}~ Let $\tau_\alpha(-\id) \in G$ be a lift of $-\id \in C_2$. Conjugating by $\tau_\alpha$, we write
  $$\tau_\alpha (\tau_\alpha (-\id)) \tau_{-\alpha}=-\id \in \tau_\alpha G \tau_{-\alpha}.$$

\eqref{l:Gia-2}~  Let $\tau_\alpha g_6 \in G$ be a lift of $g_6 \in C_6$. Observe that $(\id - g_6)$ is an automorphism of $A[3]$, so we can pick a $\beta$ such that $\beta - g_6(\beta)= -\alpha$, which gives $$\tau_\beta (\tau_\alpha g_6) \tau_{-\beta}= g_6 \in \tau_\beta G \tau_{-\beta}.$$

\eqref{l:Gia-3}~ Let $\tau_\alpha t$ \new{ be a lift of $t$ to $G$.} 
        Observe that $(\id - t)$ is an automorphism of $A[3]$, so we can pick a $\gamma$ such that $\gamma - t(\gamma)= -\alpha$, which implies $$\tau_\gamma (\tau_\alpha t) \tau_{-\gamma}=t \in \tau_{\gamma}G \tau_{-\gamma}.$$ 
        Let $\tau_\beta r$ \new{ be a lift of $r$ to $\tau_{\gamma}G \tau_{-\gamma}$.} 
        As $t^3=-\id \in \tau_{\gamma}G \tau_{-\gamma}$ and $(\tau_\beta r)^3=\tau_{\beta + r(\beta) + r^2(\beta)}(-\id) \in \tau_{\gamma}G \tau_{-\gamma}$, we write $$\tau_{\beta + r(\beta) + r^2(\beta)} ( \tau_\beta r) \tau_{-\beta -r(\beta) - r^2(\beta)}=r \in \tau_{\gamma}G \tau_{-\gamma}.$$

        \eqref{l:Gia-4}~ As $N_3 \neq 0$, by \cref{lemma:inducedinvolution}\eqref{l:i-2}, there exists an
        $$\alpha \in \Pi_{g_3} \coloneqq \{ \alpha \in A[3] \,|\, g_3(\alpha)=\alpha\}= \textrm{Im}(\id - g_3)$$
        such that $\tau_\alpha g_3 \in G$. Pick a $\beta$ such that $\beta- g_3(\beta)=-\alpha$. Then $$\tau_\beta (\tau_\alpha g_3) \tau_{-\beta}= g_3 \in \tau_\beta G \tau_{-\beta}.$$

        \eqref{l:Gia-5}~ As $N_2 \neq 0$, by \cref{lemma:inducedinvolution}\eqref{l:i-1},  there exists an $\alpha \in A[2]$ such that $\tau_\alpha (-\id) \in G$. Pick  a $\beta$ such that $2\beta=\alpha$. Then
\begin{equation*}\pushQED{\qed}
        \tau_\beta (\tau_\alpha (-\id)) \tau_{-\beta}=-\id \in \tau_\beta G \tau_{-\beta}.\qedhere \popQED
\end{equation*}
\renewcommand{\qed}{}    
\end{proof}

\subsection{Terminalizations with simply connected regular locus}\label{sec:terminalizationfour} 

\cref{table sing n=2,table sing n=3} are refinements of \cref{table n=2,table n=3} for terminalizations $Y$ of quotients with simply connected regular locus. 
\begin{itemize}
    \item We list the group ID, the alias of $G$, the integers \(N_i\) and $b_2(Y)$ as in \cref{table n=2,table n=3}. 
      
    \item We list the numbers $a_k$ of isolated singularities in \(Y\) of analytic type \(\IA^{2n}/\tfrac{1}{k}(1,-1, \ldots, 1,-1)\); see \cref{defn:sing type ai}, the computations in \cref{sec:sing computation n=2} for $n=2$, and in \cref{prop:K3A terminalization nonsmooth} for $n=3$.

    \item If $n=3$, we list the number $s_2$ of surfaces of $Y$ with general transversal singularities of type $\tfrac{1}{2}(1,1,-1,-1)$; see \cref{prop:K3A terminalization nonsmooth}.
    \item If $n=2$, we list the topological Euler characteristic $\chi$, the fourth Betti number $b_4$ and the Chern numbers $c_4$ and $c_2^2$ of $Y$, which are functions of $b_2(Y)$ and $a_k$ as follows:\footnote{Recall that in our case,  $b_3(Y)=0$ by \cref{prop:3bettinumberKum}.}
    \begin{align*}
\quad b_4(Y)
    & = 10 b_2(Y) - b_3(Y) + 46 - a_2 - 2a_3 - 3a_4
    && \quad \text{by \cite[Proposition 3.6]{Fu-Menet}},\\
\quad \chi(Y)
    & = 12 b_2(Y)- 3b_3(Y) + 48 - a_2 - 2a_3 - 3a_4 
    && \quad \text{by \cite[Proposition 3.6]{Fu-Menet}},\\
\quad c_4(Y)
   & = \chi(Y) - \frac{a_2}{2} - \frac{2a_3}{3} -\frac{3a_4}{4}
   && \quad \text{by \cite[Theorem 2.14]{Blache1996}}, \\ 
\quad c_2^2(Y)
   & = \frac{1}{3}c_4(Y) + 720 - 240\left(\frac{a_2}{2^5} + \frac{2a_3}{27} + \frac{9a_4}{2^6}\right)
   && \quad \text{by \cite[]{Fu-Menet}}. 
\end{align*}
Note that we can apply the previous identities since our terminalizations have quotient singularities; see \cref{cor:quotientsingularities}. 

\item If $Y$ is deformation equivalent to a known IHS variety, we write the latter in the last column; this analysis follows from \cref{prop:mainsection} for \(n=2\) and \cref{thm:smooth term} for \(n=3\). The notation $\K_n(A,G)$ stands for a \new{projective} terminalization of $\K_n(A)/G$, while $S(G)^{[n]}_\theta$ is the Fujiki variety; see Notation~\ref{notationKummer} and \cref{defb:Fujiki variety}. 
The question mark in the correspondence of \(G=BT_{24}\) indicates that it shares the singularities and topological invariants of $S(S_3^2 \rtimes C_2)^{[2]}_{\id}$, but we could not decide whether the two are deformation equivalent; see also \cref{rmk:open}. Note that $\K_2(A,C_2)$ is studied in \cite{Kapfer-Menet} and also appeared in \cite{Fu-Menet}. In all other cases, we declare the deformation type to be \emph{new}. 
\end{itemize}

\small
\renewcommand{\arraystretch}{1.5}
\begin{longtable}{c|c|c|c|c|c|c|c|c|c|c|c||c}
\caption{Terminalizations of \(\K_2(A)/G\) with simply connected regular locus 
}\\
\label{table sing n=2}
 ID 
 & \(G\)
 & $N_2$
 & $N_3$
 & \(b_2\)  
 & $a_2$
 & $a_3$
 & $a_4$
 & $b_4$
 & $\chi$
 & $c_4$
 & $c_2^2$
 &
 \\\hline
 2,1
 & \(C_2\)
 & \multirow{5}{*}{1}
 & 0
 & 8
 & 36
 & 0
 & 0
 & 90
 & 108
 & 90
 & 480
 & \cite{Kapfer-Menet}
 \\
 6,1
 & \(C_3\rtimes C_2\) 
 & 
 & 0
 & 8
 & 36
 & 13
 & 0
 & 64
 & 82
 & 166/3
 & 712/3
 & new 
 \\
 18,4
 & \(C_3^2\rtimes_2 C_2\)
 & 
 & 0
 & 8
 & 36
 & 16
 & 0
 & 58
 & 76
 & 142/3
 & 544/3
 & new
 \\
 54,14
 & \(C_3^3\rtimes C_2\)
 & 
 & 0
 & 8
 & 36
 & 13
 & 0
 & 64
 & 82
 & 166/3
 & 712/3
 & $\K_2(A, S_3)$
\\
 162,54
 & \(C_3^4\rtimes C_2\)
 & 
 & 0
 & 8
 & 36
 & 0
 & 0
 & 90
 & 108
 & 90
 & 480
 & $\K_2(A, C_2)$ 
\\ \hline
 3,1
 & \(C_3\)
 & \multirow{3}{*}{0}
 & 1
 & 7
 & 0
 & 12
 & 0
 & 92
 & 108
 & 100
 & 540
 & $S(C^2_3)^{[2]}_{-\id}$
 \\
 9,2
 & \(C_3^2\)
 &
 & 3
 & 11
 & 0
 & 15
 & 0
 & 126
 & 150
 & 140
 & 500
 & $S(C_3)^{[2]}_{-\id}$
 \\ 
 27,5
 & \(C_3^3\)
 & 
 & 9
 & 23
 & 0
 & 0
 & 0
 & 276
 & 324
 & 324
 & 828
 & $S^{[2]}$\\
 \hline
 6,2
 & \(C_6\)
 & \multirow{6}{*}{1}
 & 1
 & 8
 & 28
 & 12
 & 0
 & 74
 & 92
 & 70
 & 320
 & $S(C_3 \rtimes S_3)_{\id}^{[2]}$
\\ 
18,3
 & \(C_3\rtimes C_6\)
 & 
 & 2
 & 10
 & 28
 & 12
 & 0 
 & 94
 & 116
 & 94
 & 328
 & $S(S_3)^{[2]}_{\id}$
\\
54,13
 & \(C_3^2\rtimes_4 C_6\)
 & 
 & 5
 & 16
 & 28
 & 0
 & 0
 & 178
 & 212
 & 198
 & 576
 & $S(C_2)^{[2]}_{\id}$
\\
54,5
 & \(C_3^2\rtimes C_6\)
 & 
 & 1
 & 8 
 & 28
 & 20
 & 0
 & 58
 & 76
 & 146/3
 & 512/3
 & $S(C_3 \rtimes S_3)^{[2]}_{(-\id,\id)}$
\\
162,40
 & \(C_3^3\rtimes_4 C_6\)
 & 
 & 2
 & 10
 & 28
 & 12
 & 0
 & 94
 & 116
 & 94
 & 328
 & $S(S_3)^{[2]}_{\id}$
 \\
 486,146
 & \(C_3^4\rtimes_4 C_6\)
 & 
 & 1
 & 8
 & 28
 & 12
 & 0
 & 74
 & 92
 & 70
 & 320
 & $S(C_3 \rtimes S_3)_{\id}^{[2]}$
 \\ \hline
24,3
 & \(BT_{24}\)
 & \multirow{3}{*}{1}
 & 1
 & 7
 & 20
 & 12
 & 3
 & 63
 & 79
 & 235/4
 & 275
 & $S(S_3^2 \rtimes C_2)^{[2]}_{\id}$ ?
\\ 
216,153
 & \(C_3^2\rtimes BT_{24}\)
 & 
 & 1
 & 7
 & 20
 & 16
 & 3
 & 55
 & 71
 & 577/12
 & 601/3
 & new
\\ 
1944,NA
 & \(C_3^4\rtimes BT_{24}\)
 & 
 & 1
 & 7
 & 20
 & 12
 & 3
 & 63
 & 79
 & 235/4
 & 275
 & $\K_2(A, BT_{24})$
\\ 
\end{longtable}
\renewcommand{\arraystretch}{1}
\normalsize
\small
\renewcommand{\arraystretch}{1.5}
\begin{longtable}{c|c|c|c|c|c||c}
    \caption{Terminalizations of \(\K_3(A)/G\) with simply connected regular locus} 
    \\ \label{table sing n=3}
  \text{ID}
  & \(G\)
  & $N_2$
  & \(b_2\)  
  & $a_2$
  & $s_2$
  &
  \\ \hline  
  \(2,1\) 
  & \(\langle -\id \rangle\)
  & 1
  & 8
  & 140
  & 0
  & \(S(C_2^4)^{[3]}\) 
  \\ 
  \(4,2\)
  & \(C_2\times \langle -\id \rangle\)
  & 2
  & 9
  & 112
  & 7
  &\(S(C_2^3)^{[3]}\)
  \\ 
  \(8,5\)
  & \(C_2^2\times \langle -\id \rangle\)
  & 4
  & 11
  & 64
  & 18
  & \(S(C_2^2)^{[3]}\)
  \\ 
  \(16,14\)
  & \(C_2^3\times \langle -\id \rangle\)
  & 8
  & 15
  & 0
  & 28
  & \(S(C_2)^{[3]}\)
  \\ 
  \(32,51\)
  & \(C_2^4\times \langle -\id \rangle\)
  & 16
  & 23
  & 0
  & 0
  & \(S^{[3]}\)
  \\ 
\end{longtable}
\normalsize
\renewcommand{\arraystretch}{1.0}

\subsection{Singularities of terminalizations of quotients of \(\boldsymbol{\K_3(A)}\)}\label{sec:terminalization6} We determine the singularities of the terminalizations of $\K_3(A)/G$ with simply connected regular locus.

\begin{proposition}\label{prop:K3A terminalization nonsmooth}
Let $X=\K_3(A)$ and $G = C_2^i \times \langle -\id \rangle$ for $0 \leq i \leq 4$. 
Then the singular locus of\, $Y$ consists only of $a_2$ isolated points of type
\(\IA^{6}/\tfrac{1}{2}(1,1,1,1, 1,1)\) and $s_2$ surfaces with general transversal singularities $\IA^{4}/\tfrac{1}{2}(1,1,1,1)$, where 
\begin{align*}
    a_2 & =4\left(42-7\cdot 2^i + \frac{1}{3}(2^i-1)(2^i-2)\right), \\
    s_2 & =(2^i-1)(8-2^{i-1}).
\end{align*}
\end{proposition}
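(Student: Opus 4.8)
The plan is to combine the local description of the terminalization from \cref{prop:blowup} and \cref{cor:quotientsingularities} with an explicit analysis of the $G$-action on $\K_3(A)$, and then reduce everything to combinatorics on the $2$-torsion group $A[2] \simeq \IF_2^4$. First I would record the group theory. Writing $V \subseteq A[2]$ for the translation subgroup with $|V| = 2^i$, the group is $G = V \times \langle -\id\rangle \simeq C_2^{i+1}$, and its nontrivial elements are of two kinds: pure translations $\tau_\gamma$ ($\gamma \in V \setminus\{0\}$) and antipodal involutions $\tau_\beta(-\id)$ ($\beta \in V$), the latter being exactly the elements fixing a codimension-$2$ subvariety $F_\beta$ by \cref{lemma:inducedinvolution}. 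By \cref{prop:blowup}, away from the dissident locus $Y$ is the blowup of the union of the $q(F_\beta)$, and by \cref{cor:quotientsingularities} the remaining singularities of $Y$ are quotient singularities modeled on the terminalization of $\IA^6/G_p$ for the linearized stabilizers $G_p$. Since every codimension-$2$ locus $F_\beta$ is blown up, the surviving singular points of $Y$ lie off the $F_\beta$ and split into: (a) isolated fixed points of some $\tau_\beta(-\id)$ whose stabilizer is exactly $\langle\tau_\beta(-\id)\rangle$, which give $\IA^6/\tfrac{1}{2}(1,1,1,1,1,1)$ since a symplectic involution with isolated fixed point acts as $-\id$ on the whole tangent space; and (b) the $2$-dimensional components of $\Fix(\tau_\gamma)$, $\gamma \in V\setminus\{0\}$, along which the transversal stabilizer $\langle\tau_\gamma\rangle$ acts as $\tfrac{1}{2}(1,1,1,1)$.

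For the count of $a_2$ I would translate isolated fixed points into subsets of $A[2]$. An isolated $\tau_\beta(-\id)$-fixed configuration of distinct points is a $4$-element set of solutions of $2x=\beta$ with vanishing sum; shifting by a fixed solution identifies these with $4$-subsets $U \subseteq A[2]$ with $\sum U = 0$. A short linear-algebra computation shows that every such $U$ is a coset of a $2$-dimensional subspace $H \le A[2]$, and that the stabilizer of the corresponding point is larger than $\langle\tau_\beta(-\id)\rangle$ exactly when $H \cap V \neq 0$: a nonzero $\gamma \in H\cap V$ translates $U$ into itself, so $\tau_\gamma$ fixes the point and it lies on a surface rather than being isolated. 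Running over $\beta \in V$ and accounting for the free $V$-translation action, the number of orbits equals the number of admissible $U$ for a single $\beta$, so $a_2 = 4 \cdot \#\{H \le A[2] : \dim H = 2,\ H \cap V = 0\}$. Counting such planes by choosing an ordered basis avoiding $V$ gives $\tfrac{(16-2^i)(16-2^{i+1})}{6}$ planes, and multiplying by $4$ reproduces $4\big(42 - 7\cdot 2^i + \tfrac{1}{3}(2^i-1)(2^i-2)\big)$.

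For $s_2$ I would describe $\Fix(\tau_\gamma)$ explicitly: its $2$-dimensional components are the surfaces $\Sigma_{\gamma,s} = \{[(x, x+\gamma, s-x, s-x+\gamma)]\}$ parametrized by $s \in A[2]/\langle\gamma\rangle$ (eight values), and one checks that all of $G$ preserves each $\Sigma_{\gamma,s}$, so distinct parameters give distinct surface components. The decisive observation is that $\Sigma_{\gamma,s}$ has configurations of the form $\{u,-u+s,v,-v+s\}$, hence is contained in the blown-up locus $F_\alpha$ precisely when $\alpha \equiv s \pmod{\langle\gamma\rangle}$ lies in $V$, that is, when $s \in V$; for $s \notin V$ the surface persists with transversal $\tfrac{1}{2}(1,1,1,1)$. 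Subtracting the $2^{i-1} = |V/\langle\gamma\rangle|$ excluded parameters from the eight available and summing over the $2^i-1$ choices of $\gamma$ yields $s_2 = (2^i-1)(8 - 2^{i-1})$.

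The hard part will be the completeness claim, namely that $Y$ has \emph{no} singularities other than these. This requires a careful local study near the blown-up loci $F_\beta$, in particular at points with a larger stabilizer $G_p \supsetneq C_2$ and at the Hilbert--Chow exceptional configurations (for instance the non-reduced points $2[a]+2[b]$ lying over the singular Kummer locus), to verify that blowing up the codimension-$2$ strata resolves all of them and introduces no new singular strata. I expect this to reduce to a finite check over the possible stabilizer subgroups of $C_2^{i+1}$ and their linear actions on $\IA^6$, using the local triviality and the same-singularities statement of \cref{prop:singulartermina}. A secondary subtlety is the bookkeeping of $G$-orbits in the two counts, so that configurations with extra symmetry are consistently assigned to surfaces rather than to isolated points; the two computations are cross-consistent precisely because the condition $H\cap V \neq 0$ appearing in the point count is exactly the translation-stabilizer condition defining the surviving surfaces.
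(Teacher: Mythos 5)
Your proposal is correct, and its geometric skeleton is the same as the paper's: both rely on the explicit fixed-locus description imported from \cite{Floccari2022} (each nontrivial translation $\tau_\gamma$ fixes eight K3 surfaces; each involution $\tau_\beta(-\id)$ fixes a fourfold $W_\beta$ of $\K3^{[2]}$ type plus $140$ isolated points), both identify the surviving singularities of $Y$ as isolated $\IA^6/\tfrac{1}{2}(1,\ldots,1)$ points and transversal-$A_1$ surfaces, and both reduce the counts to combinatorics in $A[2]\simeq\IF_2^4$. (This citation also absorbs your worry about the Hilbert--Chow exceptional configurations: the listed fixed loci are closed and exhaustive.) Your $s_2$ count is the paper's, organized per $\gamma$ instead of globally, where the paper writes $8(2^i-1)-\binom{2^i}{2}$. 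The genuine difference is in $a_2$: the paper starts from $140$ and runs an inclusion--exclusion, subtracting the $4\cdot 7\cdot(2^i-1)$ isolated fixed points of $\tau_\alpha(-\id)$ lying on some $W_\beta$ with $\beta\in G_{\tr}$, then adding back $\tfrac{1}{3}\cdot 8\binom{2^i-1}{2}$ because points on three fourfolds are counted thrice. Your observation that a zero-sum $4$-subset of the solutions of $2x=\beta$ is exactly a coset of a plane $H\le A[2]$, with isolation criterion $H\cap V=0$, replaces this by the single count $a_2=4\cdot\tfrac{(16-2^i)(16-2^{i+1})}{6}$ with no overcounting to repair; the two expressions agree identically, and your criterion recovers the paper's stabilizer trichotomy ($|H\cap V|=2$ is its $C_2^2$ case, $H\subseteq V$ its $C_2^3$ case). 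This is a tidier bookkeeping of the same geometric input.

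Two caveats. First, the step you defer is a real gap in the write-up, but your diagnosis of how to fill it is exactly what the paper does, and it comes down to two local models: at a point with stabilizer $C_2^2$ (your $|H\cap V|=2$), the point lies on $W_\beta\cap\Sigma_{\gamma,s}$ with $s\notin V$, and locally the terminalization is $\mathrm{Tot}\left(T^*_{\IP^1}\right)\times\IA^4/\tfrac{1}{2}(1,1,1,1)\to\IA^2/\tfrac{1}{2}(1,1)\times\IA^4/\tfrac{1}{2}(1,1,1,1)$, so the point is absorbed into one of your surviving surfaces and no isolated singularity is created; at a point with stabilizer $C_2^3$ (your $H\subseteq V$, occurring only for $i\ge 2$), $X/G$ is locally a triple product of $A_1$ surface singularities, which the terminalization resolves completely. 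Second, a citation issue: you invoke \cref{cor:quotientsingularities} at the outset, but in the paper that corollary is \emph{deduced} from this very proposition, so using it here is circular. The non-circular tool is the one you name later, \cref{prop:singulartermina} (in the form of \cref{prop:Qfactorialterminalization}), whose quotient-singularity hypothesis holds for $X/G$ automatically since it is a finite quotient of a manifold; that is what legitimizes transferring the two local computations above to any projective terminalization.
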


\begin{proof} The fixed loci of an automorphism in $G$ are computed for instance in \cite[Lemma 2.10, Proposition~2.12]{Floccari2022}.
\begin{enumerate}
\item Any nontrivial translation $\tau_\alpha \in A[2]$ fixes eight $\K3$ surfaces $V_{\alpha, \theta}$ in $\K_3(A)$, where
$$
\epsilon(V_{\alpha, \theta})= \{ 
[(x, x+\alpha, -x + \theta, -x+\alpha + \theta)] \,|\, x \in A
\}
$$ with $\theta \in A[2]$ and $V_{\alpha, \theta}=V_{\alpha, \theta + \alpha}$. 
\item Any involution $\tau_\alpha(-\id) \in A[2] \times  \langle -\id \rangle$ fixes the fourfold $W_{\alpha}$ of \(\K3^{[2]}\)-type, where
$$
\epsilon(W_\alpha) = \{ 
[(x, -x + \alpha, y, -y + \alpha)] \,|\, x,y \in A
\},
$$ 
and 140 isolated fixed points of the form
$$
[(\varepsilon_1, \varepsilon_2, \varepsilon_3, -\varepsilon_1-\varepsilon_2 -\varepsilon_3)] 
\quad \text{ with $2\varepsilon_i=\alpha$ and the $\varepsilon_i$ pairwise disjoint.}
$$
\end{enumerate}
Observe that these fixed loci satisfy the following intersection rules: 
\begin{itemize}
\item Two fourfolds $W_\alpha$ and $W_\beta$ intersect along the surface $V_{\alpha+\beta, \alpha}=V_{\alpha + \beta, \beta}$.
\item Three fourfolds $W_\alpha, W_\beta$ and $W_\gamma$ intersect in four points of the form
$$
[(\varepsilon, \varepsilon+\alpha+\beta, \varepsilon+ \alpha + \gamma, \varepsilon+ \beta + \gamma)] \quad \text{ with $2\varepsilon=\alpha+ \beta + \gamma$.}
$$
Thus, $W_\alpha \cap W_\beta \cap W_\gamma$ consists of $4$ of the $140$ isolated points fixed by $\tau_{\alpha + \beta + \gamma}(-\id)$.
\end{itemize}

Let $z$ be an isolated point of $\Fix(\tau_{\alpha}(-\id))$.
Then one of the following cases holds:
\begin{enumerate}[label=(\roman*),ref=\roman*]
\item\label{1} $G_z=\langle \tau_\alpha(-\id) \rangle$, and $z$ corresponds to a singular point of $Y$. 
\item\label{2} $G_z=\langle \tau_\alpha(-\id), \tau_\beta(-\id) \rangle \simeq C_2^2$, and \[z \in W_\beta \cap V_{\alpha + \beta, \theta} \,\left(=W_\beta \cap W_{\theta} \cap W_{\alpha+\beta+\theta}\right)\] 
for some $\theta \neq \alpha, \beta$ in $A[2]$, equivalently $\tau_\theta(-\id) \notin G$.
Indeed, $z$ is an isolated fixed point only for the involution $\tau_\alpha(-\id)$ as can be seen
by writing $2 \varepsilon_i =\alpha$, so $z$ must lie in $W_{\beta}$. 
Locally at $q(z)$,  the terminalization $q\colon Y \to X/G$ is isomorphic to 
\[\mathrm{Tot}(T^*_{\mathbb{P}^1}) \times \IA^4/ \tfrac{1}{2}(1,1, 1,1) \lra \IA^2/\tfrac{1}{2}(1,1) \times \IA^4/\tfrac{1}{2}(1,1, 1,1); \]
thus it contains only a singular surface with general transversal singularities of type $\tfrac{1}{2}(1,1, 1,1)$. 
\item\label{3} $G_z=\langle \tau_\alpha(-\id), \tau_\beta(-\id), \tau_\gamma (-\id) \rangle \simeq C_2^3$, and \[z \in W_{\beta} \cap W_{\gamma} \cap W_{\alpha + \beta + \gamma}.\] Locally at $q(z)$, the quotient $X/G$ is isomorphic to the triple product of a canonical surface singularity of type $A_1$, which admits a symplectic resolution.  
\end{enumerate}
\small
\begin{center}
\begin{tikzpicture}
\draw[thick] plot coordinates {(0,0) (-1.4,-0.5) (0,-1) (1.4,-0.5) (0,0) (0,1.25)};
\draw[dashed] plot coordinates {(1.4,-0.5) (1.4,0.75) (0,1.25) (-1.4,0.75) (-1.4,-0.5)};

\fill (0,0) circle[radius=2pt] node[above right]{\(z\)};
\fill (0,1.25)  node[above]{\(V_{\alpha+\beta,\theta}\)};
\fill (0,-0.5) node{\scriptsize{\(W_{\beta}\)}};
\fill (-1.4,0.5) node[right]{\scriptsize{\(W_{\alpha+\beta+\theta}\)}};
\fill (0,0.5) node[right]{\hspace{4mm}\scriptsize{\(W_{\theta}\)}};

\fill (0,-1.5) node[below]{(ii) \(G_z\simeq C_2^2\)};

\end{tikzpicture} 
\hspace{2.5cm}
\begin{tikzpicture}
\draw[thick] plot coordinates {(1.4,-0.5) (0,0)};
\draw[thick] plot coordinates {(0,1.25) (-1.4,0.75) (-1.4,-0.5) (0,-1) (1.4,-0.5) (1.4,0.75) (0,1.25) (0,0) (-1.4,-0.5)};

\fill (0,0) circle[radius=2pt] node[above]{\hspace{3mm}\(z\)};
\fill (0,-0.5) node{\scriptsize{\(W_{\beta}\)}};
\fill (-1.4,0.5) node[right]{\scriptsize{\(W_{\alpha+\beta+\gamma}\)}};
\fill (0,0.5) node[right]{\hspace{4mm}\scriptsize{\(W_{\gamma}\)}};
\fill (0,1.25)  node[above]{\(V_{\alpha+\beta,\gamma}\)};
\fill (-1.4,-0.5) node[left]{\(V_{\alpha+\gamma,\beta}\)};
\fill (1.4,-0.5) node[right]{\(V_{\beta+\gamma,\beta}\)};

\fill (0,-1.5) node[below]{(iii) \(G_z\simeq C_2^3\)};

\end{tikzpicture}
\end{center}
\normalsize

As a result, $Y$ has only (quotient) singularities of the types appearing in the statement of \cref{prop:K3A terminalization nonsmooth}, with the invariants $a_2$ and $s_2$ as follows: 
\begin{align*}
    s_2 
    & = \# \text{ surfaces in}\Sing(X/G)\\
    & = (\# \text{ surfaces in }X \text{ fixed by a translation}) - (\# \text{ such surfaces lying on a fixed fourfold})\\
    & = 8 \cdot (\# \text{ nontrivial translations in }G) - (\# \text{  intersection of two fixed fourfolds})\\
    & = 8( 2^i-1) - {2^i \choose 2}, \\
    a_2
    & = \# \text{ isolated singular points of }X/G
    \\
    & = \big(\# \text{ isolated points in}\Fix(\tau_\alpha(-\id))\text{ for some }\tau_\alpha(-\id) \in G \\
    & \hphantom{= (}\; \text{and not lying} 
    \text{ on }W_{\beta}\text{ for any }\beta \in G_{\tr}\big)/(\# \text{orbits of such points})\\
    & = 2^i 
    \cdot \big( (\# \text{ points fixed by $\tau_\alpha(-\id)$ and not lying on $W_\alpha$})\\
    & \hphantom{=2^i\cdot\big(}\; - (\# \text{ such points lying on }W_{\beta}\text{ for some }\beta \in G_{\tr})
    \big)
    /2^i.
\end{align*}

The points $z \in \Fix(\tau_\alpha(-\id))$ lying on $W_\beta \setminus W_\alpha$ for $\beta \neq \alpha$ are in particular fixed by $\tau_{\alpha + \beta}$; hence they lie on one of the seven fixed surfaces $V_{\alpha+\beta, \theta}= W_{\theta} \cap W_{\alpha+\beta+\theta}$ for $\theta \neq \alpha, \beta$ in $A[2]$. Thus, for each choice of $\beta \in G_\tr \setminus \{ \alpha \}$, there are $4 \cdot 7$ such points $z \in W_{\beta} \cap W_{\theta} \cap W_{\alpha + \beta + \theta}$. 
However, note that when $\theta$ is in $G_{\tr}$, we count the same point $z$ three times. Indeed, if $\theta \in G_\tr$, then $z$ is a point of type~\eqref{3}, and it lies on three fixed fourfolds $W_\beta$, $W_\theta$ and $W_{\alpha+\beta+\theta}$. So,

\begin{align*}
    (\# 
    & \text{ isolated points fixed by $\tau_\alpha(-\id)$ and lying on }W_{\beta}\text{ for some }\beta \in G_{\tr})\\
    & = 4 \cdot 7 \cdot (\# \text{ translations in 
    $G_\tr \setminus \{ \alpha \}$}) 
    - 4 \cdot 2 \cdot 
    (\# \text{ choices of $\{\beta, \theta\}$ in $G_\tr \setminus \{ \alpha \}$})/3\\
    & = 4 \cdot 7 \cdot (2^i-1) - 4 \cdot 2 \cdot {2^i-1 \choose 2} \frac{1}{3}, 
\end{align*}
and we conclude
\begin{equation*}\pushQED{\qed}
a_2 = 140 -  4 \cdot 7 \cdot (2^i-1) + 4 \cdot 2 \cdot {2^i-1 \choose 2} \frac{1}{3}.
\qedhere \popQED
	\end{equation*}
\renewcommand{\qed}{}     
\end{proof}

\begin{corollary}[Quotient singularities]\label{cor:quotientsingularitiestext}
    Any \new{projective} terminalization of a quotient of\, $\K_2(A)$ or $\K_3(A)$ by a finite group of induced symplectic automorphisms has quotient singularities.
\end{corollary}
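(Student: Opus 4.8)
The plan is to localize the statement and reduce it to understanding $\IQ$-factorial terminalizations of the linear quotient singularities that model $X/G$, and then to settle these local models by combining the explicit computations for the primitive quotients with an equivariant descent.

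First I would apply \cref{prop:Qfactorialterminalization}. Since $X=\K_n(A)$ is smooth, the quotient $X/G$ has only quotient singularities, and near each point it is analytically isomorphic to a linear quotient $\IA^{2n}/G_{\tilde x}$, where $G_{\tilde x}\subset \Sp(2n,\IC)$ is the stabilizer of a lift $\tilde x\in X$ acting linearly on $T_{\tilde x}X$. The hypotheses of \cref{prop:Qfactorialterminalization} hold for the explicit terminalization of Section~\ref{sec:termexplicit} (the exceptional divisors are the blowups of the $q(F_g)$, whose restrictions to connected opens are irreducible, with the single non-normal exception recorded in \cref{Prop:excorder3}). Hence $Y$ and a local $\IQ$-factorial terminalization $T$ of each $\IA^{2n}/G_{\tilde x}$ have the same singularities, and it suffices to prove that every such $T$ has quotient singularities.

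Next I would analyze the symplectic reflections of $G_{\tilde x}$, that is, the elements fixing a codimension-$2$ subspace of $T_{\tilde x}X$. By \cref{lemma:codime2} and \cref{lemma:inducedinvolution} these are exactly the germs at $\tilde x$ of the induced involutions and order-$3$ automorphisms singled out there; in particular they lie in $N_{\tilde x}\coloneqq N\cap G_{\tilde x}$, where $N\triangleleft G$ is the normal subgroup generated by codimension-$2$ fixers of \cref{prop:fundamentalgroup}, and $G_{\tilde x}/N_{\tilde x}$ contains no symplectic reflection. The universal quasi-\'{e}tale cover $\tilde Y$ of $Y$ is, by \cref{prop:fundamentalgroup}, a terminalization of $X/N$ with simply connected regular locus, so $N$ itself belongs to the primitive classification. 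For these primitive quotients \cref{prop:blowup} shows that a single blowup resolves the transversal $A_1$ and $A_2$ singularities, and the residual singularities are computed explicitly: isolated cyclic points $\IA^4/\tfrac{1}{k}(1,-1,1,-1)$ for $n=2$ in \cref{table sing n=2}, and isolated points $\IA^6/\tfrac{1}{2}(1,1,1,1,1,1)$ together with surfaces of transversal type $\tfrac{1}{2}(1,1,1,1)$ for $n=3$ in \cref{prop:K3A terminalization nonsmooth}. All of these are quotient singularities, so $\tilde Y$ has quotient singularities.

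Finally I would descend to an arbitrary $G$. Setting $H\coloneqq G/N$, \cref{prop:fundamentalgroup} identifies $Y$ with $\tilde Y/H$ for the quasi-\'{e}tale map $\tilde Y\to Y$. At a point $\tilde y\in\tilde Y$ over $\tilde x\in X$, the local model of $\tilde Y$ is a quotient singularity $\IA^{2n}/\Delta$, and the stabilizer $H_{\tilde y}=G_{\tilde x}/N_{\tilde x}$ acts on it; because the whole configuration is induced from the linear $G_{\tilde x}$-representation on $T_{\tilde x}X$, this action lifts to a linear action normalizing $\Delta$, so $Y$ is locally $\IA^{2n}/\Delta'$ for a finite overgroup $\Delta'\supseteq\Delta$, again a quotient singularity (possibly non-cyclic, as the types $\Sing(\K_2(E^2,Q_8))$ of \cref{ex:differentdeformation} show). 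I expect this descent to be the main obstacle: one must verify that the residual action of $H_{\tilde y}$ lifts compatibly to normalize the local uniformizing group $\Delta$, which rests on the facts that everything is induced from the $G_{\tilde x}$-action on $T_{\tilde x}X$ and that $N_{\tilde x}$ already absorbs all symplectic reflections of $G_{\tilde x}$. This lifting is precisely what promotes the explicit primitive-case computation to the general statement.
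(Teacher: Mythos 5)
Your proposal is correct and is essentially the paper's own argument: both first settle the projective terminalizations with simply connected regular locus by the explicit local analysis (\cref{table sing n=2} via \cref{prop: sing after terminalization} and \cref{remark:projectiveterminalization}, and \cref{prop:K3A terminalization nonsmooth} for $\K_3(A)$), and then reduce the general case through \cref{prop:fundamentalgroup}. The only divergence is in the final step---the paper notes that any other projective terminalization is deformation equivalent to a quotient of a simply connected one (using \cref{prop:birationaldeformation} and equisingularity of locally trivial deformations), whereas you use the exact identification of $Y$ with the quotient of its universal quasi-\'{e}tale cover and then lift the residual stabilizer action locally; the lifting you flag as the ``main obstacle'' is just the standard fact that a finite quotient of a quotient singularity is again a quotient singularity (lift the action through the regular locus to the local uniformizing cover and linearize by Cartan's lemma), so it requires no appeal to the induced linear structure and is not a genuine obstacle.
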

\begin{proof}
By direct inspection of the singularities of $K_{n}(A)/G$, where $G$ is one of the groups in Table~\ref{table sing n=2} (more precisely, because of the analysis of the local model of terminalizations in \cref{prop: sing after terminalization} and the projectivity of their gluing explained in \cref{remark:projectiveterminalization}) and \cref{prop:K3A terminalization nonsmooth}, we see that the projective terminalizations with simply connected regular locus have quotient singularities. The result in general follows since any other \new{projective} terminalization is deformation equivalent to a quotient of a terminalization with simply connected regular locus by \cref{prop:fundamentalgroup}.
\end{proof}

\section{Singularities of quotients of generalized Kummer fourfolds} \label{sec:sing computation n=2}

In this section, we analyze the singularities of the quotients $\K_2(A)/G$ (see \cref{table sing n=2}) and describe local models for their terminalizations (see Section~\ref{subsec:local models}). A result of Namikawa grants that the singularity type of global terminalizations agrees with that of the local models; see Section~\ref{remark:projectiveterminalization}. One of the difficulties, compared to similar previous investigations, is that our groups $G$ are not necessarily cyclic and may contain several translations. This implies that the intersections and combinatorics of fixed loci of all elements $g \in G$  make the analysis technically more challenging. To navigate this complexity, we display the configuration of the singularities of $\K_2(A)/G$ in some schematic pictures in Section~\ref{sec:singularitiesterminalization}: The diagrams clarify the relative position and isotropy of each stratum of the singular locus.


\revi{We pursue the desired analysis of the singularities of $\K_2(A)/G$ and their terminalizations, as follows:}
\begin{itemize}
    \item We describe the $G$-fixed locus on $\K_2(A)$; see \cref{table Gfixed}. This can be done in terms of the $G$-fixed locus of $A^3_{0}$, except where the Hilbert--Chow morphism $\epsilon \colon \K_2(A) \to A^3_{0}$ is not an isomorphism, especially on the punctual Hilbert scheme $\epsilon^{-1}(0) \simeq \IP(1,1,3)$; see Section~\ref{subsec:fixed point punctual Hilbert scheme}. 
    \item In Section~\ref{sec:singularitiesterminalization}, we provide an algorithm that extracts the configuration of the singularities of $\K_2(A)/G$ from the intersection theory and combinatorics of the $G$-fixed loci. We run this explicitly for the new deformation types that appear in \cref{table sing n=2} and represent the singularities in a diagram (see Figure~\ref{fig:configuration 216} and Section~\ref{diagram24,3}). 
    \item We provide local models for the singularities of $\K_2(A)/G$, describe the singularities of a local terminalization and show that they can be glued to a projective terminalization of $\K_2(A)/G$ by results of Namikawa; see Sections~\ref{sec:terminalization} and~\ref{remark:projectiveterminalization}.
\end{itemize}

\subsection{Projective terminalizations}\label{remark:projectiveterminalization}
Gluing together local analytic models of terminalizations may lead to a non-projective global terminalization, as in \cite[Proposition 13.3]{Fujiki1983}. One may wonder whether the local singularities of a global projective terminalization differ from that of an arbitrary local model. This is not our case. 

The only local terminalization that is not obtained by blowing up the reduced singular locus, and which may potentially affect the projectivity of the global terminalization, corresponds to a singularity with isotropy $BT_{24}$, namely singularity \eqref{item8} in \cref{prop: sing after terminalization}. Two local symplectic resolutions of such a singularity are described in \cite{LS2012}. They are blowups of local analytic Weil divisors followed by the blowup of the singular locus of the previous blowup. In particular, the exceptional locus is irreducible. We do not know whether the same sequence of birational transformations can be carried out globally on $\K_2(A)/G$, namely if the effective Weil divisor extends (at least its class in the class group does). Nevertheless, by \cref{prop:Qfactorialterminalization}, any projective terminalization of $X/G$ should be locally isomorphic to one of the two symplectic resolutions obtained by Lehn and Sorger. In fact, Bellamy showed that these are the only symplectic resolutions of such a quotient singularity; see \cite[Section~4.3]{Bellamy2016}. We conclude that, in our case, a projective terminalization of $\K_2(A)/G$ can indeed be obtained by gluing local models of terminalization, which are listed in \cref{prop: sing after terminalization}.

\subsection{Local models of some symplectic singularities and their terminalizations} \label{subsec:local models}

\begin{lemma}\label{prop: sing after terminalization}
    Let $G$ be a finite group with a faithful complex symplectic representation $V$ of dimension $4$.
    \begin{enumerate}
        \item \label{item1} If\, $G \simeq C_{k}$ for $k=2,3,4$ or $6$ and $V/G$ has an isolated $($terminal\,$)$ singularity, then
        \[V/G \simeq \IA^4/\tfrac{1}{k}(1,1,-1,-1).\]
        \item \label{item2} If\, $G \simeq C_{4}$ and $\Sing(V/G)$ is an irreducible surface generically of transversal $A_1$-singularities, then 
        \[V/G \simeq \IA^4/\tfrac{1}{4}(1,-1,2,2),\]
        and a terminalization of $V/G$ has two singularities of type $\tfrac{1}{2}(1,1,1,1)$; \textit{i.e.}, $a_2=2$. 
        \item \label{item3} If\, $G \simeq C_{6}$ and $\Sing(V/G)$ is an irreducible surface generically of transversal $A_1$-singularities, then 
        \[V/G \simeq \IA^4/\tfrac{1}{6}(1,-1,3,3),\]
        and a terminalization of $V/G$ has two singularities of type $\tfrac{1}{3}(1,1,-1,-1)$; \textit{i.e.}, $a_3=2$.
        \item \label{item4} If\, $G \simeq C_{6}$ and $\Sing(V/G)$ is an irreducible surface generically of transversal $A_2$-singularities, then 
        \[V/G \simeq \IA^4/\tfrac{1}{6}(1,-1,2,2),\]
        and a terminalization of $V/G$ has three singularities of type $\tfrac{1}{2}(1,1,1,1)$; \textit{i.e.}, $a_2=3$.
        \item \label{item4.5} If\, $G \simeq C_{6}$ and $\Sing(V/G)$ consists of two surfaces generically of transversal $A_1$- and $A_2$-singularities, respectively, then 
        \[V/G \simeq \IA^2/C_2 \times \IA^2/C_3.\]
         \item \label{item5}If\, $G = C_3 \times C_3$, then
       \[V/G \simeq \IA^2/C_3 \times \IA^2/C_3.\]
        \item \label{item6}If\, $G = S_3$ and $V/G$ has singularities in codimension $2$, then
       \[V/G \simeq \mathfrak{h} \oplus \mathfrak{h}^*/S_3,\]
        where $S_3$ acts by permutation on the hyperplane $\mathfrak{h}=\{x \in \mathbb{A}^3 \,|\,  \sum_{i}x_i=0 \}$.
         \item \label{item7}If\, $G = C_3 \times S_3= C_3^2 \rtimes C_2$ and $\Sing(V/G)$ consists of two surfaces generically of transversal $A_1$- and $A_2$-singularities, respectively,  then
       \[V/G \simeq (\mathfrak{h} \otimes \chi) \oplus (\mathfrak{h}\otimes \chi)^*/C_3 \times S_3,\]
        where $\mathfrak{h}$ is the irreducible $2$-dimensional representation lifted from $S_3$ and $\chi$ is a nontrivial character of order $3$.
        \item \label{item8} If\, $G=BT_{24}$ and $\Sing(V/G)$ is an irreducible surface generically of transversal $A_2$-singularities, then
        \[V/G \simeq \rho \oplus \rho^*/BT_{24},\]
        where $\rho$ is the $($unique up to dual\,$)$ irreducible $2$-dimensional representation of $BT_{24}$ generated by complex reflections of order $3$.
    \end{enumerate}
    The quotients $V/G$ as in \eqref{item4.5}, \eqref{item5}, \eqref{item6}, \eqref{item7} and \eqref{item8} all admit a smooth terminalization.
\end{lemma}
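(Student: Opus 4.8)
Throughout write $G\subset\Sp(V)$ with $\dim_{\mathbb C}V=4$. By \cref{prop:termcanonical} the codimension~$2$ part of $\Sing(V/G)$ records exactly the conjugacy classes of symplectic reflections of $G$, i.e.\ elements $g$ with $\dim V^{g}=2$; along $q(V^{g})$ the transversal singularity is the du Val singularity $\mathbb A^{2}/(\langle g\rangle|_{N})$ attached to the cyclic action of $\langle g\rangle$ on the normal symplectic slice $N\subset V$, so ``transversal $A_{1}$'' (resp.\ ``$A_{2}$'') means that the slice stabiliser acts as a cyclic group of order $2$ (resp.\ $3$) in $\SL(N)\simeq\SL_{2}(\mathbb C)$. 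In each case I would first pin down the symplectic representation $V$ up to isomorphism from the group together with the singularity hypothesis, and then exhibit a terminalization.

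\emph{Abelian cases.} When $G$ is cyclic or $C_{3}\times C_{3}$ the space $V$ diagonalises into character lines, and invariance and nondegeneracy of the symplectic form pair a weight with its negative, so a generator acts with weights $(a,-a,b,-b)$. Faithfulness and the singularity hypothesis then fix $a,b$. In \eqref{item1} an isolated singularity forces every nontrivial power of the generator to act freely off the origin, i.e.\ $a,b$ are units modulo $k$; since for $k\in\{2,3,4,6\}$ the only units are $\pm1$, this gives weights $(1,1,-1,-1)$. In \eqref{item2}--\eqref{item4} the hypothesis that $\Sing(V/G)$ is a single surface of transversal type $A_{1}$ or $A_{2}$ says that exactly one proper power of the generator has a $2$-plane of fixed points with the prescribed normal action; solving these congruences on $(a,b)$ modulo $4$ or $6$ yields the listed weights $\tfrac14(1,-1,2,2)$, $\tfrac16(1,-1,3,3)$ and $\tfrac16(1,-1,2,2)$. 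In \eqref{item4.5} the presence of \emph{both} an $A_{1}$- and an $A_{2}$-surface forces $(a,b)\equiv(3,2)\bmod 6$, and the resulting isomorphism $C_{6}\xrightarrow{\sim}C_{2}\times C_{3}$ splits $V$ into its two invariant planes, giving $\mathbb A^{2}/C_{2}\times\mathbb A^{2}/C_{3}$; likewise in \eqref{item5} faithfulness makes the two characters independent in $\widehat{C_{3}^{2}}$, so $V=(\chi\oplus\chi^{-1})\oplus(\psi\oplus\psi^{-1})$ splits and $V/G\simeq\mathbb A^{2}/C_{3}\times\mathbb A^{2}/C_{3}$.

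\emph{Non-abelian cases.} For \eqref{item6}--\eqref{item8} I would read off from the character tables of $S_{3}$, $C_{3}\times S_{3}$ and $BT_{24}$ that a faithful $4$-dimensional symplectic representation must be the sum of a faithful $2$-dimensional irreducible and its dual: no self-dual $2$-dimensional irreducible of these groups carries an invariant \emph{alternating} form, so such a summand is forced to pair with its dual, and faithfulness rules out sums of characters. This gives $\mathfrak h\oplus\mathfrak h^{*}$ in \eqref{item6}, $(\mathfrak h\otimes\chi)\oplus(\mathfrak h\otimes\chi)^{*}$ in \eqref{item7} and, for $BT_{24}$, either $\rho_{0}\oplus\rho_{0}$ for the standard inclusion $\rho_{0}\hookrightarrow\SL_{2}(\mathbb C)$ or $\rho\oplus\rho^{*}$ for a nontrivial twist $\rho=\rho_{0}\otimes\chi$. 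Here the transversal-$A_{2}$ hypothesis does genuine work: $\rho_{0}$ contains no reflections, so $\rho_{0}\oplus\rho_{0}$ has only isolated fixed points, whereas the twist $\rho$ contains order-$3$ complex reflections (eigenvalues $1,\omega^{2}$) and produces an $A_{2}$-surface; this selects $\rho\oplus\rho^{*}$ in \eqref{item8}. In the same way transpositions act on $\mathfrak h$ as $\diag(1,-1)$ and give $A_{1}$-surfaces, while $3$-cycles twisted by $\chi$ fix a $2$-plane and give the second, $A_{2}$-surface of \eqref{item7}, matching the hypotheses.

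\emph{Terminalizations and smoothness.} For \eqref{item2}--\eqref{item4} I would resolve the abelian quotient singularities by an explicit toric (weighted-blowup) computation: blowing up the reduced singular surface as in \cref{prop:blowup} leaves precisely the asserted terminal points, namely $a_{2}=2$ for $\tfrac14(1,-1,2,2)$, $a_{3}=2$ for $\tfrac16(1,-1,3,3)$ and $a_{2}=3$ for $\tfrac16(1,-1,2,2)$. For the final assertion, in \eqref{item4.5} and \eqref{item5} the quotient is a product of two du Val surface singularities, so the product of their minimal (crepant) resolutions is a smooth symplectic resolution. For \eqref{item6}, $(\mathfrak h\oplus\mathfrak h^{*})/S_{3}$ is the transverse slice of the Hilbert--Chow morphism $\mathrm{Hilb}^{3}(\mathbb A^{2})\to\mathrm{Sym}^{3}(\mathbb A^{2})$ along the small diagonal, and so inherits a symplectic resolution. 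The remaining cases \eqref{item7} and \eqref{item8} are the main obstacle: both pairs $(V,G)$ are symplectic reflection groups, generated by the transpositions together with the order-$3$ reflections found above, but their symplectic resolutions are \emph{not} obtained by blowing up the reduced singular locus. For \eqref{item8} a smooth symplectic resolution is constructed by Lehn--Sorger \cite{LS2012} and all its resolutions are classified by Bellamy \cite{Bellamy2016}; for \eqref{item7} I would invoke the same classification of resolvable symplectic quotient singularities to produce the smooth terminalization. I expect \eqref{item7} and \eqref{item8} to require these structural inputs, rather than the elementary toric and product arguments sufficient for the other items.
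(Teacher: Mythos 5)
Your proposal follows essentially the same route as the paper's proof: the symplectic form gives a $G$-equivariant isomorphism $V\simeq V^*$, the abelian cases reduce to an elementary weight computation with weights paired as $(a,-a,b,-b)$, the non-abelian cases are pinned down by representation theory combined with the hypothesis on the transversal singularity type, and the final smoothness assertion rests on products of minimal resolutions plus the known resolutions of symplectic reflection quotients. Your variations are harmless and sometimes nicer: for the terminal counts in \eqref{item2}--\eqref{item4}, instead of a toric computation the paper blows up the fixed plane upstairs and quotients in two steps, $\Bl_{F}(V)\to\Bl_{F}(V)/C_2\to\Bl_{F}(V)/C_{2k}$, observing that $\Bl_{F}(V)/C_2$ is smooth and that the residual $C_k$ fixes exactly two points of the exceptional $\IP^1$ (for \eqref{item4} it chases fixed points in a local version of the diagram of \cref{Prop:excorder3}); and your Hilbert--Chow slice argument for \eqref{item6} is a correct, self-contained substitute for the paper's citation of \cite[Corollary 1.2]{B2009} and \cite[Theorem 1]{LS2012}, which the paper also invokes for \eqref{item7} (note $C_3\times S_3\simeq C_3\wr S_2=G(3,1,2)$, the wreath-product case) and \eqref{item8}.

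The one genuine gap is your enumeration for $BT_{24}$. The blanket claim that ``no self-dual $2$-dimensional irreducible of these groups carries an invariant alternating form'' is false for $BT_{24}$: the standard representation $\rho_{\symp}\subset \SL_2(\IC)$ is symplectic (you implicitly concede this by admitting $\rho_0\oplus\rho_0$ as a candidate). Consequently your list of faithful symplectic $4$-dimensional representations is incomplete: besides $\rho_{\symp}\oplus\rho_{\symp}$ and $\rho\oplus\rho^*$ there are $1\oplus 1\oplus\rho_{\symp}$ and $\chi\oplus\chi^*\oplus\rho_{\symp}$, which the paper lists explicitly and then excludes. Your criterion does exclude them, but the step must be carried out: in these two cases the character plane is fixed pointwise by all of $BT_{24}$, respectively by $Q_8=\ker\chi$, so $\Sing(V/G)$ contains a surface of transversal type $E_6$, respectively $D_4$, contradicting the transversal $A_2$ hypothesis. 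With this case added, your argument coincides with the paper's (``only $\rho\oplus\rho^*$ admits a plane with generic stabilizer exactly $C_3$'').

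One caution on \eqref{item3} and \eqref{item4}: you assert that solving the congruences ``yields the listed weights,'' but actually carrying this out reveals that the two displayed weight vectors are interchanged relative to the transversal types. For $\tfrac16(1,-1,3,3)$ the plane $\{x_1=x_2=0\}$ is fixed pointwise by $\langle g^2\rangle\simeq C_3$, giving transversal $A_2$ (and $a_2=3$ after terminalizing), whereas transversal $A_1$ corresponds to $\tfrac16(1,-1,2,-2)$ with $a_3=2$; as printed, $\tfrac16(1,-1,2,2)$ does not even preserve a symplectic form since $2+2\not\equiv 0 \bmod 6$. The terminal point counts in the statement are attached to the correct transversal types, so this is a misprint in the displayed weights rather than in the geometry; but your sketch, taken literally, endorses the misprinted pairing, and an honest execution of the congruence-solving step you describe would have to detect and correct it.
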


\begin{proof}
  The symplectic form $\omega_V$ induces a $G$-equivariant isomorphism $V \simeq V^*$, and $W$ is an irreducible subrepresentation of $V$ if and only if its dual $W^*$ is so too. Therefore, $V$ decomposes in irreducible representations in one the following ways:
\begin{itemize}
\item $\chi_1\oplus \chi_1^*\oplus \chi_2 \oplus \chi_2^*$ if and only if $G$ is abelian,
\item $\chi_1\oplus \chi_1^* \oplus \rho$ with $\rho \simeq \rho^*$ symplectic,
\item $\rho \oplus \rho^*$,
\item $V$,
\end{itemize}
where $\chi_i$ and $\rho$ are irreducible $G$-representations of dimension $1$ and $2$, respectively.

First consider  the abelian cases: (\ref{item1})--(\ref{item5}). The computation of the weights of the action is elementary. We comment on the singularities of a terminalization. In cases (\ref{item2}) and (\ref{item3}), a terminalization is obtained in the following way. Let $p_V \colon \Bl_{F}(V) \to V$ be the blowup of the plane $F \subset V$ with nontrivial stabilizer. The action of $G=C_{2k}$, with $k=2$ or $3$, lifts to $\Bl_{F}(V)$ and in particular on $p_V^{-1}(0) \simeq \IP^1$ via $[x: y] \mapsto [\xi_{2k} x : \xi^{-1}_{2k} y]$. We obtain the following diagram:
\[
\begin{tikzcd}[scale=1]
   \Bl_{F}(V) \arrow[d, "p_V"'] \arrow[r, "/C_2"]& \Bl_{F}(V)/C_{2}\arrow[r, "/C_k"]& \Bl_{F}(V)/C_{2k}\arrow[d]\\
   V \arrow[rr] & &V/C_{2k}\rlap{.}
\end{tikzcd}
\]
Since $C_2 = \langle \xi_{2k}^k \rangle$ fixes only the $p_V$-exceptional divisor, the quotient $\Bl_{F}(V)/C_{2}$ is smooth, and the residual $C_k$\nobreakdash-action fixes the points $[0:1]$ and $[1:0]$ in $p_V^{-1}(0)/C_2 \simeq \IP^1$. Hence,
the terminalization $\Bl_{F}(V)/C_k \to V/C_k$ has exactly two singular points of type $\tfrac{1}{k}(1,-1,1,-1)$. A similar argument gives a proof of (\ref{item4}) by chasing fixed points as above in a local version of diagram \eqref{square:terminalizationC3} in \cref{Prop:excorder3}. Finally, note that $\Bl_0 (\IA^2/C_2) \times \Bl_0 (\IA^2/C_3)$ and $(\Bl_0 (\IA^2/C_3))^2$ give symplectic resolutions in cases (\ref{item4.5}) and (\ref{item5}), respectively.

We are left with the non-abelian cases: (\ref{item6})--(\ref{item8}).
\begin{itemize}
    \item The only irreducible $2$-dimensional representation $\mathfrak{h}$ of $S_3$ is not symplectic; it is generated by complex reflections. So we must have $V \simeq \mathfrak{h} \oplus \mathfrak{h}^*$.
    \item The irreducible $2$-dimensional representations of $C_3 \times S_3$ are $\mathfrak{h}$, $\mathfrak{h}\otimes \chi$, $(\mathfrak{h}\otimes \chi)^*$. The representation $V$ cannot be $\mathfrak{h} \oplus \mathfrak{h}^*$; otherwise, the $(C_3 \times S_3)$-action would factor through $S_3$. We must therefore have $V \simeq (\mathfrak{h} \otimes \chi) \oplus (\mathfrak{h}\otimes \chi)^*$.
    \item $BT_{24}$ has seven irreducible representations: three characters $1, \chi, \chi^*$ lifted from $BT_{24} \twoheadrightarrow BT_{24}/Q_8=C_3$; three $2$-dimensional representations $\rho_{\symp}$, $\rho = \rho_{\symp} \times \chi$, $\rho^*$; and a $3$-dimensional representation. The reducible faithful $4$-dimensional representations of $BT_{24}$ are 
    \[1\oplus 1 \oplus \rho_{\symp}, \quad \chi \oplus \chi^* \oplus \rho_{\symp}, \quad \rho_{\symp}\oplus \rho_{\symp}, \quad \rho \oplus \rho^*.\]
    Only the last representation admits a plane with generic stabilizer exactly $C_3$. So $V \simeq \rho \oplus \rho^*$.
\end{itemize}
The quotient $V/G$ admits a smooth terminalization in cases (\ref{item6}), (\ref{item7}) and (\ref{item8}); see \cite[Corollary 1.2]{B2009} or \cite[Theorem 1]{LS2012}.
\end{proof}

\subsection{Fixed points of the punctual Hilbert scheme} \label{subsec:fixed point punctual Hilbert scheme}
Let $g$ be a symplectic automorphism of the complex torus $A$. The $g$-fixed points lying in the locus where the Hilbert--Chow morphism
$\epsilon \colon \K_2(A) \to A_0^{(3)}$ is an isomorphism are fixed points in $A_0^{(3)}$, and they can be described as triples of points in $A$ partitioned by $g$-orbits. The $g$-fixed points $z$ in the $\epsilon$-exceptional locus deserve additional analysis. 

The positive-dimensional fibers $\epsilon^{-1}(\epsilon(z))$, with their reduced structure, are isomorphic to
\begin{enumerate}
    \item $\IP(T^*_{\alpha}A) \simeq \IP^1$ if $\epsilon(z)=[(\alpha,\alpha,\beta)]$ and $g(\alpha)=\alpha$ and $g(\beta)=\beta$, or
    \item $\mathcal{H}_3 \simeq \IP(1,1,3)$ if $\epsilon(z)=[(\alpha,\alpha,\alpha)]$ and $g(\alpha)=\alpha$. 
\end{enumerate}

In the former case, $g$ acts on $T^*_{\alpha}A$ with weights $(1,-1)$, which gives the following lemma.
\begin{lemma} \label{lemma:one-dim fibers HC}
    The automorphism $g$ acting on the rational curve $\epsilon^{-1}(\alpha,\alpha,\beta) \simeq \IP(T^*_{\alpha}A)$ fixes either the whole curve if $\ord(g)=2$, or two points corresponding to the eigenlines of $g$ if $\ord(g) > 2$.
\end{lemma}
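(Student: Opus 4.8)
The plan is to reduce the statement to the elementary linear algebra of a finite-order element of $\SL_2(\IC)$ acting on $\IP^1$. Since $g(\alpha)=\alpha$, the differential $dg_\alpha$ is a linear automorphism of $T_\alpha A$, and because $g$ is symplectic it lies in $\SL(T_\alpha A)\cong\SL_2(\IC)$. The identification $\epsilon^{-1}([(\alpha,\alpha,\beta)])\simeq\IP(T^*_\alpha A)$ is canonical, hence $g$-equivariant, so the action of $g$ on this rational curve is precisely the projectivization of the action of $dg_\alpha$ on $T^*_\alpha A$. By the weight computation recalled just above the statement, $dg_\alpha$ acts on $T^*_\alpha A$ with weights $(1,-1)$; that is, it is diagonalizable with eigenvalues $\zeta,\zeta^{-1}$ for $\zeta=\zeta_{\ord(g)}$ a primitive $\ord(g)$-th root of unity.

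Next I would carry out the two-case analysis according to whether these two eigenvalues coincide after projectivizing. If $\ord(g)=2$, then $\zeta=-1$ and both eigenvalues equal $-1$, so $dg_\alpha=-\id$ acts as a scalar on $T^*_\alpha A$ and therefore trivially on $\IP(T^*_\alpha A)$; hence $g$ fixes the whole curve. If $\ord(g)>2$, then $\zeta^2\neq 1$, so $\zeta\neq\zeta^{-1}$ and $dg_\alpha$ has two distinct eigenvalues, each with a one-dimensional eigenspace. A line in $T^*_\alpha A$ determines a $g$-fixed point of $\IP(T^*_\alpha A)$ exactly when it is an eigenline of $dg_\alpha$, so there are precisely two fixed points, namely the two eigenlines, as claimed.

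The only point requiring care — and the main (if modest) obstacle — is justifying that the eigenvalues of $dg_\alpha$ are genuinely $\zeta_{\ord(g)}^{\pm1}$, equivalently that the linearized action has the same order as $g$ itself. This follows from the faithfulness of the $g$-action near the fixed point $\alpha$: linearizing at $\alpha$ shows that $g\mapsto dg_\alpha$ is injective on the cyclic group $\langle g\rangle$, so $\ord(dg_\alpha)=\ord(g)$. Granting this, the weight statement $(1,-1)$ quoted above guarantees the eigenvalues are $\zeta_{\ord(g)}^{\pm1}$, and the dichotomy $\ord(g)=2$ versus $\ord(g)>2$ is exactly the dichotomy between $dg_\alpha$ being scalar and having two distinct eigenlines. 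No further computation is needed.
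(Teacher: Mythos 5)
Your proof is correct and takes essentially the same route as the paper: the paper derives the lemma in one line from the fact that $g$ acts on $T^*_{\alpha}A$ with weights $(1,-1)$, i.e.\ with eigenvalues $\zeta^{\pm 1}$ for $\zeta$ a primitive $\ord(g)$-th root of unity, and then reads off the dichotomy scalar ($-\id$, whole curve fixed) versus two distinct eigenlines (two fixed points) on $\IP(T^*_{\alpha}A)$. Your extra step verifying $\ord(dg_\alpha)=\ord(g)$ via faithfulness of the linearization at a fixed point is a sound (and in the abelian-surface case immediate, by rigidity) justification of what the paper leaves implicit.
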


In the latter case, the fiber $\epsilon^{-1}(\epsilon(z))$ is the so-called punctual Hilbert scheme $\mathcal{H}_3$ of three points on a plane, isomorphic to the weighted projective space $\IP(1,1,3)$; see \cite[Section~IV.2, p.~76]{Briancon1977}
or \cite[Section~3]{Gordon15}. It parametrizes ideals of colength $3$ supported on a single point, say $0 \in T_0 A \simeq \IA^2_{x,y}$, namely 
\begin{itemize}
    \item the square $\mathfrak{m}^2$ of the maximal ideal $\mathfrak{m}=(x,y)$, 
    \item the curvilinear ideals $I$ of colength $3$, \textit{i.e.}, ideals containing the ideal of a smooth curve passing through the origin. In symbols, $I=(f, \mathfrak{m}^3)$, where $f \in \mathfrak{m}$ and $df \neq 0$. 
\end{itemize}
Note that if $\frac{\partial f}{\partial x} \neq 0$ and $\frac{\partial f}{\partial y} \neq 0$, we can write 
\[I=(x+c_0y+c_1y^2, \mathfrak{m}^3)=\left(\frac{1}{c_0}x+y+\frac{c_1}{c^3_0}x^2, \mathfrak{m}^3\right)\]
using the equivalences $x^2+c_0 xy \equiv 0$ and $xy+c_0 y^2 \equiv 0$ modulo $I$. This gives the transition functions of $\mathrm{Tot} \mathcal{O}_{\IP^1}(3)=\IP(1,1,3) \setminus [0:0:1] = \mathcal{H}_{3} \setminus \mathfrak{m}^2$. In particular, the zero-section of $\mathrm{Tot} \mathcal{O}_{\IP^1}(3)$, isomorphic to $\IP(T^*_{0}A) \simeq \IP^1_{[\lambda \colon \mu]}$, represents the curvilinear ideals cosupported on the lines through the origin; \textit{i.e.}, $I([\lambda \colon \mu])=(\lambda x +  \mu y, \mathfrak{m}^3)$.

\begin{lemma}\label{lemma:punctualHilbert}
    Let $V$ be a $2$-dimensional symplectic representation of the finite group $G$. Denote by $\IC(k)$ the $\IC^*$-character given by $t \cdot v = t^k v$, and let $W(k) \coloneqq W \otimes \IC(k)$ for any vector space $W$. Then the Hilbert scheme $\mathcal{H}_3$ of three points on $V$ is $G$-equivariantly isomorphic to
    \[V^*(1) \oplus (\det V^*)^{\otimes 2}(3)\sslash \IC^* \simeq \IP(1,1,3).\]
\end{lemma}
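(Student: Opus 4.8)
The plan is to exploit the stratification of $\mathcal H_3$ into the single fat point $\mathfrak m^2$ and the open curvilinear locus, which by the chart computation above is the total space of a degree-$3$ line bundle over $\IP(V^*)$. First I would upgrade that identification to a $G$-equivariant statement and pin down the precise equivariant line bundle; then I would recognize the resulting total space, together with its compactifying point, as exactly the GIT presentation $V^*(1)\oplus(\det V^*)^{\otimes 2}(3)\sslash\IC^*$ of $\IP(1,1,3)$ claimed in the lemma.

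First I would set up the tangent-direction map. On the curvilinear locus $\mathcal H_3\setminus\{\mathfrak m^2\}$, sending a curvilinear ideal $I=(f,\mathfrak m^3)$ to the class $[f_1]=[\,df\,]\in\IP(V^*)$ of its linear part is a well-defined $G$-equivariant morphism onto $\IP(V^*)$ (matching the parametrization $I([\lambda:\mu])=(\lambda x+\mu y,\mathfrak m^3)$ of the text). Its fibres are affine lines: modifying $f$ by a unit and by $\mathfrak m^3$, the degree-$2$ part $f_2\in\mathrm{Sym}^2 V^*$ is determined only modulo $f_1\cdot V^*$, and its scaling is coupled to that of $f_1$. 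Hence the curvilinear locus is the total space of the $G$-equivariant line bundle $\mathcal N:=Q\otimes\mathcal O_{\IP(V^*)}(1)$, where $Q$ is the line bundle with fibre $Q_{[f_1]}:=\mathrm{Sym}^2 V^*/(f_1\cdot V^*)$.

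Next I would compute $\mathcal N$ equivariantly. The fibrewise multiplication $f_1\otimes g\mapsto f_1 g$ gives a $G$-equivariant exact sequence of bundles on $\IP(V^*)$,
\[
0\lra \mathcal O_{\IP(V^*)}(-1)\otimes V^*\lra \mathrm{Sym}^2 V^*\otimes\mathcal O_{\IP(V^*)}\lra Q\lra 0 .
\]
Taking determinants and using $\det\mathrm{Sym}^2 V^*\simeq(\det V^*)^{\otimes 3}$ yields $Q\simeq\mathcal O_{\IP(V^*)}(2)\otimes(\det V^*)^{\otimes 2}$, hence $\mathcal N\simeq\mathcal O_{\IP(V^*)}(3)\otimes(\det V^*)^{\otimes 2}$; the bare degree $3$ reproduces the transition $c_1\mapsto c_1/c_0^{3}$ found in the chart computation above. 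I would then reassemble: in $V^*(1)\oplus(\det V^*)^{\otimes 2}(3)\sslash\IC^*$ the locus where the weight-$1$ summand is nonzero is precisely the total space of $\mathcal O_{\IP(V^*)}(3)\otimes(\det V^*)^{\otimes 2}$ over $\IP(V^*)$, with complementary point the $G$-fixed vertex $[0:0:1]$. This matches the curvilinear locus $G$-equivariantly under $\mathfrak m^2\leftrightarrow[0:0:1]$, and since $\mathcal H_3\simeq\IP(1,1,3)$ is normal and both sides are already identified away from a single codimension-two point, the isomorphism extends across it.

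The main obstacle is the equivariant bookkeeping: extracting the twist $(\det V^*)^{\otimes 2}$ from the determinant computation and verifying that the linearization of $\mathcal O_{\IP(V^*)}(1)$ coming from the tangent-direction map agrees with the one from the GIT presentation, so that no spurious character creeps in and the gluing of the vertex is genuinely $G$-equivariant. (For symplectic $V$ one has $\det V^*\simeq\mathcal O$ as a $G$-representation, so the twist is invisible at the level of the underlying variety; retaining it is exactly what makes the statement and the compactification natural.)
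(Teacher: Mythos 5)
Your proof is correct, but it takes a genuinely different route from the paper's. The paper argues by a single chart computation: it acts by $g\in G$ on the explicit curvilinear ideal $(x+c_0y+c_1y^2,\mathfrak{m}^3)$, renormalizes the generator, and reads off that the quasi-homogeneous coordinates transform by $[\underline{x}\colon x_3]\mapsto[(M^{-1})^t\underline{x}\colon \det((M^{-2})^t)x_3]$; since this formula is global and manifestly algebraic on all of $\IP(1,1,3)$, the identification with $V^*(1)\oplus(\det V^*)^{\otimes2}(3)\sslash\IC^*$ follows at once, vertex included, with no extension argument. You instead work coordinate-freely: the tangent-direction map exhibits the curvilinear locus as the total space of $\mathcal{O}_{\IP(V^*)}(1)\otimes Q$ with $Q=\mathrm{coker}\bigl(\mathcal{O}(-1)\otimes V^*\to\mathrm{Sym}^2V^*\otimes\mathcal{O}\bigr)$, and taking determinants of that tautological sequence pins the bundle down equivariantly as $\mathcal{O}(3)\otimes(\det V^*)^{\otimes2}$, which you then match against the open chart of the GIT presentation. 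This is more conceptual: it explains structurally where the twist $(\det V^*)^{\otimes2}$ comes from, and because every identification you use is natural in $V$, no spurious character can appear (and, as you note, for symplectic $V$ the twist is $G$-trivial anyway, since $G\subset\SL(V)$). The price is the final gluing step, and that is the one place where your justification is imprecise: ``normal and codimension two'' is not by itself a reason an isomorphism extends --- isomorphisms off codimension-$2$ sets do not extend in general (flops are the standard counterexample). What you actually need is that an isomorphism between the complements of single points in two normal projective surfaces always extends; this follows by taking the graph of the induced birational map and applying Zariski's main theorem (connectedness of fibres forces the exceptional set over $\mathfrak{m}^2$ to map to the vertex and conversely), or equivalently from the uniqueness of the normal one-point compactification of $\mathrm{Tot}(\mathcal{O}_{\IP^1}(3))$. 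With that justification supplied, your argument is complete and produces exactly the equivariant weights that the paper obtains by direct computation.
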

\begin{proof}
    Consider the curvilinear ideal $I=(x+c_0y+c_1y^2, \mathfrak{m}^3)$ and the matrix representation of the action of an element $g \in G$ on $V$, 
    \[g=M = \begin{pmatrix} 
a & b \\
c & d
\end{pmatrix}.\]
The ideal $g^*I=(f \circ g^{-1}, \mathfrak{m}^3)$ is generated by 
\[\left(\frac{dx-by}{\det M}
+c_0\frac{-cx+ay}{\det M}y
+c_1\frac{(-cx+ay)^2}{\det M^2}, \mathfrak{m}^3 \right) = 
\left(x
+ \frac{-b+c_0 a}{d-c_0 c}
+c_1\frac{ad-bc}{(d-c_0 c)^3}y^2, \mathfrak{m}^3 \right).\]
In the quasi-homogeneous coordinates $[x_1\colon x_2 \colon x_3]$ of $\IP(1,1,3)$, we write
\begin{equation*}
[1\colon c_0 \colon c_1] 
\longmapsto 
\left[\frac{d - c_0 c}{\det M} \colon \frac{-b+c_0 a }{\det M}\colon \frac{c_1}{\det M^2}\right],
\end{equation*}
or equivalently $[\underline{x} \colon x_3] \mapsto [(M^{-1})^t \underline{x} \colon \det((M^{-2})^t) x_3]$,  where $\underline{x}=(x_1, x_2)$.\end{proof}
Note that \new{$G$-fixed points of $\mathcal{H}_3 \simeq \IP(1,1,3)$ are $G$-invariant subspaces of $V^*\oplus (\det V^*)^{\otimes 2}$.} We obtain the following elementary corollary.

\begin{corollary}\label{cor:punctualHilbert} We use the notation of \cref{lemma:punctualHilbert}.
    A symplectic automorphism $g \in G$ fixes
    \begin{itemize}
        \item $\IP(V^*)$ and $\mathfrak{m}^2$ if $\ord(g)=2$, 
        \item two lines through $\mathfrak{m}^2$ corresponding to the eigenlines of $g$ if $\ord(g)=3$, 
        \item two points in $\IP(V^*)$ corresponding to the eigenlines of $g$ and $\mathfrak{m}^2$ if $\ord(g) > 3$. 
    \end{itemize}
\end{corollary}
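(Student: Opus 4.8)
The plan is to read off the three cases directly from the explicit model provided by \cref{lemma:punctualHilbert}, reducing the statement to an elementary fixed-point computation on $\IP(1,1,3)$. Writing $M$ for the matrix of $g$ on $V$, that lemma identifies $\mathcal{H}_3$ $G$-equivariantly with $\IP(1,1,3)$ equipped with the action
\[
[\underline{x}\colon x_3]\longmapsto\bigl[(M^{-1})^t\underline{x}\colon \det\bigl((M^{-2})^t\bigr)\,x_3\bigr],
\]
where $\underline{x}=(x_1,x_2)$ are the weight-$1$ coordinates spanning $V^*$ and $x_3$ is the weight-$3$ coordinate spanning $(\det V^*)^{\otimes 2}$. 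The first step is to exploit symplecticity: a symplectic automorphism of the $2$-dimensional space $V$ lies in $\SL(V)$, so $\det M=1$; this makes the character twist on $x_3$ trivial and ensures that $(M^{-1})^t$ shares the eigenvalue pair $\{\lambda,\lambda^{-1}\}$ of $M$. After diagonalizing, I may therefore assume the action takes the normal form
\[
[x_1\colon x_2\colon x_3]\longmapsto[\mu x_1\colon \mu^{-1}x_2\colon x_3],\qquad \mu^n=1,\ n=\ord(g),
\]
with $\mu$ a primitive $\supth{n}$ root of unity.

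Next I would set up the fixed-point equations on the weighted projective space. Recall from the construction preceding \cref{lemma:punctualHilbert} that $\mathfrak{m}^2=[0\colon0\colon1]$ is the unique point off $\mathrm{Tot}\,\mathcal{O}_{\IP^1}(3)$, while $\IP(V^*)=\{x_3=0\}$ is the zero-section $\IP^1$ parametrizing curvilinear ideals cosupported on lines. A point is $g$-fixed precisely when there exists $t\in\IC^*$, absorbing the weighted rescaling $(x_1,x_2,x_3)\mapsto(t x_1,t x_2,t^3 x_3)$, such that $\mu x_1=t x_1$, $\mu^{-1}x_2=t x_2$ and $x_3=t^3 x_3$. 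The three cases then drop out by inspecting which coordinates may be nonzero.

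For $\ord(g)=2$ we have $\mu=-1$, and rescaling by $t=-1$ rewrites the action as $[x_1\colon x_2\colon x_3]\mapsto[x_1\colon x_2\colon -x_3]$, whose fixed locus is exactly $\{x_3=0\}\cup\{x_1=x_2=0\}=\IP(V^*)\cup\{\mathfrak{m}^2\}$. For $\ord(g)=3$ we have $\mu^3=1$ but $\mu\ne\mu^{-1}$; choosing $t=\mu$ forces $x_2=0$ with $x_3$ unconstrained, and $t=\mu^{-1}$ forces $x_1=0$, so the fixed locus is the pair of lines $\{x_2=0\}$ and $\{x_1=0\}$, each joining an eigenline of $g$ in $\IP(V^*)$ to $\mathfrak{m}^2$. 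For $\ord(g)>3$ one has $\mu^3\ne1$, so the relation $x_3=t^3x_3$ with $t=\mu^{\pm1}$ forces $x_3=0$, leaving only the two isolated eigenline-points $[1\colon0\colon0]$ and $[0\colon1\colon0]$ of $\IP(V^*)$, together with $\mathfrak{m}^2=[0\colon0\colon1]$ (fixed, since there $t^3=1$ is admissible). This matches the three bullets.

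The hard part is essentially bookkeeping rather than conceptual: once \cref{lemma:punctualHilbert} is granted, everything is a bounded computation. The only genuine subtlety to watch is the weighted-projective rescaling, which is precisely what distinguishes $\ord(g)=3$ (where $\mu^3=1$ lets $x_3$ survive and produces positive-dimensional fixed lines) from $\ord(g)>3$ (where $\mu^3\ne1$ collapses these loci to isolated points). I would also flag at the outset that $\det M=1$ is what removes the twist on the weight-$3$ coordinate; dropping symplecticity would let $x_3$ transform by $\det(M)^{-2}$ and alter the fixed-point geometry.
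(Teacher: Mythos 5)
Your proof is correct and follows essentially the same route as the paper: the paper simply remarks that $G$-fixed points of $\mathcal{H}_3 \simeq \IP(1,1,3)$ correspond to $G$-invariant subspaces of $V^*\oplus(\det V^*)^{\otimes 2}$ and calls the corollary elementary, which is exactly the eigenvalue analysis you carry out explicitly in coordinates. Your careful treatment of the weight-$3$ rescaling (distinguishing $\mu^3=1$ from $\mu^3\neq 1$) is precisely the content the paper leaves implicit, so the two arguments coincide.
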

Let $G$ be a group of symplectic automorphisms of the abelian variety $A$ (fixing the origin). To determine the points of $\mathcal{H}_3$ with nontrivial stabilizers, we proceed as follows:
\begin{enumerate}
    \item Note that the stabilizer of $\mathfrak{m}^2$ is $G$. 
    \item Determine the stabilizers for the action of $G$ on $\IP(T^*_{0}A)$, \textit{i.e.}, the eigenspaces of all elements $g \in \SL(T^*_0A)$. 
    \item Note that if the automorphism $g \in G$ of order $3$ fixes the point $z \in \IP(T^*_{0}A)$, then the line through $z$ and $\mathfrak{m}^2$ has generic stabilizer $C_3$.
\end{enumerate}
\small
\normalsize
\begin{figure}[h]
\centering
\begin{tikzpicture}
\draw[thick] plot coordinates {(-1.5,0) (1.5,0)} node[right]{\(F_{-\id}\)};
\draw[dashed] plot coordinates {(-1.2,0.5) (0.5,-1.5)};
\draw[dashed] plot coordinates {(1.2,0.5) (-0.5,-1.5)};

\fill (0,-0.9) circle[radius=2pt] node[right]{\(\mathfrak{m}^2\)};
\fill (0,-2.3) node[below]{\(G=C_4\)};
\end{tikzpicture} 
\hspace{1cm}
\begin{tikzpicture}
\draw[thick] plot coordinates {(-1.5,0) (1.5,0)} node[right]{\(F_{-\id}\)};
\draw[thick] plot coordinates {(-1.2,0.5) (0.5,-1.5)} node[right]{\(F_{g_3}\)};
\draw[thick] plot coordinates {(1.2,0.5) (-0.5,-1.5)};

\fill (0,-0.9) circle[radius=2pt] node[right]{\(\mathfrak{m}^2\)};
\fill (0,-2.3) node[below]{\(G=C_6\)};
\end{tikzpicture}
\hspace{1cm}
\begin{tikzpicture}
\draw[thick] plot coordinates {(-1.8,0) (1.8,0)} node[right]{\(F_{-\id}\)};
\draw[thick] plot coordinates {(-2,0.5) (0.9,-1.5)} node[right]{\(F_{g'''_3}\)};
\draw[thick] plot coordinates {(-1.7,0.5) (0.7,-1.5)};
\draw[thick] plot coordinates {(-0.9,0.5) (0.4,-1.5)} node[below]{\(F_{g''_3}\)};
\draw[thick] plot coordinates {(-0.7,0.5) (0.3,-1.5)};
\draw[thick] plot coordinates {(0.9,0.5) (-0.4,-1.5)} node[below]{\(F_{g'_3}\)};
\draw[thick] plot coordinates {(0.7,0.5) (-0.3,-1.5)};
\draw[thick] plot coordinates {(2,0.5) (-0.9,-1.5)} node[left]{\(F_{g_3}\)};
\draw[thick] plot coordinates {(1.7,0.5) (-0.7,-1.5)};

\fill (0,-0.9) circle[radius=2pt] node[right]{\hspace{2mm}\(\mathfrak{m}^2\)};
\fill (0,-2.3) node[below]{\(G=BT_{24}\)};
\end{tikzpicture}
\caption{The picture represents the loci with nontrivial stabilizer in the punctual Hilbert scheme $\mathcal{H}_3$ with respect to the action of the group $G$. We draw $\mathcal{H}_3$ as a cone with vertex $\mathfrak{m}^2$, the horizontal section is $\IP(V^*)$, and the segments from $\IP(V^*)$ to $\mathfrak{m}^2$ are lines parametrizing ideals $I=(f, \mathfrak{m}^3)$ with fixed $df$.}
\label{fig:enter-label}
\end{figure}
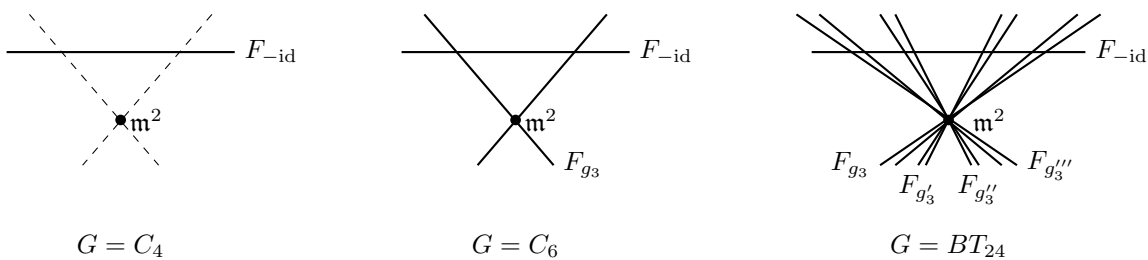

\subsection{Fixed points of $\boldsymbol{\K_2(A)}$} \label{subsec:fixed point K2(A)}

\begin{lemma} \label{lem: p with linear Gp} 
Let $G$ be a finite group with a faithful symplectic linear action on $A$. In \cref{table Gfixed}, we provide the number of surfaces and isolated points in $\Fix(G)$ in $\K_2(A)$, and their inclusion in surfaces $F_{g}$ fixed by an automorphism $g \in G$.
\end{lemma}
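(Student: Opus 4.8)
The plan is to compute $\Fix(G)$ stratum by stratum along the $G$-equivariant Hilbert--Chow morphism $\epsilon \colon \K_2(A) \to A^{(3)}_0$. Since $\epsilon$ is an isomorphism away from the big diagonal, a $G$-fixed point in its isomorphism locus is exactly a $G$-invariant reduced length-$3$ subscheme, i.e.\ an unordered triple $\{x_1,x_2,x_3\}$ with $x_1+x_2+x_3=0$ that $G$ permutes as a set. The remaining fixed points lie in the $\epsilon$-exceptional locus, whose fibers are either rational curves $\IP(T^*_\alpha A) \simeq \IP^1$ over partially diagonal points $[(\alpha,\alpha,\beta)]$, or the punctual Hilbert scheme $\mathcal{H}_3 \simeq \IP(1,1,3)$ over a triple point $[(\alpha,\alpha,\alpha)]$; these are governed respectively by \cref{lemma:one-dim fibers HC} and \cref{cor:punctualHilbert}. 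Because $G$ acts linearly and fixes the origin, I would first record, for each element $g \in G$, its fixed points on $A$ (the eigen-torsion points, whose number is $\det(\id-g)$, e.g.\ $16,9,4,1$ for $\ord(g)=2,3,4,6$) and, crucially, the common fixed set $\Fix(G)\cap A=\bigcap_{g}\Fix_A(g)$, which for the linear groups of Section~\ref{rem:Fujiki results on AG} is a short explicit list always containing $0$.

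On the open stratum I would enumerate the $G$-invariant triples according to their multiplicity type $\{x,y,z\}$ (distinct), $\{x,x,y\}$, or $\{x,x,x\}$, imposing $\sum_i x_i = 0$. A triple of distinct points is $G$-invariant as a set precisely when $G$ acts on $\{x_1,x_2,x_3\}$ through a permutation quotient; combined with linearity and the constraints $x_i \in \Fix_A(\mathrm{stab})$, this yields the isolated fixed points supported on three distinct orbits. The doubled triples $\{x,x,y\}$ with $y=-2x$ contribute isolated points when $x$ is $G$-fixed, but their loci meet the $\epsilon$-exceptional divisor along the fiber over $[(x,x,-2x)]$, so each must be matched with the curve analysis below, while $\{x,x,x\}$ forces $x\in A[3]$ and lands on a punctual fiber.

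On the exceptional fibers I would apply the cited results directly. Over $[(\alpha,\alpha,\beta)]$, \cref{lemma:one-dim fibers HC} shows that an involution fixes the entire $\IP^1$ (contributing to a surface) while higher-order elements fix only the two eigen-points; intersecting over all of $G$ then decides whether the fiber contributes a surface or isolated points. Over a triple point, \cref{cor:punctualHilbert} describes the fixed locus on $\IP(1,1,3)$: the point $\mathfrak m^2$ is always fixed, an involution fixes $\IP(V^*)$ together with $\mathfrak m^2$, an order-$3$ element fixes two lines through $\mathfrak m^2$, and higher-order elements fix two isolated points. Intersecting these over the generators of $G$ gives the fixed locus inside each $\IP(1,1,3)$, from which I read off surfaces versus isolated points. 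Finally, using the explicit equations $\epsilon(F_g)$ from \cref{lemma:inducedinvolution} and \cref{table:subvarieties F}, I would record for each component of $\Fix(G)$ which surfaces $F_g$ contain it, completing the entries of \cref{table Gfixed}. The whole bookkeeping is then carried out case by case for $G_\circ \in \{C_2,C_3,C_4,C_6,Q_8,BD_{12},BT_{24}\}$ using \cref{lemma:punctualHilbert,cor:punctualHilbert}.

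The main obstacle is the careful treatment of the exceptional fibers and, above all, avoiding over- or under-counting where several fixed loci meet: a point such as $[(x,x,-2x)]$ with $x$ torsion can lie simultaneously on an exceptional $\IP^1$, on several surfaces $F_g$, and (when $x=0$) on the punctual fiber, so the same geometric point is reachable through distinct strata of the diagonal. Keeping the incidences between the surfaces $F_g$, the rational-curve fibers, and the $\IP(1,1,3)$ over the triple point mutually consistent---and matching them against the intersection combinatorics of the $\Fix_A(g)$---is the delicate part, whereas the individual eigen-point counts and weight computations are routine once \cref{lemma:punctualHilbert,cor:punctualHilbert} are available.
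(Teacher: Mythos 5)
Your proposal follows essentially the same route as the paper's proof: both push a $G$-fixed point down along the equivariant Hilbert--Chow morphism, observe that its support must be a union of $G$-orbits in $A$ summing to zero, and then resolve the fixed locus inside the exceptional fibers using \cref{lemma:one-dim fibers HC} and \cref{cor:punctualHilbert}. The only (cosmetic) difference is bookkeeping: the paper enumerates the small orbits by quoting the ADE singularity configuration of $A/G$ (e.g.\ $4A_3+6A_1$ for $C_4$, $A_5+4A_2+5A_1$ for $C_6$, $E_6+D_4+4A_2+A_1$ for $BT_{24}$), whereas you count them directly via $|\Fix(g)\cap A|=\det(\id-g)$; the two are equivalent, and your counts $16,9,4,1$ for orders $2,3,4,6$ are correct.
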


\renewcommand*{\arraystretch}{1.2}
\begin{table}[h]
\centering
\caption{Fixed loci of some linear actions on $\K_2(A)$}\label{table Gfixed}
\begin{tabular}{c|c|c|l}
$G$ & $G$-fixed surface & \(G\)-fixed points & Relative position of fixed loci \\\hline
$C_2$ & 1 & 36 & 36 pts $\not \in F_{-\id}$\\ \hline
$C_3$ & 1 & 12 & 12 pts $\not \in F_{g_3}$\\ \hline
\multirow{2}{*}{$C_4$} & \multirow{2}{*}{0} & \multirow{2}{*}{16} & 8 pts $\in F_{-\id}$ \\
 &  &  & 8 pts $\not\in F_{-\id}$\\ \hline
\multirow{3}{*}{$C_6$} & \multirow{3}{*}{0} & \multirow{3}{*}{12} & 2 pts $= F_{-\id} \cap F_{g_3}$ \\
 &  &  & 4 pts $\in F_{-\id} \setminus F_{g_3}$\\
  &  &  & 6 pts $\in F_{g_3} \setminus F_{-\id}$\\\hline
$BT_{24}$ & 0 & 2 & 2 pts $= \bigcap_{\ord(g)=3} F_{g}$
\end{tabular}
\end{table}

\begin{proof}
Cases $C_2$ and $C_3$ are classical; see for instance \cite[Section~1.2.1]{Tar15} and \cite[Section~5.5]{Fu-Menet}. We focus on the remaining cases. For any $G$-fixed point $z$ in $\K_2(A)$, the image $\epsilon(z)= [(x,y,-x-y)]$ in $A_0^{(3)}$ is $G$-fixed too, and $\{x,y,-x-y\}$ is a union of orbits for the action of $G$ on $A$, equivalently a union of fibers of the quotient $A \to A/G$.

    \begin{enumerate} \setcounter{enumi}{2}
        \item  If $G \simeq C_4 = \langle g_4 \rangle$, the singularities of $A/G$ are $4A_3 + 6A_1$; see \cite[Lemma 3.19]{Fujiki88} and also \cite[Proposition 2.7]{Pietromonaco2022}. We denote the point/orbit in $A$ over the singularities $4A_3$ by $0,q_1, q_2 $ and $q_3$, and the orbits over $6A_1$ are $\{x, g_4(x)\}$ for some $x \in A[2] \setminus \{0,q_1, q_2, q_3\}$. If $z$ is a $G$-fixed point in $\K_2(A)$, then one of the following holds: 
        \begin{itemize}
            \item $\epsilon(z)=[(0,0,0)]$, and $G$ fixes two points in $\epsilon^{-1}(0,0,0)$ lying in $F_{-\id}$ and the point $\mathfrak{m}^2 \not \in F_{-\id}$; see \cref{cor:punctualHilbert}.
            \item $\epsilon(z)=[(q_i, q_i, 0)] \in \epsilon(F_{-\id})$, and $G$ fixes two points in $\epsilon^{-1}(q_i, q_i, 0)$ lying on $F_{-\id}$; see \cref{lemma:one-dim fibers HC}.
            \item $\epsilon(z)=[(q_1, q_2, q_3)] \notin \epsilon(F_{-\id})$. 
            \item $\epsilon(z)=[(x,g_4(x), -x-g_4(x))] \notin \epsilon(F_{-\id})$ for some $x \in A[2] \setminus \{0,q_1, q_2, q_3\}$.
        \end{itemize}

        \item If $G \simeq \langle g_3, -\id \rangle \simeq C_6$, the singularities of $A/G$ are $A_5 + 4A_2 + 5A_1$. The point/orbit over $A_5$ is $0$, the orbits over $4A_2$ are $\{x,-x\}$ for some $x \neq 0$ with $g_3(x)=x$, and the orbits over $5A_1$ are $\{x,g_3(x),g_3^2(x)\}$ for some $x \in A[2] \setminus \{0\}$.        
        If $z$ is a $G$-fixed point in $\K_2(A)$, then one of the following holds: 
        \begin{itemize}
            \item  $\epsilon(z)=[(0,0,0)]$, and $G$ fixes two points in $\epsilon^{-1}(0,0,0)$ lying in $F_{-\id} \cap F_{g_3}$ and the point $\mathfrak{m}^2 \in F_{g_3} \setminus F_{-\id}$; see \cref{cor:punctualHilbert}. 
            
            \item $\epsilon(z)=[(x,0,-x)] \in F_{-\id} \setminus F_{g_3}$ for some $x \neq 0$ with $g_3(x)=x$.
            
            \item  $\epsilon(z)=[(x,g_3(x), {g_3}^2(x))] \in F_{g_3} \setminus F_{-\id}$ for some $x \in A[2] \setminus \{0\}$.
        \end{itemize}
        
        \item  If $G \simeq BT_{24}$, the singularities of $A/G$ are $E_6+D_4 + 4A_2 +A_1$. The point/orbit over $E_6$ is $0$, the orbit over $D_4$ is $\{q_1,q_2,q_3\}$, and all other orbits of $G$ have cardinality greater than $3$. If $z$ is a $G$-fixed point in $\K_2(A)$, then one of the following holds: 
        \begin{itemize}
            \item $\epsilon(z)=[(0,0,0)]$, and $G$ fixes $\mathfrak{m}^2 = \epsilon^{-1}(0,0,0) \cap \bigcap_{g \in G \,:\, \ord(g)=3} F_{g}$; see \cref{cor:punctualHilbert}. 
            \item $\epsilon(z)=[(q_1, q_2, q_3)] \in \bigcap_{g \in G \,:\, \ord(g)=3} F_{g}$.\qedhere
        \end{itemize} 
    \end{enumerate}
    \renewcommand{\qed}{}
\end{proof}

\begin{lemma} \label{lem:p with big stabiliser}
Let $G$ be a finite group of induced symplectic automorphisms of\, $\K_2(A)$.

    \begin{enumerate}
        \item \label{j3} If\, $G \simeq \langle \tau_\alpha \rangle \simeq C_3$, then  $G$ fixes $27$ points. 
        
        \item \label{j6} If\, $G \simeq \langle \tau_\alpha, -\id \rangle \simeq S_3$, then $G$ fixes the unique intersection point of all surfaces fixed by an involution of\,~$G$.

        \item If\, $G \simeq \langle \tau_\alpha, g_3 \rangle \simeq C_3^2$, then $G$ fixes the three intersection points between \new{a pair} 
        of surfaces fixed by an element of\,~$G$.

        \item If\, $G \simeq \langle g_3, \tau_\alpha, -\id \rangle \simeq C_3 \times S_3$, then $G$ fixes the unique intersection point of all surfaces fixed by an element of\, $G \setminus \langle g_3\rangle$. 
    \end{enumerate}
\end{lemma}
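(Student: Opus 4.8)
The plan is to reduce all four statements to a linear-algebra computation on the $3$-torsion $A[3]\cong\mathbb{F}_3^4$ through the Hilbert--Chow morphism $\epsilon\colon\K_2(A)\to A^{(3)}_0$. Each of the four groups contains the order-$3$ translation $\tau_\alpha$, so every $G$-fixed point $z$ is in particular $\tau_\alpha$-fixed; since translation by the nonzero element $\alpha$ acts freely on $A$, the configuration $\epsilon(z)=[(p_1,p_2,p_3)]$ cannot have a repeated entry, hence $\epsilon$ is a local isomorphism at $z$. Thus $G$-fixed points of $\K_2(A)$ correspond bijectively to $G$-fixed \emph{reduced} configurations in $A^{(3)}_0$, and I can work entirely with triples of distinct points, bypassing the punctual Hilbert scheme of Section~\ref{subsec:fixed point punctual Hilbert scheme}.

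First I would record three elementary fixing conditions. A $\tau_\alpha$-fixed configuration must be a single coset $\{x_0,x_0+\alpha,x_0+2\alpha\}$, and the summation condition $3x_0+3\alpha=0$ forces $x_0\in A[3]$; counting such cosets gives $|A[3]|/|\langle\alpha\rangle|=81/3=27$ configurations, which is exactly statement~\eqref{j3}. For the other generators one checks that such a configuration is fixed by $g_3$ if and only if $(g_3-1)x_0\in\langle\alpha\rangle$, and by $-\id$ if and only if $2x_0\in\langle\alpha\rangle$, equivalently $x_0\in\langle\alpha\rangle$ as $2$ is invertible modulo $3$. The structural input that makes the counts work is that $g_3\in\SL(\Lambda)$ satisfies $g_3^2+g_3+1=0$, and since $t^2+t+1=(t-1)^2$ over $\mathbb{F}_3$ the reduction $(g_3-1)$ is nilpotent with $(g_3-1)^2=0$ on $A[3]$; together with the fact that $g_3$ fixes exactly $9=\det(g_3-1)$ points on $A$, all $3$-torsion, this yields $\rank(g_3-1)=2$ and $\mathrm{im}(g_3-1)=\ker(g_3-1)$, a subspace containing $\alpha$ because $g_3(\alpha)=\alpha$.

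With these facts the remaining items are short. For $S_3$ and for $C_3\times S_3$ the condition $x_0\in\langle\alpha\rangle$ isolates the single configuration $\{0,\alpha,2\alpha\}$, so $\Fix(G)$ is one point $z$; I would then verify that $z$ lies on every surface fixed by an involution $\tau_\beta(-\id)$ (respectively on every surface fixed by an element of $G\setminus\langle g_3\rangle$) using the explicit descriptions $\epsilon(\Fix(\tau_\beta(-\id)))=\{[(x,-x+\beta,-\beta)]\}$ and $\epsilon(\Fix(h))=\{[(x,hx,h^2x)]\}$ from \cref{lemma:inducedinvolution} and \cref{table:subvarieties F}, and conclude uniqueness because any intersection of these surfaces is contained in $\Fix(G)=\{z\}$ (the chosen elements generate $G$); this gives item~\eqref{j6} and the final statement. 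For $C_3^2$ the set $\Fix(G)$ consists of the nine configurations cut out by $(g_3-1)x_0\in\langle\alpha\rangle$; writing $h=\tau_{i\alpha}g_3$ one computes $h(x_0)-x_0=(g_3-1)x_0+i\alpha$, so whether a fixed configuration lies on the surface $\Fix(h)$ is governed by a single residue in $\mathbb{F}_3$. This shows that each pair of the three surfaces meets in exactly three of the nine points, which is the content of the third statement.

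The main obstacle I expect is bookkeeping rather than conceptual depth: one must carefully separate, among the $G$-fixed configurations, those lying on a codimension-$2$ fixed surface---where the relevant automorphism acts as a $3$-cycle on the three points---from the isolated fixed points, where it fixes each point individually. This distinction is exactly what underlies the restriction to $G\setminus\langle g_3\rangle$ in the last statement: the unique fixed point $z=\{0,\alpha,2\alpha\}$ lies on the surfaces attached to the involutions and to $\tau_\alpha g_3,\tau_{2\alpha}g_3$, but \emph{not} on $\Fix(g_3)$, precisely because $g_3$ fixes $0,\alpha,2\alpha$ pointwise rather than cyclically. Keeping this surface-membership criterion and the residue accounting consistent across the cases is where the care will be needed.
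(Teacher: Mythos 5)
Your proposal is correct and follows essentially the same route as the paper's proof: reduce via the Hilbert--Chow morphism to reduced $\tau_\alpha$-invariant configurations $[(x_0,x_0+\alpha,x_0-\alpha)]$ with $x_0\in A[3]$ (giving the count $81/3=27$), impose the extra fixing conditions $(g_3-1)x_0\in\langle\alpha\rangle$ and $x_0\in\langle\alpha\rangle$, and decide membership in each surface $F_{\tau_{i\alpha}g_3}$ or $F_{\tau_\beta(-\id)}$ according to whether the element acts on the triple as a $3$-cycle or pointwise. Your added $\mathbb{F}_3$-linear algebra ($(g_3-1)^2=0$, $\operatorname{im}=\ker$) and the residue bookkeeping are just a more explicit packaging of the paper's case analysis, so this is the same argument.
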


\begin{proof}
    Let $z$ be a point whose stabilizer $G_z$ contains $\tau_\alpha$ with $\alpha \neq 0$. 
    Then $z$ is of the form $[(x, x+\alpha, x-\alpha)]$ with $x \in A[3]$, and there are $27=|A[3]|/3$ such points $z$. In particular, we have the following:  
    \begin{itemize}
    \item If $G_z = \langle \tau_\alpha, -\id \rangle$, then \new{$x \in \langle \alpha \rangle$,} 
      so
      \[p=[(0, \alpha, -\alpha)] \in F_{-\id} \cap F_{\tau_{\alpha}(-\id)} \cap F_{\tau_{-\alpha}(-\id)}.\]

    \item If $G_z =\langle \tau_\alpha, g_3 \rangle$, then one of the following holds: 
      \begin{itemize}
        \item $g_3(x)=x$, \textit{i.e.}, $z\in F_{\tau_\alpha g_3} \cap F_{\tau_{-\alpha}g_3} \setminus F_{g_3}$.
        \item $g_3(x)=x-\alpha$, \textit{i.e.}, $z \in F_{g_3} \cap F_{\tau_{-\alpha} g_3} \setminus F_{\tau_\alpha g_3}$.
        \item $g_3(x)=x+\alpha$, \textit{i.e.}, $z\in F_{g_3} \cap F_{\tau_{\alpha} g_3} \setminus F_{\tau_{-\alpha} g_3}$.
    \end{itemize} 
    Note that any pair of fixed surfaces intersects in three points of the form $[(x, x+\alpha, x-\alpha)]$.

    \item If $G_z = \langle g_3, \tau_\alpha, -\id \rangle$, then combining the two cases above, we obtain
\begin{equation*}\pushQED{\qed}
      z=[(0, \alpha, -\alpha)] \in F_{-\id} \cap F_{\tau_{\alpha}(-\id)} \cap F_{\tau_{-\alpha}(-\id)} \cap F_{\tau_\alpha g_3} \cap F_{\tau_{-\alpha} g_3} \setminus F_{g_3}.
   \qedhere \popQED
	\end{equation*}  
    \end{itemize}    
\renewcommand{\qed}{}    
\end{proof}

\subsection{Singularities of symplectic quotients}\label{sec:singularitiesterminalization}
The singular locus of $\K_2(A)/G$ is stratified in
\begin{enumerate}
    \item \label{item:subvarisotropy} locally closed surfaces with isotropy $C_2$ or $C_3$, 
    \item \label{item:higherisotropypoints} points in the closure of the surfaces in \eqref{item:subvarisotropy} with isotropy strictly greater than $C_2$ or $C_3$, 
    \item remaining isolated singular points.
\end{enumerate}
The points of type \eqref{item:higherisotropypoints} are images under the quotient map $q \colon K_2(A) \to K_2(A)/G$ of 
\begin{enumerate}[label=(\ref{item:higherisotropypoints}.\arabic*)]
    \item \label{item:2.1} the intersection of surfaces $F_{g} \cap F_{h}$ fixed by some $g,h \in G$, 
    \item \label{item:2.2} fixed points in $F_{g}$ for the residual action of $N_G(\langle g \rangle)/\mathrm{ncl}(g)$, where $N_G(\langle g \rangle)$ is the normalizer of the cyclic subgroup $\langle g \rangle$ generated by $g$, and $\mathrm{ncl}(g)$ is the normal subgroup generated by $g$ in $N_G(\langle g \rangle)$.
\end{enumerate}
In order to determine the singularities of $X/G$ effectively, we use the following algorithm:
\begin{enumerate}
    \item List the possible stabilizers of points of $X$ for the action of $G$. 
    \item Determine all the points of type~\ref{item:subvarisotropy},~\ref{item:2.1} and~\ref{item:2.2}. 
\item Note that the number of remaining isolated fixed points with isotropy $m$ is 
\[
\sum_{g \in G,\ \ord(g)=m}\big((\# \text{ isolated }g\text{-fixed points}) - (\#\ g\text{-fixed points of type~\ref{item:2.1} and~\ref{item:2.2}})\big)\big/(|G|/\ord(g))
\]
\end{enumerate}

One may run the algorithm for all groups in \cref{table sing n=2}, but for brevity we make the following expository choice. 
For the terminalizations which are deformation equivalent to a Fujiki variety (see \cref{prop:mainsection}), the singularities have been already computed in \cite[Theorem 1.11] {GM22},  and we refer the reader to \textit{loc.~cit.} Here we study in detail the singularities of the new deformation types of IHS fourfolds in \cref{table sing n=2}, namely $G_{\circ}=C_2$ (see Section~\ref{sec:C2}) and $G=C_3^2\rtimes BT_{24}$ (see Section~\ref{sec:BT24}). For the only remaining case $G=BT_{24}$, for which we do not know yet if it is deformation equivalent to other Fujiki varieties (see \cref{rmk:open}), we provide the diagram of the singularities of $\K_2(A)/G$  and leave the details to the reader; see Section~\ref{diagram24,3}. 

\subsubsection{Groups with $\boldsymbol{G_0=C_2}$}\label{sec:C2}
Suppose $G_\tr \simeq C_3^{\oplus i}$ for some $i=0,\ldots,4$. Since any point $z \in \K_2(A)$ cannot be fixed by more than one translation up to multiples (\textit{i.e.}, $G_z \cap G_\tr = \{1\} $ or $\langle \tau_\alpha \rangle$), the possible nontrivial stabilizers of points in $\K_2(A)$ for the action of $G$ are 
\begin{alignat*}{2}
    \langle \tau_\beta \rangle \simeq C_3,
    & \quad \beta \in G_\tr \setminus \{0\}, \\
    \langle \tau_\alpha(-\id) \rangle \simeq C_2,
    & \quad \alpha \in G_\tr, \\
    \langle \tau_\alpha(-\id), \tau_\beta \rangle \simeq S_3,
    & \quad \alpha \in G_\tr,\ \beta \in G_\tr \setminus \{0\}.
\end{alignat*}
The singular points of $Y$ correspond to the isolated singularities of $X/G$. Indeed, as $N_2=1$ and $N_3=0$, the singular locus contains a unique irreducible component of codimension~$2$, namely $q(F_{-\id})$, with points of isotropy $C_2$ or $S_3$. By \cref{prop: sing after terminalization}\eqref{item6}, the terminalization $Y \to X/G$ is a symplectic resolution in a neighborhood of $q(F_{-\id})$. 

The singularities of $X/G$ away from $q(F_{-\id})$ are images of isolated points in $X$ fixed by elements $g \in G$. Given the list of possible stabilizers, the isolated points of the fixed locus of an involution do not lie on any surface fixed by any other involution.  We obtain that
\begin{align*}
    a_2 
    & = \# \text{ isolated singular points of }X/G\text{ with isotropy }C_2\\
    & = (\# \text{ isolated points in }X\text{ fixed by an involution in }G)/(\# \text{orbits of such points})\\
    & = (\# \text{ involutions in }G) \cdot (\# \text{ isolated points fixed by}-\id)/(|G|/2)\\
    & = 3^i \cdot 36/(2 \cdot 3^i/2)=36.
\end{align*}
On the contrary, if $g = \tau_{\beta}$ is a translation, an isolated fixed point may lie on a surface $F_{\tau_{\alpha}(-\id)}$. In that case, the point is the unique intersection of the three surfaces 
\[F_{\tau_{\alpha}(-\id)} \cap F_{\tau_{\alpha + \beta}(-\id)} \cap F_{\tau_{\alpha - \beta}(-\id)} = [(-\alpha +\beta, -\alpha-\beta, -\alpha)].\] Since there are exactly $\frac{1}{3}{3^i \choose 2}$
such points, we obtain that
\begin{alignat*}{2}
a_3 & = \# \text{ isolated singular points of }X/G\text{ with isotropy }C_3\\
    & = \big( (\# \text{ isolated points in }X \text{ fixed by a translation}) \\
    & \qquad- (\# \text{ isolated such points lying on a fixed surface}) \big) \big/(\# \text{orbits of such points})\\
    &  
    = \big((\# \text{ subgroups }\langle \tau_{\beta}\rangle \subset G) \cdot (\# \text{ isolated points in }X\text{ fixed by }\tau_{\beta})\\
    & \qquad- (\# \text{ isolated such points lying on a fixed surface}) \big) \big/(|G|/3)\\
    & = \left( \frac{3^i-1}{2} \cdot 27-\frac{1}{3}{3^i \choose 2}\right)\bigg/\frac{2 \cdot 3^i}{3}=\frac{(3^i-1)(3^{4-i}-1)}{4} \in \{0,13,16\}.
\end{alignat*}

\subsubsection{Group $\boldsymbol{C_3^2\rtimes BT_{24}}$ (ID 216,153)}\label{sec:BT24}
We first determine the possible nontrivial stabilizers of points of $\K_2(A)$ under the action of $G=C_3^2\rtimes BT_{24}$; see Figure~\ref{fig:poset Gp 216}.

We note in particular that there are no stabilizers isomorphic to $S_3$ and $Q_8$. To this end, observe that all subgroups of $G$ isomorphic to $S_3$ and $Q_8$ are conjugate, so it suffices to show that the groups $S_3 \simeq \langle \tau_\alpha, -\id \rangle$ and $Q_8 = \langle i,j,k \rangle \subset BT_{24}$ are not stabilizers of any point in $\K_2(A)$. 
\begin{itemize}
    \item In the former case, any point $[(0,\alpha, -\alpha)] \in \Fix(\langle \tau_\alpha, -\id \rangle )$ is also fixed  by $\tau_{\alpha} g_{\alpha}$, where \new{$\ga \in BT_{24}$ is the unique automorphism of order $3$ fixing the line $\langle \alpha \rangle  \in G_{\tr}$.} 
    \item  In the latter case, any point $z \in \K_2(A)$ fixed by $Q_8$ is fixed by $BT_{24}$ too. Indeed, $\epsilon(z) \in A_0^{(3)}$ is either $[(0,0,0)]$ or $[(q_1, q_2, q_3)]$, as in the proof of \cref{lem: p with linear Gp} for $BT_{24}$, and they are both fixed by $BT_{24}$. Further, the only $Q_8$-fixed point of the punctual Hilbert scheme $(0,0,0)$ is $\mathfrak{m}^2$, which is fixed by $BT_{24}$ too.
\end{itemize}

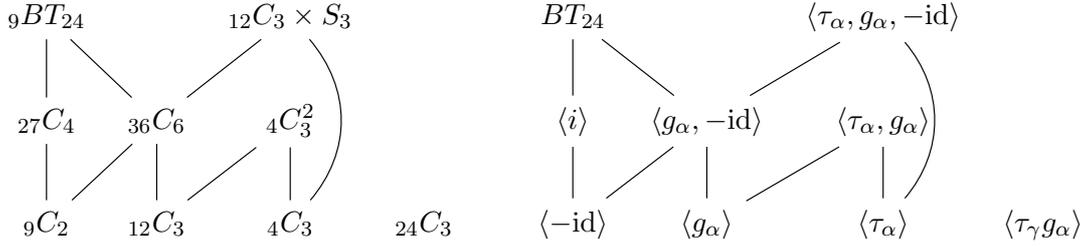
\begin{figure}[h!]
\[\begin{tikzcd}[column sep=tiny]
	{{}_{9}BT_{24}} && {{}_{12}C_3 \times S_3} &&& {BT_{24}} && {\langle \tau_\alpha, \ga, -\id \rangle} \\
	{{}_{27}C_4} & {{}_{36}C_6} & {{}_{4}C_3^2} &&& {\langle i \rangle} & {\langle \ga, -\id \rangle} & {\langle \tau_\alpha, \ga \rangle} \\
	{{}_{9}C_2} & {{}_{12}C_3} & {{}_{4}C_3} & {{}_{24}C_3} & {} & {\langle -\id \rangle} & {\langle \ga \rangle} & {\langle  \tau_\alpha  \rangle} & {\langle \tau_\gamma \ga \rangle}
	\arrow[no head, from=3-2, to=2-3]
	\arrow[no head, from=3-3, to=2-3]
	\arrow[no head, from=3-1, to=2-2]
	\arrow[no head, from=2-2, to=1-3]
	\arrow[no head, from=2-2, to=1-1]
	\arrow[no head, from=3-1, to=2-1]
	\arrow[no head, from=2-1, to=1-1]
	\arrow[no head, from=1-6, to=2-6]
	\arrow[no head, from=2-6, to=3-6]
	\arrow[no head, from=3-6, to=2-7]
	\arrow[no head, from=2-7, to=1-6]
	\arrow[no head, from=2-7, to=1-8]
	\arrow[no head, from=3-8, to=2-8]
	\arrow[no head, from=3-7, to=2-8]
	\arrow[no head, from=3-2, to=2-2]
	\arrow[curve={height=23pt}, no head, from=3-3, to=1-3]
	\arrow[no head, from=3-7, to=2-7]
	\arrow[curve={height=23pt}, no head, from=3-8, to=1-8]
\end{tikzcd}\]
    \caption{On the left, the poset of nontrivial stabilizers of points of $\K_2(A)$ under the action of $G=C_3^2\rtimes BT_{24}$, up to conjugation. The left subscript denotes the number of conjugate subgroups. On the right, we provide a representative for each conjugacy class. Note that \rev{$\alpha, \gamma \in G_{\tr}$ with $g_\alpha(\gamma)\neq \gamma$.}}
    \label{fig:poset Gp 216}
\end{figure}

As $N_2=1$ and $N_3=1$ (\textit{cf.} \cref{table n=2}), the only surfaces in the singular locus of $X/G$ are $q(F_{-\id})$ and $q(F_\ga)$. The residual groups acting on the K3 surfaces $F_{-\id}$ and $F_\ga$ are $A_4= BT_{24}/{-\id}$ and $S_3= \langle \tau_\alpha, -\id \rangle$, respectively. The singularities of the quotients $F_{-\id}/A_4$ and $F_{\ga}/S_3$ are   
\begin{alignat*}{3}
    F_{-\id}/A_4& : && \quad 6A_2 + 4A_1, \\
    F_{\ga}/S_3 &: && \quad 3A_2 + 8A_1;
\end{alignat*}
see \cite[Theorem 3, \#17, \#6]{Xiao1996}. This suffices to describe the singular points of $\K_2(A)/G$ lying on $q(F_{-\id}) \cup q(F_{g_3})$ (\textit{cf.} Figure~\ref{fig:configuration 216}):
\begin{itemize}
    \item The six points of type $A_2$ in $F_{-\id}/A_4$ correspond to
    \begin{itemize}
        \item three points in $q(F_{-\id})$ with isotropy $C_6$,
        \item two points in the intersection of $q(F_{-\id})$ and $q(F_\ga)$ with isotropy $C_6$,
        \item The point $[(\alpha, -\alpha, 0)] \subset q(F_{-\id}) \cap q(F_\ga)$, with isotropy $C_3 \times S_3$.
    \end{itemize}
  \item The four points of type $A_1$ in $F_{-\id}/A_4$ correspond to four points in $q(F_{-\id})$ with isotropy $C_4$. 
    \item The three points of type $A_2$ in $F_\ga/S_3$ corresponds to
    \begin{itemize}
    \item the point $[(\alpha, -\alpha, 0)] \subset q(F_{-\id}) \cap q(F_\ga)$, with isotropy $C_3 \times S_3$, 
    \item the point $[(x+\alpha, x-\alpha, x)] \in q(F_\ga)$, with $x \in \Pi_\ga \setminus G_\tr$ and  isotropy $C_3^2$. 
    \end{itemize}
Note that two points of type $A_2$ in $F_\ga/S_3$ are identified by the normalization map $F_\ga/S_3 \to q(F_{\ga})$. 
    \item The eight points of type $A_1$ in $F_\ga/S_3$ correspond to
    \begin{itemize}
        \item four points in $q(F_\ga)$ with isotropy $C_6$,
        \item two points in the intersection of $q(F_{-\id})$ and $q(F_\ga)$, with isotropy $C_6$,
        \item two points in $q(F_\ga)$  with isotropy $BT_{24}$.
    \end{itemize}   
\end{itemize}

\small
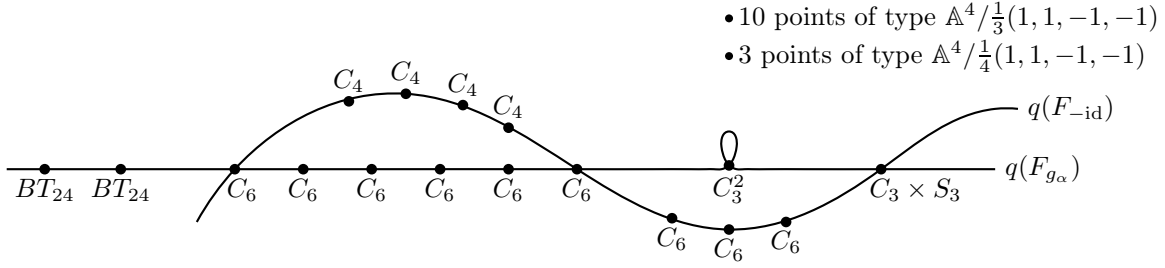
\begin{figure}
\begin{tikzpicture}
\draw[thick,hobby] plot coordinates {(-2.5,0) (0,0) (2,0) (5,0) (6,0) (6.5,0) (6.7,0) (6.8,0) (6.9,0) (7.1,0.3) (7,0.5) (6.9,0.3) (7.1,0) (7.2,0) (7.3,0) (7.5,0) (8,0) (9,0) (10.5,0)} node[right]{\(q(F_\ga)\)};
\draw[thick,hobby] plot coordinates {(0,-0.7) (0.5,0) (2.75,1) (5,0) (7,-0.8) (9,0) (10.5,0.8) (10.8,0.8) } node[right]{\(q(F_{-\id})\)};

\fill (-2,0) circle[radius=2pt] node[below]{\(BT_{24}\)};
\fill (-1,0) circle[radius=2pt] node[below]{\(BT_{24}\)};
\fill (0.5,0) circle[radius=2pt] node[below]{\hspace{2mm}\(C_6\)};
\fill (1.4,0) circle[radius=2pt] node[below]{\(C_6\)};
\fill (2.3,0) circle[radius=2pt] node[below]{\(C_6\)};
\fill (3.2,0) circle[radius=2pt] node[below]{\(C_6\)};
\fill (4.1,0) circle[radius=2pt] node[below]{\(C_6\)};
\fill (5,0) circle[radius=2pt] node[below]{\(C_6\)};
\fill (7,0.05) circle[radius=2pt] node[below]{\(C_3^2\)};
\fill (9,0) circle[radius=2pt] node[below]{\hspace{9mm}\(C_3\times S_3\)};

\fill (2,0.9) circle[radius=2pt] node[above]{\(C_4\)};
\fill (2.75,1) circle[radius=2pt] node[above]{\(C_4\)};
\fill (3.5,0.85) circle[radius=2pt] node[above]{\(C_4\)};
\fill (4.1,0.55) circle[radius=2pt] node[above]{\(C_4\)};
\fill (6.25,-0.65) circle[radius=2pt] node[below]{\(C_6\)};
\fill (7,-0.8) circle[radius=2pt] node[below]{\(C_6\)};
\fill (7.75,-0.7) circle[radius=2pt] node[below]{\(C_6\)};

\fill (7,2) circle[radius=1.5pt] node[right]{10 points of type \(\IA^4/\tfrac{1}{3}(1,1,-1,-1)\)};
\fill (7,1.5) circle[radius=1.5pt] node[right]{3 points of type \(\IA^4/\tfrac{1}{4}(1,1,-1,-1)\)};
\end{tikzpicture}
\caption{Singularities of $X/G$ for $G=C_3^2\rtimes BT_{24}$ (ID 216,153).}
\label{fig:configuration 216}
\end{figure}
\normalsize
\medskip

We are left to determine the isolated singular points in \new{$\K_2(A)/G$ according to their stabilizer type.} 
By \cref{lem: p with linear Gp}, an isolated fixed point in $\K_2(A)$ can only have stabilizer isomorphic to $C_4$, $C_3$ or $C_2$.

\textit{Isotropy} $C_4$.~ As all subgroups in $G$ isomorphic to $C_4$ are conjugate, it suffices to consider the fixed points of the linear automorphism $g_4 \in BT_{24}$. By \cref{lem: p with linear Gp} for $C_4$, the automorphism $g_4$ fixes eight points not lying on $F_{-\id}$, but two of them are fixed by the whole $BT_{24}$, and they lie on $F_\ga$; see \cref{lem: p with linear Gp} for $BT_{24}$. Thus, the number of isolated singular points with isotropy $C_4$ in $X/G$ is
\begin{equation} \label{equ:216, isolated C4}
    (8-2) \cdot (\# \text{subgroups conjugate to }\langle g_4 \rangle)/(|G|/|C_4|)=6\cdot 27/(216/4)=3.
\end{equation}

\textit{Isotropy} $C_2$.~ As all subgroups in $G$ isomorphic to $C_2$ are conjugate, it suffices to consider the points fixed by $-\id$. By \cref{lem: p with linear Gp} for $C_2$, the involution $-\id$ fixes $36$ points not lying on $F_{-\id}$, but
\begin{itemize}
    \item[-]2 of them are fixed by the whole $BT_{24}$ and lie on $F_\ga$,  
    \item[-]18 of them have stabilizer $\langle i \rangle, \langle j \rangle$ or $\langle k \rangle \simeq C_4$, 
    \item[-]$16$ of them lie on a fixed surface $F_g$ for some $g \in BT_{24}$ of order $3$.
\end{itemize}
So there are no isolated points with isotropy $C_2$ in $X/G$.

\textit{Isotropy} ${{}_{12}C_3}$.~ Consider the subgroups in $G$ conjugate to $\langle \ga \rangle$. By \cref{lem: p with linear Gp} for $C_3$, the automorphism $\ga$ fixes $12$ points not lying on $F_\ga$, but
\begin{itemize}

 \item[-]1 lies on $F_{-\id} \cap F_{\tau_\alpha \ga} \cap F_{\tau_{-\alpha} \ga}$, 
    \item[-]2 lie on $F_{\tau_\alpha \ga} \cap F_{\tau_{-\alpha} \ga}$, 
    \item[-]$9$ lie on a single surface $F_{\tau_{\beta}(-\id)}$ with $\beta \in \{0, \pm \alpha\}$.
\end{itemize}
So there are no isolated points with isotropy $C_3$ conjugate to $\langle \ga \rangle$ in $X/G$.

\textit{Isotropy} ${{}_{4}C_3}$.~ Consider the subgroups in $G$ conjugate to $\langle \tau_\alpha \rangle$. By \cref{lem:p with big stabiliser}\eqref{j3}, the translation $\tau_\alpha$ fixes $27$ isolated points of the form $z=[(x, x+\alpha, x-\alpha)]$ for $x \in A[3]$. If $x \in G_\tr$, then $z$ lies on $F_{\tau_{-x}(-\id)}$.
Thus, the number of isolated points with isotropy $C_3$ conjugate to $\langle \tau_\alpha \rangle$ in $X/G$ is 
\begin{equation} \label{equ:216, isolated C3 translation}
    (27-9) \cdot (\# \text{subgroups conjugate to }\langle \tau_{\alpha} \rangle)/(|G|/|C_3|)=(27-9)\cdot 4 /(216/3)=1.
\end{equation}

\textit{Isotropy} ${{}_{24}C_3}$.~ Consider the subgroups of $G$ conjugate to $\tau_\gamma \ga$ with $\gamma \in G_\tr \setminus \Pi_\ga$.  By \cref{lem: isolated for C3 not-linear}, the number of isolated points with isotropy conjugate to $\langle \tau_\gamma g_\alpha\rangle$ is
\begin{equation} \label{equ:216, isolated C3 not-linear}
    27 \cdot (\# \text{subgroups conjugate to }\langle \tau_\gamma \ga \rangle)/(|G|/|C_3|)=27\cdot 24 /(216/3)=9.
    \end{equation}

Finally, combining \cref{prop: sing after terminalization} and \eqref{equ:216, isolated C4}--\eqref{equ:216, isolated C3 not-linear}, we conclude that 
\begin{equation*}
 a_2= 3 \cdot 4 + 4 \cdot 2=20, \quad a_3= 10 + 2 \cdot 3 = 16, \quad a_4=3.   
\end{equation*}

\begin{lemma} \label{lem: isolated for C3 not-linear}
    There are precisely $27$ points in $\K_2(A)$ fixed by~$\tau_\gamma g_\alpha$ with $g_\alpha(\gamma) \neq \gamma$.
\end{lemma}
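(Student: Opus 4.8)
The plan is to enumerate $\Fix(g)$ for $g=\tau_\gamma g_\alpha$ by pushing everything down the Hilbert--Chow morphism $\epsilon\colon \K_2(A)\to A_0^{(3)}$ and organizing the fixed points according to the partition type of the support of $\epsilon(z)$. First I would record the relevant algebra. Since $g_\alpha\in\SL(\Lambda)$ has order $3$ and determinant $1$, its eigenvalues are the two primitive cube roots of unity, so $1+g_\alpha+g_\alpha^2=0$ as an endomorphism of $A$ (it kills the tangent space, hence is the zero endomorphism). From $g^3=\tau_{(1+g_\alpha+g_\alpha^2)(\gamma)}\,g_\alpha^3=\id$ and $g\neq\id$ one gets $\ord(g)=3$, and $g$ fixes no surface: by Lemma~\ref{lemma:inducedinvolution}\eqref{l:i-2} that would force $g_\alpha(\gamma)=\gamma$, which is excluded. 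Hence $\Fix(g)$ is finite, and every fixed $z$ projects to a $g$-fixed $\epsilon(z)=[(x_1,x_2,x_3)]\in A_0^{(3)}$ on whose support $g$ acts by a permutation of order dividing $3$.

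Next I would describe $\Fix(g)$ on $A$ itself. Solving $g(x)=x$, i.e.\ $(1-g_\alpha)(x)=\gamma$, and using that $1-g_\alpha$ has nonzero eigenvalues $1-\xi_3,\,1-\xi_3^{-1}$, so that it is an isogeny of degree $|{\det}_{\IC}(1-g_\alpha)|^2=|3|^2=9$, the fixed set $P$ is a coset of $K:=\ker(1-g_\alpha)$ with $|P|=|K|=9$. Each $k\in K$ satisfies $3k=(1+g_\alpha+g_\alpha^2)(k)=0$, so $K\subseteq A[3]$ and $K\simeq(\IZ/3)^2$. The structural key is that $P\cap A[3]=\varnothing$: on $A[3]$ the reduction of $g_\alpha$ is unipotent with $(g_\alpha-1)^2=0$ (because $t^2+t+1\equiv (t-1)^2 \bmod 3$), hence $\mathrm{im}(1-g_\alpha|_{A[3]})\subseteq\ker(1-g_\alpha|_{A[3]})$, while $\gamma\notin\ker(1-g_\alpha|_{A[3]})$ by hypothesis; thus $(1-g_\alpha)(x)=\gamma$ has no $3$-torsion solution.

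With this I would split the count into the three strata. For a triple point $\epsilon(z)=[3x]$ one needs $x\in P\cap A[3]=\varnothing$, so this stratum is \emph{empty}; in particular the positive-dimensional fixed loci of Corollary~\ref{cor:punctualHilbert} never arise. For a double point $\epsilon(z)=[(x_1,x_1,x_2)]$ one must have $g(x_1)=x_1$, $g(x_2)=x_2$, and $2x_1+x_2=0$; taking $x_1\in P$ and $x_2=-2x_1$ (which lies in $P$ automatically and differs from $x_1$ since $P\cap A[3]=\varnothing$) gives $9$ configurations, and over each the $\epsilon$-fiber $\IP(T^\ast_{x_1}A)\simeq\IP^1$ carries the order-$3$ differential $g_\alpha$ with two eigenlines, contributing $2$ fixed points by Lemma~\ref{lemma:one-dim fibers HC}, for $18$ in total. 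For three distinct support points a genuine $3$-cycle would require $x_1+g(x_1)+g^2(x_1)=2\gamma+g_\alpha(\gamma)=0$, i.e.\ $g_\alpha(\gamma)=\gamma$, again excluded; so each $x_i$ lies in $P$. Writing $x_i=p_0+k_i$ with $k_i\in K$ and $c:=-3p_0\in K$ (note $3p_0$ is $g_\alpha$-fixed, and $c\neq0$ since $p_0\notin A[3]$), the condition $\sum x_i=0$ reads $k_1+k_2+k_3=c$ with distinct $k_i$. In $K\simeq(\IZ/3)^2$ there are $|K|^2=81$ ordered solutions, of which none have all three equal ($3k=c\neq0$) and exactly $27$ have two equal, leaving $54$ with distinct entries, hence $54/6=9$ unordered triples. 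Summing the strata gives $0+18+9=27$.

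The main obstacle is purely arithmetic and concentrated in the two facts $P\cap A[3]=\varnothing$ and $c\neq0$: both rest on the unipotence of $g_\alpha$ on $A[3]$ together with $\gamma$ not being $g_\alpha$-fixed, and they are exactly what distinguishes the present case (no fixed surface, $27$ isolated points) from the case $g_\alpha(\gamma)=\gamma$ of Lemma~\ref{lemma:inducedinvolution}\eqref{l:i-2}. Once these are in place, the enumeration of strata and the combinatorics of $3$-subsets of $K$ are routine.
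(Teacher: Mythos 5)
Your proposal is correct and follows essentially the same route as the paper's proof: both stratify $\Fix(\tau_\gamma g_\alpha)$ via the Hilbert--Chow morphism, identify the fixed locus on $A$ as a $9$-element coset of $\ker(\id-g_\alpha)$ inside $A[9]$, and obtain $9$ points with reduced support plus $9\cdot 2=18$ points on the exceptional locus via \cref{lemma:one-dim fibers HC}. The only difference is that you make explicit several facts the paper leaves implicit (that $P\cap A[3]=\varnothing$, that triple points and genuine $3$-cycles cannot occur when $g_\alpha(\gamma)\neq\gamma$), which strengthens rather than changes the argument.
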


\begin{proof}
    Let $z \in \K_2(A)$ with $G_z=\langle \tau_\gamma g_\alpha \rangle$, and write $\epsilon(z)=[(x,y,-x-y)] \in A_0^{(3)}$, where $x$ and $y$ are fixed by~$\tau_\gamma g_\alpha$. Note that $x,y \in A[9] \cap (g_\alpha - \id)^{-1}(\gamma)$, which consists of nine elements. 
    If $x \neq y,-2y,4y$, then $z$ does not lie on the exceptional locus of $\epsilon$, and we have $(9 \cdot 6)/{3!}=9$ such points $z$. 
    Otherwise, $\epsilon(z)=[(x,x,-2x)]$ and~$\tau_\gamma g_\alpha$ fixes two points in $\epsilon^{-1}(x,x,-2x)$ by \cref{lemma:one-dim fibers HC}. \new{In total, we obtain 18 more points fixed by $\tau_\gamma g_\alpha$ lying on the exceptional locus.} 
\end{proof}

\subsubsection{Group~\(\boldsymbol{G=BT_{24}}\) (ID 24,3)}\label{diagram24,3}
\small
\begin{center}
\begin{tikzpicture}
\draw[thick,hobby] plot coordinates {(-2,0)  (10,0)} node[right]{\(F_g\)};
\draw[thick,hobby] plot coordinates { (-1,-0.8) (0,-1) (2,0) (5.5,1) (9,0) (10,-0.5) } node[below]{\(F_{-\id}\)};

\fill (-1,0) circle[radius=2pt] node[below]{\(BT_{24}\)};
\fill (-0,0) circle[radius=2pt] node[below]{\(BT_{24}\)};
\fill (2,0) circle[radius=2pt] node[below]{\hspace{1mm}\(C_6\)};
\fill (3.4,0) circle[radius=2pt] node[below]{\(C_6\)};
\fill (4.8,0) circle[radius=2pt] node[below]{\(C_6\)};
\fill (6.2,0) circle[radius=2pt] node[below]{\(C_6\)};
\fill (7.6,0) circle[radius=2pt] node[below]{\(C_6\)};
\fill (9,0) circle[radius=2pt] node[below]{\(C_6\)};
\fill (3,0.5) circle[radius=2pt] node[above]{\(C_4\)};
\fill (3.5,0.7) circle[radius=2pt] node[above]{\(C_4\)};
\fill (4,0.85) circle[radius=2pt] node[above]{\(C_4\)};
\fill (4.5,0.95) circle[radius=2pt] node[above]{\(C_4\)};
\fill (6.5,0.9) circle[radius=2pt] node[above]{\(C_6\)};
\fill (7,0.75) circle[radius=2pt] node[above]{\(C_6\)};
\fill (7.5,0.6) circle[radius=2pt] node[above]{\(C_6\)};
\fill (8,0.45) circle[radius=2pt] node[above]{\(C_6\)};

\fill (-3,1.7) circle[radius=1.5pt] node[right]{4 points of type \(\IA^4/\tfrac{1}{3}(1,1,-1,-1)\)};
\fill (-3,1.2) circle[radius=1.5pt] node[right]{3 points of type \(\IA^4/\tfrac{1}{4}(1,1,-1,-1)\)};
\end{tikzpicture}
\end{center}
\normalsize

\section{Birational orbifolds}
\label{sec:birational orbifolds}
In this section, we show some exceptional birational maps between terminalizations of different quotients of IHS varieties; see \cref{prop:mainsection}. While the determination of Betti numbers, fundamental groups and singularities are essentially algorithmic, determining whether projective terminalizations of two different quotients $X_1/G_1$ and $X_2/G_2$ are deformation equivalent represents a subtle task. An obvious necessary condition is that their deformation invariants coincide, namely the corresponding rows in Table~\ref{table sing n=2} or in \cite[Table in Theorem 1.11]{GM22} are identical. When this is the case (with the single open exception of \cref{rmk:open}), we find an explicit birational map between $X_1/G_1$ and $X_2/G_2$, so that their terminalizations are deformation equivalent; see \cref{prop:mainsection,prop:birationaldeformation}. The idea is to write $G_1$ as an extension of $G_2$ by a normal subgroup $N_1$ and then to show that $X_2$ is birational to $X_1/N_1$, equivariantly with respect to the given $G_2$-action on $X_2$ and the residual $G_2$-action on $X_1/N_1$. As a result, we obtain
\[X_1/G_1 = (X_1/N_1)/G_2 \sim_{\bir} X_2/G_2.\] 
Even when $X_1=X_2=\K_{2}(A)$, we can still run the argument: It suffices to find an isogeny $f \colon A \to A$ such that $N_1=\ker(f)$. The ultimate goal is to merge the classification of irreducible symplectic varieties in this paper with \cite[Theorem 1.11]{GM22}, avoiding redundancy.

\begin{notation}\label{notationKummer}
    Let $a \colon G \times \K_{n}(A) \to \K_{n}(A)$ be the action of a finite group of symplectic automorphisms $G$ on $\K_{n}(A)$. A \new{projective} terminalization of the quotient $\K_{n}(A)/G$ is denoted by $\K_n(A, a)$. In the following, we always assume that the action $a$ is induced by a symplectic action on the underlying abelian surface $A$, and we simply write $\K_n(A, G)$ when the action $a$ of $G$ is clear. 
\end{notation}

\begin{definition}\label{defb:Fujiki variety}
    Let $G$ be a finite group of symplectic automorphisms of a K3 surface $S$. Let $\theta \colon G \to G$ be an involution (which may also be  the identity). The group $G$ acts on $S^n$ by 
    \[g(x_1, x_2, x_3, \ldots, x_n)=(g(x_1), \theta(g)(x_2),x_3, \ldots, x_n),\] and the symmetric group $S_n$ permutes the factors of $S^n$. 
    A \emph{Fujiki variety}, denoted by $S(G)_{\theta}^{[n]}$, is a terminalization of the quotient $S^n/\langle G, S_n\rangle$. In particular, we have
    \[S(G)_{\theta}^{[n]} \sim_{\bir} S^n/\langle G, S_n\rangle.\]
\end{definition}

\begin{proposition}\label{prop:mainsection}
The following couples or triples of  symplectic orbifolds with simply connected regular locus are deformation equivalent: \vspace{0.5em}
\begin{enumerate} \setlength\itemsep{0.5em}
    \item\label{p:m-1} $\K_2(A, C_2) \sim \K_2(A, C^4_3 \rtimes C_2)$,
    \item\label{p:m-2} $\K_2(A, S_3) \sim \K_2(A, C^3_3 \rtimes C_2)$, 
    \item\label{p:m-3} $\K_2(A, C_3) \sim S(C^2_3)^{[2]}_{-\id}$,
    \item\label{p:m-4} $\K_2(A, C^2_3) \sim S(C_3)_{-\id}^{[2]}$,
    \item\label{p:m-5} $\K_2(A, C_6) \sim \K_2(A, C^4_3 \rtimes C_6)\sim S(C_3 \rtimes S_3)_{\id}^{[2]}$,
    \item\label{p:m-6} $\K_2(A, C_3 \rtimes C_6) \sim  \K_2(A, C^3_3 \rtimes_{4} C_6) \sim S(S_3)^{[2]}_{\id}$,
    \item\label{p:m-7} $\K_2(A, C^2_3 \rtimes_{4} C_6) \sim S(C_2)^{[2]}_{\id}$
    \item\label{p:m-8} $\K_2(A, C^2_3 \rtimes C_6) \sim S(C_3 \rtimes S_3)_{(-\id,\id)}^{[2]}$,
    \item\label{p:m-9} $\K_2(A, BT_{24}) \sim \K_2(A, C^4_3 \rtimes BT_{24}) 
    $,
    \item\label{p:m-10}  $\K_3(A, C_2^i \times C_2) \sim S(C_2^{4-i})^{[3]}_{\id}$ for $0 \leq i \leq 4$.
\end{enumerate}\vspace{0.5em}
In all cases above, the group $G$ acts on \(A\)
as the affine group $G_\tr \rtimes G_\circ$ $($see \cref{lem:G is affine} and Equation~\eqref{equ:G_0}$)$.
For suitable choices of surfaces $A$ and $S$ and actions, the orbifolds in each row are actually birational.
\end{proposition}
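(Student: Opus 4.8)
The plan is to deduce each deformation equivalence from an explicit birational equivalence of the corresponding quotients, invoking \cref{prop:birationaldeformation}: once $X_1/G_1 \sim_{\bir} X_2/G_2$, a birational map between them lifts to a birational map of their projective $\IQ$-factorial terminalizations, and the latter are deformation equivalent because they are birational $\IQ$-factorial terminal IHS varieties. Since the deformation type of these terminalizations is constant along the connected families of pairs $(A, G_\circ)$ described in \cref{rem:Fujiki results on AG}, it suffices to produce the birational equivalence for one convenient representative (e.g. $A = E^2$ with $E = \IC/\IZ[\zeta_3]$ when $3 \mid |G_\circ|$, or a general $A$ together with the Kummer K3 $\widetilde{A/\langle -\id\rangle}$ when $G_\circ = C_2$). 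The affine statement is immediate: every row listed has simply connected regular locus, so \cref{lem:G is affine} together with \cref{rmk:groupaction} lets us conjugate $G$ by an element of $A[n+1]$ to the split form $G_\tr \rtimes G_\circ$, which I assume from now on.

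The common mechanism is a tower of quotients. Writing $G_1$ as an extension $1 \to N_1 \to G_1 \to G_2 \to 1$, the identity $X_1/G_1 = (X_1/N_1)/G_2$ reduces everything to producing a $G_2$-equivariant birational identification $X_1/N_1 \sim_{\bir} X_2$. For the Kummer-to-Kummer rows (items \eqref{p:m-1}, \eqref{p:m-2}, \eqref{p:m-9}, and the first equivalences in \eqref{p:m-5} and \eqref{p:m-6}) the subgroup $N_1 = (G_1)_\tr$ is a group of simultaneous translations $\Delta N$ with $N \subseteq A[3]$, and the key lemma I would isolate is that $\K_2(A)/\Delta N$ is birational to $\K_2(A')$ for an isogenous abelian surface $A'$, equivariantly for the residual $G_\circ$-action. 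On the sum-zero model $A^{(3)}_0 \cong \{(x_1,x_2,x_3):\sum x_i = 0\}$, this is proved by the consecutive-difference isogeny $\delta\colon (x_1,x_2,x_3) \mapsto (x_1 - x_2,\, x_2 - x_3)$, whose kernel is exactly $\Delta A[3]$: one checks that $\delta$ intertwines the permutation $S_3$-action with a standard $S_3$-representation on the target, so that $A^{(3)}_0/\Delta A[3]$ is again a sum-zero Kummer model, and that the linear automorphisms commute with $\delta$; a proper subgroup $N \subsetneq A[3]$ (as in items \eqref{p:m-2} and the first part of \eqref{p:m-6}) is handled by factoring $\delta$ through $A^{(3)}_0/\Delta N$. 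As isogenous abelian surfaces lie in the same connected family, this suffices.

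For the Kummer-to-Fujiki rows (items \eqref{p:m-3}, \eqref{p:m-4}, \eqref{p:m-7}, \eqref{p:m-8}, the last equivalences in \eqref{p:m-5} and \eqref{p:m-6}, and item \eqref{p:m-10}) the unifying observation is that the K3 surface $S$ entering the Fujiki variety can be realized as the minimal resolution $S = \widetilde{A/\langle g\rangle}$ of a symplectic quotient of the same $A$: the Kummer K3 when $g = -\id$ (item \eqref{p:m-10}), and the order-$3$ resolution of \cref{table:subvarieties F} when $\ord(g)=3$. Consequently $S^2$ is birational to $(A \times A)/\langle g \times g\rangle$, so that $S(G')^{[n]}_\theta$ becomes a terminalization of a finite quotient of the abelian fourfold $A \times A$. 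Since $\K_2(A)/G$ is likewise a terminalization of a finite quotient of $A^{(3)}_0 \cong A \times A$, both sides reduce to comparing two finite subgroups of the affine/isogeny automorphisms of a single abelian fourfold. I would then match the two group actions—translations, the diagonal linear part, the swap $S_2$, and the Fujiki involution $\theta$—and show they are conjugate up to isogeny, yielding the birational map $X_1/G_1 \dashrightarrow X_2/G_2$. For item \eqref{p:m-10} this comparison is the one carried out by Floccari for $i=4$, and the intermediate cases $0 \le i \le 4$ follow by restricting to subgroups of translations.

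The hard part is exactly this last matching of group actions on the abelian fourfold in the Kummer-to-Fujiki cases: one must simultaneously reconcile the permutation factor $S_{n+1}$ coming from the Hilbert–Chow description with the swap $S_2$ and the involution $\theta$ of the Fujiki side, track the induced action on the exceptional locus of the resolution $\widetilde{A/\langle g\rangle}$, and choose the isogeny so that the residual $G_2$-actions correspond—bookkeeping in which the representation theory of $G_\circ$ (the $C_3$-, $C_6$- and $BT_{24}$-actions of \cref{rem:Fujiki results on AG}) is essential. I expect the genuinely delicate point to be $BT_{24}$: item \eqref{p:m-9} is safely a Kummer-to-Kummer equivalence (going through via the translation lemma with $N_1 = A[3]$), but the analogous comparison of $\K_2(A, BT_{24})$ with the Fujiki variety $S(S_3^2 \rtimes C_2)^{[2]}_{\id}$ cannot be settled by this method and is precisely the case left open in \cref{rmk:open}.
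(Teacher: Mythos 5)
Your global architecture agrees with the paper's strategy in Section~\ref{sec:birational orbifolds}: write $G_1$ as an extension of $G_2$ by $N_1$, produce a $G_2$-equivariant birational identification of $X_1/N_1$ with $X_2$ via an explicit isogeny, conclude with \cref{prop:birationaldeformation}, and reduce to convenient representatives (CM surfaces, affine actions via \cref{lem:G is affine}). However, your key lemma for the Kummer-to-Kummer rows is false as stated. Your isogeny $\delta(x_1,x_2,x_3)=(x_1-x_2,\,x_2-x_3)$ is, up to a coordinate change, exactly the map $f_0$ of \cref{lem:quotientC4}, but it does \emph{not} intertwine the permutation action with the standard one: the $3$-cycle descends to the standard $3$-cycle, whereas a transposition descends to $(-\id)$ composed with a standard transposition, and no $\IZ$-linear identification of the target with a sum-zero model removes this sign (the two $S_3$-lattice structures differ by a twist by the sign character and are non-isomorphic: on the fixed line of the $3$-cycle in the $3$-torsion, transpositions act by $+1$ in one model and by $-1$ in the other). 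Hence $\K_2(A)/\Delta N$ is $G_\circ$-equivariantly a generalized Kummer model only after absorbing the sign, which requires $-\id\in G_\circ$ --- precisely the hypothesis of \cref{lem:quotientC4} that your lemma omits. The omission is consequential, not cosmetic: applied with $G_\circ=\langle g_3\rangle$ and $N=A[3]$, your lemma would yield $\K_2(A,C_3)\sim\K_2\left(A,C_3^4\rtimes_1 C_3\right)$, contradicting \cref{table n=2}, where these terminalizations have $\pi_1(Y^{\reg})=\{1\}$ and $C_3^2$, respectively, and this is a deformation invariant. Your five applications (items \eqref{p:m-1}, \eqref{p:m-2}, \eqref{p:m-9} and the Kummer halves of \eqref{p:m-5}, \eqref{p:m-6}) do satisfy $-\id\in G_\circ$, so they are repairable, but the repair is exactly the point your sketch glosses over.

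For the Kummer-to-Fujiki rows --- items \eqref{p:m-3}, \eqref{p:m-4}, \eqref{p:m-7}, \eqref{p:m-8}, the Fujiki halves of \eqref{p:m-5}, \eqref{p:m-6}, and item \eqref{p:m-10} --- you only outline the strategy and explicitly defer ``the hard part,'' namely matching the two finite group actions on the abelian fourfold up to isogeny. That matching is the actual mathematical content of \cref{lem:K2SG,lem:K2SII,lem:quotientK3}: the explicit isogenies $f_1,f_2\colon E^4\to E^4$, whose kernels are the diagonal resp.\ anti-diagonal copies of $\Pi_{g_3}$ and whose choice is what makes the Fujiki involution come out as $\theta=\id$ resp.\ $\theta=-\id$, and $f_3\colon A^3\to A^3$, together with the verification of how $\sigma$, $\tau$, the linear part, the translations and $\theta$ descend or lift along them. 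None of this is supplied, so these items remain unproved in your proposal. For item \eqref{p:m-10} in particular, deducing the cases $0\le i\le 4$ from Floccari's case $i=4$ ``by restricting to subgroups of translations'' presupposes that his birational map is induced by an equivariant isogeny with a specified matching of translation subgroups to induced automorphisms of the Kummer K3 --- which is exactly what \cref{lem:quotientK3} establishes, so nothing is saved. (Your observation that the comparison of $\K_2(A,BT_{24})$ with $S(S_3^2\rtimes C_2)^{[2]}_{\id}$ is out of reach of this method does correctly match \cref{rmk:open}.)
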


\begin{remark}\label{rmk:open}
    The IHS orbifolds $\K_2(A, BT_{24})$ and $S(S^2_3 \rtimes C_2)^{[2]}_{\id}$ share the same Betti numbers and singularities. They could be a pair of deformation equivalent orbifolds, but the lemmas in this section are not sufficient to decide it.
\end{remark}

\begin{proof}[Proof of \cref{prop:mainsection}]
The proposition follows from Equations~\eqref{eq:birationalKA2} and~\eqref{eq:birationalKA3} below, \cref{prop:birationaldeformation}, and
\begin{itemize}
    \item[-] \cref{lem:quotientC4} for \eqref{p:m-1}, \eqref{p:m-2}, \eqref{p:m-5}, \eqref{p:m-6} and \eqref{p:m-9}, 
    \item[-] \cref{lem:K2SG} for \eqref{p:m-5}, \eqref{p:m-6} and \eqref{p:m-7}, 
    \item[-] \cref{lem:K2SII} for \eqref{p:m-3}, \eqref{p:m-4} and \eqref{p:m-8}, 
    \item[-] \cref{lem:quotientK3} for \eqref{p:m-10}. 
\end{itemize}
In order to apply \cref{lem:K2SG,lem:K2SII} in cases \eqref{p:m-3}--\eqref{p:m-8}, we first deform the pair $(A,G)$ to $(E^2, G)$, where $E$ has complex multiplication. This is possible since the moduli space of such pairs $(A,G)$ is connected by \cite[Proposition 3.7]{Fujiki88}.
\end{proof}

\begin{definition}
    Let $f\colon X \to Y$ be a morphism of algebraic varieties. An automorphism $h \colon X \to X$ descends along $f$ to an automorphism $\bar{h} \colon Y \to Y$ if the following square commutes:  
    \[
\begin{tikzcd}[scale=1]
   X \arrow[d, "f"'] \arrow[r, "h"] & X \arrow[d, "f"]\\
   Y \arrow[r, "\bar{h}"]& Y.
\end{tikzcd}
\]
\textit{Vice versa}, we say that $\bar{h}$ lifts to $h$ along $f$.
\end{definition}

\subsection{Birational orbifolds in dimension 4} 
Let $G$ be a finite group of induced symplectic automorphisms of $\K_{2}(A)$. 
By construction, we have the following birational map:
\begin{equation}\label{eq:birationalKA2}
    \K_{2}(A, G) \sim_{\bir}A^{(3)}_{0}/G \simeq A^2/(S_3 \times G),
\end{equation}
 where $G$ acts diagonally on $A^2$ and the action of $S_3$ on $A^2$ is given by 
\begin{equation}\label{eq:S3}\sigma(x,y)=(y,-x-y), \quad \tau(x,y)=(y,x).
\end{equation}

\begin{lemma}\label{lem:quotientC4}
Let $H \simeq C_3^k \subseteq A[3]$ for $0 \leq k \leq 4$, \new{acting by translation on $A$. Assume that $G_\circ$ contains $-\id$.} Then the following quotients are isomorphic:
\[
\frac{(A/H)^2}{S_3\times (A[3]/H) \rtimes G_\circ} \simeq \frac{A^2}{S_3\times H \rtimes G_\circ}, 
\]
where 
the linear symplectic group $G_\circ$ and the translation group $H$ $($respectively, $A[3]/H)$ act diagonally on $A^2$ $($respectively, $(A/H)^2)$.
\end{lemma}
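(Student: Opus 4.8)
The plan is to realize both sides as quotients of abelian surfaces by $S_3\times G_{\circ}$ and to produce the isomorphism from an isogeny of $A$. Write $B\coloneqq A/H$ and $\pi\colon A\to B$ for the quotient isogeny, so $\ker\pi=H$; since $H\subseteq A[3]$, multiplication by $3$ factors as $A\xrightarrow{\pi}B\xrightarrow{\rho}A$ for a complementary isogeny $\rho$ with $\rho\circ\pi=[3]_A$ and $\ker\rho=\pi(A[3])=A[3]/H$. Both $\pi$ and $\rho$ are $G_{\circ}$-equivariant, hence $\pi\times\pi$ and $\rho\times\rho$ are equivariant for the diagonal $G_{\circ}$-action and for the $S_3$-action of Equation~\eqref{eq:S3}. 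First I would absorb the diagonal translation subgroups: the subgroups $\diag(H)\subset A^2$ and $\diag(A[3]/H)\subset B^2$ are normalized by $S_3$ (diagonal translations commute with the coordinate permutations in the $A^{3}_{0}$-model) and by $G_{\circ}$ (which preserves $H$, the action being affine by \cref{lem:G is affine}). Thus both sides take the shape $(\text{abelian surface})/(S_3\times G_{\circ})$, with covers $A_R\coloneqq A^2/\diag(H)$ and $A_L\coloneqq B^2/\diag(A[3]/H)$.

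Next I would use the isogenies to connect these covers. Since $\diag(A[3]/H)\subseteq\ker(\rho\times\rho)$, the map $\rho\times\rho$ descends to a finite $S_3\times G_{\circ}$-equivariant morphism $A_L\to A^2$, and symmetrically $\pi\times\pi$ collapses $\diag(H)$ and induces $A_R\to B^2$; the composite is the map induced by $[3]$. The subtlety is that $A_R$ and $A_L$, although isogenous, are connected only by a naive isogeny of degree $3^4$, so the sought isomorphism is a \emph{self-similarity} (the abelian-surface analogue of $\IC(t)\simeq\IC(t^2)$) rather than a quotient map. To manufacture it I would invoke the extra endomorphisms of $A$: an element of order $3$ in $G_{\circ}$ forces $\IZ[\xi_3]\subseteq\mathrm{End}(A)$, and in the remaining cases (where $G_{\circ}\simeq C_2$) one first deforms the pair $(A,G)$ to $(E^2,G)$ with $E$ of complex multiplication, which is harmless because $b_2$ and $\pi_1(Y^{\reg})$ are deformation invariants. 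These endomorphisms supply an equivariant self-isogeny $\delta\times\delta$ of $A^2$ of the correct degree whose descent to the $S_3\times G_{\circ}$-quotient is an isomorphism.

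The hard part will be to verify that the descended map has degree $1$. Concretely, one must show that the $3^4$ points in a generic fibre of the isogeny are merged into a single $S_3\times G_{\circ}$-orbit after passing to the quotient, i.e.\ that the two invariant function fields $\IC(A^2)^{S_3\times H\rtimes G_{\circ}}$ and $\IC(B^2)^{S_3\times(A[3]/H)\rtimes G_{\circ}}$ coincide under the isogeny pullback. This is exactly where the hypothesis $-\id\in G_{\circ}$ enters: it identifies $\tau_{\alpha}$ with $\tau_{-\alpha}$ in the quotient and, together with the $S_3$-action, sweeps out the kernel of the self-isogeny, so that the extra factor of $3^4$ is absorbed. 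Once the map is shown to be finite and birational between normal varieties, Zariski's main theorem yields that it is an isomorphism, completing the proof; the birational statement feeding into \cref{prop:mainsection} then follows through Equation~\eqref{eq:birationalKA2} together with \cref{prop:birationaldeformation}.
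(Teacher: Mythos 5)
Your setup is sound: reducing both sides to quotients of the abelian fourfolds $A_R=A^2/\diag(H)$ and $A_L=(A/H)^2/\diag(A[3]/H)$ by $S_3\times G_\circ$, and recognizing that the naive degree-$3^4$ isogeny between them cannot induce the desired map, matches the start of the paper's argument. The genuine gap is that the ``self-similarity'' which must replace it is never constructed, and the form you prescribe, $\delta\times\delta$ with $\delta\in\mathrm{End}(A)$, cannot work. Its kernel is the product $\ker\delta\times\ker\delta$; but the subgroup to be collapsed on the $A_L$-side is $\langle H^-,\diag(A[3])\rangle$ with $H^-=\{(h,-h)\}$, whose essential part is the diagonal $\diag(A[3])$. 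If $\diag(A[3])\subseteq\ker\delta\times\ker\delta$, then $A[3]\subseteq\ker\delta$ and the kernel contains all of $A[3]\times A[3]$, which merely trades the statement for $H$ into the same statement for a complementary subgroup: the diagonal obstruction survives, so the map must mix the two factors. Moreover, $\delta\times\delta$ is not even $G_\circ$-equivariant when $G_\circ$ is non-abelian: for $G_\circ\simeq BT_{24}$ (one of the cases in which \cref{prop:mainsection} invokes this lemma) an order-$3$ element is not central, so $\delta=1-\xi_3$ does not commute with $G_\circ$. Finally, your fallback for $G_\circ\simeq C_2$ --- deforming $(A,G)$ to $(E^2,G)$ --- cannot prove the lemma as stated: the lemma asserts an isomorphism of the actual quotients for every $A$, and a deformation only compares invariants.

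What you are missing is that the required self-isogeny exists universally, in $M_2(\IZ)\subseteq\mathrm{End}(A^2)$ rather than in $\mathrm{End}(A)$: the paper takes $f_0(x,y)=(x+2y,x-y)$, an isogeny of $A^2$ for an arbitrary abelian surface $A$, whose kernel is exactly $\diag(A[3])$ and which satisfies $f_0(H^-)=\diag(H)$; no complex multiplication and no deformation are needed. The remaining content, which you assert rather than verify, is the computation of how the group descends along $f_0$: one finds that $\sigma,\tau$ descend to $\bar\sigma=\sigma^2$ and $\bar\tau=(-\id)\circ\tau\circ\sigma$, so the permutation action descends to the twisted subgroup $\langle\sigma,-\tau\rangle$, not to the standard $S_3$. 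This is the one and only place where the hypothesis $-\id\in G_\circ$ enters: it untwists this copy, giving $\langle\bar\sigma,\bar\tau,\diag(H),G_\circ\rangle=S_3\times H\rtimes G_\circ$, whence the descended map has degree $1$. Your proposed mechanism for $-\id$ (identifying $\tau_\alpha$ with $\tau_{-\alpha}$ and ``sweeping out the kernel'') is not what happens, and without this explicit computation the degree-$1$ claim, i.e.\ the equality of invariant function fields, remains unproven.
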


\begin{proof}
Consider the isogeny $f_0 \colon A^2 \to A^2$ given by $f_0(x,y)=(x+2y,x-y)$, whose kernel is the diagonal copy of $A[3]$ in $A^2[3]$.
The automorphisms $\sigma$, $\tau$ in \eqref{eq:S3} and  $g \in G_\circ$ 
descend along $f_0$ to
\[\bar{\sigma}(x,y)=(-x-y, x), \quad \bar{\tau}(x,y)=(x+y, -y), \quad \bar{g}=g. 
\]
Note that the group $\langle \bar{\sigma}, \bar{\tau} \rangle=\langle \bar{\sigma}^2, \bar{\sigma}^2 \bar{\tau}\rangle=\langle \sigma, -\tau\rangle$ acts via the standard action of $S_3$ up to a sign.
Hence, the action of $S_3 \times A[3] \rtimes G_\circ = S_3 \times \ker(f_0)\rtimes G_\circ$ descends along $f_0$ to the action of $S_3 \times G_\circ$ since
\[\left\langle \bar{\sigma}, \bar{\tau}, \overline{A[3]}, \overline{-\id}, \bar{g} \right\rangle
=\left\langle \bar{\sigma}^2, \bar{\sigma}^2 \bar{\tau}, -\id, g\right\rangle
=\langle \sigma, \tau,  -\id, g\rangle.\]

Further, for any $(\alpha, \beta) \in A^2[3]$, the translation $\tau_{(\alpha,\beta)}$ descends along $f_0$ to $\bar{\tau}_{(\alpha,\beta)}=\tau_{(\alpha-\beta, \alpha - \beta)}$.
In particular, the anti-diagonal $H^- \new{\coloneqq \{(\alpha, - \alpha)\}}\subset H^2 \subset A^{2}[3]$ descends along $f_0$ to the diagonal $H \new{\coloneqq \{(\alpha, \alpha)\}} \subset H^2 \subset A^{2}[3]$.
We conclude that
\begin{equation*}\pushQED{\qed}
\frac{(A/H)^2}{S_3\times (A[3]/H) \rtimes G_\circ} \simeq \frac{A^2}{S_3\times (H^{-} \times A[3]) \rtimes G_\circ} \simeq \frac{A^2}{S_3\times H \rtimes G_\circ}.
\qedhere \popQED
	\end{equation*}
\renewcommand{\qed}{} 
\end{proof}

Let $\xi_3$ be a primitive third root of unity, and let $E$ be an elliptic curve with complex multiplication $\xi_3 \curvearrowright E \colon x \mapsto \xi_3 \cdot x$. Denote by $g_3 \colon E^2 \to E^2$ the diagonal automorphism $g_3(x_1,x_2)=(\xi_3 x_1, \xi^{-1}_3 x_2)$.

\begin{lemma}\label{lem:K2SG}
Let $G'$ be a finite symplectic group acting diagonally on $E^2$, and set $G \coloneqq  \langle \Pi_{g_3}, g_3, G' \rangle$. 
The group $G'$ acts on the K3 surface \new{$S \sim_{\bir} E^2/\langle g_3 \rangle$}, and the following orbifolds are birational: 
\[\K_2(E^2, G) \sim_{\bir}  S(G')^{[2]}_{\id}\]
\end{lemma}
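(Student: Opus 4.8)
The plan is to realize both sides as quotients of the abelian fourfold $A^2=(E^2)^2$ and to exhibit one explicit isogeny of $A^2$ intertwining the two group actions, exactly in the spirit of the proof of \cref{lem:quotientC4}.

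\emph{Setting up the two models.} By \eqref{eq:birationalKA2} the left-hand side is $\K_2(E^2,G)\sim_{\bir}A^2/(S_3\times G)$, where $S_3=\langle\sigma,\tau\rangle$ acts through \eqref{eq:S3} and $G=\langle\Pi_{g_3},g_3,G'\rangle$ acts diagonally; writing $g_3^{\mathrm{diag}},\Pi_{g_3}^{\mathrm{diag}},G'^{\mathrm{diag}}$ for these diagonal actions, the relevant group is $\Gamma_{\mathrm s}:=\langle\sigma,\tau,\Pi_{g_3}^{\mathrm{diag}},g_3^{\mathrm{diag}},G'^{\mathrm{diag}}\rangle$. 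On the right-hand side, \cref{defb:Fujiki variety} with $\theta=\id$ gives $S(G')^{[2]}_{\id}\sim_{\bir}S^2/\langle G',S_2\rangle$; substituting $S\sim_{\bir}E^2/\langle g_3\rangle$ and lifting to $A^2$ turns this into $A^2/\Gamma_{\mathrm t}$ with $\Gamma_{\mathrm t}:=\langle g_3^{(1)},g_3^{(2)},S_2,G'^{\mathrm{diag}}\rangle$, where $g_3^{(i)}$ is $g_3$ on the $i$-th factor and $S_2$ swaps the factors. Setting $u:=g_3^{(1)}g_3^{(2)}=g_3^{\mathrm{diag}}$ and $v:=g_3^{(1)}(g_3^{(2)})^{-1}$, the relations $S_2uS_2=u$, $S_2vS_2=v^{-1}$ mirror $\tau g_3^{\mathrm{diag}}\tau=g_3^{\mathrm{diag}}$ and $\tau\sigma\tau=\sigma^{-1}$, so abstractly $\langle\sigma,g_3^{\mathrm{diag}},\tau\rangle\cong\langle v,u,S_2\rangle$; the only discrepancy between $\Gamma_{\mathrm s}$ and $\Gamma_{\mathrm t}$ is the translation subgroup $\Pi_{g_3}^{\mathrm{diag}}\subset\Gamma_{\mathrm s}$, which will turn out to be precisely the kernel of the isogeny.

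\emph{The isogeny.} I would introduce $f\colon A^2\to A^2$, $f(x,y)=(x-g_3y,\;y-g_3x)$. Using the minimal-polynomial relation $1+g_3+g_3^2=0$ on $A$ from \cref{lemma:inducedinvolution}, one checks the intertwining identities $f\circ g_3^{\mathrm{diag}}=g_3^{\mathrm{diag}}\circ f$, $f\circ\sigma=v\circ f$ and $f\circ\tau=S_2\circ f$. Since $\Pi_{g_3}=\ker(\id-g_3)$ is fixed pointwise by $g_3$, solving $f(x,y)=0$ gives $\ker f=\{(\alpha,\alpha):\alpha\in\Pi_{g_3}\}=\Pi_{g_3}^{\mathrm{diag}}$, so $f$ is an isogeny of degree $|\Pi_{g_3}|=9$. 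For the diagonal action of $G'$ there are two cases: a translation $\tau_\beta\in G'$ has $\beta\in\Pi_{g_3}$ (so that it descends to $S$), whence $f$ kills $\tau_\beta^{\mathrm{diag}}\in\ker f$; and a linear $g'\in G'$ normalizes $\langle g_3\rangle$, with $f\circ g'^{\mathrm{diag}}=g'^{\mathrm{diag}}\circ f$ on the nose whenever $g'$ centralizes $g_3$.

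\emph{The inverting case and descent.} The delicate point---and the one I expect to be the main obstacle---is a linear $g'\in G'$ with $g'g_3g'^{-1}=g_3^{-1}$. Then $f\circ g'^{\mathrm{diag}}=g'^{\mathrm{diag}}\circ f'$ with $f'(x,y)=(x-g_3^{-1}y,y-g_3^{-1}x)$; since $\ker f'=\ker f$ one gets $f'=\phi\circ f$ for a unique automorphism $\phi$ of $A^2$, and a direct computation yields $\phi=(-g_3^2)^{\mathrm{diag}}\circ S_2$. The key observation is that any $g'$ swapping the eigenlines of $g_3$ satisfies $g'^2=-\id$, so $-\id\in G'$ and hence $\phi\in\langle g_3^{(1)},g_3^{(2)},S_2,(-\id)^{\mathrm{diag}}\rangle\subseteq\Gamma_{\mathrm t}$. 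Consequently $f\circ\gamma=F(\gamma)\circ f$ defines a homomorphism $F\colon\Gamma_{\mathrm s}\to\Gamma_{\mathrm t}$ which, by the identities above, is surjective with $\ker F=\Pi_{g_3}^{\mathrm{diag}}=\ker f$ (consistently, $|\Gamma_{\mathrm s}|=9\,|\Gamma_{\mathrm t}|$, e.g.\ $162=9\cdot18$ when $G'=1$). Thus $f$ is $F$-equivariant and descends to an isomorphism $A^2/\Gamma_{\mathrm s}\xrightarrow{\sim}A^2/\Gamma_{\mathrm t}$, giving $\K_2(E^2,G)\sim_{\bir}S(G')^{[2]}_{\id}$; deformation equivalence of the terminalizations then follows from \cref{prop:birationaldeformation}.
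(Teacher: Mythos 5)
Your overall strategy coincides with the paper's: both arguments present the two sides as quotients of $E^4$ and pass through an isogeny of $E^4$ whose kernel is the diagonal copy of $\Pi_{g_3}$ (the paper uses $f_1(x_1,x_2,x_3,x_4)=(\xi_3^2x_1-x_3,\,\xi_3^2x_2-x_4,\,-\xi_3x_1+x_3,\,-\xi_3x_2+x_4)$, you use $f(x,y)=(x-g_3y,\,y-g_3x)$, which has the same kernel), and your intertwining identities for $\sigma$, $\tau$, $g_3^{\mathrm{diag}}$ are correct. The genuine gap is in your treatment of $G'$, which is the actual content of the lemma. You assert that any translation $\tau_\beta\in G'$ has $\beta\in\Pi_{g_3}$ and is therefore killed by $f$. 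Both halves of this fail. ``Acting diagonally'' constrains only the linear parts of $G'$ to be diagonal matrices $\diag(c,c^{-1})$; the translation parts may be arbitrary $3$-torsion, and in the applications of the lemma in \cref{prop:mainsection}\,(5)--(6) they \emph{must} lie outside $\Pi_{g_3}$: there $G'\simeq S_3$ or $C_3\rtimes S_3$ is generated by $-\id$ together with translations complementing $\Pi_{g_3}$ inside $E^2[3]$, since otherwise $\langle\Pi_{g_3},g_3,G'\rangle$ could never equal $C_3^3\rtimes_4C_6$ or $C_3^4\rtimes C_6$. For such a translation the correct identity is $f\circ\tau_\beta^{\mathrm{diag}}=\tau^{\mathrm{diag}}_{(1-g_3)\beta}\circ f$: it is \emph{not} killed but descends to the nontrivial translation by $(1-g_3)\beta\in\Pi_{g_3}$, which acts on $S\times S$; this is exactly the paper's key computation ($\bar a=\xi_3^2a-a$ with $\xi_3\bar a=\bar a$), and it is what gives meaning to the phrase ``$G'$ acts on $S$'' and makes the group acting on $S\times S$ isomorphic to $G'$. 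Your version is moreover internally inconsistent: if translations in $G'$ died under $F$, the descended quotient would be $S\bigl(G'/\langle\text{translations}\rangle\bigr)^{[2]}_{\id}$ rather than $S(G')^{[2]}_{\id}$, and your count $|\Gamma_{\mathrm s}|=9\,|\Gamma_{\mathrm t}|$ fails (adjoining a $\Pi_{g_3}$-translation to $G'$ enlarges $\Gamma_{\mathrm t}$ but leaves $\Gamma_{\mathrm s}$ unchanged). As written, your argument establishes the lemma only when $G'$ contains no translations, e.g.\ case (7) of \cref{prop:mainsection}, but not cases (5)--(6).

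Secondly, the case you single out as the main obstacle---a linear $g'$ with $g'g_3g'^{-1}=g_3^{-1}$---cannot occur under the hypothesis: such a $g'$ interchanges the two eigenlines of $g_3$, hence is anti-diagonal, and is excluded by the assumption that $G'$ acts diagonally. (Your computation $\phi=(-g_3^2)^{\mathrm{diag}}\circ S_2$ is correct, so this would be a valid extension to groups merely normalizing $\langle g_3\rangle$, but it does not repair the missing point above.) To fix the proof: drop the inverting case, compute for a general $g'=\tau_tD\in G'$ with $D$ diagonal that $F(g'^{\mathrm{diag}})=\tau^{\mathrm{diag}}_{(1-g_3)t}D^{\mathrm{diag}}$, check that these images commute with $\langle g_3^{(1)},g_3^{(2)},S_2\rangle$ and act diagonally on $S\times S$, and verify that $g'\mapsto\tau_{(1-g_3)t}D$ is injective on $G'$ (which requires, as the paper implicitly does, choosing $G'$ without $\Pi_{g_3}$-translations), so that the resulting quotient is the Fujiki variety of a group isomorphic to $G'$.
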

\begin{proof} We follow closely \cite[Proof of Theorem 4.2]{kawatani2009birational}.
As in \eqref{eq:birationalKA2}, there exists a birational map 
\begin{equation*} 
\K_2(E^2, G) \sim_{\bir}  E^4/(S_3 \times G).
\end{equation*} Consider the isogeny $f_1 \colon E^4 \to E^4$ given by \[f_1(x_1,x_2, x_3, x_4)=\left(\xi^{2}_3 x_1-x_3, \xi^{2}_3 x_2-x_4, -\xi_3 x_1+x_3, -\xi_3 x_2+x_4\right),\] whose kernel is the diagonal copy of $\Pi_{g_3}$ in $E^4[3]$.  
The automorphisms $\sigma$, $\tau$ and $g_3$ descend along $f_1$ to $\bar{\sigma}$, $\bar{\tau}$ and $\bar{g}_3$ such that
\begin{align}
    \bar{g}_3\bar{\sigma}(x_4,x_1, x_2, x_3) & =\left(\xi_3 x_4, \xi^2_3 x_1, x_2, x_3\right), \nonumber\\
    \bar{g}_3\bar{\sigma}^2(x_4,x_1, x_2, x_3) & =\left( x_4, x_1, \xi_3 x_2, \xi^2_3 x_3\right), \label{eq:descent}\\
    \bar{\tau} \bar{\sigma}(x_4, x_1,x_2, x_3)& =(x_2,x_3,x_4,x_1). \nonumber
\end{align} 
In particular, we obtain
\begin{equation} \label{eq:K3bir}
E^4/\langle \bar{\sigma}, \bar{\tau}, \bar{g}_3 \rangle = E^4/\langle \bar{g}_3\bar{\sigma}, \bar{g}_3\bar{\sigma}^2, \bar{\tau}\bar{\sigma}  \rangle \simeq \left((E^2/g_3) \times (E^2/g_3)\right)/\bar{\tau} \bar{\sigma} \sim_{\bir} S^{[2]}.
\end{equation}

An element $g'\in G'$ is of the form \[g'(x_1,x_2,x_3, x_4)=(cx_1+a, dx_2 + b, cx_3+a, dx_2+b)\] for some $c,d \in \mathbb{C}$ and $a,b \in E[3]$, and it descends along $f_1$ to \[\bar{g}'(x_4, x_1, x_2, x_3) =\left(dx_4+\bar{b}, cx_1+\bar{a}, dx_2+\bar{b}, cx_1+\bar{a}\right),\] with $\bar{a}=\xi^2_3a-a$ and $\bar{b}=\xi^2_3b-b$. Since $\xi_3 \bar{a}=\bar{a}$ and $\xi_3 \bar{b}=\bar{b}$, the morphism $\bar{g}'$ commutes with all the automorphisms $\bar{\sigma}$, $\bar{\tau}$ and $\bar{g}_3$. 

We conclude that
	\begin{equation*}\pushQED{\qed}
\K_2(E^2, G) \sim_{\bir} E^4/(S_3 \times G) \simeq E^4/(\langle \bar{\sigma}, \bar{\tau}, \bar{g}_3 \rangle \times G') \sim_{\bir} S^{[2]}/G'\sim_{\bir} S(G')^{[2]}_{\id}.
\qedhere \popQED
	\end{equation*}
\renewcommand{\qed}{}     
\end{proof}

\begin{lemma}\label{lem:K2SII}
The following orbifolds are birational: \vspace{0.5em}
\begin{itemize} \setlength\itemsep{0.5em}
\item 
   $\K_2(E^2, C_3) \sim_{\bir} S(C^2_3)_{-\id}^{[2]}$ with $C_3=\langle g_3\rangle$ and $S \sim_{\bir} E^2/\langle g_3 \rangle$,
\item $\K_2(E^2, C^2_3)  \sim_{\bir} S_{\alpha}(C_3)_{-\id}^{[2]}$ with $C_3^2 = \langle g_3, \tau_{\alpha}\rangle$ and $S_{\alpha} \sim_{\bir} E^2/\langle g_3, \tau_{\alpha}\rangle$,
\item $\K_2(E^2, C^2_3 \rtimes C_6) \sim_{\bir} S_{\alpha}(C_3 \rtimes S_3)_{\theta}^{[2]}$ 
  with $C^2_3 \rtimes C_6= \langle g_3, -\id, \tau_{\alpha}, \tau_{\beta}\rangle$, $g_3(\beta) \neq \beta$ and $\theta=(-\id, \id)$ acting on $C_3 \rtimes S_3$.
\end{itemize}
  \end{lemma}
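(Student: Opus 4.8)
The plan is to adapt the isogeny argument used for \cref{lem:K2SG}, itself modelled on \cite[Proof of Theorem~4.2]{kawatani2009birational}; the only genuinely new ingredient is that the identification now proceeds through an \emph{anti}-diagonal copy of $\Pi_{g_3}$ inside $E^4$, and it is exactly the sign attached to this anti-diagonal that is responsible for the inversion twist $\theta=-\id$ (whereas the diagonal copy in \cref{lem:K2SG} produced $\theta=\id$). In each of the three cases I would start from the birational model \eqref{eq:birationalKA2}, namely $\K_2(E^2,G)\sim_{\bir}E^4/(S_3\times G)$, where $E^4=(E^2)^2$, the subgroup $S_3$ acts through the maps $\sigma,\tau$ of \eqref{eq:S3}, and $G$ acts diagonally. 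Writing $S=E^2/\langle g_3\rangle$ and $S_\alpha=E^2/\langle g_3,\tau_\alpha\rangle$, I would realise $S^2$ (resp.\ $S_\alpha^2$) as a quotient of $E^4$ by $\langle g_3,T\rangle^{(1)}\times\langle g_3,T\rangle^{(2)}$ with $T=\{0\}$ (resp.\ $T=\langle\alpha\rangle$), so that the residual translation group $G'=\Pi_{g_3}/T$ --- which is $C_3^2$ (resp.\ $C_3$), recalling $\Pi_{g_3}\cong C_3^2$ --- is the symplectic group acting on the K3 surface. The goal is to exhibit an explicit degree-$9$ isogeny $f\colon E^4\to E^4$ with kernel the anti-diagonal $\{(\gamma,-\gamma):\gamma\in\Pi_{g_3}\}$ and to descend the group defining the Fujiki variety along $f$, matching it with $S_3\times G$; since $S(G')^{[2]}_\theta\sim_{\bir}S^2/\langle G',S_2\rangle$ (with $G'$ acting via the twist $\theta$), this produces the stated birational equivalence, and \cref{prop:birationaldeformation} upgrades it to deformation equivalence.

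For the first two assertions ($\theta=-\id$) the source group $G=\langle g_3,T\rangle$ is abelian and contains no $-\id$, so the twist cannot come from an element of $G$: it arises solely from the anti-diagonal. Concretely, on $E^4$ the translations $\{(\gamma,-\gamma)\}$, the two linear automorphisms $g_3^{(1)},g_3^{(2)}$, the translations $T^{(1)},T^{(2)}$ (trivial in the first case) and the swap $S_2$ generate a group whose order exceeds $|S_3\times G|$ by the factor $|\Pi_{g_3}|=9$, equal to $\deg f$; by the definition of the twist $\theta=-\id$ on $G'$, this is precisely the group defining $S(G')^{[2]}_{-\id}$. I would then compute the generators descended along $f$ exactly as $\bar\sigma,\bar\tau,\bar g_3$ were computed in \eqref{eq:descent}, and check that they generate $\langle\sigma,\tau\rangle\times\langle g_3,T\rangle=S_3\times G$, the copies $T^{(1)},T^{(2)}$ recombining into the diagonal $T\subseteq G$ because $T$ is fixed pointwise by $g_3$.

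For the third assertion $G=C_3^2\rtimes C_6=\langle g_3,-\id,\tau_\alpha,\tau_\beta\rangle$ with $g_3(\beta)\neq\beta$, the target is $S_\alpha(C_3\rtimes S_3)^{[2]}_{(-\id,\id)}$, so $\theta$ inverts the normal $C_3$ of $G'=C_3\rtimes S_3$ and fixes the quotient $S_3$. I would run the same isogeny $f$ (anti-diagonal $\Pi_{g_3}$, still of degree $9$) and organise the descended generators into two packets: the residual translations $\Pi_{g_3}/\langle\alpha\rangle\cong C_3$ give the normal $C_3$, carrying the inversion twist exactly as before, while $-\id$ together with the genuinely non-fixed translation $\tau_\beta$ (here $g_3(\beta)\neq\beta$, so $(\id-g_3)\beta\neq 0$) descend to generate the quotient $S_3$, on which $\theta$ must act trivially because $-\id$ commutes with the swap $S_2$. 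As the statement indicates, before invoking this lemma one first deforms $(A,G)$ to $(E^2,G)$ via \cite[Proposition~3.7]{Fujiki88}, after which the entire computation reduces to linear algebra on the $\xi_3$-eigenspaces over the CM curve $E$.

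\textbf{Main obstacle.} The delicate step, concentrated in the third case, is the bookkeeping of the descent: one must verify not merely that the group orders agree but that the descended generators assemble into the \emph{semidirect} product $C_3\rtimes S_3$ with the sign twist landing on the normal $C_3$ alone, i.e.\ that the resulting involution of $G'$ is exactly $(-\id,\id)$ and not some other involution. Pinning down which generator receives the twist, and checking that $\tau_\beta$ and $-\id$ descend to the $S_3$ factor rather than interfering with the translations, is where the argument genuinely requires care.
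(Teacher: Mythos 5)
Your proposal is correct and is essentially the paper's own argument: the paper's isogeny $f_2$ is exactly your degree-$9$ isogeny with kernel the anti-diagonal copy of $\Pi_{g_3}$, and the identification of the anti-diagonal translations as the source of the twist $\theta=-\id$, together with the explicit matching of generators (your $T^{(1)},T^{(2)}$ recombining into the diagonal $T$, the order bookkeeping by the factor $\deg f=9$), reproduces the paper's computation. The only cosmetic differences are that the paper transports $\sigma,\tau,g_3$ by \emph{lifting} them along $f_2$ rather than descending the Fujiki group, and it obtains the third case by quotienting the second equivalence further by $S_3=\langle\tau_\beta,-\id\rangle$ instead of re-running the descent with the full group $C_3^2\rtimes C_6$ --- in both organizations the delicate point you flag (that the residual $\langle\tau_\beta,-\id\rangle$-action descends diagonally, so the twist lands only on the normal $C_3$) is the same verification.
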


\begin{proof} 
  Consider the isogeny $f_2 \colon E^4 \to E^4$ given by
  \[f_2(x_1,x_2, x_3, x_4)=\left( x_1+x_3, x_2+x_4, \xi_3 x_1+\xi_3^2x_3, \xi_3 x_2+\xi_3^2 x_4\right),\] whose kernel is the anti-diagonal copy of $\Pi_{g_3}$ in $E^4[3]$; \textit{i.e.},  
 \[\ker(f_2)=\{(a,b,-a,-b) \in E^4\, | \, \xi_3(a)=a, \xi_3(b)=b\}\subseteq E^4[3].\]
 The automorphisms $\sigma, \tau$ in \eqref{eq:S3} and $g_3$ lift along $f_2$ to the automorphisms $\tilde{\sigma}$, $\tilde{\tau}$ and $\tilde{g}_3$ such that 
 \begin{align*}
    \tilde{g}_3\tilde{\sigma}(x_4,x_1, x_2, x_3) & =\left(\xi_3 x_4, \xi^2_3 x_1, x_2, x_3\right), \nonumber\\
    \tilde{g}_3 \tilde{\sigma}^2(x_4,x_1, x_2, x_3) & =\left( x_4, x_1, \xi_3 x_2, \xi^2_3 x_3\right), \label{eq:descent}\\
    \tilde{\tau}\tilde{\sigma}^2(x_4, x_1,x_2, x_3)& =(x_2,x_3,x_4,x_1). \nonumber
\end{align*} 
Thus, the group \( \langle \tilde{\sigma}, \tilde{g}_3\rangle \simeq C_3^2 \) acts 
on $E^2_{x_4,x_1} \times E^2_{x_2,x_3}$ as $\langle g_3 \rangle \times \langle g_3 \rangle$, while the group \(\langle \ker(f_2), \tilde{\tau}\tilde{\sigma}^2\rangle \simeq C^2_3 \rtimes C_2 \) acts on \(E^4/\langle \tilde{\sigma}, \tilde{g}_3\rangle \sim_{\bir} S^2\) as the group $\langle C_3^2, C_2 \rangle$ in \cref{defb:Fujiki variety} with $\theta=-\id$.

From the short exact sequence
 \[1 
 \lra C_3^2 = \langle \tilde{\sigma}, \tilde{g}_3\rangle  
 \lra (C^3_3 \rtimes C_2) \times C_3  = \langle \ker(f_2), \tilde{\sigma}, \tilde{\tau}, \tilde{g}_3\rangle 
 \lra C^2_3 \rtimes C_2 = \left\langle \ker(f_2), \tilde{\tau}\tilde{\sigma}^2\right\rangle 
 \lra 1,\]
we obtain that
\begin{align*}
    \K_2(E^2, C_3) \sim_{\bir} & E^4/(S_3 \times C_3) 
    = E^4/ \langle \sigma, \tau, g_3 \rangle
    \simeq E^4/\langle \ker(f_2), \tilde{\sigma}, \tilde{\tau}, \tilde{g}_3\rangle = E^4/((C^3_3 \rtimes C_2) \times C_3) \\
    & \simeq \left((E^2/g_3) \times (E^2/g_3)\right)/ (C^2_3 \rtimes C_2) 
    \sim_{\bir}  S^2/ (C^2_3 \rtimes C_2) 
    \sim_{\bir}S(C^2_3)_{-\id}^{[2]}.
\end{align*}

A 3-torsion point $\alpha$ in the diagonal $E^2[3] \subset E^4[3]$ lifts along $f_2$ to its opposite $-\alpha$, up to a translation in $\ker(f_2)$. If $\alpha =(a,b)$ is a nonzero translation in $\Pi_{g_3}$, then $\langle \ker(f_2), \tilde{\tau}_{\alpha}\rangle$ is generated by three translations
\[\tau_1 \coloneqq (a,0,0,b), \quad \tau_2  \coloneqq (0,b,a,0), \quad \tau_3 \in \ker(f_2) \setminus \langle (a,-b,-a,b) \rangle.\] 
From the short exact sequence 
 \[1 \lra C_3^4 = \langle \tilde{\sigma}, \tilde{g}_3, \tau_1, \tau_2 \rangle 
 \lra \langle \ker(f_2), \tilde{\sigma}, \tilde{\tau}, \tilde{g}_3, \tilde{\tau}_{\alpha}\rangle = \left(C^3_3 \rtimes C_2\right) \times C^2_3 
 \lra S_3 = \left\langle \tau_3, \tilde{\tau}\tilde{\sigma}^2\right\rangle \lra 1,\]
we obtain that
\begin{align*}
    \K_2(E^2, C^2_3) \sim_{\bir} & E^4/\left(S_3 \times C^2_3\right) \simeq E^4/\left(\left(C^3_3 \rtimes C_2\right) \times C^2_3\right) \\ 
    & \simeq \left(E^2/\langle g_3, \tau_{(b,a)} \rangle\right)^2/ S_3 \sim_{\bir}  S_{\alpha}^2/ S_3 \sim_{\bir}S(C_3)_{-\id}^{[2]}.
\end{align*}
Quotienting further by $S_3=\langle \tau_{\beta}, -\id \rangle$ with $g_3(\beta)=\beta + \alpha$, we also obtain 
\begin{equation*}\pushQED{\qed}
\K_2\left(E^2, C^2_3 \rtimes C_6\right) \sim_{\bir} S_{\alpha}\left(C_3 \rtimes S_3\right)_{\theta}^{[2]}.\qedhere \popQED
\end{equation*}
\renewcommand{\qed}{}     
\end{proof}

\subsection{Birational orbifolds in dimension 6}
The following \cref{lem:quotientK3} was communicated to the authors by Menet. 
By construction, we have a birational map 
\begin{equation}\label{eq:birationalKA3}
    \K_{3}(A, G) \sim_{\bir}A^{(4)}_{0}/G \simeq A^3/(S_4 \times G),
\end{equation}
 where $G$ acts diagonally on $A^3$ and the action of $S_4$ on $A^2$ is given by 
\[\sigma_{12}(x,y,z)=(y,x,z), \quad 
\sigma_{13}(x,y,z)=(z,y,x), \quad \sigma_{14}(x,y,z)=(-x-y-z,y,z).\]

\begin{lemma}\label{lem:quotientK3}
Let $H\simeq C_2^k \subseteq A[2]$ for $0 \leq k \leq 4$, and set $G \coloneqq A[2]/H \times \langle -\id \rangle$. \new{The group $H$ acts by translation on $A$, and it induces an action on the corresponding Kummer surface $T \sim_{\bir} A/\langle -\id \rangle$.}
Then the following orbifolds are birational: 
\[\K_3(A/H, G) \sim_{\bir}  T(H)^{[3]}_{\id}.\]
\end{lemma}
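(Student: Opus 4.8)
The plan is to reduce both sides to one and the same quotient of $A^3$ by an explicit isogeny, exactly in the spirit of \cref{lem:quotientC4,lem:K2SG,lem:K2SII}. Throughout, recall that $H\subseteq A[2]$, so $-h=h$ for every $h\in H$, and that the group $G=A[2]/H\times\langle-\id\rangle$ is already in the affine form guaranteed by \cref{lem:G is affine}\,\eqref{l:Gia-5}. Write $B:=A/H$ and let $\pi\colon A\to B$ be the quotient isogeny, $\ker\pi=H$. Since a terminalization is birational to the quotient it resolves (\cref{notationKummer}, \cref{defb:Fujiki variety}), it suffices to prove $\K_3(B)/G\sim_{\bir}T^3/\langle H,S_3\rangle$. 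Let $A^4_0:=\{(x_1,x_2,x_3,x_4)\in A^4\mid \sum_i x_i=0\}$, an abelian threefold carrying the permutation action of $S_4$, so that the summation fibre is $A^{(4)}_0=A^4_0/S_4$. By \eqref{eq:birationalKA3} the left-hand side is $B^3/(S_4\times G)$; pulling back along $\pi$ gives $B^{(4)}_0=A^4_0/H^4_0$ with $H^4_0:=\{(h_i)\in H^4\mid\sum_i h_i=0\}$, while $-\id$ lifts to the global sign, the translations $A[2]/H$ lift to the diagonal $\Delta:=\{(t,t,t,t)\mid t\in A[2]\}$, and $S_4$ lifts by the very same permutation formulas (which preserve $A^4_0$ and $H^4_0$ because $-h=h$). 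Hence
\[
\K_3(B)/G\;\sim_{\bir}\;A^4_0\big/\big\langle\,H^4_0,\ \Delta,\ -\id,\ S_4\,\big\rangle .
\]

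For the right-hand side I would use $T\sim_{\bir}A/\langle-\id\rangle$, so $T^3\sim_{\bir}A^3/(\langle-\id\rangle)^{\times3}$, and read off the Fujiki action of \cref{defb:Fujiki variety} with $\theta=\id$: the point is that $h\in H$ acts on only the \emph{first two} factors, as the translation $\tau_{(h,h,0)}$, while $S_3$ permutes the three factors. Thus
\[
T^3/\langle H,S_3\rangle\;\sim_{\bir}\;A^3\big/\big\langle\,(\langle-\id\rangle)^{\times3},\ S_3,\ \tau_{(h,h,0)}\ (h\in H)\,\big\rangle .
\]
The comparison is made by the \emph{pair-sum} isogeny: the three partitions of $\{1,2,3,4\}$ into two pairs are permuted by $S_4$ with kernel the Klein group $V_4$ and $S_4/V_4\simeq S_3$, and this is realized by
\[
\phi\colon A^4_0\lra A^3,\qquad (x_1,x_2,x_3,x_4)\longmapsto(x_1+x_2,\ x_1+x_3,\ x_1+x_4),
\]
a homomorphism of abelian threefolds with $\ker\phi=\Delta$.

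Next I would carry out the two computations that make the matching work. First, a direct check shows that for each $g\in\langle S_4,-\id\rangle$ there is a signed permutation $\bar g$ of $A^3$ with $\phi\circ g=\bar g\circ\phi$: the elements of $V_4$ go to the three even sign changes, $-\id$ to the total sign, and each transposition of $S_4$ to a signed transposition; together $\langle S_4,-\id\rangle$ surjects onto the full signed-permutation group $W:=(\langle-\id\rangle)^{\times3}\rtimes S_3$ of order $48$. Since $\ker\phi=\Delta$, passing to $A^4_0/\Delta\simeq A^3$ turns $\langle\Delta,-\id,S_4\rangle$ into $W$. Second, because $\phi$ is a homomorphism it carries the translation lattice $H^4_0$ onto $\phi(H^4_0)=\{(h_1+h_2,\,h_1+h_3,\,h_2+h_3)\}=H^3_0$, where $H^3_0:=\{(u,v,w)\in H^3\mid u+v+w=0\}$ has order $|H|^2$. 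Therefore
\[
A^4_0\big/\big\langle H^4_0,\Delta,-\id,S_4\big\rangle\;\simeq\;A^3\big/\big\langle H^3_0,\,W\big\rangle .
\]
On the other side the linear part is already $W$, and the $S_3$-orbit of the Fujiki translation $\tau_{(h,h,0)}$ is $\{\tau_{(h,h,0)},\tau_{(h,0,h)},\tau_{(0,h,h)}\}$, which generates precisely $H^3_0$ (conjugation by the sign changes produces nothing new, as $-h=h$). Hence $T^3/\langle H,S_3\rangle\sim_{\bir}A^3/\langle H^3_0,W\rangle$ as well, and the two sides coincide.

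I expect the only real obstacle to be bookkeeping of the two translation lattices rather than any deep geometry. The subtle—and at first counterintuitive—point is precisely that in $T(H)^{[3]}_{\id}$ the group $H$ acts on two factors and not diagonally: it is this two-factor action whose $S_3$-saturation $H^3_0$ matches the image $\phi(H^4_0)$ of the sum-zero lattice, whereas a diagonal action would give a lattice of the wrong order $|H|$ instead of $|H|^2$. The two calculations to perform with care are thus the signed-permutation image of $S_4$ under $\phi$ and the identity $\phi(H^4_0)=H^3_0$ (with its order count); everything else is formal. A sanity check at $H=0$ recovers $\K_3(A)/\big(A[2]\times\langle-\id\rangle\big)\sim_{\bir}T^{[3]}$, consistent with the smooth case of \cref{thm:smooth term}. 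Finally, \cref{prop:birationaldeformation} upgrades this birational equivalence to the deformation equivalence recorded in \cref{prop:mainsection}\,\eqref{p:m-10}.
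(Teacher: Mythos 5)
Your proposal is correct and takes essentially the same route as the paper: your pair-sum isogeny $\phi$ coincides with the paper's $f_3(x,y,z)=(x+y,\,x+z,\,y+z)$ (kernel the diagonal copy of $A[2]$) up to the identification $A^4_0\simeq A^3$ and a sign in one coordinate, and your two key computations --- that $\langle S_4,-\id\rangle$ descends onto the full signed-permutation group $\langle-\id\rangle^3\rtimes S_3$, and that $\phi(H^4_0)=H^3_0$ equals the $S_3$-saturation of the Fujiki translations $\tau_{(h,h,0)}$ --- are exactly the paper's descent computations for the linear part and for the translation lattice $H^3$. The only differences are cosmetic: the paper works in $A^3$-coordinates from the start (with the twisted $S_4$-action) and descends the group actions along $f_3$, whereas you keep $A^4_0$ explicit and lift from $(A/H)^4_0$ before descending.
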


\begin{proof} 
Consider the isogeny $f_3 \colon A^3 \to A^3$ given by $f_3(x,y,z)=(x+y,x+z,y+z)$, whose kernel is the diagonal copy of $A[2]$ in $A^3[2]$.
The automorphisms $\sigma_{12}$, $\sigma_{13}$, $\sigma_{14}$ and $-\id$ descend along $f_3$ to, respectively,  the permutations $(23)$, $(13) \in S_3$ of the factors of $A^3$, and
\[\bar{\sigma}_{14}(x,y,z)=(-y,-x,z)=(-\id, -\id, \id)(12)(x,y,z), \quad \overline{-\id} =(-\id, -\id, -\id).\]
Hence, the action of $S_4 \times A[2] \times \langle -\id \rangle= S_4 \times \ker(f_3) \times \langle -\id \rangle$ descends along $f_3$ to the action of $S_3 \times \langle-\id \rangle^3$, and
\[ \frac{A^3}{S_4 \times A[2] \times \langle -\id \rangle}
\simeq 
\frac{A^3}{S_3 \times \langle -\id \rangle^3} \sim_{\bir} T^{[3]}.\] 
Further, for any $(\alpha, \beta, \gamma) \in A^3[2]$, the translation $\tau_{(\alpha,\beta,\gamma)}$ of $A^3$ descends along $f_3$ to \[\bar{\tau}_{(\alpha,\beta,\gamma)}=(\tau_\alpha, \tau_\alpha, \id)(\tau_\beta, \id, \tau_\beta)(\id, \tau_\gamma, \tau_\gamma).\]
In particular, the action of $H^3 \subseteq A^{3}[2]$ descends along $f_3$ to \new{the action of $H^3 \subset \langle H, S_3\rangle$} as in \cref{defb:Fujiki variety} with $\theta=\id$.
We conclude that 
\begin{equation*}\pushQED{\qed}
\frac{(A/H)^3}{S_4 \times A[2]/H \times \langle -\id \rangle} \simeq \left(\frac{A^3}{S_4 \times A[2] \times \langle -\id \rangle}\right) \bigg/ H^3
\simeq 
\frac{A^3}{\langle H, S_3 \rangle \times \langle -\id \rangle^3}  \sim_{\bir} T(H)_{\id}^{[3]}.\qedhere \popQED
\end{equation*}
\renewcommand{\qed}{} 
\end{proof}


\newcommand{\etalchar}[1]{$^{#1}$}

\end{document}